\newcommand{\blackarrowright}{\mathrel{\tikz[baseline=-.6ex]{%
\draw[->,>=Latex,black,line width=0.5pt] (0,0) -- (3ex,0);}}}
\newcommand{\xblackrightarrow}[2][]{\mathrel{\tikz[baseline=-.6ex]{%
\draw[->,>=Latex,black,line width=0.5pt] (0,0) -- node[above] {#1} (4ex,0);}}#2}
\newcommand{\xblackleftarrow}[2][]{\mathrel{#2\tikz[baseline=-.6ex]{%
\draw[<-,>=Latex,black,line width=0.5pt] (0,0) -- node[above] {#1} (4ex,0);}}}
\tikzstyle{tikzfig}=[baseline=-0.25em,scale=0.5]
\tikzstyle{none}=[inner sep=0mm]
\newcommand{\tikzfig}[1]{%
{\tikzstyle{every picture}=[tikzfig]
\IfFileExists{#1.tikz}
  {\input{#1.tikz}}
  {%
    \IfFileExists{./figures/#1.tikz}
      {\input{./figures/#1.tikz}}
      {\tikz[baseline=-0.5em]{\node[draw=red,font=\color{red},fill=red!10!white] {\textit{#1}};}}%
  }}%
}
\tikzstyle{every loop}=[]
\tikzstyle{basic_node}=[fill=none, draw=none, shape=circle, tikzit fill={rgb,255: red,160; green,160; blue,160}, tikzit draw=black]
\tikzstyle{arrow}=[fill=none, draw=black, tikzit draw=black, tikzit fill=none, ->]
\setlist[enumerate]{labelsep=*, leftmargin=1.5pc}
\setlist[enumerate]{label=\normalfont(\roman*), ref=\roman*}
\newtheorem{thm}{Theorem}[section]
\newtheorem{thm*}[intro]{Theorem}
\newtheorem{lemma}[thm]{Lemma}
\newtheorem{lemma*}[intro]{Lemma}
\newtheorem{cor}[thm]{Corollary}
\newtheorem{cor*}[intro]{Corollary}
\newtheorem{prop}[thm]{Proposition}
\newtheorem{prop*}[intro]{Proposition}
\newtheorem{claim*}[]{Claim}
\newtheorem{conjecture*}[intro]{Conjecture}
\theoremstyle{definition}
\newtheorem{example}[thm]{Example}
\newtheorem{example*}[intro]{Example}
\newtheorem{remark}[thm]{Remark}
\newtheorem{remark*}[intro]{Remark}
\newtheorem{definition}[thm]{Definition}
\newtheorem{definition*}[intro]{Definition}
\newtheorem{notation}[thm]{Notation}
\newtheorem{problem*}[intro]{Problem}
\newtheorem{algorithm}[thm]{Algorithm}
\newtheorem{question*}[intro]{Question}
\newtheorem{setup}[thm]{Setup}
\numberwithin{equation}{section}
\newcommand{\Z}{\mathbb{Z}}
\newcommand{\R}{\mathbb{R}}
\newcommand{\C}{\mathbb{C}}
\newcommand{\N}{\mathbb{N}}
\newcommand{\op}{\operatorname}
\begin{document}
\title[Combed Trisection Diagrams and Non-Semisimple 4-Manifold Invariants]{Combed Trisection Diagrams and \\ Non-Semisimple 4-Manifold Invariants}

\author{Julian Chaidez}
\address{Department of Mathematics\\University of Southern California\\Los Angeles, CA\\90007\\USA}
\email{julian.chaidez@usc.edu}

\author{Jordan Cotler}
\address{Society of Fellows\\Harvard University\\Cambridge, MA\\02138\\USA}
\email{jcotler@fas.harvard.edu}

\author{Shawn X. Cui}
\address{Department of Mathematics, Department of Physics and Astronomy\\Purdue University\\West Lafayette, IN\\47907\\USA}
\email{cui177@purdue.edu}

\begin{abstract} Given a triple $H$ of (possibly non-semisimple) Hopf algebras equipped with pairings satisfying a set of properties, we describe a construction of an associated smooth, scalar invariant $\tau_H(X,\pi)$
of a simply connected, compact, oriented $4$-manifold $X$ and an open book $\pi$ on its boundary. This invariant generalizes an earlier semisimple version and is calculated using a trisection diagram $T$ for $X$ and a certain type of combing of the trisection surface. We explain a general calculation of this invariant for a family of exotic 4-manifolds with boundary called Stein nuclei, introduced by Yasui. After investigating many low-dimensional Hopf algebras up to dimension 11, we have not been able to find non-semisimple Hopf triples that satisfy the criteria for our invariant.  Nonetheless, appropriate Hopf triples may exist outside the scope of our explorations.
\end{abstract}

\maketitle

\def \myHeader {}

\tableofcontents

\section{Introduction} 
\label{sec:introduction}

Topological quantum field theories (or TQFTs) have played a major role in low-dimensional topology since their inception \cite{witten1988topological,witten1989quantum}. Physically, a TQFT is a quantum field theory where the fields and action do not depend on the metric of space-time manifolds. Mathematically, a TQFT can be described by a monoidal functor from the category of cobordisms of a certain dimension to the category of vector spaces. In particular, TQFTs provide quantum invariants for smooth manifolds, and as such are important tools in topology.  

\vspace{3pt}

In dimension three, the Turaev-Viro-Barrett-Westbury (TVBW) \cite{turaev1992state,barrett1996invariants} and Witten-Reshetikhin-Turaev (WRT) \cite{reshetikhin1991invariants, turaev1994quantum} constructions provide two fundamental families of TQFTs, with the former serving as far-reaching generalizations of the well-known Jones polynomial of links. The quantum invariants  are sensitive to properties of 3-manifolds beyond classical homotopic/homological information. For example, the invariants can distinguish certain homotopy equivalent but non-diffeomorphic 3-manifolds \cite{freed1991computer}.

\vspace{3pt}

In dimension four, TQFTs have not been understood as well as their lower-dimensional counterparts. A broad family of state-sum quantum invariants/TQFTs are due to Douglas-Reutter \cite{douglas2018fusion} based on semisimple monoidal 2-categories, which generalize previous constructions by the third author \cite{cui2019four}, Crane-Yetter \cite{crane1993categorical, crane1997state}, Yetter \cite{yetter1993tqft}, and Dijkgraaf-Witten \cite{dijkgraaf1990topological}. Most currently known TQFTs are either proved or expected to be semisimple.   On the other hand, a no-go theorem \cite{reutter2022semisimple} points to the subtlety that semisimple TQFTs cannot distinguish smooth structures of 4-manifolds. See \cite{tata2023anomalies} for a construction of TQFTs defined for unoriented manifolds endowed with Pin structures which distinguish the real and fake $\mathbb{R}\mathbb{P}^4$. For the rest of the paper, we only consider TQFTs/invariants on orientable manifolds.  A central problem in dimension four is to define effective computable invariants  sensitive to smooth structures. Given the no-go theorem, one has to consider non-semisimple TQFTs or more generally 4-manifold invariants which do not extend to  semisimple TQFTs.

\vspace{3pt}

There have been numerous studies of non-semisimple WRT-type TQFTs in dimension three (based on certain non-semisimple tensor categories) where they have been shown to be more fruitful than their semisimple counterparts (e.g.~\cite{costantino2014quantum, de2020nonsemisimple, de20223}). From a dual perspective, there also exist two invariants of 3-manifolds -- the Kuperberg invariant \cite{kuperberg1996noninvolutory} and the Hennings invariant \cite{hennings1996invariants, kauffman1995invariants} -- constructed from finite-dimensional Hopf algebras. When the Hopf algebra is semisimple, the Kuperberg and Hennings invariants reduce, respectively, to the invariants of TVBW and WRT. Hence, the former two can be thought of as non-semisimple generalizations of the latter two.

\vspace{3pt}

Let us focus on the Kuperberg invariant. Given a Hopf algebra, the invariant is defined for a closed oriented 3-manifold endowed with a framing or a nowhere-vanishing vector field. Notably, for non-semisimple Hopf algebras, the Kuperberg invariant does not extend to a TQFT. Moreover, it allows for various generalizations using super-Hopf algebras \cite{neumann2021kuperberg} or Hopf objects in symmetric tensor categories  \cite{kashaev2019generalized}, and in some setups it provides an invariant of Spin$^c$ structures which recovers the Reidemeister torsion and the Seiberg-Witten invariant. These results show that the Kuperberg invariant, as a non-semisimple generalization of TVBW, contains rich information about 3-manifolds. Since the Douglas-Reutter invariant is a 4-dimensional analog of TVBW, to obtain non-semisimple 4-manifold invariants, it is natural to consider a 4-dimensional analog of the Kuperberg invariant.

\subsection{Trisection invariants} In \cite{chaidez20194manifold} we initiated the program of constructing Kuperberg-type invariants of 4-manifolds. These invariants are defined and computed using trisection diagrams  \cite{gk2016}, which are 4-dimensional analogues of Heegaard splittings. Trisections have been extensively studied \cite{abrams2018group, castro2018trisections, lambert2021symplectic, joseph2022bridge,agk2018grouptrisections,co2017lefschetztrisections} and have become a very useful tool for understanding 4-dimensional topology. For example, using trisections of $\mathbb{CP}^2$ \cite{lambert2020bridge}, Lambert-Cole re-proved the famous Thom Conjecture originally settled by Seiberg-Witten gauge theory \cite{kronheimer1994genus}. 

\begin{figure}[h!]
\centering
\includegraphics[width=.3\textwidth]{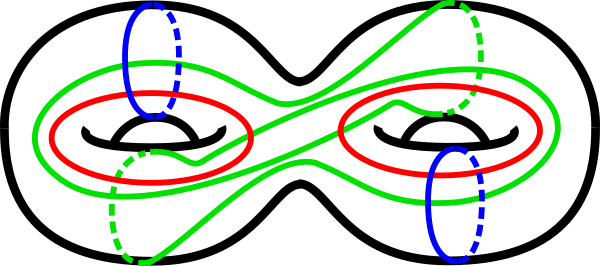} 
\caption{A genus $2$ trisection for the closed manifold $S^2 \times S^2$.}\label{fig:intro_trisection}
\end{figure} 

In our work~\cite{chaidez20194manifold}, the algebraic structure utilized for the construction of the 4-manifold invariant is a \emph{Hopf triplet} which roughly consists of three Hopf algebras and a bilinear form on each pair subject to compatibility conditions. Although Hopf triplets can be defined in the non-semisimple setting, the work ~\cite{chaidez20194manifold} restricts to the case of semisimple Hopf algebras. The resulting invariant, called the trisection invariant of 4-manifolds, includes a large class of dichromatic invariants \cite{barenz2016dichromatic}, Crane-Yetter invariants, and conjecturally the Kashaev invariant \cite{kashaev2014asimple}. Note that~\cite{chaidez20194manifold} only contains a construction of a scalar invariant, and does not construct a full semisimple TQFT.

\vspace{3pt}

The goal of this paper is to generalize the construction in \cite{chaidez20194manifold} to 4-manifolds with boundary and to certain non-semisimple Hopf triplets. We are inspired by the analogous non-semisimple generalization of Kuperberg's invariant \cite{kuperberg1996noninvolutory} to the case of balanced Hopf algebras. We now discuss the topological and algebraic input required to define and compute these invariants.

\subsection{Topological input} The topological input to our generalized trisection invariant is a pair
\[
(X,\pi)
\]
consisting of a smooth $4$-manifold $X$ with non-empty, connected boundary equipped with a marked open book $\pi$ on the boundary. Recall that an open book $(B,\pi)$ on a closed oriented connected $3$-manifold $Y$ consists of a closed embedded $1$-manifold $B \subset Y$ called the binding and a fibration $\pi$ of $Y \setminus B$ over $S^1$ that is trivial near $B$. Open books are a common tool in 3-manifold theory and contact topology (cf. \cite{giroux2006stable,winkelnkemper1973manifolds,geiges2008introduction}). A \emph{marked} open book is equipped with a distinguished, \emph{marked} component $C \subset B$.

\begin{figure}[h!]
\centering
\includegraphics[width=.6\textwidth]{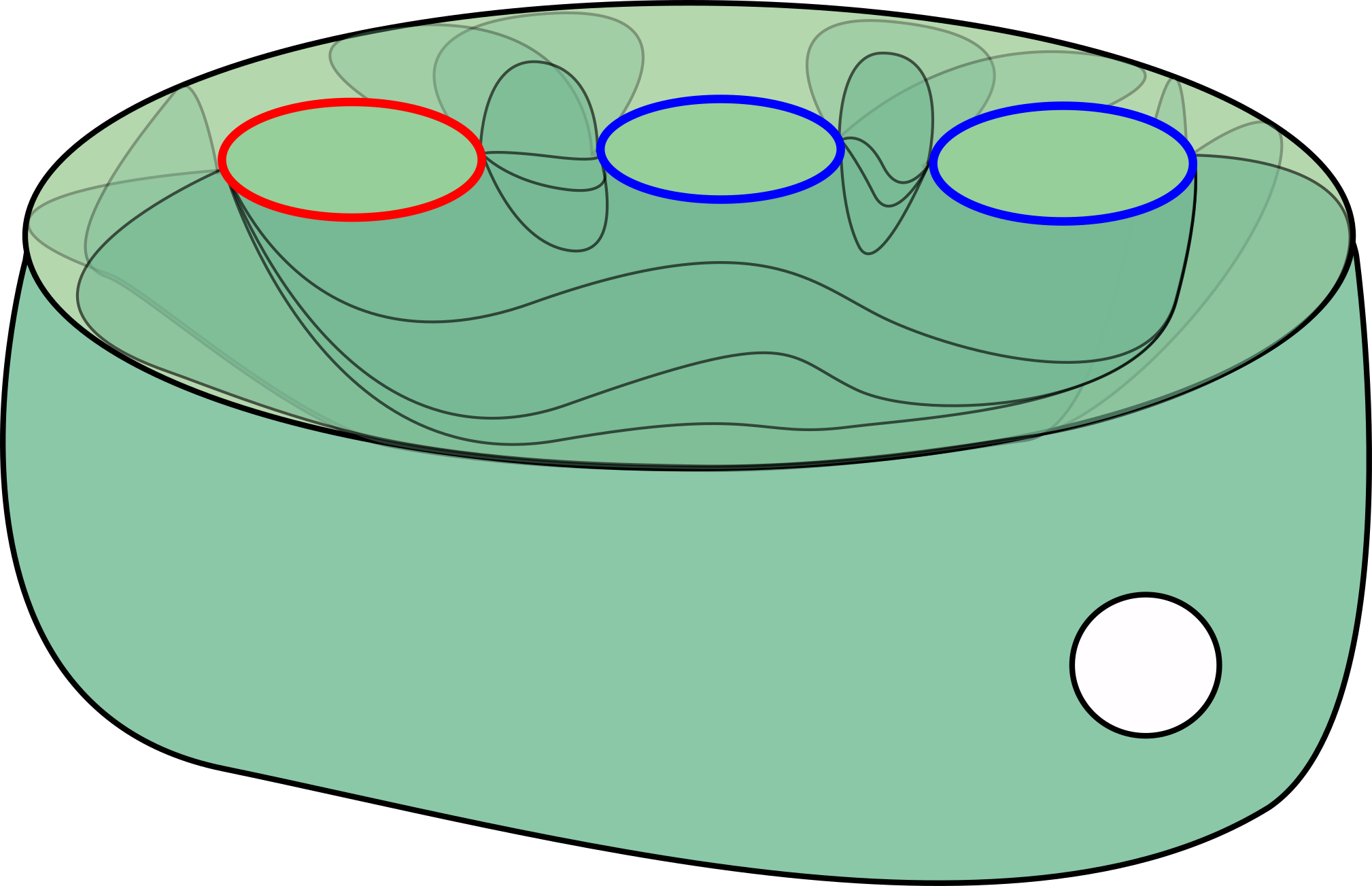} 
\caption{A cartoon of a pair $(X,\pi)$ of a $4$-manifold $X$ and a marked open book $\pi$ on $\partial X$. The boundary is foliated with surfaces (the fibers of $\pi$) except along the red and blue circles (the binding). The red circle is the marked component.}\label{fig:intro_4_manifold}
\end{figure} 

In our construction, we require a presentaton of $(X,\pi)$ as a \emph{marked trisection diagram}, which is a simple variation of the relative trisection diagrams introduced by Castro-Gay-Pinz\'{o}n-Caicedo \cite{cgpc2018relativetrisections}. Roughly, a marked trisection diagram
\[T = (\Sigma;\alpha,\beta,\kappa; C)\]
consists of a surface $\Sigma$, a marked boundary component $C \subset \partial \Sigma$, and three embedded closed 1-manifolds $\alpha, \beta, \kappa$ in $\Sigma$ satisfying certain conditions (see Definition \ref{def:trisection_diagram}).

\vspace{3pt}

%

There are a number of important moves relating different trisections: \emph{isotopy} of the manifolds $\alpha,
\beta$ and $\kappa$; \emph{handleslides} of the constituent curves in $\alpha,\beta$ and $\kappa$; and \emph{(de)stabilization} by boundary sum with one of the standard stabilizing diagrams (see Figure \ref{fig:stabilized_sphere_trisection_intro}) along the marked boundary component. A direct adaption of Theorem 3 in \cite{cgpc2018relativetrisections} states the following.

\begin{thm} There is a bijection between diffeomorphism classes of pairs $(X, \pi)$ and equivalence classes of marked trisection diagrams $T$ up to isotopy, handleslides, stabilization and destabilization. \end{thm}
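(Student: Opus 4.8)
The plan is to deduce the statement from Theorem 3 of \cite{cgpc2018relativetrisections}, which establishes the analogous bijection for \emph{unmarked} relative trisection diagrams, and then to promote that bijection across the forgetful map that discards the marking. Concretely, there is a reconstruction procedure assigning to each marked diagram $T = (\Sigma;\alpha,\beta,\kappa;C)$ a pair $(X_T,\pi_T)$: one builds the three four-dimensional sectors of the trisection from the handlebodies (compression bodies) determined by the cut systems $\alpha,\beta,\kappa$ and glues them along a thickening of $\Sigma$; the induced open book $\pi_T$ on $\partial X_T$ has binding determined by $\partial\Sigma$, and the marked boundary component $C\subset\partial\Sigma$ is exactly the marked binding component $C\subset B$. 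Forgetting $C$ recovers the reconstruction of \cite{cgpc2018relativetrisections}, so it suffices to check that (i) the marking can always be added (existence), (ii) the four moves preserve the diffeomorphism type of $(X_T,\pi_T)$ together with its marking, and (iii) any two marked diagrams presenting the same marked pair are connected by the moves.

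For existence, I would start from the relative trisection existence theorem of Gay--Kirby \cite{gk2016} and \cite{cgpc2018relativetrisections}: every compact oriented $X$ with connected boundary and prescribed open book $\pi$ on $\partial X$ admits a relative trisection inducing $\pi$, hence an unmarked diagram $(\Sigma;\alpha,\beta,\kappa)$. Since the binding of $\pi$ is identified with $\partial\Sigma$, the distinguished component of the binding singles out the boundary component $C\subset\partial\Sigma$, upgrading the diagram to a marked one. This step is routine.

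The invariance direction (ii) is also inherited from \cite{cgpc2018relativetrisections}: isotopies and handleslides do not change $(X_T,\pi_T)$, and I would only need to confirm that a stabilization realized as a boundary sum along $C$ with a standard stabilizing diagram (Figure \ref{fig:stabilized_sphere_trisection_intro}) corresponds to a trisection stabilization, which changes neither the diffeomorphism type of $X_T$ nor the induced open book; since the stabilizing summand contributes a trivial sector, this is a direct check. Because all four moves fix the labelling of boundary components of $\Sigma$, the marked component $C$ is carried along, so the moves genuinely act on \emph{marked} diagrams.

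The main obstacle is the completeness direction (iii). Here I would invoke the Cerf-theoretic / Morse $2$-function argument underlying \cite{cgpc2018relativetrisections}: any two relative trisections of a fixed $(X,\pi)$ become isotopic after a sequence of handleslides and stabilizations, which yields the unmarked equivalence. The new point is to make this sequence respect the marking. A diffeomorphism of marked pairs must send marked binding component to marked binding component, so the two diagrams carry compatible markings; the difficulty is that the stabilizations produced by the Cerf theory may a priori be located at arbitrary boundary components of $\Sigma$, whereas our move set only permits stabilization along the marked component $C$. I would resolve this by showing that a stabilization performed at any boundary component can be traded---using handleslides, isotopies, and cancelling stabilization/destabilization pairs---for one performed along $C$, essentially sliding the stabilizing handle across the central surface into the marked region. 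Verifying that this relocation can be carried out uniformly along the entire sequence connecting the two trisections is the crux of the argument and the part requiring the most care.
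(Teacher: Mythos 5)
Your proposal follows essentially the same route as the paper: the paper contains no proof of this statement at all, asserting only that it is ``a direct adaption of Theorem 3 in \cite{cgpc2018relativetrisections}'' (with \cite{gk2016} also cited when the theorem is restated in Section 4), which is precisely the reduction you carry out, and your three-step outline (existence of markings, invariance of the marked moves, completeness) is sound. The only point worth adding is that the crux you flag is more tractable than you fear: the stabilizations of \cite{cgpc2018relativetrisections} are interior Gay--Kirby stabilizations rather than operations located at boundary components, and since each $T^{\op{st}}_i$ is itself an interior stabilization of the trivial disk diagram for $(B^4,\pi_{\op{std}})$, a boundary sum with $T^{\op{st}}_i$ along $C$ is just an interior stabilization performed in a collar of $C$; the standard fact that interior stabilizations at different sites yield isotopic trisections (hence diagrams related by diffeomorphism and handle slides) then converts any chain of unmarked moves into marked ones move-by-move, so no ``uniformity along the entire sequence'' is needed.
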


\vspace{-10pt}

\begin{figure}[h!]
\centering
\includegraphics[width=\textwidth]{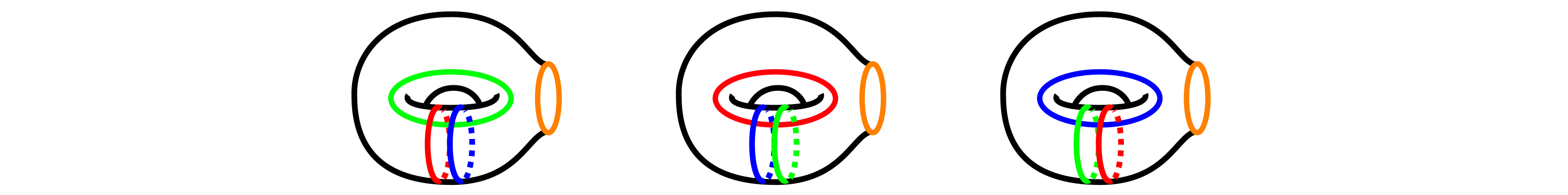}

\vspace{-5pt}

\caption{The standard stabilization diagrams $T^{\op{st}}_1,T^{\op{st}}_2$ and $T^{\op{st}}_3$.}\label{fig:stabilized_sphere_trisection_intro}
\end{figure}

In the non-semisimple setting, the marked trisection diagram $T$ must also be equipped with a \emph{singular combing}. This is a type of singular vector field $v$ on the surface $\Sigma$ satisfying a few criteria. For instance, $v$ has exactly one singularity of index $-1$ on each component of $\alpha$, $\beta$, and $\gamma$, and it has no additional singularities. Also, the degree of $v$ on each component of $\partial \Sigma$ is zero except on the marked component $C$. See Definition \ref{def:combing_on_diagram}. A \emph{combed} marked trisection diagram $(T, v)$ is a marked trisection diagram $T$ together with a combing $v$ on it. 

\vspace{3pt}

Our invariant will only depend on the choice of combing up to equivalence by certain combing moves. The first move, a \emph{basepoint isotopy}, pushes one singularity of a combing across an intersection between two trisection curves on $T$. The second move, a \emph{basepoint spiral}, twists a combing near one of its singularities. See Definition \ref{def:combing_move}. Two combings $T$ are defined to be equivalent if they are related by a sequence of diagram isotopies,  basepoint isotopies, and  basepoint spirals. Denote by $[v]$ the equivalence class of the combing $v$, and by \[\text{Comb}(T)\]
the set of equivalence classes of combings on $T$. Analogously to spin-c structures, the set of singular combings is a torsor over a homology group.

\begin{thm}
    Let $T$ be a marked trisection diagram for the pair $(X,\pi)$ of a connected compact smooth 4-manifold $X$ and a marked open book $\pi$ on $\partial X$. Then $\text{Comb}(T)$ admits a free transitive action of $H_1(X)$. 
\end{thm}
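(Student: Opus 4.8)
The plan is to realize $\mathrm{Comb}(T)$ as a torsor by attaching to every ordered pair of combings an explicit difference class and then matching the two combing moves against the relations that present $H_1(X)$ from the trisection. First I would use diagram isotopies together with basepoint isotopies to arrange that two given combings $v_0,v_1$ share a common singular set $S=\{p_c\}$, one point $p_c$ on each component $c$ of $\alpha\cup\beta\cup\kappa$, so that both become nowhere-zero sections of $T\Sigma$ over $\Sigma^\circ:=\Sigma\setminus S$. After fixing a trivialization of $T\Sigma|_{\Sigma^\circ}$ (possible since $\Sigma^\circ$ is an open surface), the pointwise ratio $v_1/v_0\colon\Sigma^\circ\to S^1$ has a well-defined homotopy class $d(v_0,v_1)\in H^1(\Sigma^\circ;\mathbb{Z})$. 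These classes compose additively, $d(v_0,v_1)+d(v_1,v_2)=d(v_0,v_2)$, which is the additivity underlying the candidate action.

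Next I would pin down the precise group in which $d$ lives. Because both combings carry index $-1$ at every point of $S$, the winding of $v_1/v_0$ about each puncture meridian vanishes, so $d(v_0,v_1)$ lies in the image of $H^1(\Sigma;\mathbb{Z})\to H^1(\Sigma^\circ;\mathbb{Z})$; the boundary-degree conditions (degree zero on every component of $\partial\Sigma$ except the marked $C$) then cut this down to a subgroup which, under Lefschetz duality $H^1(\Sigma)\cong H_1(\Sigma,\partial\Sigma)$, corresponds to a definite homology group $G$ of $\Sigma$ rel its boundary. By standard obstruction theory for sections of the oriented $S^1$-bundle $T\Sigma|_{\Sigma^\circ}$ over a surface (where $H^2(\Sigma^\circ)=0$, so there is no secondary obstruction), $G$ acts freely and transitively on homotopy classes of combings with fixed singular set $S$, and one checks this action is compatible with changing $S$ by a basepoint isotopy.

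The heart of the argument is to show that passing to the quotient $\mathrm{Comb}(T)$ replaces $G$ by $H_1(X)$ \emph{exactly}. Here I would compute the effect of each move on $d$: a basepoint isotopy that drags a singularity on a curve $c$ fully around $c$ (a sequence of crossings of the transverse curves) changes $d$, via the point-pushing monodromy of the index $-1$ zero, by the Lefschetz dual of $[c]$, while a basepoint spiral adjusts the framing data so that $d$ is independent of the trivialization and of the local model at $S$. Together the moves generate precisely the subgroup $\langle [\alpha_i],[\beta_j],[\kappa_k]\rangle\subseteq G$ spanned by the curve classes---no more and no less. Quotienting by this subgroup and invoking the Mayer--Vietoris (spine) presentation of the trisection, $H_1(X)\cong H_1(\Sigma)/\langle[\alpha_i],[\beta_j],[\kappa_k]\rangle$ in the relative setting adapted to the open book, yields the desired free transitive action of $H_1(X)$: freeness records that the moves impose no relation beyond the curve classes, transitivity that they impose at least those.

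The main obstacle I anticipate is exactly this two-sided matching of the moves with the curve subspace, together with the boundary bookkeeping. For transitivity I must show the moves generate \emph{all} of $\langle[\alpha_i],[\beta_j],[\kappa_k]\rangle$, which requires a careful point-pushing computation of the monodromy induced on the ambient vector field when an index $-1$ singularity is dragged around a curve; for freeness I must show they generate \emph{nothing more}, i.e.\ that a nontrivial class in $H_1(X)$ can never be undone by any sequence of isotopies and spirals. Ensuring that the degree conditions along $\partial\Sigma$ and at the single marked component $C$ land the torsor on $H_1(X)$ rather than on a relative group such as $H_1(X,\partial X)$ is the delicate final piece, and is where the marked-open-book hypothesis must enter.
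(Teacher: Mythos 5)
Your proposal follows essentially the same route as the paper's proof: the paper likewise defines a difference class $[v-u]\in H_1(\Sigma)/(L_\alpha+L_\beta+L_\kappa)\simeq H_1(X)$ (the isomorphism coming from the trisection presentation of homology, Proposition \ref{prop:trisection_homology}) via the angle map $\phi=\arg\bigl(\tau\circ v/\tau\circ u\bigr):\Sigma\to S^1$, proves its additivity, shows that a basepoint isotopy carrying a singularity once around a curve $\eta$ shifts the class by $[\eta]$ while a basepoint spiral shifts it by zero, and deduces the free transitive action exactly as you outline. The only differences are cosmetic: where you work on the punctured surface with obstruction theory and describe the spiral as ``adjusting framing data,'' the paper first arranges the two combings to agree near the singularities and near $\partial\Sigma$ so that $\phi$ extends to $\Sigma$ and its class lies in $H^1(\Sigma,\partial\Sigma)$, whose Poincar\'e dual lands in $H_1(\Sigma)$ (rather than $H_1(\Sigma,\partial\Sigma)$, as your duality statement would suggest), and it verifies directly that a spiral has vanishing difference class.
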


\begin{cor}
If $H_1(X) = 0$, then there is a unique equivalence class of combings on any marked trisection diagram of  $(X, \pi)$.
\end{cor}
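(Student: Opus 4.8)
The plan is to read this off directly from the preceding theorem. That theorem asserts that $\text{Comb}(T)$ carries a free transitive action of $H_1(X)$; in other words, $\text{Comb}(T)$ is an $H_1(X)$-torsor, and in particular it is non-empty. When $H_1(X)=0$ the acting group is trivial, and I would argue as follows: transitivity means that for any two classes $[v],[v'] \in \text{Comb}(T)$ there is a group element carrying $[v]$ to $[v']$, but the only element of the trivial group is the identity, which acts as the identity map, so $[v]=[v']$. Hence $\text{Comb}(T)$ consists of a single equivalence class.

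The only genuine content beyond invoking the theorem is to confirm that $\text{Comb}(T)$ is non-empty, since the empty set also admits a (vacuously free and transitive) action of the trivial group, and in that degenerate case the uniqueness assertion would be meaningless. I would therefore first establish, or explicitly cite, the existence of at least one singular combing on any marked trisection diagram $T$. This is guaranteed by the construction underlying Definition \ref{def:combing_on_diagram}: one produces a vector field with the required index $-1$ singularities on the trisection curves and the prescribed boundary degrees, for instance by a local model near each curve patched to a field of degree zero on the unmarked boundary components. With non-emptiness in hand, a torsor over the trivial group is exactly a singleton, giving the corollary.

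I do not expect any serious obstacle here, as the statement is a formal consequence of the torsor structure — it is really just the observation that a torsor over the trivial group is a point. The one step I would be careful to state explicitly is non-emptiness, since this is the sole place where the corollary rests on an existence result for combings rather than on the group action alone; if the existence of combings has already been recorded (either in the theorem's hypotheses via the word ``torsor'' or in the definition), then the proof reduces to a single line.
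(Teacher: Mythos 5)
Your proposal is correct and takes essentially the same route as the paper: the corollary is read off directly from the free transitive $H_1(X)$-action, which in the body of the paper is packaged as the difference class $[v-u]\in H_1(\Sigma)/(L_\alpha+L_\beta+L_\kappa)\simeq H_1(X)$ together with the lemma that $[v-u]=0$ if and only if $u$ and $v$ are related by isotopies, basepoint isotopies and basepoint spirals. Your insistence on checking non-emptiness of $\text{Comb}(T)$ is a point the paper glosses over (its body version, Corollary \ref{cor:unique_combings}, only asserts that any two combings are equivalent), so that remark is, if anything, slightly more careful than the source.
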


\begin{remark} \label{rmk:intro_intrinsic_combings} In \cite{kuperberg1996noninvolutory}, Kuperberg is able to construct an essentially canonical bijection between equivalence classes of singular combings of a Heegaard diagram $(\Sigma,\alpha,\beta)$ and isotopy classes of non-vanishing vector-fields on the corresponding $3$-manifold $Y$. We are not aware of any such intrinsic description for $\text{Comb}(T)$. 

\vspace{3pt}

In particular, this prevents us from canonically associating an invariant to triples $(X,\pi,v)$ of a $4$-manifold $X$ with boundary open book $\pi$ and some additional, intrinsic data $v$. We hope to remedy this issue in future work.
\end{remark}

\subsection{Algebraic input} The algebraic input to our trisection invariant is a balanced Hopf triplet equipped with a set of cointegrals. As introduced in \cite{chaidez20194manifold}, a Hopf triplet $\mathcal{H}$ consists of three Hopf algebras over a field $k$, denoted by
\[H_\mu \quad\text{for}\quad \mu \in \{\alpha,\beta,\kappa\}\]
and three bilinear pairings satisfying several compatibility conditions, denoted by 
\[\langle-\rangle_{\mu\nu}\colon H_\mu \otimes H_\nu \to k \qquad\text{for each pair }\mu\nu \in \{\alpha\beta,\beta\kappa,\kappa\alpha\}\]
Similarly, the choice of cointegrals is simply a choice of elements
\[e_\mu:k \to H_\mu \quad\text{for}\quad \mu \in \{\alpha,\beta,\kappa\}\]
satisfying the standard cointegral identities (see Section \ref{subsec:hopf_algebras}). In \cite{chaidez20194manifold}, all Hopf algebras are assumed to be \emph{involutory}, i.e.~to have antipode squared equal to the identity map. In this paper, we weaken this by assuming that the Hopf algebras are \emph{balanced}, in the terminology of Kuperberg \cite{kuperberg1996noninvolutory}. A Hopf algebra is balanced if the antipode squared is equivalent to conjugation by the phase element, and a Hopf triplet $\mathcal{H}$ is balanced if all three of its Hopf algebras are balanced. See Definition \ref{def:hopf_triplet}.

\vspace{3pt}

Any balanced Hopf triplet $\mathcal{H}$ has an associated \emph{phase} $q_{\mathcal{H}} = \pm 1$ (see Definition \ref{def:triplet_phase_balanced}). Our invariant will take values in the set $k/\sim$ of equivalence classes, where two elements are equivalent if they are equal up to a multiplicative factor of $q_{\mathcal{H}}$. The usual multiplication on $k$ is well-defined on $k/\sim$. The scalar invariants that we define will take values in $k/\sim$.

\subsection{Main results} We can now outline the main results of this paper. Given a balanced Hopf triplet $\mathcal{H}$  equipped with cointegrals and a combed, marked trisection diagram $T$ with singular combing $v$, we construct an associated scalar, called the \emph{bracket}
\[\big\langle (T,v)\big\rangle_{\mathcal{H}} \in k\]
This scalar is obtained by contracting a tensor diagram assigned to $(T, v)$ consisting of various tensors from $\mathcal{H}$ (e.g. comultiplication, left/right cointegrals, pairings, etc.). The assignments are similar to the semisimple setting in~\cite{chaidez20194manifold}, but important adjustments are needed to deal with non-semisimplicity of the triplets and the appearance of combings.

\begin{thm}[Prop. \ref{prop:inv_combing_move} and Prop. \ref{prop:inv_trisection_move}] The bracket
$\langle (T,v) \rangle_{\mathcal{H}}$ only depends on the trisection $T$ and the equivalence class $[v]$ of $v$ as an element of $k/\sim$. Moreover, it is invariant under handle slides and isotopy. 
\end{thm}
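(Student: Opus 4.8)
The plan is to reduce the statement to a finite list of \emph{local} tensor identities, one for each elementary move, exploiting the fact that both the combing equivalence of Definition \ref{def:combing_move} and the handleslide/isotopy equivalence are generated by moves supported in a small disk on $\Sigma$. Since the scalar $\langle(T,v)\rangle_{\mathcal{H}}$ is computed by contracting a tensor diagram built locally from the combinatorics of $(T,v)$, it suffices to show that replacing the part of the tensor diagram inside such a disk by its post-move counterpart does not alter the global contraction (up to the factor $q_{\mathcal{H}}$ in the combing case). Accordingly I would split the proof into the two propositions indicated: Proposition \ref{prop:inv_combing_move} for the combing moves and Proposition \ref{prop:inv_trisection_move} for the trisection moves, and then deduce the theorem by concatenating the local invariances along any sequence realizing a given equivalence.

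For the combing moves I would treat the two generators separately. A basepoint isotopy slides a single index-$(-1)$ singularity across a transverse intersection of two trisection curves; on the tensor diagram this merely permutes the order in which a pairing $\langle-\rangle_{\mu\nu}$ and a neighboring (co)multiplication are applied, so the required identity is one of the compatibility axioms of the Hopf triplet together with (co)associativity, and I expect this move to preserve the bracket \emph{on the nose}. A basepoint spiral twists the combing once around a singularity, which inserts an antipode-squared $S^2$ on the adjacent strand. Here non-semisimplicity enters: because $\mathcal{H}$ is only \emph{balanced}, $S^2$ is conjugation by the phase element rather than the identity, and the key computation is to show that the resulting grouplike insertion, pushed through the cointegral $e_\mu$ and the pairings, collapses to the global scalar $q_{\mathcal{H}} = \pm 1$ of Definition \ref{def:triplet_phase_balanced}. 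This is precisely why the bracket is only well defined in $k/\sim$, and checking that the spiral contributes \emph{exactly} $q_{\mathcal{H}}$, rather than some other phase-dependent factor, is the technical core of this half.

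For the trisection moves I would first reduce isotopy invariance to the finitely many local moves relating two transverse curve configurations: planar isotopies preserving the intersection combinatorics (trivial for the tensor diagram), bigon creation/cancellation and triple-point moves, and basepoint isotopies of the singularities handled above. Each of these corresponds to an axiom of the balanced Hopf triplet (naturality of the pairings, the compatibility conditions of Definition \ref{def:hopf_triplet}, and (co)associativity). For a handleslide, which replaces a curve $c$ by a band sum $c \,\#\, c'$, the local move is the standard Kuperberg-type duplication: the cointegral $e_\mu$ is split by comultiplication and one resulting strand is threaded along $c'$, so the identity to verify is the cointegral invariance lemma (that the left/right cointegral is an eigenvector for the relevant convolution action), combined with the pairing axioms that reabsorb the extra strand; the two sheet-orderings agree by the antipode axiom.

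The hardest part, I expect, is bookkeeping the interaction between handleslides and the combing. A handleslide drags the curve $c$ and its index-$(-1)$ singularity through the surface, possibly sweeping it across other intersections, so the ``before'' and ``after'' diagrams may carry combings that are distinct as marked vector fields even though they represent the same class in $\text{Comb}(T)$. I would address this by first normalizing the combing near the band through a controlled sequence of basepoint isotopies and spirals, which changes the bracket only by a known power of $q_{\mathcal{H}}$ and hence harmlessly in $k/\sim$, and then applying the purely algebraic cointegral identity to the normalized configuration. Verifying that the spiral corrections accrued during normalization cancel, or at worst contribute a factor of $\pm 1$, is the delicate step that genuinely couples Proposition \ref{prop:inv_combing_move} to Proposition \ref{prop:inv_trisection_move}.
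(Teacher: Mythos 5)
Your overall architecture coincides with the paper's: both arguments reduce each combing move and each trisection move to a local tensor identity (basepoint spiral: an $S^2$ insertion absorbed by the eigenvalue $q_{\mathcal{H}}$ of the generalized cointegral; two-point move: the Drinfeld-pairing axiom; three-point moves: the Triangle Identity of Definition \ref{def:hopf_triplet}, via Theorem \ref{thm:triangle_identity_versions}; handleslide: the cointegral absorption identity of Lemma \ref{lemma:etheta1} together with the bialgebra axiom), and the theorem follows by concatenating these local invariances, exactly as you propose. Two organizational differences are worth noting: the paper first proves orientation-independence (Proposition \ref{prop:bracket_orientation}) so that every local verification may be done with a convenient choice of orientations, and in the handleslide case it tracks the induced combing on the slid curve directly, computing its rotation numbers explicitly and obtaining equality on the nose, rather than normalizing by spirals first; your normalization plan is harmless in $k/\sim$ but is not needed.

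The genuine gap is in your basepoint isotopy step. You assert that the move ``merely permutes the order in which a pairing and a neighboring (co)multiplication are applied,'' that the required identity is triplet compatibility plus (co)associativity, and you locate the role of non-semisimplicity exclusively in the spiral move. In fact, sliding the basepoint of a curve $\mu_i$ across an intersection cyclically permutes the legs of the tensor obtained by applying the iterated coproduct to $e_{\theta(\mu_i)}$ (the crossing changes from the last leg to the first), and simultaneously shifts the rotation numbers on the \emph{other} curve: $\theta(\nu_j;y)$ increases by $1$ for crossings after $x$ and by $\tfrac12$ at $x$ itself, which inserts antipode powers on those legs. Generalized cointegrals are not cocommutative, so coassociativity gives you nothing here: the cyclic-permutation defect is precisely conjugation by the distinguished group-like $a$ together with $S^{\pm 2}$ (Lemma \ref{lem:e_theta_cocommutativity}, which rests on Lemma \ref{lem:eR_cocommutativity} and the tilt automorphism of Corollary \ref{cor:tilt}), and cancelling this defect against the antipode shifts on the other curve requires both the balanced hypothesis and condition (b) of Definition \ref{def:hopf_triplet}, namely that the pairings preserve the phase elements. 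So the phase-element machinery enters essentially in the basepoint isotopy as well, not only in the spiral; with only ``compatibility plus coassociativity'' this verification would fail. Your expected conclusion (on-the-nose invariance in $k$ for this move) is nevertheless correct once these ingredients are supplied, as in the paper's proof of Proposition \ref{prop:inv_combing_move}.
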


\noindent If $T$ represents a 4-manifold $X$ with $H_1(X) = 0$, then every singular combing is isotopic. In this case $\langle T \rangle_{\mathcal{H}}$ will denote the bracket for any choice of combing. The invariant also satisfies a product rule with respect to boundary sum of diagrams and combings along the marked component.

\begin{prop} The bracket $\langle -\rangle_{\mathcal{H}}$ is multiplicative under boundary sum of combed, marked trisections:
\[
\langle(S \; \natural \; T, u \; \natural \; v)\rangle_{\mathcal{H}} = \langle (S, u)\rangle_{\mathcal{H}} \cdot \langle (T, v)\rangle_{\mathcal{H}}
\]
\end{prop}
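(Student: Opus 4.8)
The plan is to reduce the multiplicativity to a factorization of the tensor network that defines the bracket. Recall that $\langle (T,v)\rangle_{\mathcal{H}}$ is computed by assigning a tensor drawn from $\mathcal{H}$ to each local feature of the combed diagram --- a pairing $\langle-\rangle_{\mu\nu}$ to each transverse intersection of a $\mu$-curve and a $\nu$-curve, a cointegral $e_\mu$ together with comultiplications distributing it across the intersections lying on each closed component of $\alpha,\beta,\kappa$, and data determined by the combing near each singularity and along the marked boundary --- and then contracting all the resulting indices into a single scalar. The boundary sum $S \natural T$ is formed by attaching a band to the marked boundary components $C_S$ and $C_T$, yielding a surface $\Sigma_{S\natural T}$ with one marked component $C_{S\natural T}$. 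Because the curves $\alpha,\beta,\kappa$ lie in the interiors of the surfaces and the attaching band can be chosen to meet none of them, we have $\alpha_{S\natural T}=\alpha_S\sqcup\alpha_T$ (and likewise for $\beta,\kappa$), and the set of intersection points of $S\natural T$ is precisely the disjoint union of those of $S$ and of $T$.

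First I would fix compatible models for the combings so that $u\natural v$ restricts to $u$ on $\Sigma_S$, to $v$ on $\Sigma_T$, and interpolates across the band by a nonsingular field. Then $u\natural v$ has exactly the union of the singularities of $u$ and of $v$, still one of index $-1$ per curve component, so its local singularity contributions are again the disjoint union of those for $(S,u)$ and $(T,v)$. Granting this, every pairing tensor and every cointegral/comultiplication tensor in the network for $(S\natural T,u\natural v)$ belongs to exactly one of the two sub-networks coming from $S$ and from $T$, and no contraction index runs between them. The network for $S\natural T$ is therefore the disjoint union (tensor product over $k$) of the networks for $(S,u)$ and $(T,v)$, and since the contraction of a disjoint union of closed tensor networks is the product of the separate contractions, this yields the desired identity.

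The step I expect to be the main obstacle is the behavior along the marked boundary, the only place where the two diagrams genuinely interact. Unlike the other boundary circles, the combing has nonzero degree on the marked component and the construction places extra data there. After boundary sum the two marked circles merge into $C_{S\natural T}$, and since boundary sum of surfaces satisfies $\chi(\Sigma_{S\natural T})=\chi(\Sigma_S)+\chi(\Sigma_T)-1$, I would need to check that the degree forced on $C_{S\natural T}$ by Poincar\'e--Hopf is compatible with simply gluing the combings of $S$ and $T$, and, crucially, that the marked-boundary insertions factor as a tensor product rather than coupling the two sides. I expect the $-1$ in the Euler-characteristic count to be the delicate point: it must be absorbed by combing the band with an essentially constant field contributing only identity morphisms, so that the marked-boundary tensor of $S\natural T$ is literally the tensor product of those of $S$ and $T$ and the factorization is exact.

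Finally I would confirm independence of the auxiliary choices in forming the band and the interpolating combing, which follows from invariance under isotopy, handleslides, and combing moves (Prop.~\ref{prop:inv_combing_move} and Prop.~\ref{prop:inv_trisection_move}): any two admissible models of $u\natural v$ are related by such moves and hence compute the same bracket. Any residual twist introduced by the Euler-characteristic correction can only change the scalar by the phase $q_{\mathcal{H}}$ via a basepoint spiral, so the identity holds on the nose in $k$ for the natural combing $u\natural v$, and in all cases in $k/\sim$.
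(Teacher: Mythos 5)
Your proof is correct and follows essentially the same route as the paper, which treats this proposition as immediate from the construction: the curves and intersection points of $S \natural T$ are the disjoint unions of those of $S$ and $T$, the combing $u \natural v$ restricts to $u$ and $v$ away from the band (by the standard local model defining the boundary sum of combings), so all rotation numbers are unchanged, the tensor network factors as a tensor product of closed networks, and its contraction multiplies. One simplification: the bracket assigns tensors only to curves and intersection points --- the combing enters solely through rotation numbers --- so there are no ``marked-boundary insertions'' to worry about; the degree condition on the marked component is merely an existence constraint on combings, and the Euler-characteristic bookkeeping you flag is absorbed exactly by the standard form of $u \natural v$ near the band, as you surmise.
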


In order to define the trisection invariant from the bracket, we must assume that the standard stabilization trisections (see Figure \ref{fig:stabilized_sphere_trisection_intro}) have non-zero invariant:
\begin{equation}\label{eq:intro_nondegenerate_H} \langle T^{st}_i \rangle_{\mathcal{H}} \neq 0 \quad\text{for}\quad i = 1,2,3\end{equation}
Under this assumption, for a combed marked trisection diagram $(T,v)$ of type $(g,k_1, k_2, k_3; p,b)$ (see Definition \ref{def:trisection_diagram}), we define
\begin{align*}
    \tau_{\mathcal{H}}(T,[v]):=& \langle T^{st}_1\rangle_{\mathcal{H}}^{-k_1} \cdot \langle T^{st}_2\rangle_{\mathcal{H}}^{-k_2} \cdot  \langle T^{st}_3\rangle_{\mathcal{H}}^{-k_3} \cdot \langle (T,v) \rangle_{\mathcal{H}}\,.
\end{align*}
Finally, let $(X, \pi)$ be a pair of a smooth 4-manifold $X$ with boundary and a marked open book $\pi$ on $\partial X$, and let $T$ be a trisection diagram for $(X,\pi)$. Then we define
\begin{align*}
        \tau_{\mathcal{H}}(X, \pi) := \Big\{\tau_{\mathcal{H}}(T, [v]) \ \Big|\ [v] \in \text{Comb}(T)\Big\} \subset k/\sim.
\end{align*}

\noindent The main result of the paper is the following.
\begin{thm}
    The set $\tau_{\mathcal{H}}(X, \pi)$ is a diffeomorphism invariant of the pair $(X, \pi)$ of a connected compact smooth 4-manifold $X$ with non-empty, connected boundary $\partial X$ and a marked open book $\pi$ on $\partial X$.
\end{thm}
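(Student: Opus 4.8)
The plan is to reduce to the topological classification and then check invariance move-by-move. By the stated bijection between diffeomorphism classes of pairs $(X,\pi)$ and marked trisection diagrams up to isotopy, handleslides, and (de)stabilization, it suffices to show that the set $\tau_{\mathcal{H}}(X,\pi)$ computed from a diagram $T$ is unchanged under each of these three types of move (destabilization being the inverse of stabilization). In every case I would compare the two diagrams $T$ and $T'$ term by term, exploiting that $\text{Comb}(T)$ and $\text{Comb}(T')$ are free transitive $H_1(X)$-torsors: an $H_1(X)$-equivariant map between torsors is automatically a bijection, which is exactly what upgrades a term-by-term identity of values to an identity of \emph{sets}.

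For isotopy and handleslides I would first note that neither move alters the type $(g,k_1,k_2,k_3;p,b)$, so the normalizing prefactor $\langle T^{st}_1\rangle_{\mathcal{H}}^{-k_1}\langle T^{st}_2\rangle_{\mathcal{H}}^{-k_2}\langle T^{st}_3\rangle_{\mathcal{H}}^{-k_3}$ is literally the same for $T$ and $T'$. The invariance of the bracket $\langle(T,v)\rangle_{\mathcal{H}}$ under handleslides and isotopy, together with its dependence only on the class $[v]$, then gives $\tau_{\mathcal{H}}(T,[v]) = \tau_{\mathcal{H}}(T',[v'])$ for corresponding combing classes. Since the move furnishes a natural identification of the torsors $\text{Comb}(T)$ and $\text{Comb}(T')$, this correspondence is a bijection, and the two sets of values coincide.

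The essential case is stabilization. Writing $T' = T \,\natural\, T^{st}_i$ and $v' = v \,\natural\, v^{st}_i$, where $v^{st}_i$ is the essentially unique combing on $T^{st}_i$ (unique because $T^{st}_i$ represents a piece with trivial first homology), multiplicativity of the bracket gives $\langle(T',v')\rangle_{\mathcal{H}} = \langle(T,v)\rangle_{\mathcal{H}}\cdot\langle T^{st}_i\rangle_{\mathcal{H}}$. Since boundary sum adds the type invariants and $T^{st}_i$ contributes exactly one to the index $k_i$, the prefactor of $\tau_{\mathcal{H}}(T',[v'])$ acquires precisely one extra factor of $\langle T^{st}_i\rangle_{\mathcal{H}}^{-1}$. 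By the non-degeneracy hypothesis \eqref{eq:intro_nondegenerate_H} this factor is invertible, so it cancels the extra $\langle T^{st}_i\rangle_{\mathcal{H}}$ coming from the bracket, yielding $\tau_{\mathcal{H}}(T',[v']) = \tau_{\mathcal{H}}(T,[v])$. This cancellation is precisely what the definition of $\tau_{\mathcal{H}}$ was built to produce.

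The final and hardest step is to promote this term-by-term identity to an equality of the full value-sets. For this I would show that the boundary-sum assignment $[v]\mapsto[v\,\natural\, v^{st}_i]$ defines an $H_1(X)$-equivariant map $\text{Comb}(T)\to\text{Comb}(T')$; equivariance together with the torsor property then forces it to be a bijection, so that every combing class on $T'$ is realized and the two sets $\{\tau_{\mathcal{H}}(T,[v])\}$ and $\{\tau_{\mathcal{H}}(T',[v'])\}$ agree. I expect the main obstacle to be verifying this equivariance: it requires tracing the $H_1(X)$-action furnished by the combing torsor theorem and checking its naturality under boundary sum along the marked component, and the two torsor structures live on different surfaces and must be matched compatibly. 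All computations take place in $k/\sim$, where the phase $q_{\mathcal{H}}$ is already quotiented out, so no further sign bookkeeping is needed.
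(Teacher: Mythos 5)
Your proposal is correct and follows essentially the same route as the paper: the paper deduces the theorem from exactly the three ingredients you identify --- invariance of the bracket under combing moves and under handleslides/isotopy (Propositions \ref{prop:inv_combing_move} and \ref{prop:inv_trisection_move}), and cancellation of the stabilization bracket against the normalizing prefactor via multiplicativity (Propositions \ref{prop:boundary_sum_property} and \ref{prop:inv_stabilization}) --- combined with the classification of marked trisections up to moves. The only difference is one of emphasis: where the paper simply asserts a canonical $H_1(X)$-equivariant identification $\text{Comb}(T) \simeq \text{Comb}(T')$ across each move, you propose to actually verify that the induced map (e.g.\ $[v] \mapsto [v \,\natural\, v^{st}_i]$ for stabilization) is equivariant and then invoke the torsor property to get a bijection, which is a legitimate and slightly more explicit treatment of a point the paper leaves implicit.
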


\noindent The set $\op{Comb}(T)$ has one element if $H_1(X) = 0$. Thus, in this case we have the following result.

\begin{cor} Let $X$ be a connected compact smooth 4-manifold $X$ with $H_1(X) = 0$ with non-empty, connected boundary $\partial X$ and $\pi$ be a marked open book on $\partial X$. Then
\[\tau_{\mathcal{H}}(X, \pi) \in k/\sim\]
is a scalar invariant of the pair $(X, \pi)$.
\end{cor}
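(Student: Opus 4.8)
The plan is to deduce this statement directly from the preceding main Theorem together with the structural result on $\op{Comb}(T)$. The main Theorem already asserts that the \emph{set} $\tau_{\mathcal{H}}(X,\pi) \subset k/\sim$ is a diffeomorphism invariant of the pair $(X,\pi)$; the only additional content of the corollary is that, under the hypothesis $H_1(X) = 0$, this set is a singleton and may therefore be regarded as a single element of $k/\sim$.

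First I would invoke the earlier corollary stating that when $H_1(X) = 0$ there is a unique equivalence class of combings on any marked trisection diagram of $(X,\pi)$. Concretely, fix a marked trisection diagram $T$ representing $(X,\pi)$. By the Theorem on the combing torsor, $\op{Comb}(T)$ carries a free transitive action of $H_1(X)$; since $H_1(X) = 0$, the group is trivial and $\op{Comb}(T)$ consists of exactly one equivalence class $[v]$. By the definition
\[
\tau_{\mathcal{H}}(X,\pi) = \Big\{ \tau_{\mathcal{H}}(T,[v]) \ \Big|\ [v] \in \op{Comb}(T) \Big\},
\]
the right-hand side is then a singleton subset of $k/\sim$. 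Identifying this singleton with its unique element, we obtain a well-defined scalar $\tau_{\mathcal{H}}(X,\pi) \in k/\sim$.

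Its independence of the auxiliary choices --- both the representative combing (automatic here, since there is only one class) and the representing diagram $T$ --- is exactly the content of the main Theorem specialized to this case, so no further argument is required. There is essentially no hard step: the corollary is a formal consequence of two already-established facts, namely the diffeomorphism invariance of the set $\tau_{\mathcal{H}}(X,\pi)$ and the collapse of the $H_1(X)$-torsor $\op{Comb}(T)$ to a single point.

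The only point demanding a moment's care is the passage from a singleton set to a scalar, which is legitimate precisely because the main Theorem guarantees that the single value $\tau_{\mathcal{H}}(T,[v])$ does not depend on the chosen diagram $T$; different trisection diagrams for $(X,\pi)$, related by isotopy, handleslides, and (de)stabilization, yield the same element of $k/\sim$ after the stabilization normalization built into the definition of $\tau_{\mathcal{H}}(T,[v])$. Thus the reduction to a scalar is canonical, and the resulting element of $k/\sim$ is an invariant of $(X,\pi)$.
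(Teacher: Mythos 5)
Your proposal is correct and follows exactly the paper's route: the paper also treats this corollary as an immediate formal consequence of the main theorem (diffeomorphism invariance of the set $\tau_{\mathcal{H}}(X,\pi)$) together with the fact that $\op{Comb}(T)$ is a singleton when $H_1(X)=0$, which is Corollary \ref{cor:unique_combings} (equivalently, the collapse of the free transitive $H_1(X)$-action on $\op{Comb}(T)$). No gaps; your identification of the singleton set with its unique element of $k/\sim$ is precisely the intended content.
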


The condition (\ref{eq:intro_nondegenerate_H}) is critical to the normalization of the invariant. As we will discuss later, all of the examples of Hopf triples which we presently know to satisfy this condition all happen to be semisimple.  We cannot prove if semisimplicity is necessary. We thus leave it as future work to find non-semisimple Hopf triplets satisfying the condition. 

\begin{remark}[Set-Valued] The set-valued nature of our invariant $\tau_{\mathcal{H}}$ is a consequence of our lack of an intrinsic characterization of $\op{Comb}(T)$ (see Remark \ref{rmk:intro_intrinsic_combings}). We simply ``integrate out'' all combings to obtain an invariant. We leave it as a future direction to explore a topological interpretation of combings in terms of extra structures on 4-manifolds, to obtain a scalar invariant.
\end{remark}

\begin{remark}[Cointegrals] The invariant depends on the choice of cointegrals only up to a simple, overall scalar factor. We will discuss this dependence in detail later in the paper.
\end{remark}

\begin{remark}[Closed Case] Any closed $4$-manifold $M$ can be converted into a pair $(X,\pi)$ by deleting a ball $B \subset M$ and equipping the sphere boundary with the standard open book $\pi_{\op{std}}$ on $S^3$ with one binding component and genus $0$ pages. We can thus let
\[
\tau_{\mathcal{H}}(M) := \tau_{\mathcal{H}}(M \setminus B,\pi_{\op{std}})
\]
For semisimple Hopf triples, this invariant is essentially equivalent to the original constructed in \cite{chaidez20194manifold} up to an overall scalar multiplicative factor.\end{remark}

\begin{remark} There have been a number of very recent works on non-semisimple (3+1)-TQFTs. We highlight, in particular, the construction of Constantino-Geer-Haioun-Patureau-Mirand using ribbon categories \cite{costantino2023skein} and the announced work of Reutter-Walker \cite{walker2021universal}. It may be fruitful to compare these invariants with the trisection invariants discussed in this paper.
\end{remark}

\subsection{Calculations for exotic pairs} Several calculations of the closed version of our invariant were performed in \cite{chaidez20194manifold}. However, as with many quantum invariants, the runtime of these computations grows rapidly with the complexity of the diagrams. Thus, a major challenge to the further study of these invariants is the lack of simple examples of exotic smooth phenomena in trisection form.

\vspace{3pt}

In the course of this work, we computed trisections for a family of exotic pairs called \emph{Stein nuclei}. These spaces were originally introduced by Yasui \cite{yasui2014partial} as examples of small exotic pairs of Stein 4-manifolds. These trisections are among the smallest examples of exotic pairs presented via trisections. These examples provide a valuable testing ground for future non-semisimple invariants based on this work. 

\begin{figure}[h!]
\centering
\includegraphics[width=.3\textwidth]{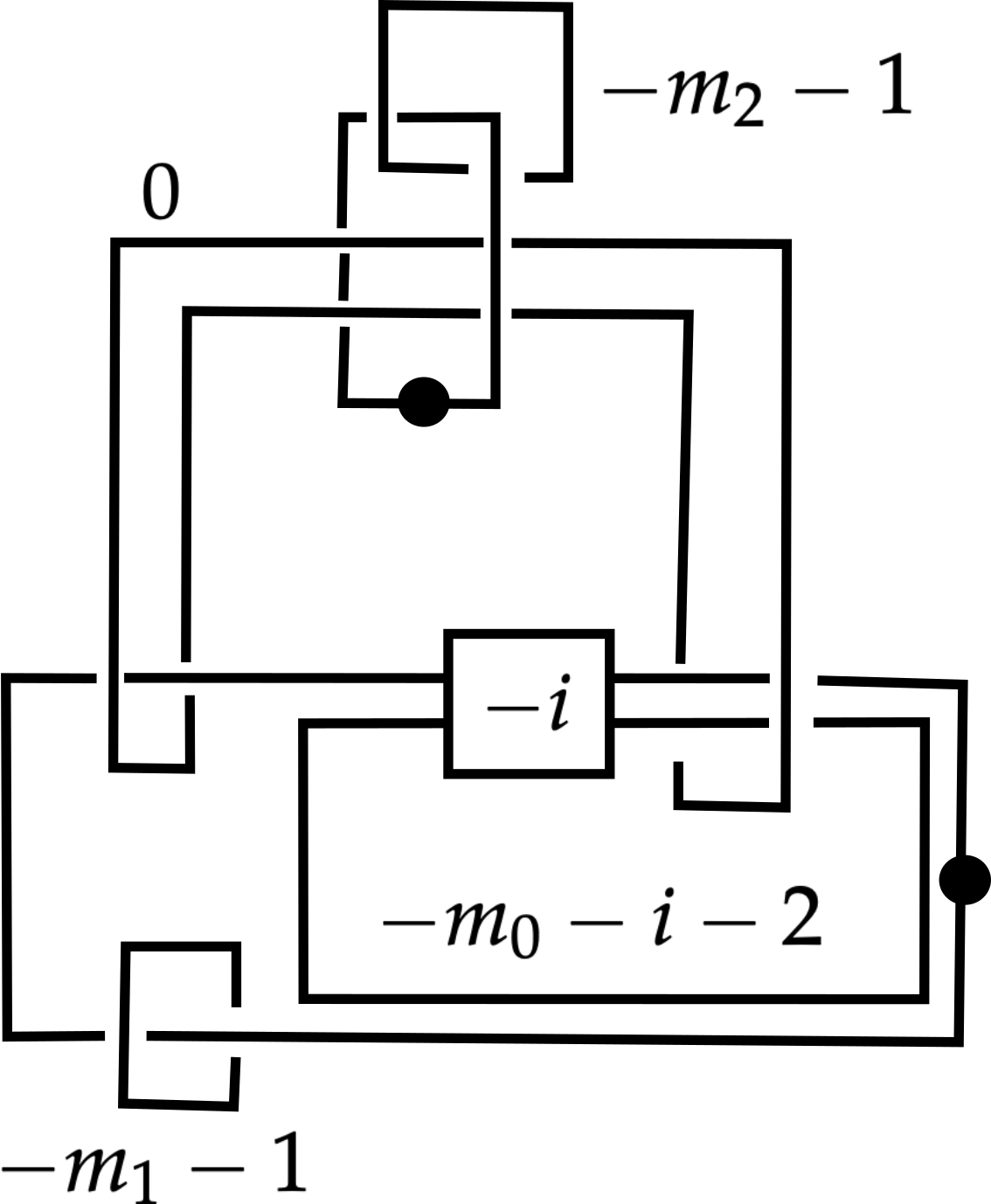}
\caption{The Kirby diagram for an arbitrary Stein nucleus. These manifolds are parametrized by certain integers $m_0,m_1,m_2$ and $i$.}\label{fig:intro_stein_nucleus}
\end{figure} 

\noindent We discuss the computations of the semisimple invariants of Stein nuclei, and a different pair of small exotic trisections introduced by Takahashi \cite{takahashi2023exotic}, in Section \ref{sec:stein_nuclei}.

\subsection*{Outline} This concludes the introduction {\bf \S \ref{sec:introduction}}. The rest of the paper is organized as follows.

\vspace{3pt}

In {\bf \S \ref{sec:preliminaries}} and {\bf \S \ref{sec:Hopf_triplets}} we discuss the algebraic preliminaries required for this paper, including Hopf algebras and Hopf triplets. In {\bf \S \ref{sec:trisections_and_combings}}, we discuss the topological preliminaries, including marked open books, relative trisections and combings. In {\bf \S \ref{sec:main_results}}, we prove the main results of the paper, defining the non-semisimple, relative trisection invariants and proving their basic properties. In {\bf \S \ref{sec:stein_nuclei}}, we trisect the Stein nuclei and discuss some aspects of the trisection invariants in this case.

\subsection*{Conventions} Throughout the paper, a 4-manifold $X$ is always assumed to be orientable, connected, compact, and smooth, with connected and non-empty boundary $\partial X$, unless otherwise specified. All vector spaces are finite-dimensional over a fixed algebraically closed field $k$ of characteristic zero.

\subsection*{Acknowledgements} We thank Kouichi Yasui for pointing out the examples in \cite{takahashi2023exotic} and other helpful comments on a previous draft. JCC was supported by National Science Foundation under Award No. 2103165.  JSC is supported by a Junior Fellowship from the Harvard Society of Fellows. SXC is partially supported by a startup fund from Purdue University and by National Science Foundation under Award No. 2006667 and No. 2304990.

\section{Hopf Algebra} \label{sec:preliminaries}

In this section, we present the background on Hopf algebras needed for the construction of the invariants described in this paper. 

\subsection{Tensor formalism} \label{subsec:tensor_formalism} We will use a standard, diagrammatic notation for tensor calculus (cf. \cite{kuperberg1996noninvolutory}). We begin by introducing this notation and establishing conventions.

\begin{notation}[Tensor] \label{not:tensor_symbol} The \emph{tensor symbol} for a linear map (or tensor) $T$ of the form
\[
T:U_1 \otimes \cdots \otimes U_a \to V_1 \otimes \cdots \otimes V_b \qquad \text{for vector-spaces $U_i$ and $V_j$}
\]
is a planar node $f$ with $a$ incoming edges and $b$ outgoing edges, written as follows:
\vspace{.1cm}
\begin{align*}
    \includegraphics[scale=.45, valign = c]{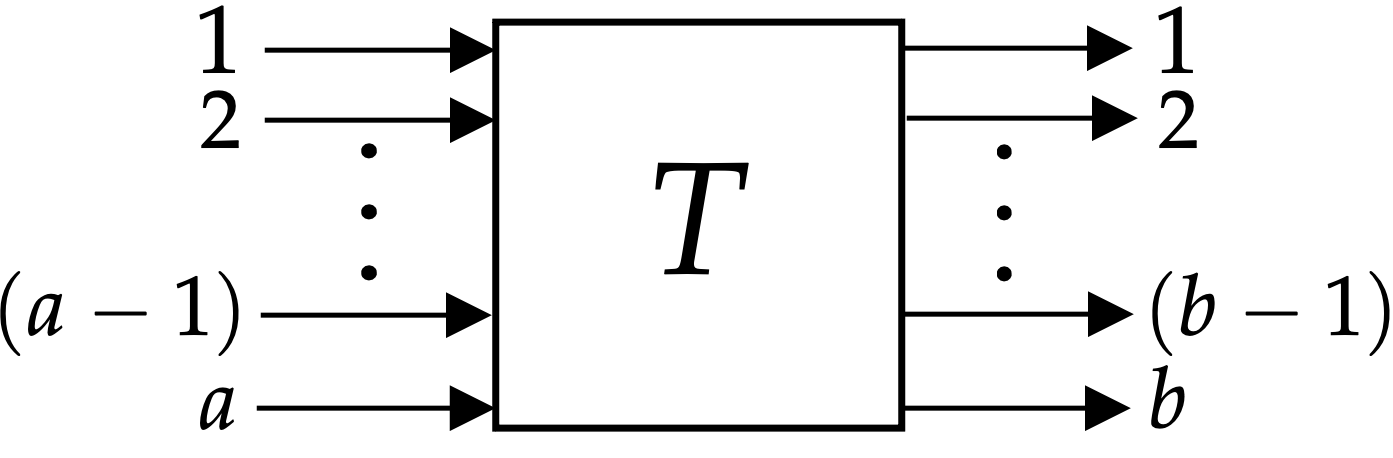}
\end{align*}
\vspace{.1cm}
The edges $i \to$ and $\to j$ correspond to the $U_i$ and $V_j$ factors, respectively. \end{notation}

We also fix the following notation for two basic operations on tensors: tensor products and traces. The simplicity of the notation for these operations is the main benefit of the diagrammatic formalism. 

\begin{notation}[Tensor Product] The symbol of the tensor product $S \otimes T$ of $S$ and $T$ is denoted
\vspace{.1cm}
\begin{align*}
    \includegraphics[scale=.4, valign = c]{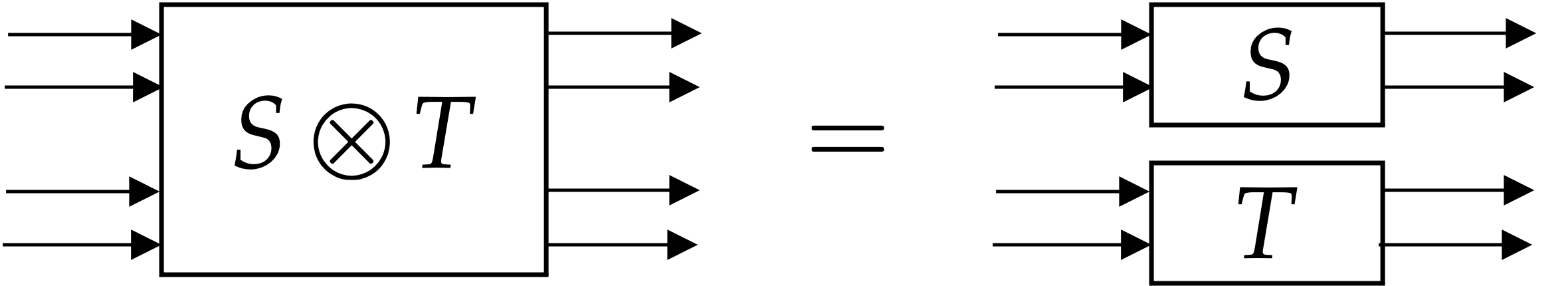}
\end{align*}
\end{notation}

\begin{notation}[Partial trace] The partial trace $\op{Tr}^c_d(T)$ of a tensor along an incoming index $c$ and outgoing index $d$, if $U_c = V_d$, is given by connecting the corresponding edges of the symbol for $T$:
\begin{align*}
    \includegraphics[scale=.4, valign = c]{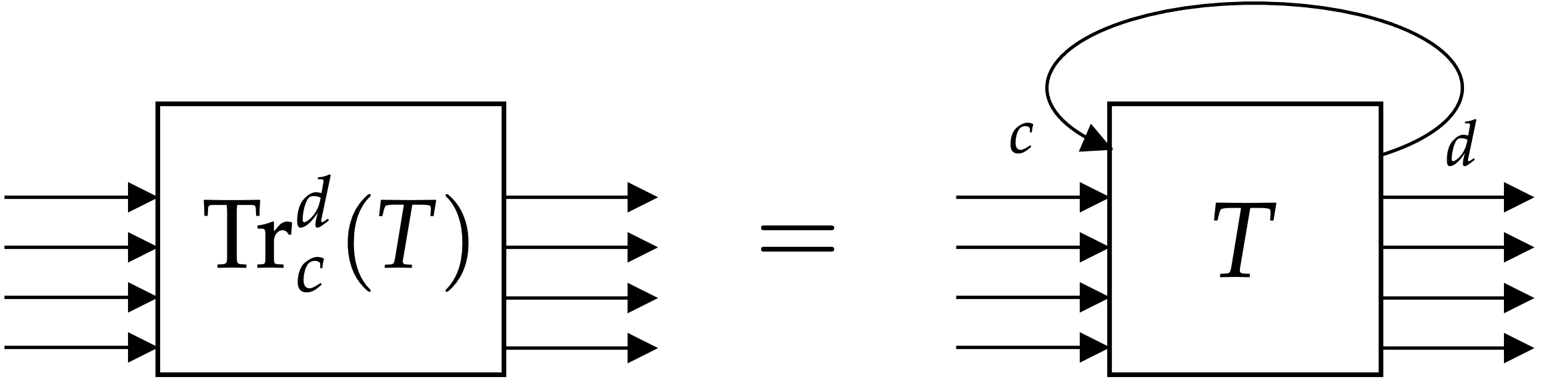}
\end{align*}
\vspace{.1cm}
\end{notation}

\begin{remark}[Ordering] We will usually omit the box of the tensor $T$. Also, most tensors we will be dealing with have the same vector space for all incoming and outgoing legs. In this case, unless the edges of $T$ are ordered explicitly, we take the following convention: \textbf{By default, the incoming edges  are ordered counterclockwise, and the outgoing edges are ordered clockwise.} When both types of edges are present, there is no ambiguity in determining the first incoming edge and the first outgoing edge. When there is only one type of edges present,  if the tensor is invariant under cyclic permutation of its factors, then there is no need to determine the first edge and the above convention of ordering still works. The exception occurs when the tensor is not invariant under cyclic permutation. In this case, we explicitly label the first edge and still refer to the above ordering convention.  

\end{remark}

\begin{notation}[Tensor contraction] Given two tensors $S$ and $T$, if an outgoing edge $d$ of $S$ and an incoming edge $c$ of $T$ correspond to the same vector space, then one can connect $d$ and $c$ to form a larger tensor. This is called a tensor contraction.  Note that any tensor contraction of $S$ and $T$ can be written as a partial trace of $S \otimes T$. 
\end{notation}

\begin{notation}[Tensor diagram]
    Usually we start with a collection of known tensors, called elementary tensors, and use them to form larger tensors by tensor contractions.  These larger tensors are called tensor diagrams. There is no essential difference between tensors and tensor diagrams, and we can use them interchangeably. In practice, tensors refer to building blocks, and tensor diagrams are those tensors constructed from building blocks. A tensor diagram without open edges can be evaluated to a scalar. We call the scalar the evaluation/contraction of the tensor diagram. See below for an example of such a tensor diagram:
    \vspace{.4cm}
    \begin{align*}
    \includegraphics[scale=.09, valign = c]{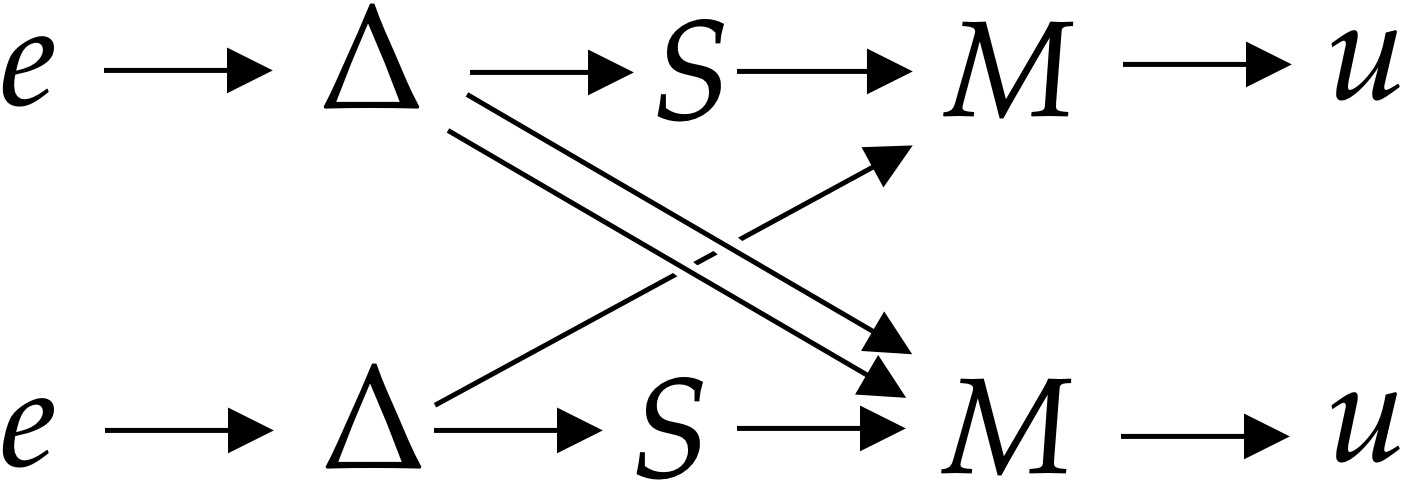}
    \end{align*}
    $$$$
\end{notation}

\begin{notation}[Edge orientation reversal]
    In a tensor, any edge corresponding to a vector space $V$ can have its orientation reversed while replacing $V$ by its linear dual $V^*$. The new tensor is naturally identified with the original tensor.
    \begin{align*}
    \includegraphics[scale=.4, valign = c]{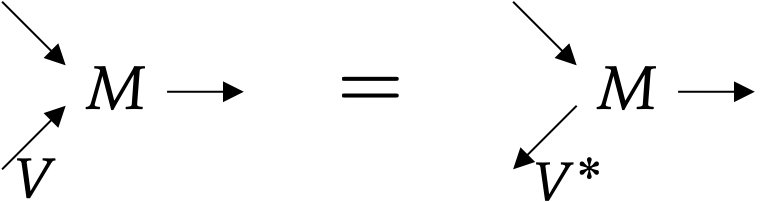}
    \end{align*}
\end{notation}

\subsection{Hopf algebras} \label{subsec:hopf_algebras} Here we review the theory of Hopf algebras. For a more detailed account of the theory, we refer the reader to Radford \cite{radford2011hopfalgebras}.

\begin{definition}\label{def:hopf_algebra} A \emph{Hopf algebra} $H = H(M, \eta, \Delta, \epsilon, S)$ is a vector space $H$ over $k$ equipped with the following structure tensors:
\begin{align*}
    \includegraphics[scale=.4, valign = c]{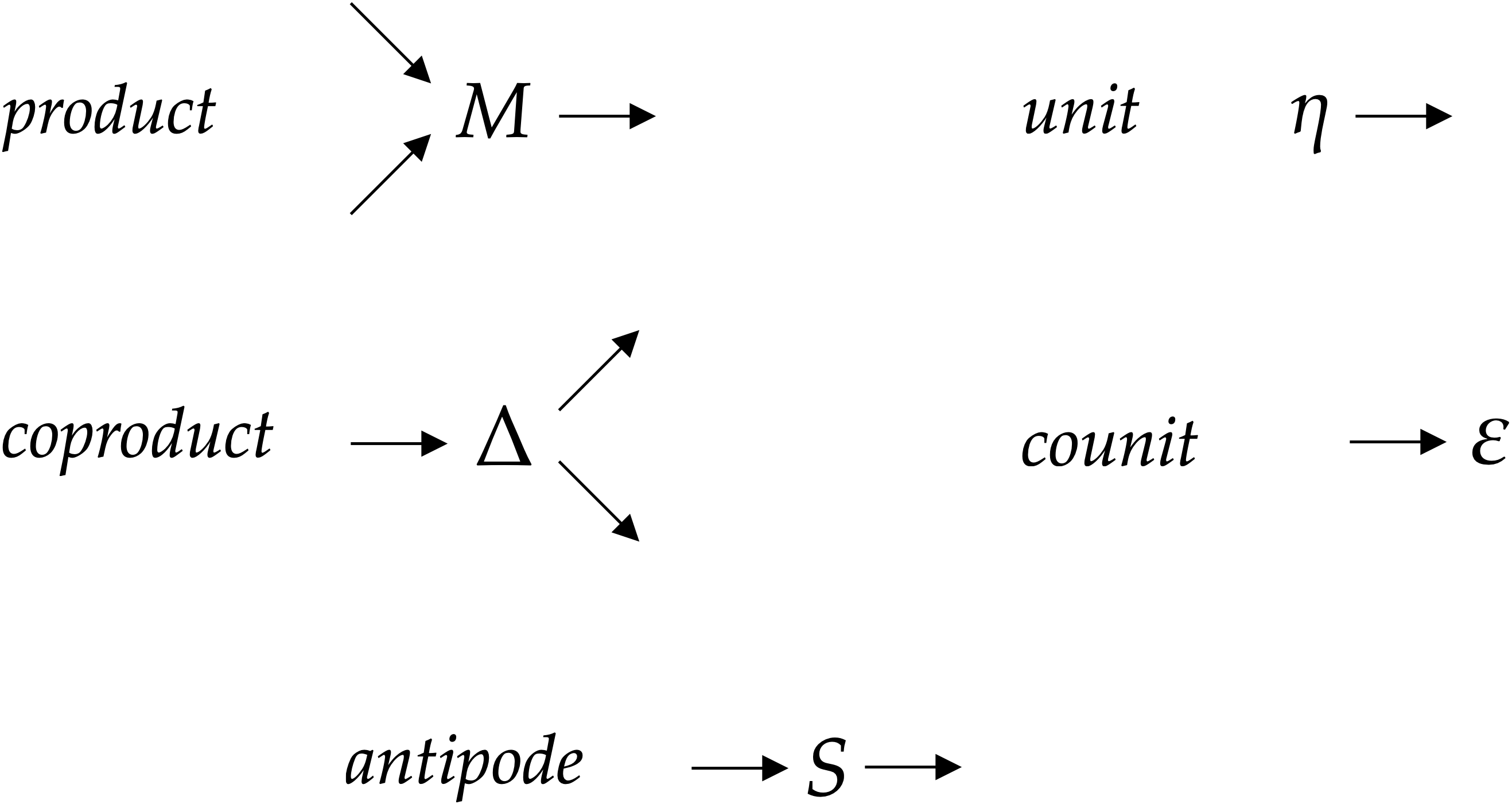}
\end{align*}
$$$$
These structure tensors must satisfy the following compatibility properties:
\begin{itemize} 

\item (Algebra) $H$ is an associative, unital algebra with product $M$ and unit $\eta$. That is, 
\vspace{.4cm}
\begin{align*}
    \includegraphics[scale=.4, valign = c]{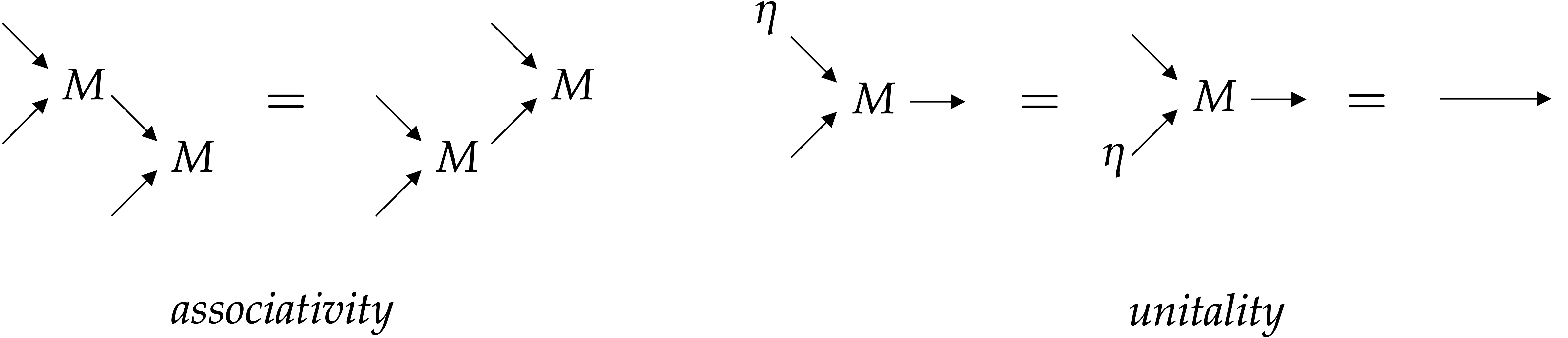}
\end{align*}
$$$$
\item (Coalgebra) $H$ is a coassociative, counital algebra with coproduct $\Delta$ and counit $\epsilon$.  In diagrams,
\vspace{.4cm}
\begin{align*}
    \includegraphics[scale=.4, valign = c]{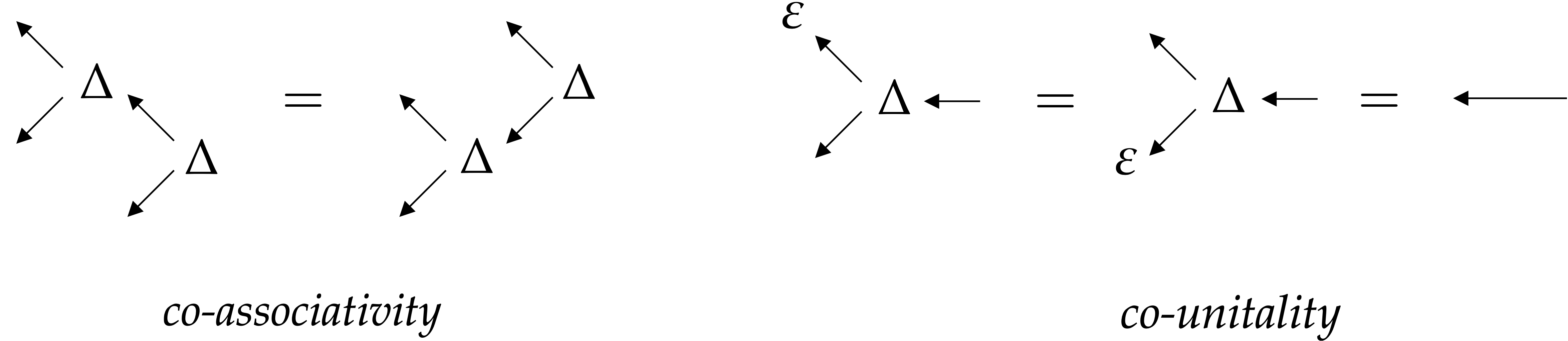}
\end{align*}
$$$$
\item (Bialgebra) The coalgebra and algebra structures define a bialgebra structure on $H$.  Diagrammatically we have
\vspace{.4cm}
\begin{align*}
    \includegraphics[scale=.4, valign = c]{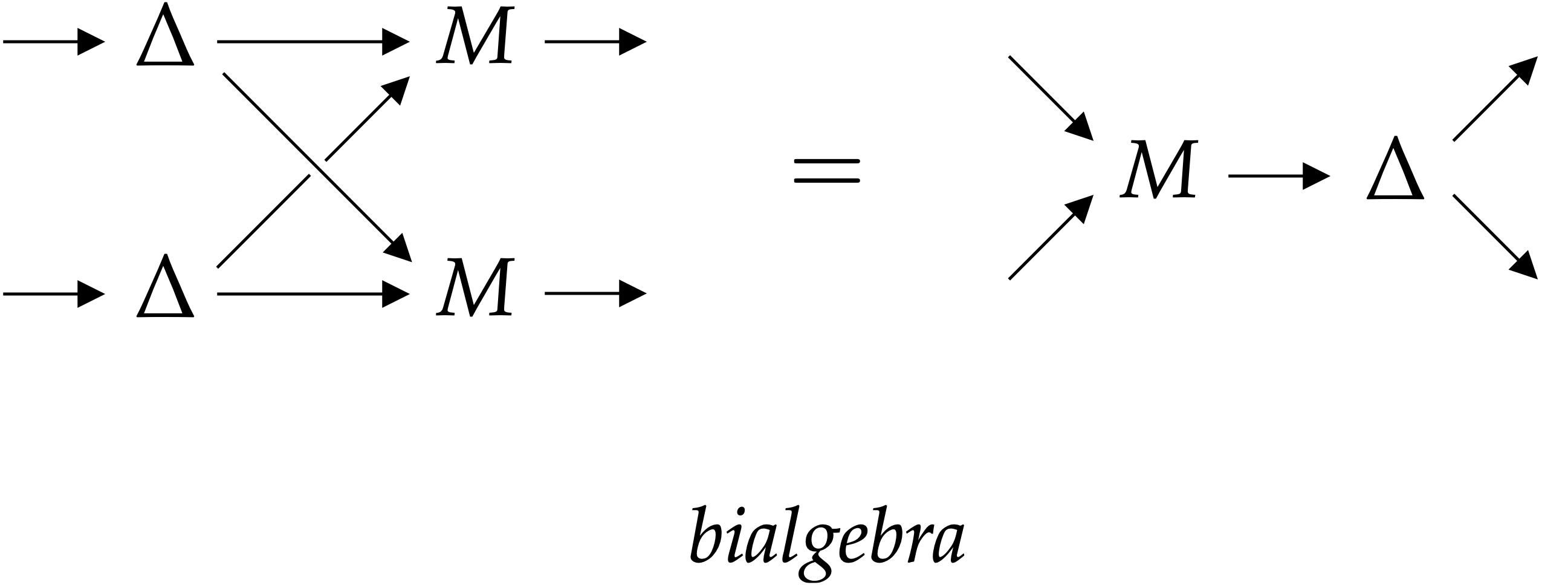}
\end{align*}
$$$$
\item (Antipode) The antipode $S$ must satisfy a standard \emph{antipode property}:
\vspace{.4cm}
\begin{align*}
    \includegraphics[scale=.4, valign = c]{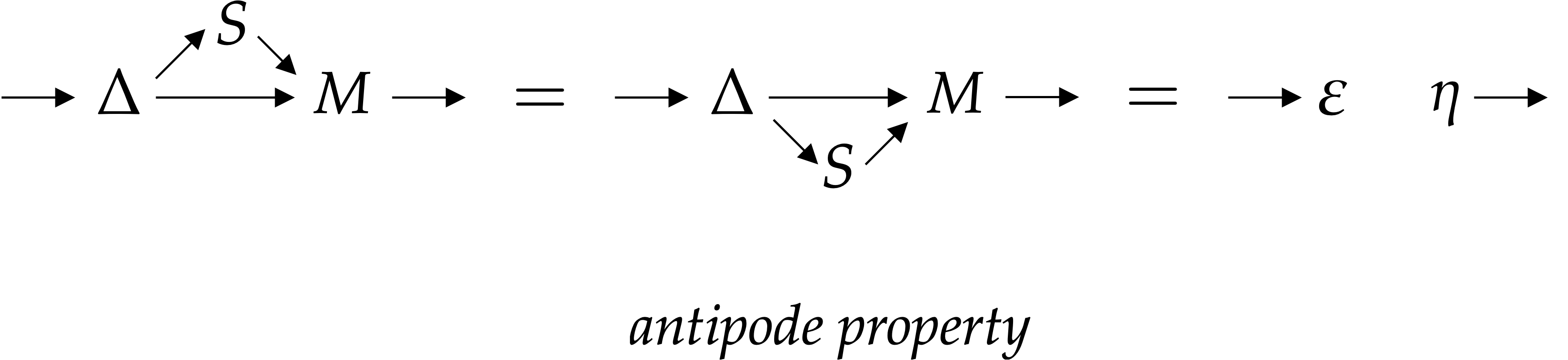}
\end{align*}
$$$$
\end{itemize}
A \emph{Hopf algebra map/morphism} $G \to H$ is a linear map that intertwines all of the structure tensors. The associativity and coassociativity axioms permit us to adopt the following abbreviated notation for iterated products and coproducts:
\vspace{.4cm}
\begin{align*}
    \includegraphics[scale=.4, valign = c]{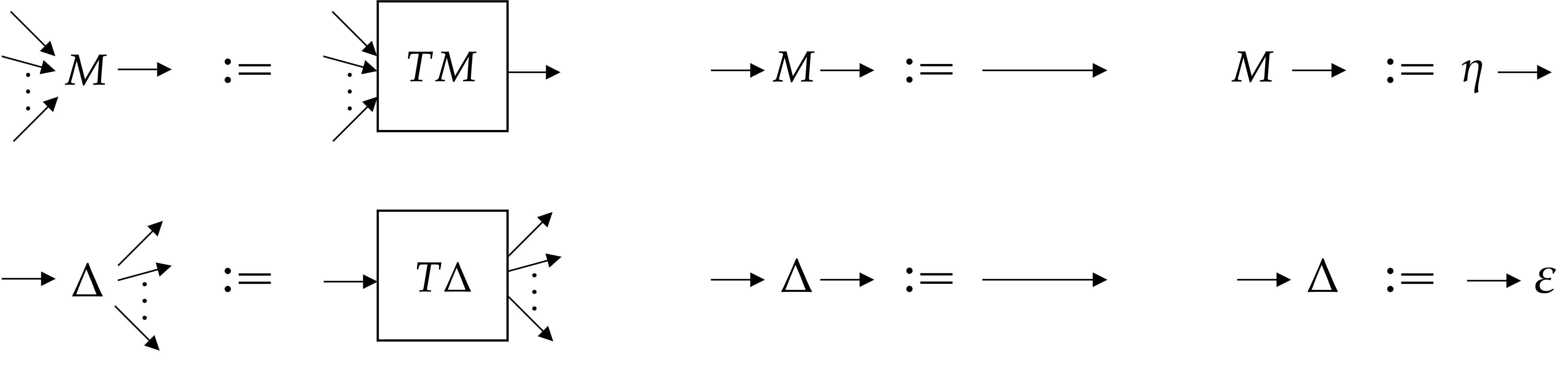}
\end{align*}
$$$$
Here $TM$ denotes an arbitrary tree with $i$ in edges, $1$ out edge, and only $M$ nodes.  Similarly $T\Delta$ denotes an arbitrary tree with $i$ out edges, $1$ in edge, and only $\Delta$ nodes.  \end{definition}

There are a number of constructions of Hopf algebras from other Hopf algebras that we will require below. 

\begin{definition} \label{def:dual_op_cop_algebras}
Let $H(M, \eta, \Delta, \epsilon, S)$ be a finite-dimensional Hopf algebra.
\begin{itemize}[leftmargin=.2in]
    \item The \emph{dual} $H^*$ is the linear dual of $H$ equipped with the structure tensors $H^*(\Delta^*, \epsilon^*, M^*, \eta^*, S^*)$. We will use the structure tensors of $H$ and interpret them as tensors for the dual space $H^*$. For instance, in $H^*$, the  product $\Delta^*$, coproduct $M^*$, and antipode $S^*$ are represented, respectively, by
\begin{align*}
    \includegraphics[scale=.4, valign = c]{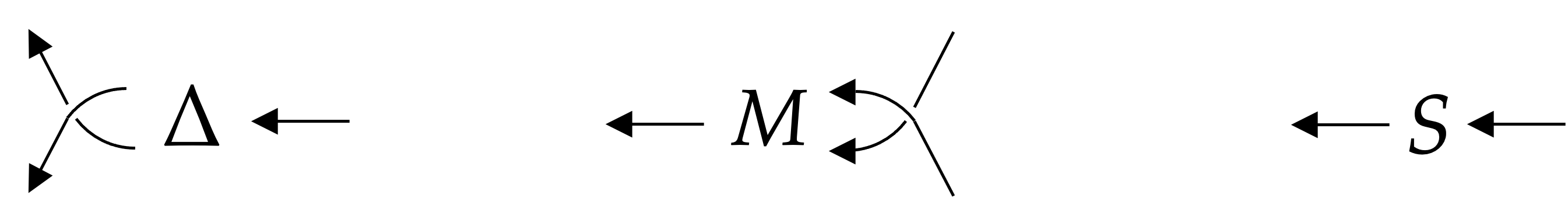}
\end{align*} 
The swapped order of the inputs and outputs in the product and coproduct tensors is necessary due to our input and output ordering convention.

\item The \emph{opposite Hopf algebra} $H^{\op{op}}$ is $H$ equipped with the structure tensors $H^{\op{op}}(M^{\op{op}}, \eta, \Delta, \epsilon, S^{-1})$ where the  product $M^{\op{op}}$ is given by the tensor
\begin{align*}
    \includegraphics[scale=.4, valign = c]{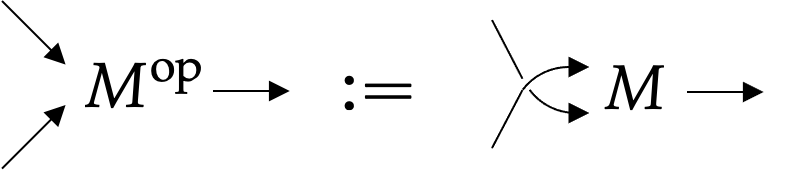}
\end{align*}
\item  The \emph{co-opposite Hopf algebra} $H^{\op{cop}}$ is $H$ equipped with the structure tensors $H^{\op{cop}}(M, \eta, \Delta^{\op{cop}}, \epsilon, S^{-1})$, where the coproduct $\Delta^{\op{cop}}$ is given by the tensor
\begin{align*}
    \includegraphics[scale=.4, valign = c]{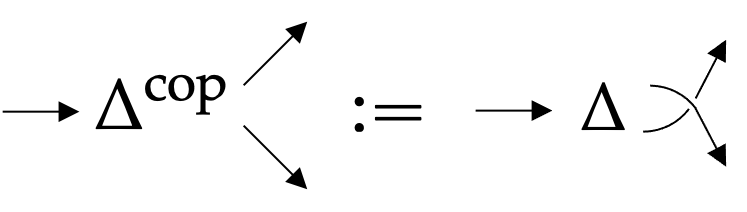}
\end{align*}
\end{itemize}

\end{definition}
Apparently, each of the three operations on Hopf algebras in the above definition is of order two, i.e.~each operation is its own inverse. We can also compose those operations, e.g.~to obtain $H^{*, \op{op}}$. The following lemma summarizes the commutation relations among these operations.
\begin{lemma}
    For a Hopf algebra $H$, we have the isomorphisms of Hopf algebras
    \begin{align*}
        H^{*, \op{cop}} &\overset{Id}{\longrightarrow} H^{\op{op},*},\\
        H^{*, \op{op}} &\overset{Id}{\longrightarrow} H^{\op{cop},*},\\
        H^{\op{op}, \op{cop}} &\overset{Id}{\longrightarrow} H^{\op{cop},\op{op}} \overset{S}{\longrightarrow} H.\\
    \end{align*}
\end{lemma}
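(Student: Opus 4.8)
The plan is to establish each of the three displayed isomorphisms by directly comparing the five structure tensors of the source and target, using the explicit descriptions of $H^*$, $H^{\op{op}}$, and $H^{\op{cop}}$ from Definition~\ref{def:dual_op_cop_algebras}. In every case the underlying linear map is either the identity or the antipode $S$, so the only real task is to check that this map intertwines the products, units, coproducts, counits, and antipodes on the two sides. I would set up notation once, writing $\tau$ for the swap map and using $(f \circ g)^* = g^* \circ f^*$ together with the fact that, under the canonical identification $(H \otimes H)^* \cong H^* \otimes H^*$, the transpose $\tau^*$ of the swap is again the swap.

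For the first isomorphism I would compute $H^{*,\op{cop}} = (\Delta^*, \epsilon^*, \tau \circ M^*, \eta^*, (S^{-1})^*)$ and $H^{\op{op},*} = (\Delta^*, \epsilon^*, (M^{\op{op}})^*, \eta^*, (S^{-1})^*)$. Four of the five tensors agree on the nose, and the coproducts match because $(M^{\op{op}})^* = (M \circ \tau)^* = \tau^* \circ M^* = \tau \circ M^*$; hence the identity is a Hopf algebra isomorphism. The second isomorphism $H^{*,\op{op}} \cong H^{\op{cop},*}$ is entirely parallel with the roles of product and coproduct exchanged, the single nontrivial identity being $(\Delta^*)^{\op{op}} = \Delta^* \circ \tau = (\tau \circ \Delta)^* = (\Delta^{\op{cop}})^*$.

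For the third line I would first observe that $\op{op}$ alters only the product (and inverts the antipode) while $\op{cop}$ alters only the coproduct (and inverts the antipode), so the two operations commute strictly and both composites yield the identical list of tensors $(M^{\op{op}}, \eta, \Delta^{\op{cop}}, \epsilon, S)$; thus the first arrow is genuinely the identity. For the second arrow $S \colon H^{\op{cop},\op{op}} \to H$ I would invoke the standard consequences of the antipode axiom (Definition~\ref{def:hopf_algebra}), namely that $S$ is an algebra and coalgebra anti-homomorphism. Rewritten for the opposite/co-opposite structures these read $S \circ M^{\op{op}} = M \circ (S \otimes S)$ and $\Delta \circ S = (S \otimes S) \circ \Delta^{\op{cop}}$, which say exactly that $S$ intertwines the products and coproducts of $H^{\op{cop},\op{op}}$ and $H$; it manifestly respects units and counits, and it preserves antipodes automatically since any bialgebra map between Hopf algebras does so by uniqueness of the antipode. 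Finite-dimensionality of $H$ guarantees that $S$ is invertible, so $S$ is a Hopf algebra isomorphism.

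The structure-tensor bookkeeping is routine, and the one genuine subtlety — the place I would be most careful — is the ordering and orientation convention used to define the dual. As the remark after Definition~\ref{def:dual_op_cop_algebras} emphasizes, dualizing silently swaps the order of inputs and outputs, so an extra $\tau$ is already present in the drawn tensors for $\Delta^*$ and $M^*$. The entire content of the first two isomorphisms is precisely that this built-in swap coincides with the swap introduced by $\op{op}$ or $\op{cop}$; making this rigorous amounts to fixing the identification $(H \otimes H)^* \cong H^* \otimes H^*$ consistently and verifying $\tau^* = \tau$ under it, after which each claim collapses to a one-line equality of tensors.
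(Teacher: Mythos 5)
Your proof is correct. The paper states this lemma without proof (it is treated as a routine consequence of Definition~\ref{def:dual_op_cop_algebras}, followed only by the remark that such ``identity'' isomorphisms are used as identifications), and your argument — matching the five structure tensors one by one, using $(M\circ\tau)^*=\tau\circ M^*$ and $(\tau\circ\Delta)^*=\Delta^*\circ\tau$ for the first two lines, and the anti-(co)algebra properties of $S$, uniqueness of antipodes, and bijectivity of $S$ in finite dimension for the third — is exactly the routine verification the paper has in mind, including the correct handling of the input/output ordering convention in the dual.
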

\noindent We identity Hopf algebras in the above lemma whenever the ``identity'' map is an isomorphism.

\subsection{Integrals and phase} There are a number of special types of elements (and dual elements) of Hopf algebras that we will need for all of the constructions in this paper: (co)integrals and (co)group-likes.

\begin{definition}[Integrals] \label{def:integral_cointegral} A \emph{right integral} $u_R:H \to k$ and a \emph{right cointegral} $e_R: k \to H$ of a Hopf algebra $H$ over $k$ are maps satisfying
\begin{align*}
    \includegraphics[scale=.4, valign = c]{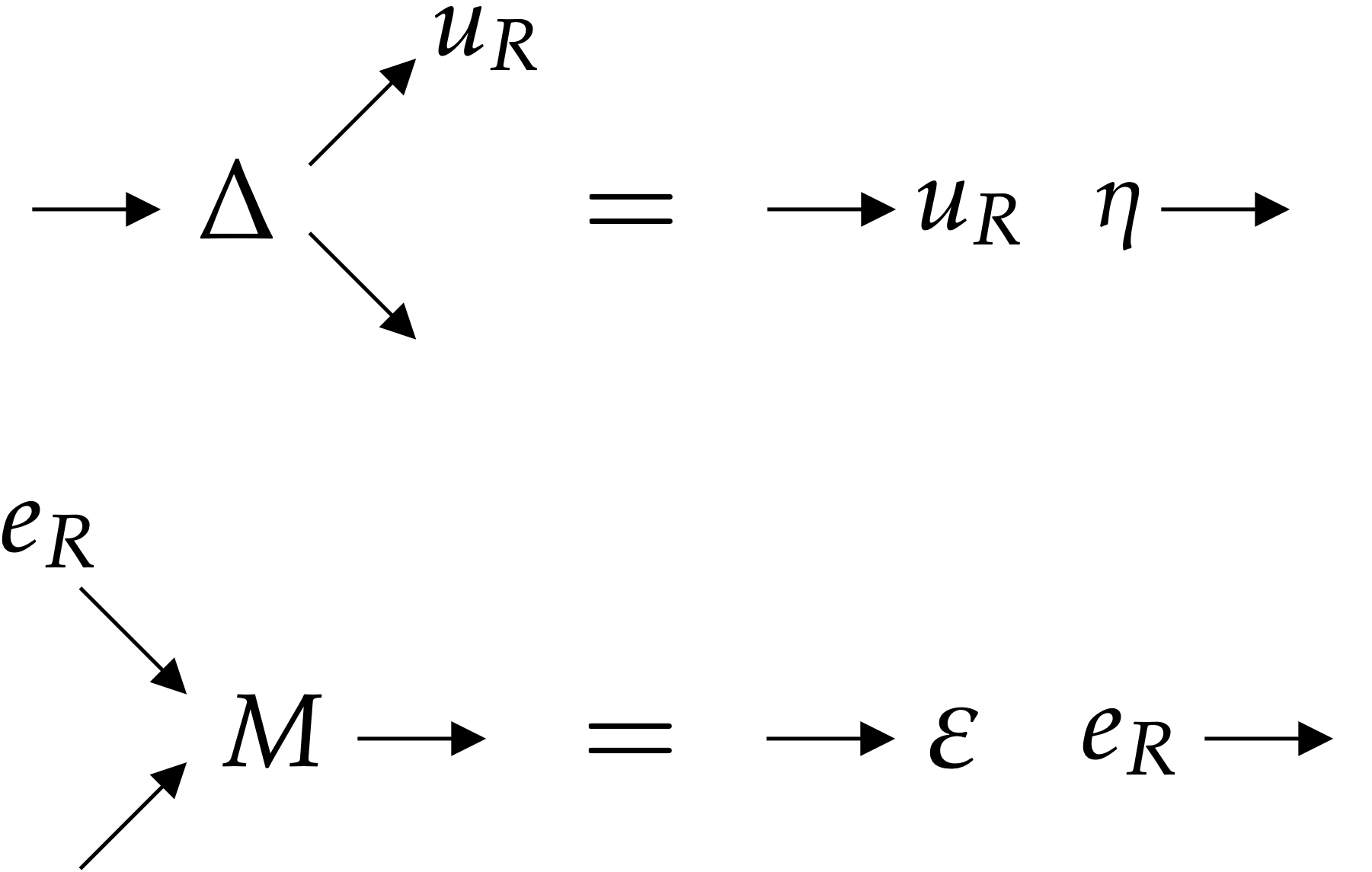}
\end{align*}
Similarly, a \emph{left integral} $u_L:H \to k$ and a \emph{left cointegral} $e_L: k \to H$ of a Hopf algebra $H$ over $k$ are maps satisfying
\begin{align*}
    \includegraphics[scale=.4, valign = c]{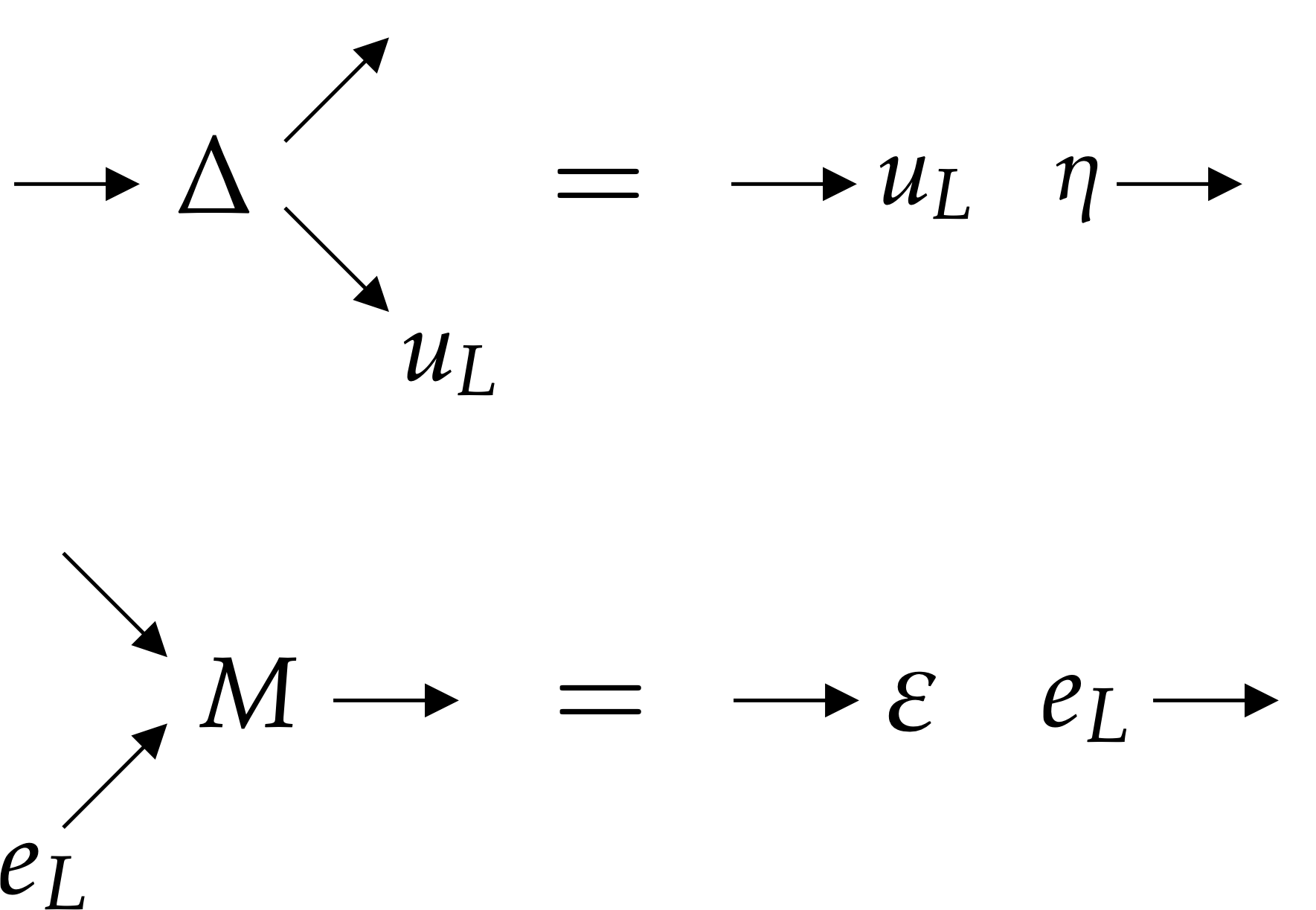}
\end{align*}
\end{definition}
Essentially, a left/right co-integral of $H$ is a left/right integral of $H^*$. It is a basic fact that for finite dimensional Hopf algebras the space of left integrals and the space of right integrals are both one-dimensional. The same holds for cointegrals. If $e_R$ is a right cointegral, then for any $x \in H$, $x e_R$ is also a right cointegral, and hence there exists an $\alpha \in H^*$ such that $x e_R = \alpha(x) e_R$. A similar argument applies to integrals which leads to the following definition.

\begin{definition}[Phase Elements]\label{def:phase_cophase}
The \emph{phase element} $a \blackarrowright $ and \emph{cophase element} $\blackarrowright \alpha$ are the unique elements such that for a right integral $u_R$ and a right cointegral $e_R$, the following identities hold:
\begin{align*}
    \includegraphics[scale=.4, valign = c]{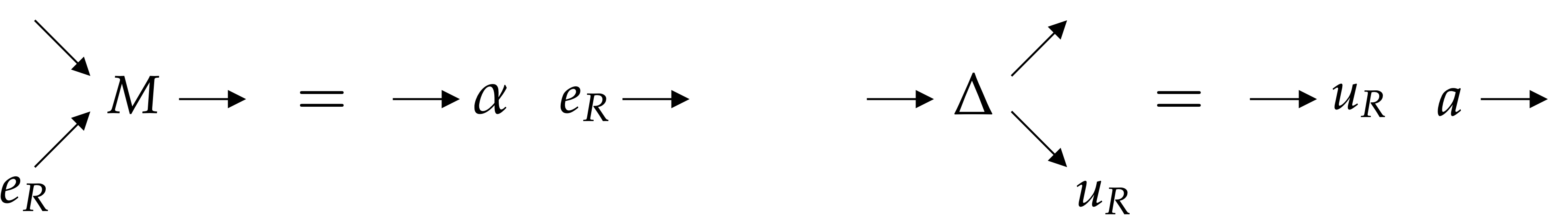}
    \end{align*}


\noindent The \emph{phase} $q$ of $H$ is the scalar $q := a \blackarrowright \alpha$. By normalizing $u_R(e_R) = 1$, the phase/cophase elements can be computed as
\begin{align*}
    \includegraphics[scale=.4, valign = c]{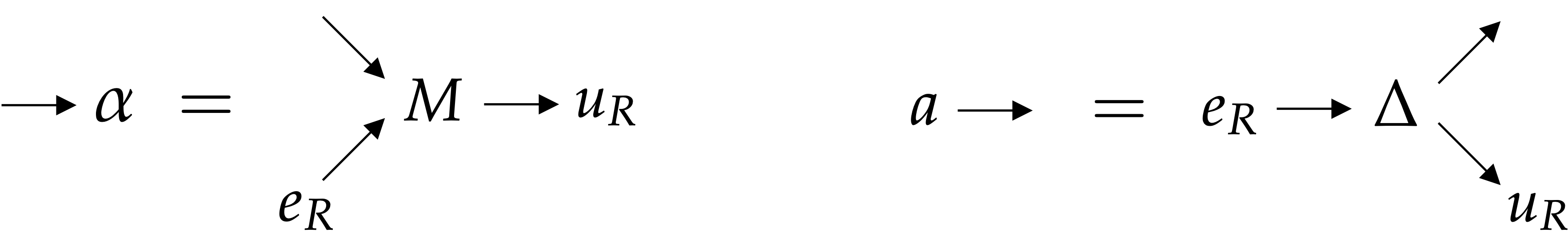}
    \end{align*}
\end{definition}

\begin{definition}[Group-Like] A \emph{group-like} $g \blackarrowright $ of a Hopf algebra $H$ is an element satisfying
\vspace{.4cm}
\begin{align*}
    \includegraphics[scale=.4, valign = c]{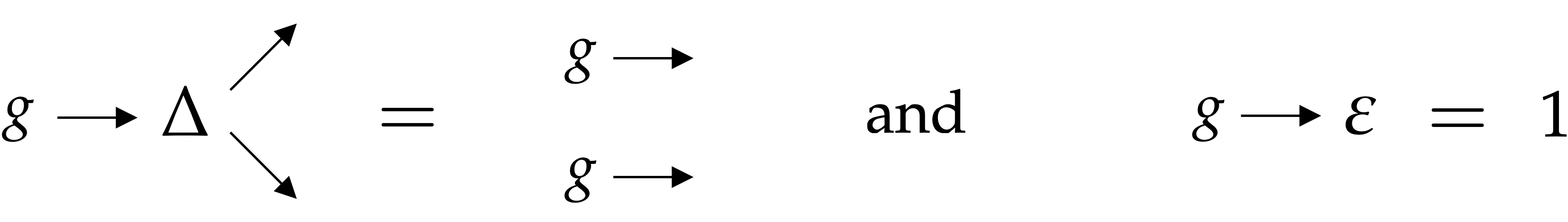}
\end{align*}
$$$$

\vspace{-10pt}

\noindent A \emph{cogroup-like} $\blackarrowright h$ is a group-like in the dual $H^*$, or equivalently, a \emph{cogroup-like} is an algebra morphism $H \to k$. \end{definition}

 The set of group-likes in $H$ form a group under the product in $H$, and for a group-like $g \in H$, $S(g) = g^{-1}$.
The phase element $a$ is  group-like and the cophase element $\alpha: H \to k$ is an algebra morphism. Hence we have $\alpha^m (a^n) = q^{mn}$. In a finite-dimensional Hopf algebra, it follows that $q$ is a root of unity since both $a$ and $\alpha$ have finite order. In the literature, $a$ and $\alpha$ are also called, respectively, the distinguished group-like elements of $H$ and $H^*$. Since the antipode maps a right cointegral to a left cointegral, and similarly for integrals, the following lemma follows from Definition \ref{def:phase_cophase}.
 \begin{lemma}\label{lem:phase_from_left_integral}
     For a left cointegral $e_L$ and a left integral $u_l$, the following identities hold:
     \vspace{.4cm}
     \begin{align*}
    \includegraphics[scale=.4, valign = c]{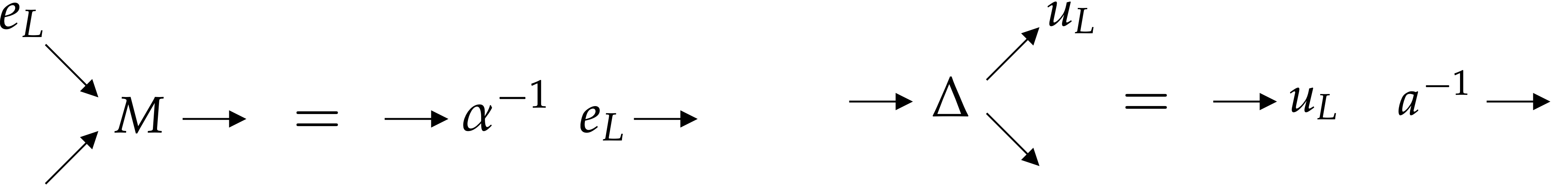}
    \end{align*}
    $$$$
 \end{lemma}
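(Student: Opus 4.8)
The plan is to obtain the two left-handed identities as the images of the right-handed identities of Definition \ref{def:phase_cophase} under the antipode, exactly as the sentence preceding the lemma suggests. The three ingredients I would use are: (i) $S$ and $S^{-1}$ interchange right and left (co)integrals, so that $e_L := S(e_R)$ is a left cointegral and $u_L := u_R \circ S$ is a left integral; (ii) the phase element $a$ is group-like, whence $S(a) = S^{-1}(a) = a^{-1}$; and (iii) the cophase $\alpha$ is an algebra morphism, whence $\alpha \circ S = \alpha \circ S^{-1} = \alpha^{-1}$ as characters. Since left integrals and left cointegrals are each unique up to a scalar, and all the identities in question are homogeneous of degree one in $u_L$ (respectively $e_L$), it suffices to prove them for the specific representatives $u_L = u_R \circ S$ and $e_L = S(e_R)$; the general case then follows by rescaling. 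The expected outcome is that the left identities read $\sum u_L(x_{(1)})\, x_{(2)} = u_L(x)\, a^{-1}$ and $e_L\, x = \alpha^{-1}(x)\, e_L$, i.e. the exact mirror of the right versions with $a,\alpha$ replaced by their inverses.

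For the cophase identity I would start from the defining relation $y\, e_R = \alpha(y)\, e_R$ and apply $S$. Since $S$ is an algebra anti-homomorphism, $S(y\, e_R) = S(e_R)\, S(y) = e_L\, S(y)$, so the relation becomes $e_L\, S(y) = \alpha(y)\, e_L$; setting $y = S^{-1}(x)$ and invoking (iii) gives $e_L\, x = \alpha(S^{-1}(x))\, e_L = \alpha^{-1}(x)\, e_L$. Diagrammatically this is simply the reflection of the right-cointegral tensor diagram, with an $S$-node absorbed into $e_L$ and the opposite $S$-node converting $\alpha$ into its inverse. As a check, the trivial side $e_R\, x = \epsilon(x)\, e_R$ is carried to $x\, e_L = \epsilon(x)\, e_L$, consistent with $e_L$ being a left cointegral.

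For the phase identity I would begin with $\sum x_{(1)}\, u_R(x_{(2)}) = u_R(x)\, a$ and compute the image under $S$ of the candidate left-hand side $T := \sum u_R(S(x_{(1)}))\, x_{(2)}$. Applying $S$ and using the co-anti relation $\Delta \circ S = (S \otimes S)\circ \Delta^{\op{cop}}$ to re-index the two legs, $S(T) = \sum u_R(S(x_{(1)}))\, S(x_{(2)})$ collapses, after relabelling $z = S(x)$, to $\sum z_{(1)}\, u_R(z_{(2)}) = u_R(z)\, a$ by the right phase identity. Applying $S^{-1}$ and using (ii) then yields $T = \sum u_L(x_{(1)})\, x_{(2)} = u_L(x)\, a^{-1}$. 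Specializing $x = e_L$ under the normalization $u_L(e_L) = 1$ recovers the explicit formulas $a^{-1} = \sum u_L((e_L)_{(1)})\, (e_L)_{(2)}$ and $\alpha^{-1}(x) = u_L(e_L\, x)$, the left-handed analogues of the formulas of Definition \ref{def:phase_cophase}, and confirms that the associated phase is unchanged, since $\alpha^{-1}(a^{-1}) = \alpha(a) = q$.

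The only genuine subtlety, and the step where I would be most careful, is the bookkeeping of orientation reversals: because the antipode is an anti-(co)homomorphism, each time it is pushed through a product or a coproduct it flips the order of the legs (forcing the passage through $\Delta^{\op{cop}}$) and sends group-likes and characters to their inverses. Obtaining $a^{-1}$ and $\alpha^{-1}$ rather than $a$ and $\alpha$, and attaching each to the correct multiplication side, is where a sign or inversion error would most easily creep in; once (i)--(iii) are in hand, everything else is a mechanical reflection of the diagrams defining the phase and cophase.
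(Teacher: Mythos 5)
Your proposal is correct and takes essentially the same approach as the paper: the paper gives no separate argument for this lemma, justifying it only by the remark that the antipode exchanges right and left (co)integrals so that the identities follow from Definition \ref{def:phase_cophase}, which is exactly the reduction you carry out in detail. The supporting facts you invoke—that $S(e_R)$ and $u_R\circ S$ serve as representatives, that $S^{\pm 1}$ inverts group-likes and characters (giving $a^{-1}$ and $\alpha^{-1}$), and that uniqueness of (co)integrals up to scalar handles the general case—are all consistent with the paper's conventions.
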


Below we list a few basic properties of integrals/co-integrals whose proofs, if not given here, can be found, for instance, in \cite{kuperberg1996noninvolutory}.

\begin{lemma}[\cite{kuperberg1996noninvolutory}]\label{lem:right_integral_cointegral_S}
The antipode can be expressed in terms of the product, coproduct, and integrals/co-integrals as
\vspace{.4cm}
 \begin{align*}
    \includegraphics[scale=.4, valign = c]{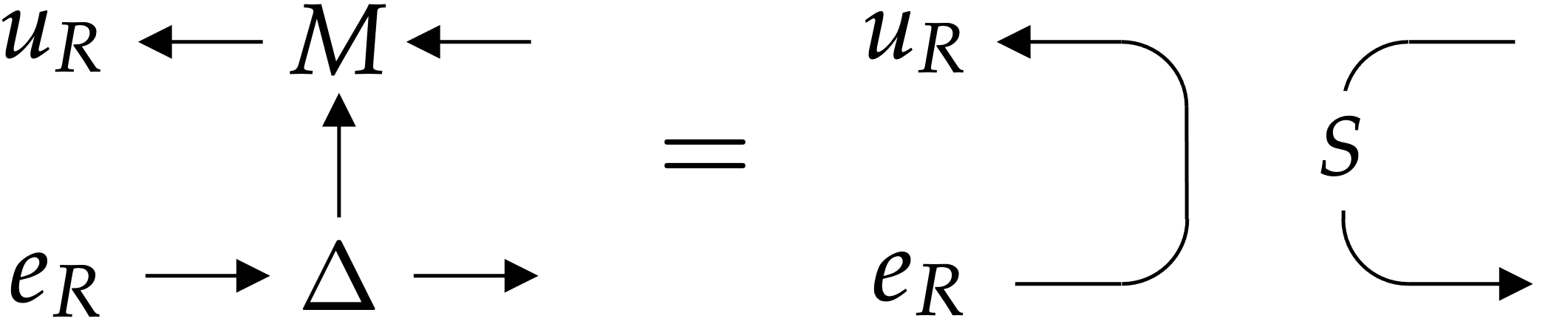}
    \end{align*}
    $$$$
\end{lemma}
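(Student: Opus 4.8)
The plan is to interpret the tensor diagram on the right-hand side of the displayed identity as the explicit linear map
\[
\phi(x) \;:=\; \sum u_R\big(e_{R(1)}\,x\big)\, e_{R(2)}, \qquad \Delta(e_R)=\sum e_{R(1)}\otimes e_{R(2)},
\]
normalized so that $u_R(e_R)=1$, and then to prove the lemma by showing $\phi=S$. Since the antipode is by definition the unique two-sided inverse of $\id_H$ in the convolution monoid $(\op{End}(H),*)$ with unit $\eta\epsilon$, it suffices to verify that $\phi$ is a \emph{one-sided} convolution inverse, say $\phi * \id_H = \eta\epsilon$. A one-line formal argument in the monoid, $\phi = \phi*(\id_H * S) = (\phi*\id_H)*S = \eta\epsilon * S = S$, then upgrades this to the desired equality $\phi = S$. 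This is the standard Larson--Sweedler circle of ideas, adapted to the diagrammatic formalism.

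The core computation rests on the two defining relations already recorded above. First, the right-cointegral property $e_R\,h=\epsilon(h)\,e_R$; applying the coproduct, which is an algebra map, gives its comultiplied form
\[
\sum e_{R(1)}\,h_{(1)}\otimes e_{R(2)}\,h_{(2)} \;=\; \epsilon(h)\sum e_{R(1)}\otimes e_{R(2)}.
\]
Second, the right-integral property $\sum u_R(y_{(1)})\,y_{(2)}=u_R(y)\,1$, which applied to $y=e_R$ and combined with the normalization yields $\sum u_R(e_{R(1)})\,e_{R(2)}=1$. Contracting the first identity against $u_R$ in its left tensor slot and feeding in the second then produces the key identity
\[
\sum u_R\big(e_{R(1)}\,h_{(1)}\big)\, e_{R(2)}\,h_{(2)} \;=\; \epsilon(h)\,1 .
\]
The left-hand side is exactly $(\phi*\id_H)(h)$, so this reads $\phi*\id_H=\eta\epsilon$, which is all that the reduction above requires.

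I expect the only delicate point to be bookkeeping rather than substance: one must match the edge orientations and the counterclockwise/clockwise input-output ordering conventions fixed in the remarks above, so that the multiplications appearing in the diagram occur in the order $e_{R(1)}\,x$ (cointegral leg first) that makes the cointegral and integral relations compose cleanly and telescope to $\epsilon(h)\,1$. If the diagram instead presents the opposite order $x\,e_{R(1)}$, the two relations no longer cancel directly, and one must first commute the factors past the Frobenius--Nakayama automorphism of the form $u_R(ab)$ — which for these Hopf algebras is implemented by $S^2$ together with the distinguished group-likes, i.e.\ precisely the data controlling the balancing. As a sanity check I would confirm the formula on the group algebra $k[G]$, where $u_R$ is the coefficient-of-identity functional, $e_R=\sum_{g\in G} g$ and $\Delta(e_R)=\sum_g g\otimes g$, so that $\phi(g_0)=\sum_g u_R(g\,g_0)\,g = g_0^{-1}=S(g_0)$, in agreement with the claim.
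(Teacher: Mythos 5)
Your proposal is correct, and it is essentially the proof this identity has in the literature: the paper itself states Lemma \ref{lem:right_integral_cointegral_S} without proof, deferring to \cite{kuperberg1996noninvolutory}, and the argument there is exactly this Larson--Sweedler-style one — show the tensor $\phi(x)=\sum u_R\big(e_{R(1)}x\big)e_{R(2)}$ is a one-sided convolution inverse of $\id_H$ and invoke uniqueness of convolution inverses. Your worry about the multiplication order is resolved in your favor: the way the paper later uses the lemma (e.g.\ in the proof of Lemma \ref{lem:antipode_integral}, where $u_R(e_R)\,S$ is substituted by the left-hand tensor) confirms the diagram encodes $e_{R(1)}$ multiplied on the left of the input, which is the only order for which the cointegral and integral relations telescope to $\eta\epsilon$. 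One cosmetic remark: you do not actually need the normalization $u_R(e_R)=1$ (and hence need not invoke its nonvanishing); the same computation gives $\phi * \id_H = u_R(e_R)\,\eta\epsilon$ and therefore $\phi = u_R(e_R)\,S$ directly, which is the unnormalized form of the identity as the paper uses it.
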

By applying the above lemma to $H^{\op{op}}$ or $H^{\op{cop}}$, one can obtain similar identities,
\vspace{.4cm}
\begin{align}
\begin{split}
    &\includegraphics[scale=.4, valign = c]{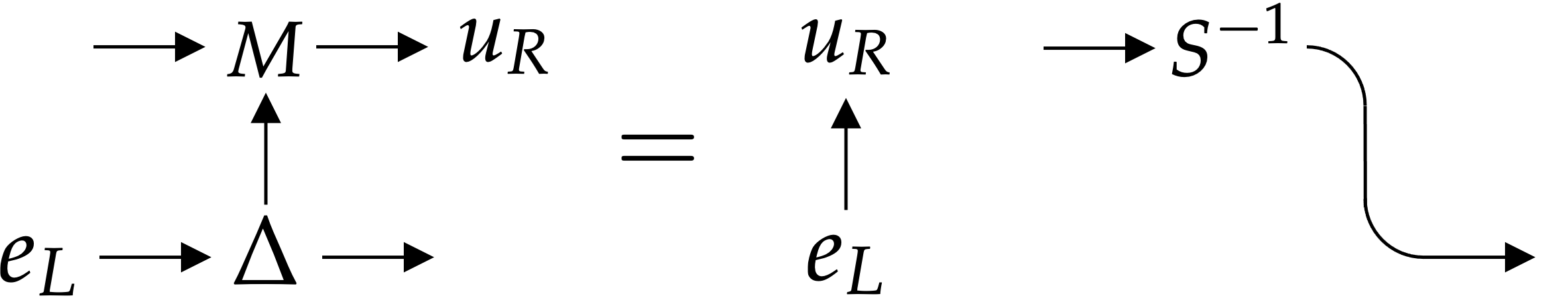} \\ \\
    &\includegraphics[scale=.4, valign = c]{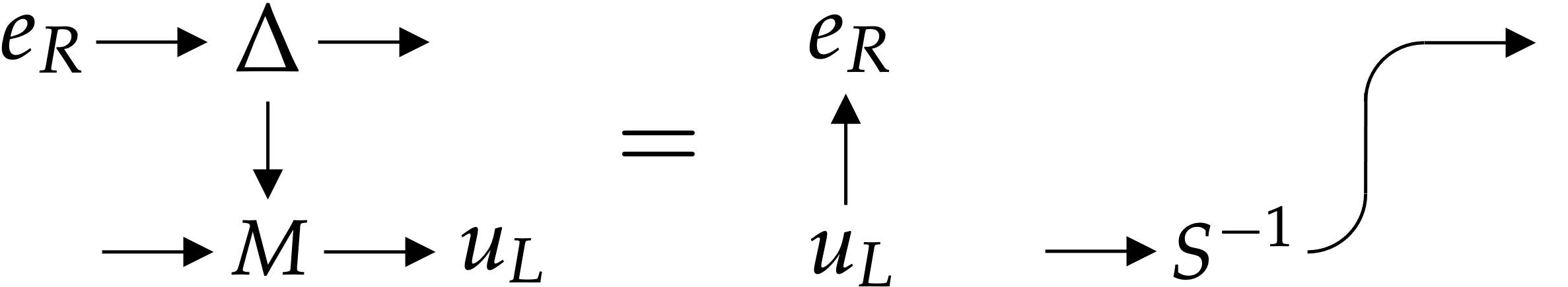}
\end{split}
\label{eqn:eR_muL_antipode}
\end{align}

$$$$

\begin{definition} \label{def:generalized_cointegral} Let $u_R$ be a right integral, $e_R$ be a right co-integral, and $n - \frac{1}{2} \in \Z + \frac{1}{2}$ be a proper half-integer. The generalized integrals/co-integrals are defined by
\begin{equation*}
    \includegraphics[scale=.4, valign = c]{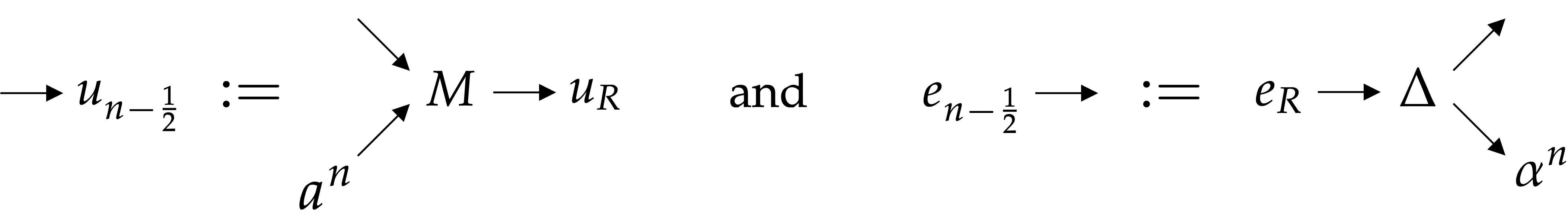}
\end{equation*}

\end{definition}

It follows from the definition that the generalized integrals/co-integrals of different indices are related by
\vspace{.4cm}
\begin{equation*}
    \includegraphics[scale=.4, valign = c]{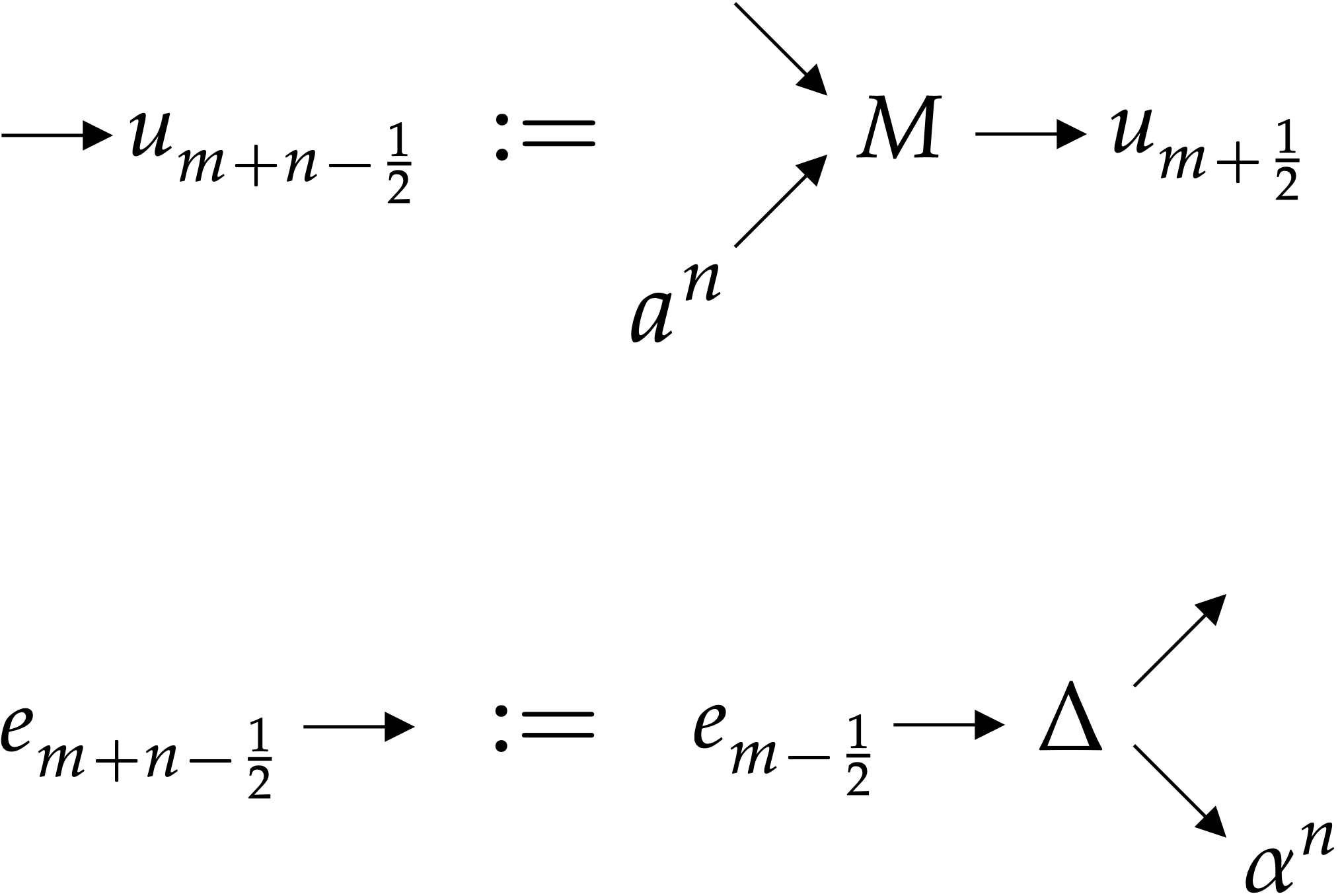}
\end{equation*}
$$$$
Note that the generalized integrals depend on the choice of a right integral. If the right integral $u_R$ is replaced by $\lambda u_R$ for a scalar $\lambda$, then the generalized integrals will also obtain a factor $\lambda$. The same applies to the generalized co-integrals.

\begin{lemma}\label{lem:generalized_integral_left_multiply}
    For any integer $n$, the generalized integrals/cointegrals satisfy the identities
    \vspace{.4cm}
    \begin{align*}
    &\includegraphics[scale=.4, valign = c]{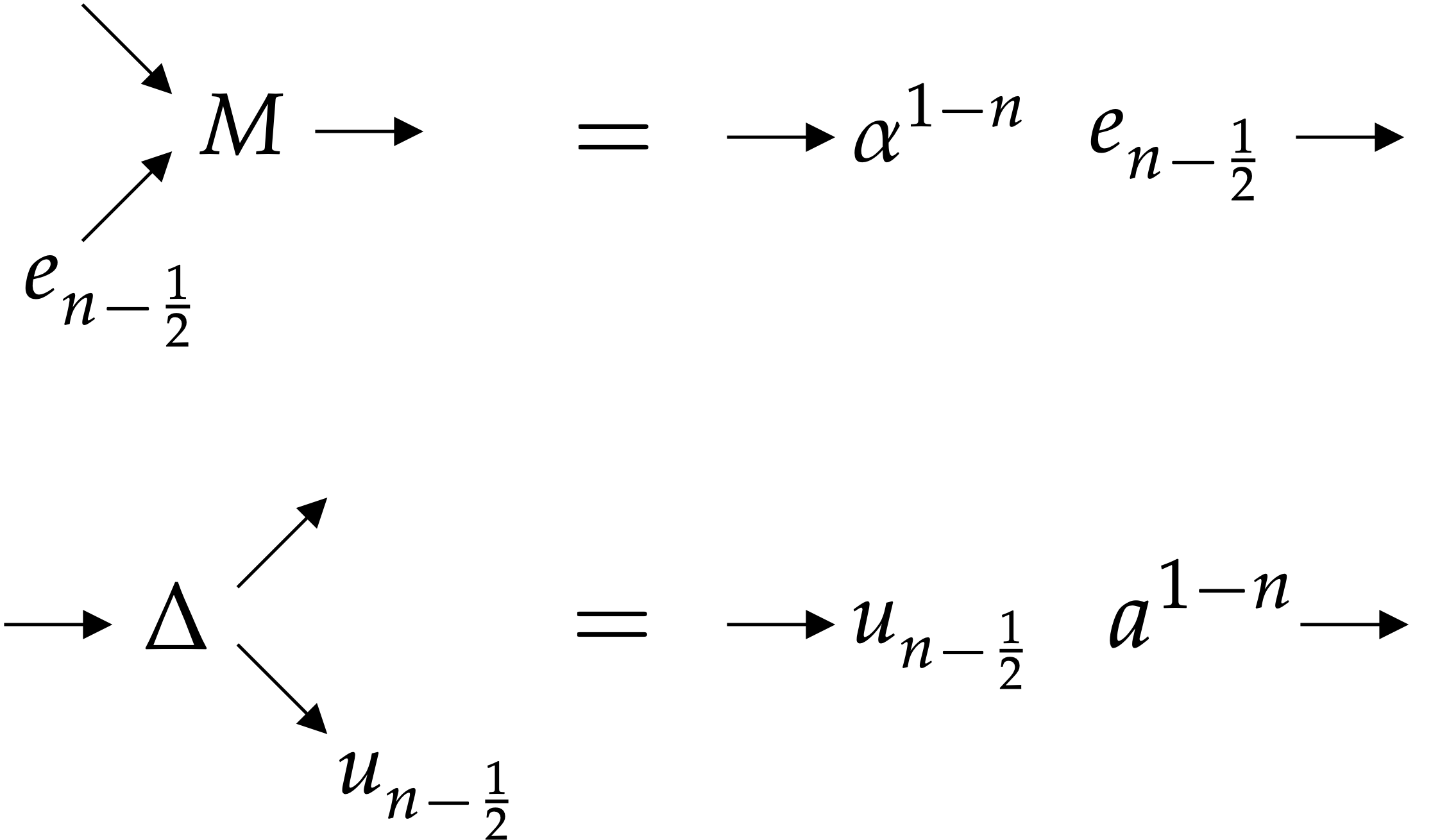}
\end{align*}
\begin{proof}
    We prove the first identity. The second one follows from the dualized version of the first identity.

    Note that since
    \vspace{.4cm}
    \begin{align*}
    &\includegraphics[scale=.4, valign = c]{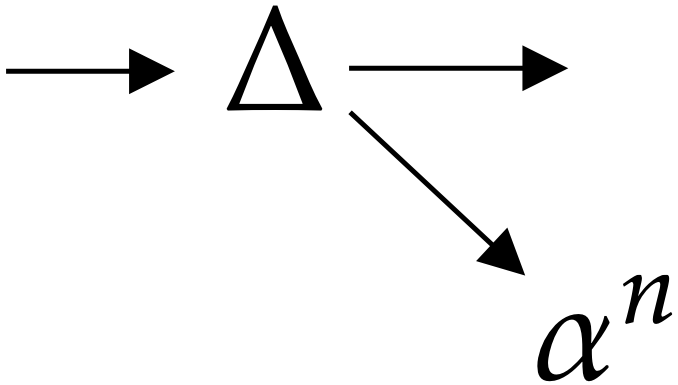}
\end{align*}
$$$$
    is an invertible map from the Hopf algebra to itself, the first identity is equivalent to
    \vspace{.4cm}
        \begin{align*}
    &\includegraphics[scale=.4, valign = c]{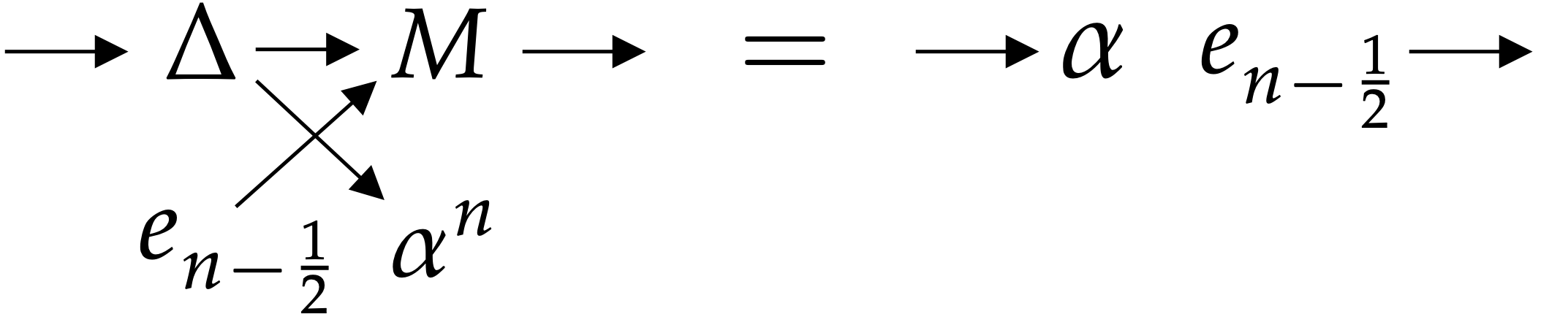}
\end{align*}
$$$$
Using the fact that $\alpha^n$ is an algebra morphism, the left hand side of the above identity can be expanded as
     \begin{align*}
    &\includegraphics[scale=.4, valign = c]{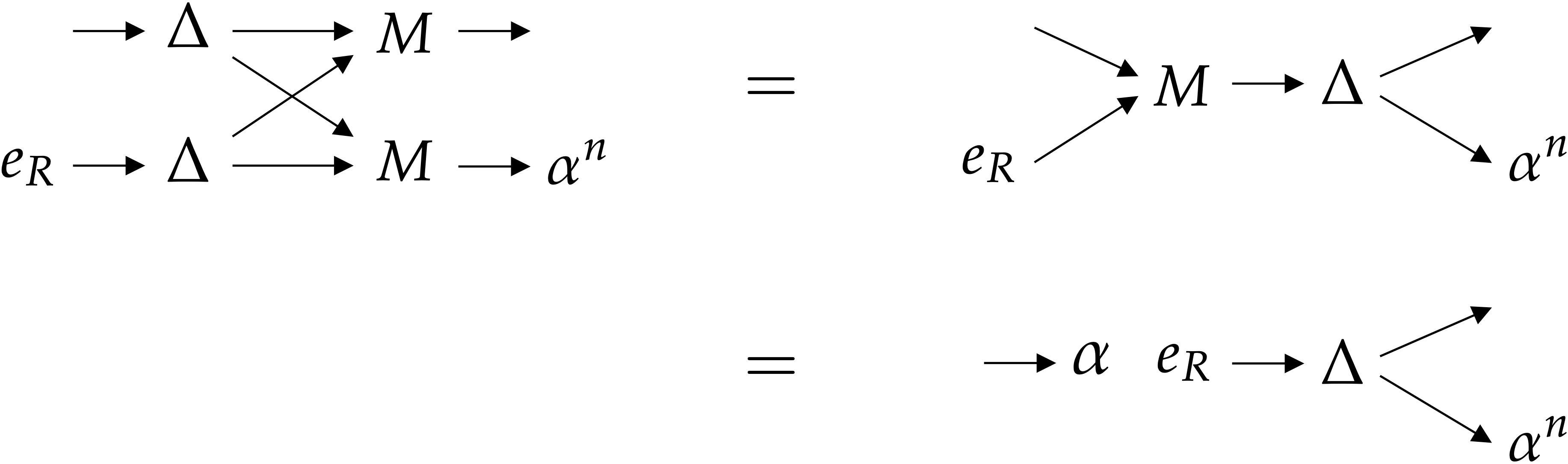}
\end{align*}
The last step above follows by the definition of $\alpha$.
\end{proof}
\end{lemma}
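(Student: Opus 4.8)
The plan is to prove the first (cointegral) identity and then deduce the second (integral) identity by duality. Since a left or right cointegral of $H$ is exactly a left or right integral of $H^*$, and the cophase $\alpha$ of $H$ plays the role of the phase of $H^*$, passing to $H^*$ converts the cointegral statement into the corresponding integral statement verbatim; so all of the content sits in the first identity, and I would focus there.

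For the first identity, I would begin by peeling off an invertible operation. By Definition \ref{def:generalized_cointegral} the generalized cointegral $e_n$ is obtained from the ordinary right cointegral $e_R$ by applying an invertible endomorphism of $H$ assembled from the antipode and the phase element (recall that in the balanced setting $S^2$ is conjugation by the phase element $a$). Because this endomorphism is a bijection $H \to H$, composing both sides of the target identity with its inverse yields an equivalent identity phrased purely in terms of $e_R$. This is the reduction step, and it is cleanest to carry out diagrammatically, where it amounts to cancelling the map against its inverse along a single edge.

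Next I would expand the left-hand side of the reduced identity. The engine is the defining relation $x\, e_R = \alpha(x)\, e_R$ attached to Definition \ref{def:phase_cophase}, applied repeatedly, together with the fact that the cophase $\alpha\colon H \to k$ is an algebra morphism, so that $\alpha^n(xy) = \alpha^n(x)\,\alpha^n(y)$ for every integer $n$. This is precisely where the hypothesis that $n$ is an integer enters, since half-integer powers of $\alpha$ need not be multiplicative. Using multiplicativity of $\alpha^n$ I would separate the variable $x$ from the fixed data and reorganize the contraction so that the innermost piece is literally $x\, e_R = \alpha(x)\, e_R$; the final step is then just the defining property of $\alpha$.

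I expect the main obstacle to be bookkeeping rather than any conceptual difficulty: one must track exactly which powers of the antipode (equivalently, which conjugations by $a$) are hidden in $e_n$, and choose the invertible map in the reduction step to be exactly the one that collapses $e_n$ back to $e_R$ without leaving stray group-like factors behind. Once that map is correctly identified, the algebra-morphism manipulation and the closing appeal to the definition of $\alpha$ are routine, and the parallel identity for the generalized integrals drops out of the $H \leftrightarrow H^*$ duality noted above.
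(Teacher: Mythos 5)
Your plan coincides with the paper's own proof: establish the cointegral identity first, obtain the integral identity by $H \leftrightarrow H^*$ duality, reduce via the invertible endomorphism of $H$ that carries $e_R$ to the generalized cointegral, and close by expanding with the multiplicativity of $\alpha^n$ together with the defining relation $x\,e_R = \alpha(x)\,e_R$. The one correction needed is to your description of that invertible map: per Definition \ref{def:generalized_cointegral} it is convolution with $\alpha^n$ (apply $\Delta$ to $e_R$ and evaluate $\alpha^n$ on one leg), i.e.\ it is built from the coproduct and the cophase element, not from antipodes and the phase element $a$ --- so your parenthetical appeal to balancedness is unnecessary, and indeed the lemma is stated and proved for arbitrary finite-dimensional Hopf algebras.
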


By Lemma \ref{lem:generalized_integral_left_multiply}, $e_{1/2}$ is a left cointegral and $ u_{1/2}$ is a left integral. Of course, $e_R = e_{-1/2}$ is a right cointegral and $u_R = u_{-1/2}$ is a right integral. For the reminder of this subsection, we always assume $e_L := e_{1/2}$ and $u_L := u_{1/2}$ are defined in such a way that
\vspace{.4cm}
\begin{equation*}
    \includegraphics[scale=.4, valign = c]{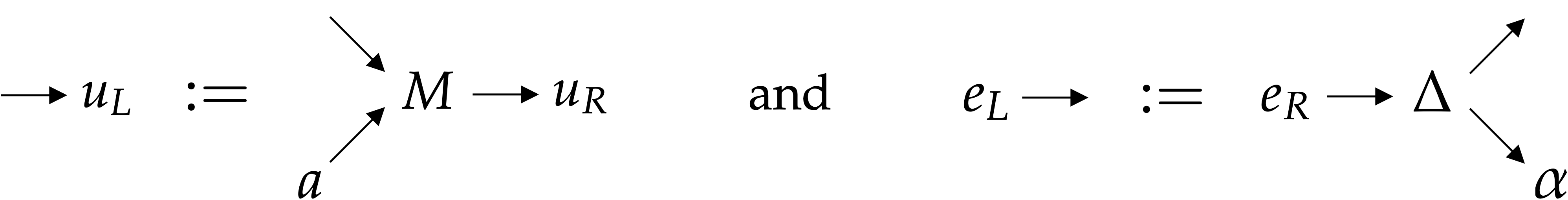}
\end{equation*}
$$$$

\noindent Then we have:
\begin{lemma}\label{lem:integral_cointegral}
    For a right integral $u_R$ and a right cointegral $e_R$ on $H$ such that $u_R(e_R) = \lambda$, we have
    \begin{align*}
        u_R(e_L) = \lambda, \quad u_L(e_R) = \lambda, \quad u_L(e_L) = q^{-1}\lambda\,.
    \end{align*}
    \begin{proof}
        The first two identities follow directly from the above expression for $u_L$ and $e_L$. The third one is similar:
        \vspace{.4cm}
         \begin{align*}
    \includegraphics[scale=.4, valign = c]{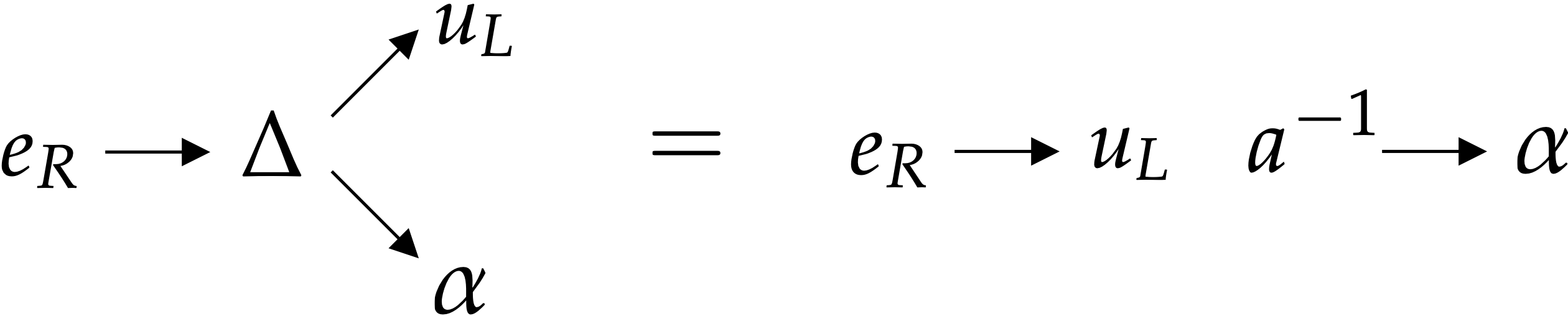}
    \end{align*}
    $$$$
        where the equality is due to Lemma \ref{lem:phase_from_left_integral}.
    \end{proof}
\end{lemma}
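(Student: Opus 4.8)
The plan is to reduce all three evaluations to the single normalization $u_R(e_R) = \lambda$, using the explicit descriptions of $e_L = e_{1/2}$ and $u_L = u_{1/2}$ from Definition \ref{def:generalized_cointegral} together with the adjacent-index relation displayed immediately afterwards. Concretely, $e_L$ and $u_L$ are obtained from $e_R$ and $u_R$ by a single step of the generalized-(co)integral construction, i.e.\ an antipode together with a phase/cophase correction. Thus each pairing in the statement is the normalized pairing of $u_R$ and $e_R$, decorated by a twist on the integral factor, on the cointegral factor, or on both. The conceptual point I would isolate first is that the phase scalar picked up in $u_n(e_m)$ is \emph{bilinear} in how far each index is twisted away from the right-(co)integral index $-1/2$: it collapses whenever either factor is left untwisted, and only when both factors are twisted does a genuine power of the phase survive. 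This single observation organizes all three cases.

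For the first identity $u_R(e_L)=\lambda$, only the cointegral side is twisted. I would substitute the formula for $e_L=e_{1/2}$ into $u_R(e_L)$ and observe that the phase/cophase correction introduced in passing from $e_{-1/2}$ to $e_{1/2}$ is paired against the \emph{untwisted} integral $u_R$; following the bilinearity remark, this correction trivializes (the relevant grouplike factor is absorbed by the counit, equivalently the surviving power of $\alpha$ evaluates to $1$), leaving exactly $u_R(e_R)=\lambda$. The second identity $u_L(e_R)=\lambda$ is the mirror-image computation, now with the twist carried by the integral $u_L$ and the cointegral $e_R$ left untwisted; it follows by the same collapse. This is what is meant by saying these two identities follow directly from the formulas for $u_L$ and $e_L$.

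The third identity $u_L(e_L)=q^{-1}\lambda$ is where the work lies, since now both factors are twisted and the phase element $a$ coming from $u_L$ collides with the cophase element $\alpha$ coming from $e_L$. Here I would invoke Lemma \ref{lem:phase_from_left_integral} to rewrite the left integral/cointegral pairing through $u_R$ and $e_R$: doing so forces the phase element $a$ to be multiplied against a leg of the left cointegral $e_L$, whose modular behaviour extracts the scalar $\alpha(a)=q$ (recall $q := a\,\blackarrowright\,\alpha$). After this factor is pulled out, the remaining contraction is again $u_R(e_R)=\lambda$, giving $q^{-1}\lambda$. The main obstacle is precisely this bookkeeping: one must apply Lemma \ref{lem:phase_from_left_integral} on the correct side and track carefully whether the collision produces $\alpha(a)=q$ or its inverse $q^{-1}$, since the entire content of the statement is that the resulting exponent is $-1$ rather than $0$ or $+1$.
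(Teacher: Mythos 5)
Your route is the same as the paper's: the first two evaluations follow directly from the explicit expressions of $u_L$ and $e_L$ in terms of $u_R$, $e_R$ and the phase/cophase elements, and the third is reduced to Lemma \ref{lem:phase_from_left_integral}. Your treatment of $u_R(e_L)$ and $u_L(e_R)$ is correct: in each case the twist sits against an untwisted (co)integral, so the correction collapses via $\alpha(1)=1$ in the first case and $\epsilon(a)=1$ in the second.

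The gap is in the third identity, and you flag it yourself without closing it: you assert that the collision extracts the scalar $\alpha(a)=q$ and in the same breath conclude $u_L(e_L)=q^{-1}\lambda$; these two statements are inconsistent, and your final paragraph explicitly leaves open whether the collision produces $q$ or $q^{-1}$. Since the entire content of the third identity is that the exponent is $-1$, this must be pinned down, not asserted. The resolution is that Lemma \ref{lem:phase_from_left_integral} is obtained from Definition \ref{def:phase_cophase} by applying the antipode (which exchanges right and left (co)integrals), so the modular law for the \emph{left} cointegral carries $\alpha\circ S=\alpha^{-1}$ rather than $\alpha$: right multiplication satisfies $e_L\cdot x=\alpha(S(x))\,e_L$. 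Writing $u_L(e_L)=u_R(e_L\cdot a)$ and applying this with $x=a$ extracts $\alpha(S(a))=\alpha(a^{-1})=q^{-1}$, after which $u_L(e_L)=q^{-1}\,u_R(e_L)=q^{-1}\lambda$ by your first identity. (Note also that $a$ must land on the side of $e_L$ governed by the modular character; if it landed on the other side it would be killed by $\epsilon(a)=1$ and you would get the wrong answer $\lambda$.) So your stated mechanism is right, but the character governing $e_L$ is the inverse of the one governing $e_R$, and that inversion is exactly where the $-1$ comes from.
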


\begin{lemma}\label{lem:cophase_conjugation}
    Let $\alpha \in H^*$ be the cophase element. The map
    \vspace{.4cm}
    \begin{align*}
    \includegraphics[scale=.4, valign = c]{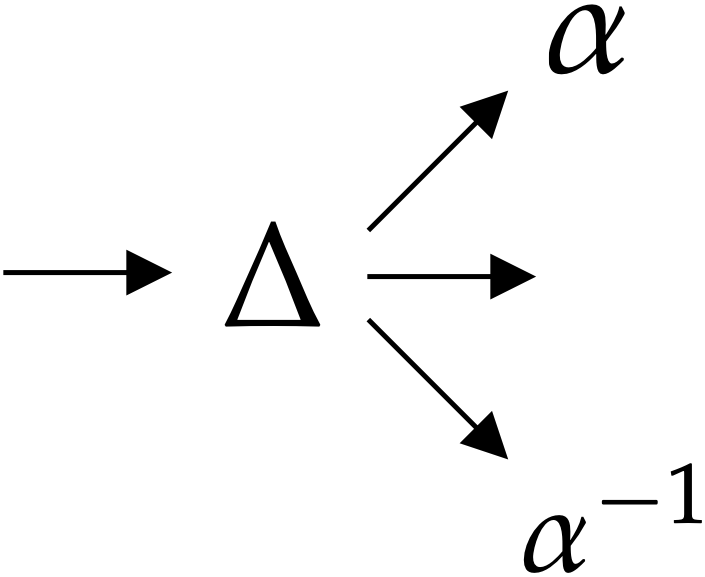}
    \end{align*}
    $$$$
    is a Hopf algebra automorphism. Furthermore, it has eigenvalue $q$ on all generalized integrals and generalized cointegrals. 
    \begin{proof}
        Denote the map in question by $f$.
        It is straightforward to check $f$ is a Hopf algebra automorphism, and we leave it as an exercise. For the second part, it suffices to check the statement for the right cointegral $e_R$ and the right integral $u_R$ by noting that $f$, as an automorphism, fixes the phase and cophase elements. 

        To check the case for $e_R$, it suffices to prove $u_R \circ f (e_R) = q u_R(e_R)$ since as an automorphism $f$ maps $e_R$ to a scalar multiple of it.  In fact, below we show $u_R \circ f  = q u_R$, which implies the statements for both $e_R$ and $u_R$:
        \vspace{.4cm}
          \begin{align*}
    \includegraphics[scale=.4, valign = c]{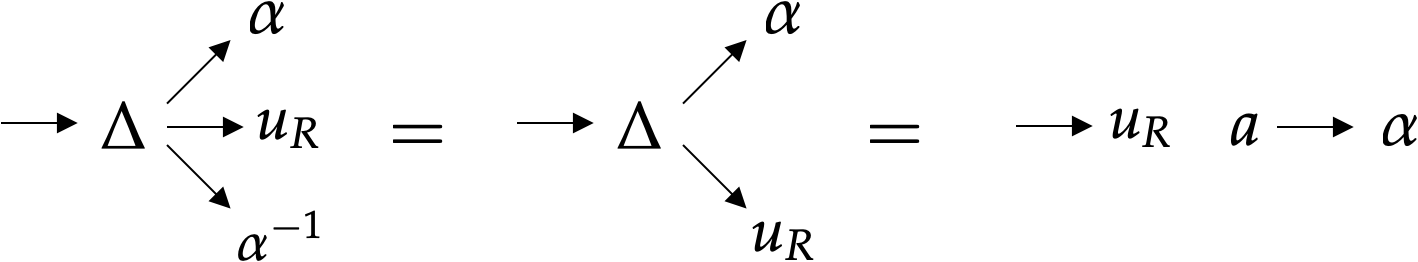}
    \end{align*}
    $$$$
    The first equality is by the definition of $u_R$ and the second is by the definition of $a$.
    \end{proof}
\end{lemma}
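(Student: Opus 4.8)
The plan is to treat the two assertions separately and to keep the argument structural wherever possible. For the automorphism claim, I would check directly that the map $f$ of the statement --- conjugation of $H$ by the cophase element $\alpha$, i.e. the composite of left translation by $\alpha^{-1}=\alpha\circ S$ and right translation by $\alpha$, both acting through the coproduct --- intertwines every structure tensor. Since $\alpha$ is an algebra morphism $H\to k$ (a cogroup-like), each one-sided translation is already an algebra map; combining the two and using that $\alpha$ is group-like in $H^*$ (so the two translations commute with $\Delta$ in the required way) shows $f$ respects the coproduct and counit as well, and compatibility with the antipode follows from $\alpha^{-1}=\alpha\circ S$ together with the naturality of $S$. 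This is the routine half, essentially the exercise the statement defers.

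For the eigenvalue assertion I would first reduce from all generalized (co)integrals to the single pair $(e_R,u_R)$. Because $f$ is a Hopf automorphism it carries right cointegrals to right cointegrals; as that space is one-dimensional, $f(e_R)=c\,e_R$ for a scalar $c$, and dually $u_R\circ f=c'\,u_R$. Moreover $f$ fixes both the phase element $a$ and the cophase $\alpha$: these are the distinguished group-likes of $H$ and $H^*$, hence intrinsic and preserved by any Hopf automorphism (concretely, $f(e_R)=c\,e_R$ together with the defining relation of $\alpha$ forces $\alpha\circ f=\alpha$, and likewise $f(a)=a$). Since every generalized integral and cointegral is obtained from $u_R$ and $e_R$ by applying powers of the antipode and convolving/multiplying by powers of $a$ and $\alpha$ (Definition \ref{def:generalized_cointegral}), and since $f$ commutes with $S$ and fixes $a,\alpha$, the scalar by which $f$ acts is the same on every generalized (co)integral as on $e_R$ (resp.\ $u_R$). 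It therefore suffices to compute this one scalar.

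To compute it, I would establish the sharper identity $u_R\circ f=q\,u_R$ of functionals; evaluating at $e_R$ and using $u_R(e_R)\neq 0$ then yields $c=q$, and the reduction above upgrades this to all generalized (co)integrals. The identity $u_R\circ f=q\,u_R$ is proved in two moves: unfold $u_R\circ f$ from the definition of $f$, apply the defining right-integral property of $u_R$ (Definition \ref{def:integral_cointegral}) to collapse the expression onto a single translation, and then recognize the distinguished group-like relation defining $a$ (Definition \ref{def:phase_cophase}), which converts that translation into the scalar $\alpha(a)=q$.

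The main obstacle is precisely this last computation: keeping the handedness of the two translations and of the integral/cointegral conventions consistent so that the surviving scalar is exactly $\alpha(a)=q$, rather than $q^{-1}$ or $\alpha(a^{\pm1})$. Everything else --- the automorphism verification and the reduction to the pair $(e_R,u_R)$ --- is formal, but extracting precisely $q$ hinges on correctly matching the left/right conventions built into the definitions of $u_R$, $e_R$, $a$, and $\alpha$.
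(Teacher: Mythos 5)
Your proposal is correct and follows essentially the same route as the paper: reduce to the pair $(e_R,u_R)$ using that the automorphism $f$ fixes the phase and cophase elements (hence acts by the same scalar on all generalized (co)integrals), then establish the functional identity $u_R\circ f=q\,u_R$ via the defining property of $u_R$ followed by the defining relation of $a$. Your extra details---the one-dimensionality argument forcing $f(e_R)=c\,e_R$ and the explicit justification that any Hopf automorphism fixes the distinguished group-likes---merely flesh out steps the paper leaves implicit.
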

The antipode interchanges the left and right integrals/co-integrals with some factors.
\begin{lemma}\label{lem:antipode_integral}
    The following identities hold:
    \vspace{.4cm}
     \begin{align*}
    \includegraphics[scale=.4, valign = c]{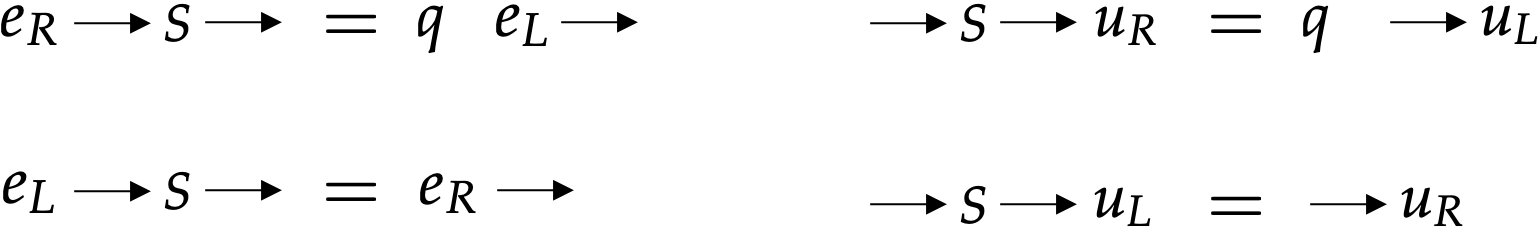}
    \end{align*}
    $$$$
    It follows that $S^2$ has eigenvalue $q$ on all generalized integrals and cointegrals.
    \begin{proof}
        It suffices to the prove the two identities involving cointegrals. Those involving integrals can be obtained from the corresponding identities of $H^*$. Since $S$ is an anti-algebra morphism, by the uniqueness of left/right cointegrals, $S$ automatically switches left and right cointegrals. We just need to calculate the relevant factors.

        We first verify $u_L \circ S (e_R) = q u_L(e_L)$, or equivalently $u_R(e_R)\; u_L \circ S (e_R) = q u_R(e_R)\; u_L(e_L) =  u_R(e_R)\; u_L(e_R)$ by Lemma \ref{lem:integral_cointegral}. Substituting $u_R(e_R)\;S$ with the tensor on the left hand side of the identity in Lemma \ref{lem:right_integral_cointegral_S}, we obtain
        \vspace{.4cm}
         \begin{align*}
    \includegraphics[scale=.4, valign = c]{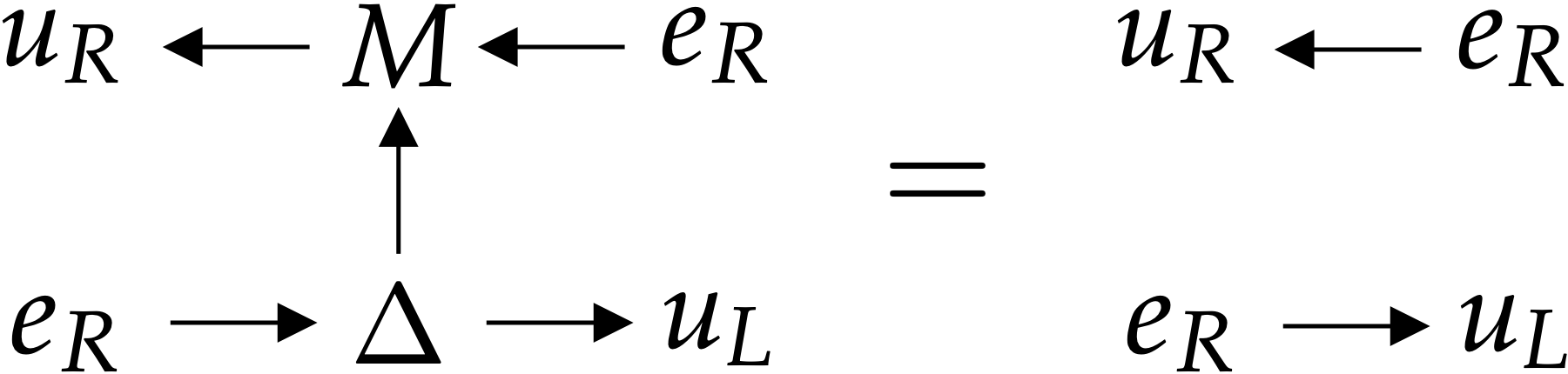}
    \end{align*}
        
    \end{proof}
\end{lemma}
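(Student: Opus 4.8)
The plan is to prove the four ``interchange'' identities---$S(e_R) = q\,e_L$, $S(e_L) = e_R$, and the dual statements $u_R \circ S = q\,u_L$ and $u_L \circ S = u_R$ for integrals---and then read off the $S^2$-eigenvalue as a formal consequence. The first reduction is to observe that $S$ is an anti-algebra and anti-coalgebra isomorphism, so it carries the one-dimensional space of right cointegrals isomorphically onto the one-dimensional space of left cointegrals, and conversely. By the uniqueness of left/right (co)integrals noted after Definition \ref{def:integral_cointegral}, this forces $S(e_R) = c_1 e_L$ and $S(e_L) = c_2 e_R$ for scalars $c_1,c_2$, so the entire content of the cointegral identities is the determination of $c_1$ and $c_2$. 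The two integral identities then come for free by applying the cointegral identities to the dual Hopf algebra $H^*$, using that cointegrals of $H$ are integrals of $H^*$ and that the phase of $H^*$ coincides with the phase $q$ of $H$; hence I would only carry out the cointegral computations.

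To pin down $c_1$, I would pair $S(e_R)$ against the left integral $u_L$: since $u_L(S(e_R)) = c_1\,u_L(e_L)$, while $u_L(e_L) = q^{-1}\lambda$ and $u_L(e_R) = \lambda$ by Lemma \ref{lem:integral_cointegral} (with $\lambda = u_R(e_R)$), determining $c_1$ is equivalent to showing $u_L \circ S(e_R) = q\,u_L(e_L) = \lambda = u_L(e_R)$, i.e. $c_1 = q$. The computation of $u_L \circ S$ is diagrammatic: I would substitute the expression for $\lambda\,S = u_R(e_R)\,S$ from Lemma \ref{lem:right_integral_cointegral_S} (rewriting the antipode in terms of $M$, $\Delta$ and a (co)integral), contract the resulting tensor diagram against $u_L$ and $e_R$, and invoke the defining relation of the phase element $a$ from Definition \ref{def:phase_cophase}, exactly as in the last step of the proof of Lemma \ref{lem:cophase_conjugation}, to extract the factor $q$. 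The scalar $c_2$ is handled by the mirror computation, now pairing $S(e_L)$ against $u_R$ and using $u_R(e_L) = \lambda$; here no phase factor is produced, giving $c_2 = 1$. Alternatively $S(e_L) = e_R$ follows by applying the first identity to $H^{\op{op}}$, where the antipode is $S^{-1}$ and left/right cointegrals are exchanged.

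With $S(e_R) = q\,e_L$ and $S(e_L) = e_R$, the base case of the eigenvalue claim is immediate: $S^2(e_R) = S(q e_L) = q\,S(e_L) = q\,e_R$, and likewise $S^2(e_L) = q\,e_L$; dually $u_R \circ S^2 = q\,u_R$ and $u_L \circ S^2 = q\,u_L$. To upgrade this from the four base (co)integrals ($n = \pm\tfrac12$) to all generalized (co)integrals, I would use that each $e_m$ (resp. $u_m$) is obtained from $e_R$ (resp. $u_R$) by the index-shifting operations of Definition \ref{def:generalized_cointegral}, which are built from the action of the phase element $a$ and cophase element $\alpha$. Since $a$ and $\alpha$ are the distinguished grouplikes, $S^2(a) = a$ and $\alpha \circ S^2 = \alpha$; consequently the Hopf automorphism $S^2$ commutes with each index-shift operation. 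Writing $e_m = \phi_m(e_R)$ for such an operation $\phi_m$, this gives $S^2(e_m) = \phi_m(S^2 e_R) = \phi_m(q\,e_R) = q\,e_m$, and symmetrically for the generalized integrals.

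The main obstacle is the diagrammatic contraction computing $u_L \circ S(e_R)$ in the second step: one must substitute the antipode formula of Lemma \ref{lem:right_integral_cointegral_S} correctly, track the normalizing factors $\lambda = u_R(e_R)$ through the partial traces, and verify that the defining relation of $a$ produces the factor $q$ on the nose rather than $q^{\pm 1}$ or $1$. The asymmetry between $c_1 = q$ and $c_2 = 1$ stems entirely from the asymmetric normalization $u_L(e_L) = q^{-1}\lambda$ versus $u_R(e_R) = \lambda$ in Lemma \ref{lem:integral_cointegral}, so the bookkeeping there is where an error is most likely. A secondary point requiring care is the final step: one must confirm that $S^2$ commutes with the generalized-index operations \emph{exactly}, not merely up to a scalar, which rests on the relations $S^2(a) = a$ and $\alpha \circ S^2 = \alpha$ together with the precise form of those operations.
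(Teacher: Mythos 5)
Your proposal is correct and follows essentially the same route as the paper's proof: reduce to the two cointegral identities via uniqueness of (co)integrals and duality with $H^*$, then determine the scalar factor by pairing $S(e_R)$ against $u_L$, using Lemma \ref{lem:integral_cointegral} together with the antipode formula of Lemma \ref{lem:right_integral_cointegral_S} and the defining relation of the phase element. Your closing argument that $S^2$ commutes exactly with the index-shifting operations (via $S^2(a)=a$ and $\alpha\circ S^2=\alpha$) is a correct and welcome filling-in of the step the paper dismisses with ``it follows.''
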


Lemma \ref{lem:cophase_conjugation} and Lemma \ref{lem:antipode_integral} show that the conjugation map by $\alpha$ defined in Lemma \ref{lem:cophase_conjugation} and $S^2$ are ``almost indistinguishable'' from one another. They are both Hopf algebra automorphisms. They have the same action on all generalized integrals, generalized cointegrals, the phase element, and the cophase element. This motivates the definition of the tensor $T$, called the \emph{tilt} of $H$ \cite{kuperberg1996noninvolutory},
 \begin{align}
 \label{E:tilt1}
    \includegraphics[scale=.4, valign = c]{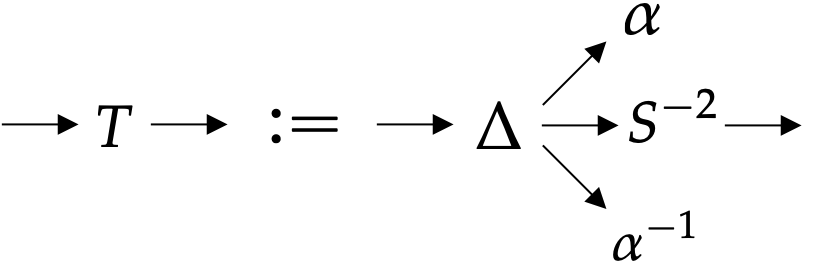}
 \end{align}
 The following property for $T$ is immediate. 
\begin{cor}\label{cor:tilt}
    The tilt $T$ is a Hopf algebra automorphism. Moreover, $T$ fixes all generalized integrals, generalized  cointegrals, the phase element, and the cophase element.
\end{cor}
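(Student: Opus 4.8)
The plan is to read off both assertions directly from the two preceding lemmas, since by construction the tilt $T$ is defined in \eqref{E:tilt1} as the composite of the conjugation-by-$\alpha$ map of Lemma \ref{lem:cophase_conjugation} with an (inverse) power of the antipode, and both building blocks have already been analyzed in detail. Conceptually there is nothing new to prove: the entire content of the corollary is already packaged in Lemmas \ref{lem:cophase_conjugation} and \ref{lem:antipode_integral}, and the work consists only in assembling them with the correct orientation of the composition.

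First I would establish that $T$ is a Hopf algebra automorphism. The conjugation-by-$\alpha$ map of Lemma \ref{lem:cophase_conjugation} is a Hopf algebra automorphism by that lemma. Since the antipode $S$ is a Hopf algebra anti-automorphism (reversing both $M$ and $\Delta$), and $S$ is invertible because $H$ is finite-dimensional, the square $S^2$ and its inverse $S^{-2}$ are honest Hopf algebra automorphisms. As $T$ is the composition of these two maps, and composites of Hopf algebra automorphisms are again Hopf algebra automorphisms, $T$ is a Hopf algebra automorphism.

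Next I would verify that $T$ fixes every generalized integral and cointegral. By Lemma \ref{lem:cophase_conjugation}, the conjugation-by-$\alpha$ map acts with eigenvalue $q$ on all generalized integrals and cointegrals, and by Lemma \ref{lem:antipode_integral}, $S^2$ likewise acts with eigenvalue $q$ on all of them. The definition \eqref{E:tilt1} pairs one of these maps against the inverse of the other, so on each such element the two eigenvalues combine as $q \cdot q^{-1} = 1$; consequently $T$ fixes every generalized integral and every generalized cointegral. For the phase element $a$ and the cophase element $\alpha$, both constituent maps fix them: the conjugation map fixes $a$ and $\alpha$ since, being a Hopf algebra automorphism, it preserves the distinguished group-likes of $H$ and $H^*$ (this is exactly the observation used in the proof of Lemma \ref{lem:cophase_conjugation}), while $a$ group-like gives $S(a)=a^{-1}$ group-like and hence $S^2(a)=a$, and dually $S^2$ fixes $\alpha$. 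Therefore their composite $T$ fixes both $a$ and $\alpha$.

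The only genuine subtlety — and the step I would be most careful about — is the bookkeeping in the eigenvalue computation: one must match the precise form of \eqref{E:tilt1} so that the factor $q$ coming from the conjugation-by-$\alpha$ map is cancelled by the factor $q^{-1}$ coming from the inverse antipode-square, rather than reinforced into $q^{\pm 2}$. Once the orientation of the composition is pinned down correctly against these two lemmas, the statement follows immediately, which is why it is labelled a corollary.
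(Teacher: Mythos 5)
Your proposal is correct and matches the paper exactly: the paper states this corollary as ``immediate'' from Lemma \ref{lem:cophase_conjugation} and Lemma \ref{lem:antipode_integral}, and your argument---composing the two automorphisms so that the eigenvalues $q$ and $q^{-1}$ cancel on generalized (co)integrals, and noting that both maps fix the phase and cophase elements (via $S(a)=a^{-1}$ for group-likes and the observation in the proof of Lemma \ref{lem:cophase_conjugation})---is precisely the reasoning the paper leaves implicit.
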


\noindent The right integral is in general not co-commutative. The following lemma characterizes to what extent it fails to be so.
\begin{lemma}\label{lem:eR_cocommutativity}
    The right integral $e_R$ satisfies
     \begin{align*}
    \includegraphics[scale=.4, valign = c]{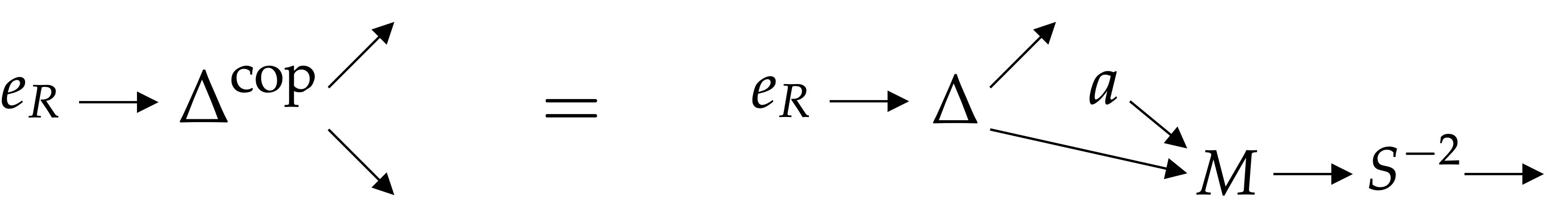}
    \end{align*}
    
    \begin{proof}
     By Lemma \ref{lem:right_integral_cointegral_S}, the tensor
    \begin{align*}
    \includegraphics[scale=.4, valign = c]{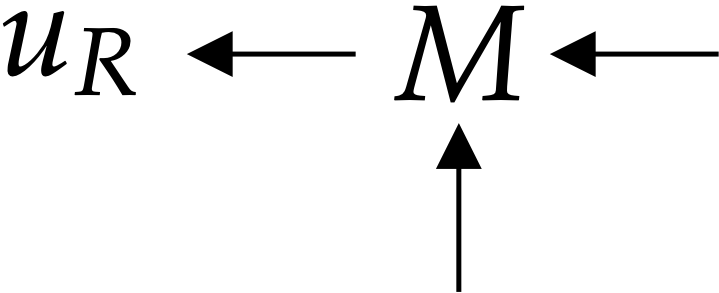}
    \end{align*}
    is non-degenerate. By contracting this non-degenerate tensor with the putative identity we desire to prove, we obtain an equivalent identity
    \vspace{.4cm}
     \begin{align}\label{eqn:lem:eR_cocommutativity_temp1}
    \includegraphics[scale=.4, valign = c]{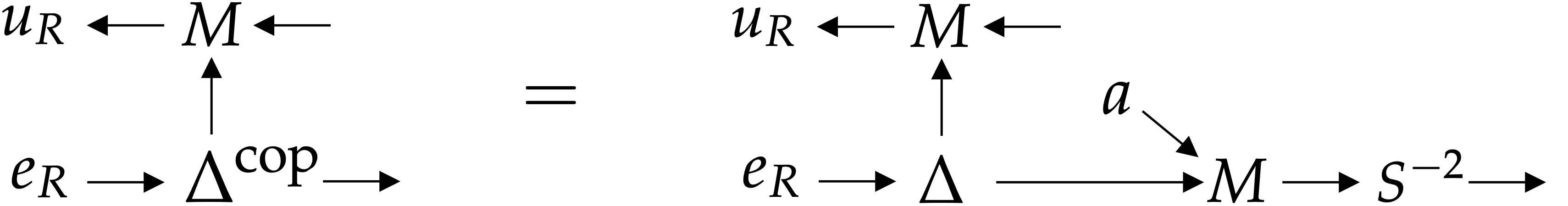}
    \end{align}
    $$$$
     Choose $u_R$ so that $u_R(e_R) = 1$. Again by Lemma \ref{lem:right_integral_cointegral_S}, the right hand side of Equation~\eqref{eqn:lem:eR_cocommutativity_temp1} becomes
     \vspace{.4cm}
     \begin{align}\label{eqn:lem:eR_cocommutativity_temp2}
    \includegraphics[scale=.4, valign = c]{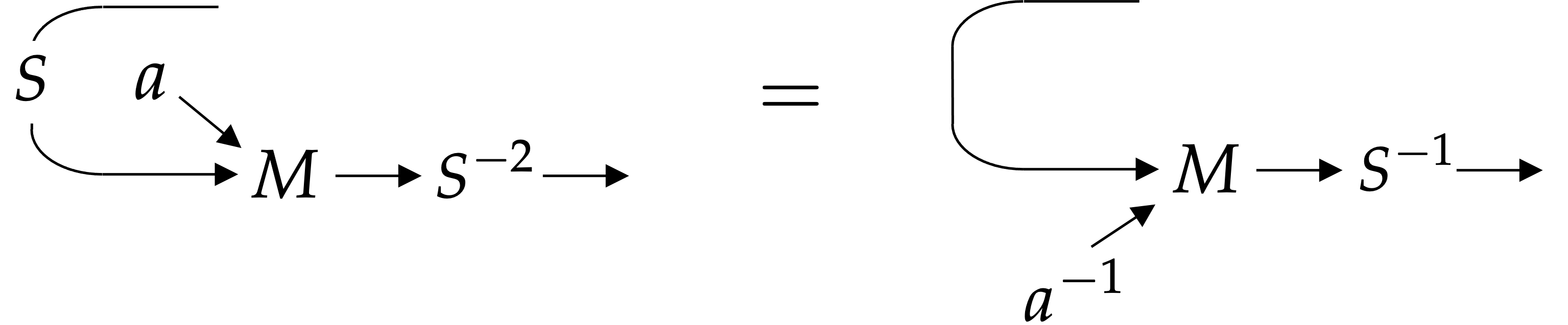}
    \end{align}
$$$$
     Now, the left hand side of Equation~\eqref{eqn:lem:eR_cocommutativity_temp1} equaling the right side of Equation~\eqref{eqn:lem:eR_cocommutativity_temp2} is equivalent to the following equation,
     \begin{align*}
    \includegraphics[scale=.4, valign = c]{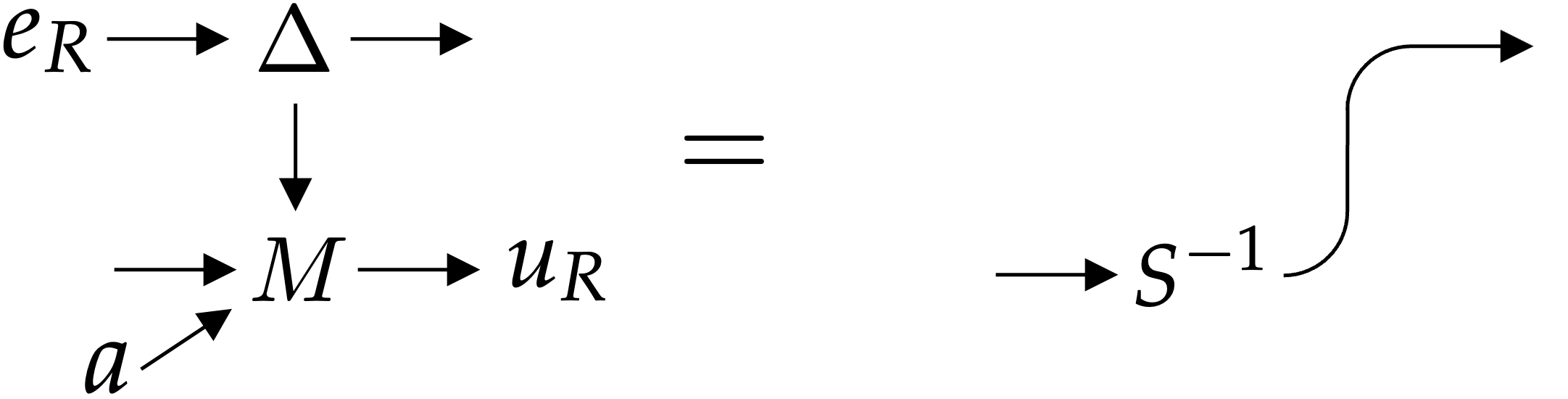}
    \end{align*}
    which is the same as Equation~\eqref{eqn:eR_muL_antipode}.
    \end{proof}
\end{lemma}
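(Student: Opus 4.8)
The statement records a precise form of ``quasi-cocommutativity'' for the right cointegral: it says that $\Delta(e_R)$ agrees, after swapping its two output legs, with $\Delta(e_R)$ twisted by $S^2$ (equivalently by the tilt $T$, via Corollary \ref{cor:tilt}) on one leg, up to the phase $q$. My plan is \emph{not} to manipulate $\Delta(e_R)$ head-on, since the coproduct of a cointegral is hard to control directly, but instead to reduce the claim to the already-established identity \eqref{eqn:eR_muL_antipode}, which is exactly the $H^{\op{op}}/H^{\op{cop}}$ incarnation of Lemma \ref{lem:right_integral_cointegral_S}. The guiding principle is that $S^2$ and conjugation by the cophase $\alpha$ are ``almost indistinguishable'' (the content of Lemma \ref{lem:cophase_conjugation} and Lemma \ref{lem:antipode_integral}), so the defect of cocommutativity should be expressible purely through the antipode data that \eqref{eqn:eR_muL_antipode} already governs.

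The first step is to find a non-degenerate tensor against which both sides of the desired identity can be paired without loss of information. Lemma \ref{lem:right_integral_cointegral_S} writes the antipode $S$ as a contraction of $M$, $\Delta$, $u_R$ and $e_R$; since $S$ is invertible, the tensor appearing on the left of that identity is non-degenerate. I would therefore contract this tensor with each side of the putative cocommutativity identity. Because the tensor is non-degenerate, the contracted equation is \emph{equivalent} to, not merely implied by, the original one, and it has the advantage that $e_R$ now sits in a position where Lemma \ref{lem:right_integral_cointegral_S} can be invoked a second time.

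After normalizing $u_R(e_R) = 1$, I would re-apply Lemma \ref{lem:right_integral_cointegral_S} to rewrite the resulting right-hand side as a diagram built only from $M$, $\Delta$, $u_R$ and $e_R$, at which point the claim becomes the assertion that two such contractions coincide; this is precisely Equation \eqref{eqn:eR_muL_antipode}. The bulk of the work — and the main obstacle — is the diagrammatic bookkeeping: one must confirm that contracting with the non-degenerate antipode tensor is genuinely reversible, and that after the two applications of Lemma \ref{lem:right_integral_cointegral_S} the two sides match \eqref{eqn:eR_muL_antipode} exactly, with every phase factor tracked correctly through the relation $\alpha^m(a^n) = q^{mn}$. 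I do not expect any algebraic input beyond Lemma \ref{lem:right_integral_cointegral_S} and Equation \eqref{eqn:eR_muL_antipode}; the difficulty is entirely in arranging the tensor contractions so that the reduction is manifestly an equivalence.
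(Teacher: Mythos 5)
Your proposal is correct and follows essentially the same route as the paper's own proof: contract both sides with the non-degenerate antipode tensor from Lemma~\ref{lem:right_integral_cointegral_S} (an equivalence, not just an implication), normalize $u_R(e_R)=1$, apply that lemma a second time, and observe that the resulting identity is exactly Equation~\eqref{eqn:eR_muL_antipode}. The only difference is cosmetic framing; the key lemma, the reduction target, and the order of steps all coincide with the paper.
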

As an aside, it is a well-known and important fact that
\vspace{.4cm}
 \begin{align*}
    \includegraphics[scale=.4, valign = c]{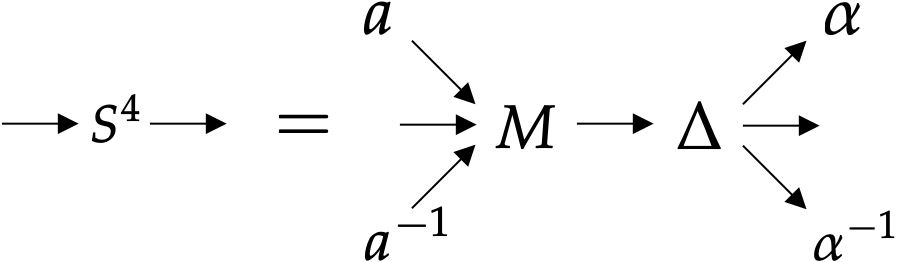}
 \end{align*}
although we will not use this identity in the present paper.
 
\begin{definition} A Hopf algebra $H$ is \emph{balanced} if the tilt map $T$ defined by~\eqref{E:tilt1} is the identity map. As a special case, $H$ is \emph{involutory} if $S^2 = \op{Id}$. \end{definition}
\noindent For a balanced Hopf algebra, $S^2$ equals the conjugation by the phase element, as well as the conjugation by the cophase element.

\subsection{More properties of the generalized cointegrals}

We need a few  more lemmas on the generalized cointegrals that will be used in the proof of the 4-manifold invariants.
\begin{lemma}
\label{lemma:etheta1}
    For any proper half-integer $\theta$, we have
    \vspace{.2cm}
    \begin{align*}
    \includegraphics[scale=.4, valign = c]{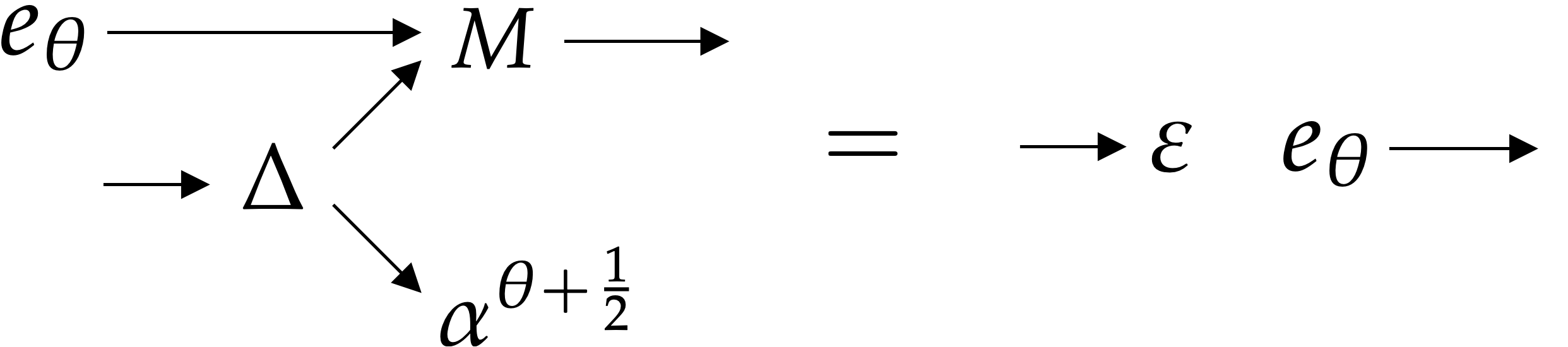}
    \end{align*}
    \begin{proof}
    Let $\theta = n - \frac{1}{2}$. By expanding $e_{\theta}$ explicitly, the left-hand side of the above equation becomes
    \vspace{.4cm}
    \begin{align*}
    \vspace{.4cm}
    \includegraphics[scale=.4, valign = c]{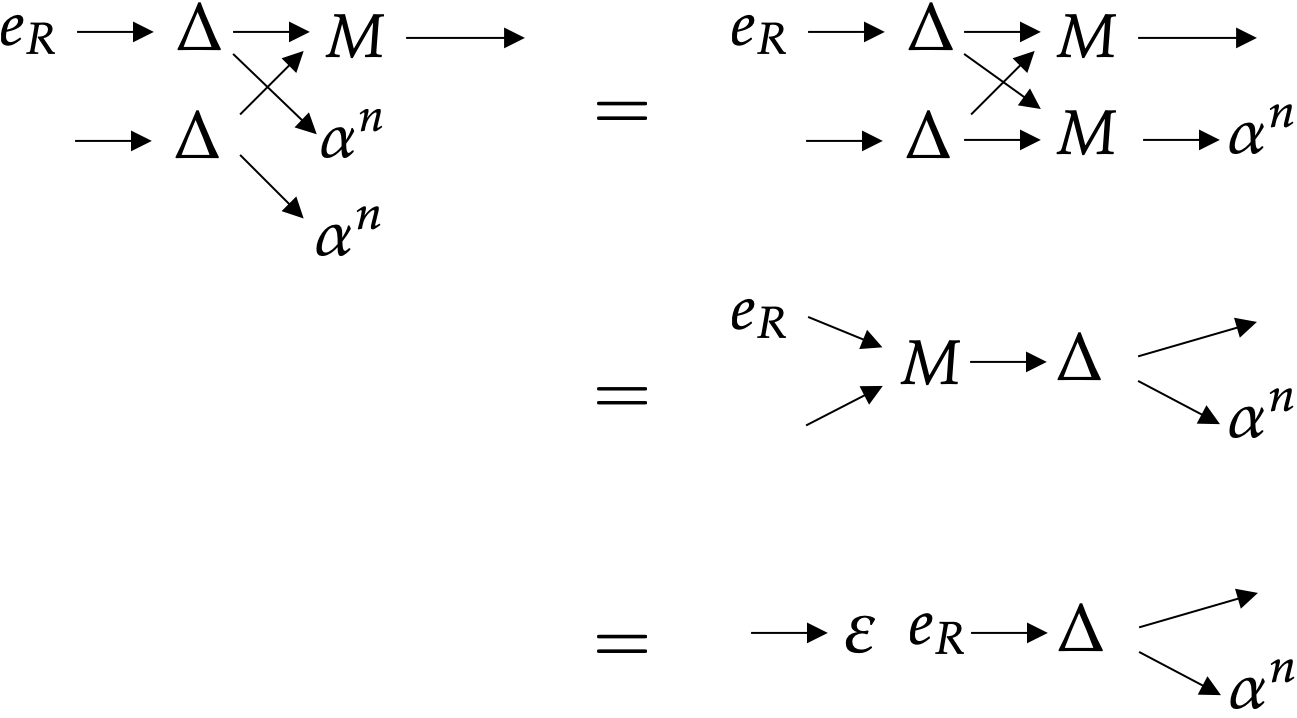}
    \end{align*}
    $$$$
    The first equality above follows from the fact that $\alpha^n$ is an algebra morphism. The second equality is a direct application of the compatibility condition between $\Delta$ and $M$. The last equality follows by definition of a right integral. 
    \end{proof}
\end{lemma}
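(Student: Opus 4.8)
The plan is to treat $\theta$ as an arbitrary proper half-integer, write $\theta = n - \tfrac{1}{2}$ for the unique integer $n$, and substitute the explicit formula for the generalized cointegral $e_\theta$ from Definition~\ref{def:generalized_cointegral}. This expresses $e_\theta$ in terms of the right integral $u_R$ and the right cointegral $e_R$, decorated by the $n$-th power of the cophase character $\alpha$. The strategy is then to simplify the resulting tensor diagram in three moves, using exactly one structural fact at each step: multiplicativity of $\alpha^n$, the bialgebra compatibility between $\Delta$ and $M$, and the defining identity of the right integral.

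First I would substitute the expansion of $e_\theta$ into the left-hand diagram and record where the cophase power $\alpha^n$ attaches. Because $\alpha$ is a cogroup-like, i.e.\ an algebra morphism $H \to k$ (see the discussion following Definition~\ref{def:phase_cophase}), every power $\alpha^n$ is again a character, so $\alpha^n(xy) = \alpha^n(x)\,\alpha^n(y)$. This is precisely what lets me distribute the single $\alpha^n$-weight across a product node of the diagram, splitting it into two $\alpha^n$-factors; this gives the first equality.

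Next I would invoke the bialgebra axiom of Definition~\ref{def:hopf_algebra}, namely $\Delta \circ M = (M \otimes M)\circ(\id \otimes \text{swap} \otimes \id)\circ(\Delta \otimes \Delta)$, to commute the coproduct past the remaining product and regroup the contracted edges. After this rearrangement, one leg of the diagram realizes the contraction of a coproduct against the right integral $u_R$, so the defining property of $u_R$ (Definition~\ref{def:integral_cointegral}) collapses that leg and produces the claimed right-hand side.

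The step I expect to be the main obstacle is the bialgebra move together with its bookkeeping, rather than any deep input: since $e_\theta$ is not cocommutative (compare Lemma~\ref{lem:eR_cocommutativity}) and the cophase twist is not central, the precise edges to which the two $\alpha^n$-copies attach after splitting, and the orientation of each contracted leg, must be tracked carefully so that the surviving leg genuinely matches the right-integral pattern. Once the diagram is organized with the $\alpha^n$-weights separated onto the two outputs of the coproduct, the character property and the right-integral identity apply mechanically, and no case distinction on the sign or parity of $n$ is needed, because the expansion of $e_\theta$, the multiplicativity of $\alpha^n$, and the bialgebra axiom all hold uniformly in $n$.
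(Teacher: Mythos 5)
Your proposal is correct and is essentially the paper's own proof: set $\theta = n - \tfrac{1}{2}$, expand $e_\theta$ explicitly from Definition~\ref{def:generalized_cointegral}, and then simplify in exactly the paper's three steps --- multiplicativity of the character $\alpha^n$, the compatibility of $\Delta$ with $M$, and the defining identity of the right (co)integral from Definition~\ref{def:integral_cointegral} to collapse the remaining leg. The only cosmetic difference is in the last step's attribution (the paper says ``definition of a right integral,'' a term it uses loosely for both $u_R$ and $e_R$), which does not change the argument.
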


The following result is about the co-commutativity of the generalized co-integrals extending Lemma \ref{lem:eR_cocommutativity}.
\begin{lemma}\label{lem:e_theta_cocommutativity}
    For any proper half-integer $\theta$, we have
    \begin{align*}
    \includegraphics[scale=.4, valign = c]{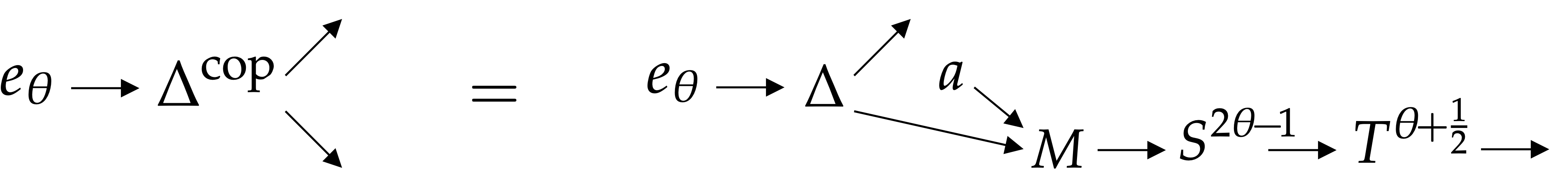}
\end{align*}
\begin{proof}
     Let $\theta = n -\frac{1}{2}$. The left hand side of the above identity can be expanded as
     \vspace{.2cm}
    \begin{align*}
    \includegraphics[scale=.4, valign = c]{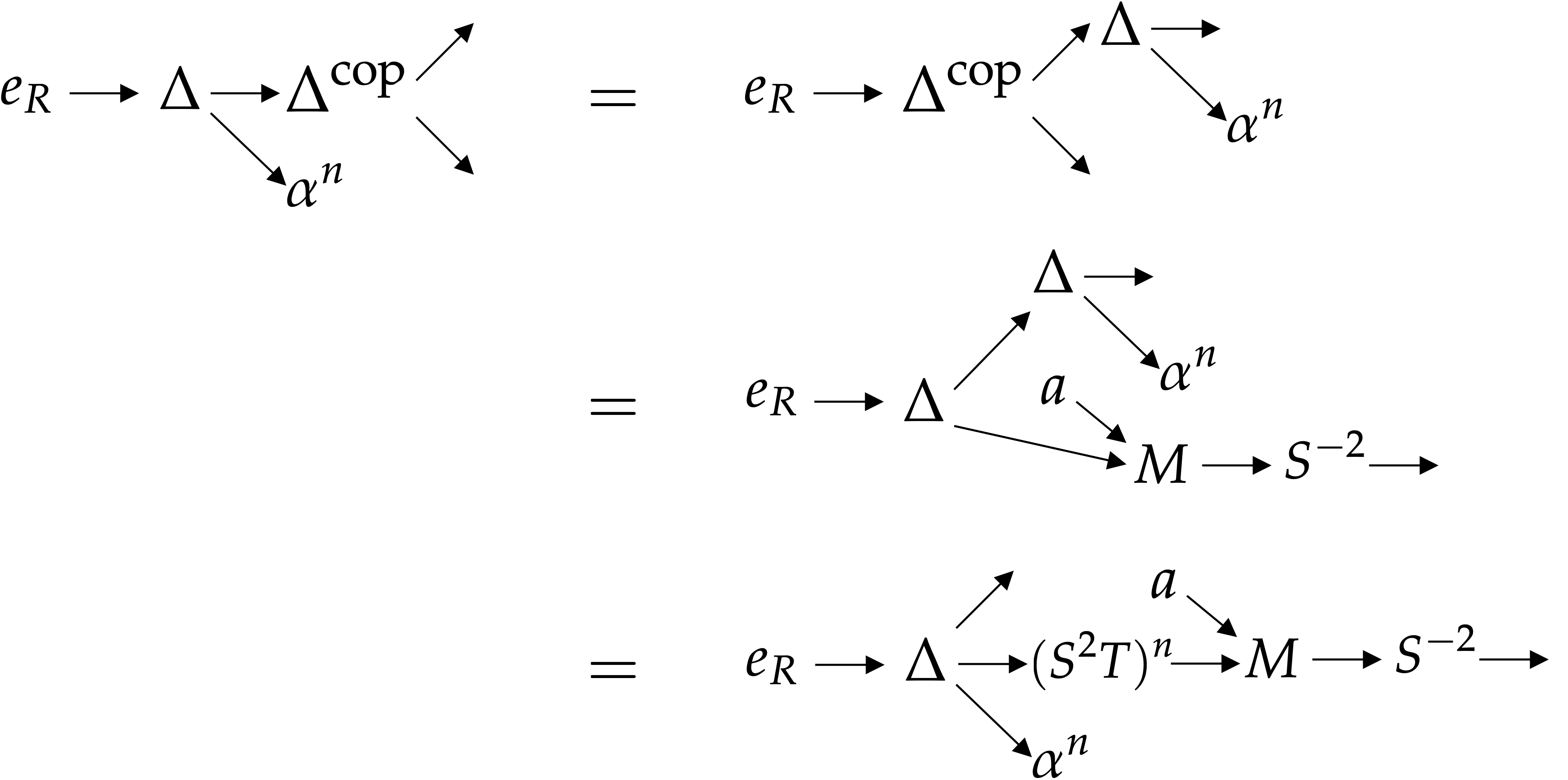}
\end{align*}
The second equality above is by Lemma \ref{lem:eR_cocommutativity}. The last equality follows from the definition of the tilt $T$. By Corollary \ref{cor:tilt}, $T$ fixes $a$ and commutes with the product and the antipode. The same holds for $S^2$. Then the last tensor diagram above reduces to the right-hand side of the desired identity. 

\end{proof}
\end{lemma}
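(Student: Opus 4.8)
The plan is to reduce the statement for the generalized cointegral $e_\theta$ to the co-commutativity defect already established for the right cointegral $e_R$ in Lemma \ref{lem:eR_cocommutativity}. Writing $\theta = n - \tfrac{1}{2}$, I would first expand $e_\theta$ according to Definition \ref{def:generalized_cointegral}, which exhibits $e_\theta$ as the right cointegral $e_R$ decorated by the $n$-th power of the cophase element $\alpha$. The target identity has exactly the same shape as Lemma \ref{lem:eR_cocommutativity} — swapping the two output legs of $\Delta \circ e_\theta$ agrees with applying $S^2$ and conjugation by the phase element $a$ to one leg — so the whole task amounts to checking that the $\alpha^n$-decoration passes transparently through both the swap and the defect operator.

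First I would apply the coproduct to the expanded form and use that $\alpha^n$ is an algebra morphism (a character on $H$), so it distributes over $\Delta$ and can be relocated onto the appropriate leg without disturbing the underlying $e_R$-core; this is the same manipulation that appears in the proof of Lemma \ref{lemma:etheta1}. Once $\Delta \circ e_R$ has been isolated, I would invoke Lemma \ref{lem:eR_cocommutativity} directly to rewrite the swapped diagram as the $S^2$-and-$a$ defect applied to $\Delta \circ e_R$.

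The remaining step is to recombine: I would push the defect operator back past the $\alpha^n$-decoration in order to reassemble $e_\theta$ on the right-hand side. This is where the tilt $T$ enters. By Corollary \ref{cor:tilt}, $T$ fixes $a$ and commutes with both the product $M$ and the antipode $S$, and the same compatibility holds for $S^2$; consequently the defect operator built from $S^2$ and conjugation by $a$ commutes with the $\alpha^n$-decoration, and the two halves reassemble into $e_\theta$ carrying the very same defect, which is the claimed identity.

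The main obstacle is precisely this bookkeeping in the final step: one must verify that the defect terms $S^2$ and conjugation-by-$a$ produced by Lemma \ref{lem:eR_cocommutativity} genuinely commute past the $\alpha^n$-factor distinguishing $e_\theta$ from $e_R$, rather than generating stray phase corrections. The content that guarantees this is the near-indistinguishability of $S^2$ and conjugation by $a$ recorded in Corollary \ref{cor:tilt} — both being Hopf automorphisms that fix $a$ and commute with $M$ and $S$ — so no extra scalar appears and the identity closes cleanly.
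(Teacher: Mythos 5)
Your proposal is correct and follows essentially the same route as the paper's proof: expand $e_\theta$ (with $\theta = n - \tfrac{1}{2}$) as the $\alpha^n$-decorated right cointegral, apply Lemma \ref{lem:eR_cocommutativity} to the underlying $e_R$-core, and then use the tilt $T$ together with Corollary \ref{cor:tilt} (that $T$ fixes $a$ and commutes with the product and antipode, as does $S^2$) to pass the $S^2$/conjugation-by-$a$ defect through the $\alpha^n$-decoration and reassemble $e_\theta$. The only minor imprecision is attributing the relocation of the $\alpha^n$-leg across the coproduct to the character property of $\alpha^n$ (that step is really coassociativity; the character property and Corollary \ref{cor:tilt} are what make the defect transparent to $\alpha^n$), but this does not affect the validity of the argument.
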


\begin{lemma}\label{lem:S_fli_e_theta}
    For any proper half-integer $\theta$, we have
    \begin{align*}
    \includegraphics[scale=.4, valign = c]{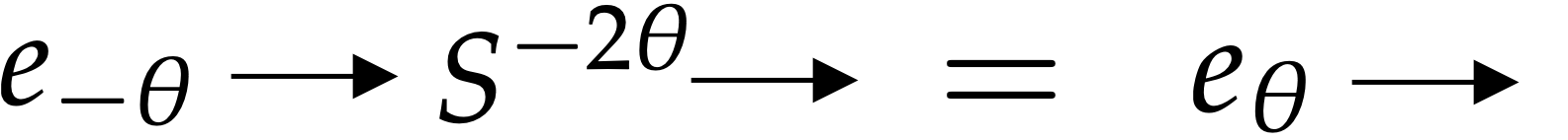}
\end{align*}
\begin{proof}
Let $\theta = n -\frac{1}{2}$. By Lemma \ref{lem:antipode_integral}, $S^{-2n}$ has eigenvalue $q^{-n}$ on the generalized co-integral $e_{-n + \frac{1}{2}}$. Hence, the left hand side of the above identity can be expanded as below with subsequent derivations; explanations of the derivations are provided at the end:
\vspace{.4cm}
\begingroup
\allowdisplaybreaks
    \begin{align*}
    &\includegraphics[scale=.4, valign = c]{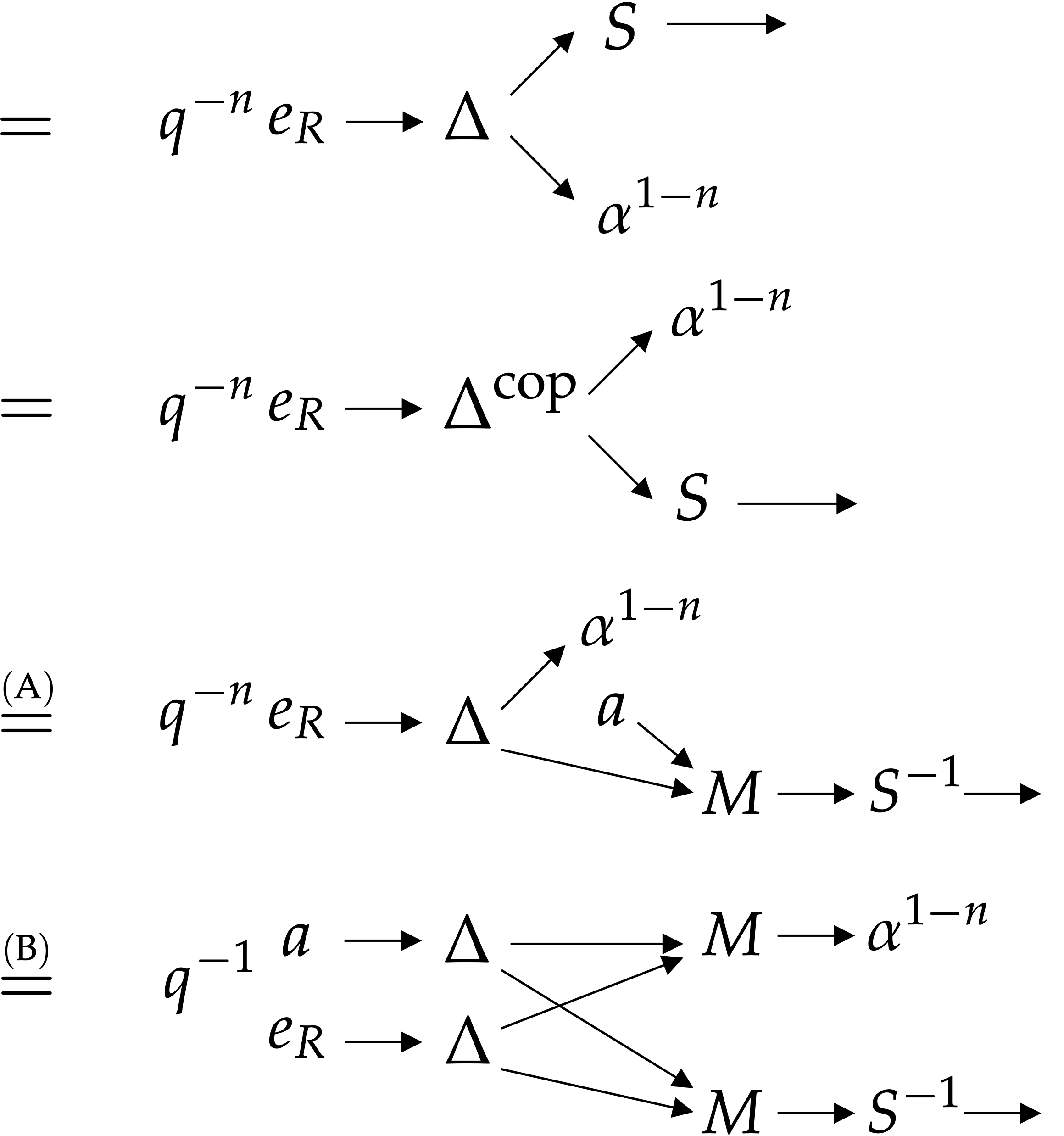}\\
    &\includegraphics[scale=.4, valign = c]{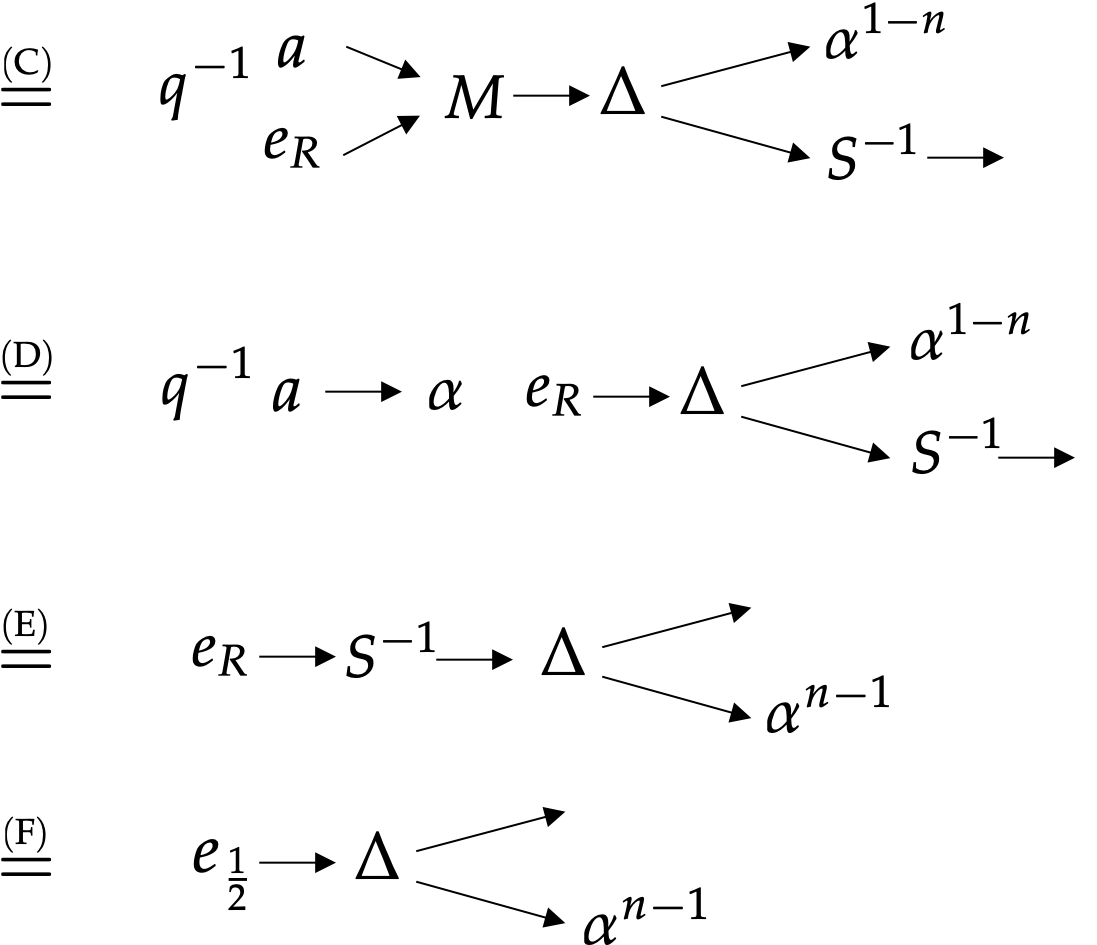}
\end{align*}
\endgroup
$$$$
Identity (A) is by Lemma \ref{lem:eR_cocommutativity}. Identity (B) uses the facts that $\alpha^{1-n}$ is an algebra morphism, $a$ is group-like, and $q = \alpha(a)$. Identity (C) is due to the compatibility condition between $\Delta$ and $M$. Identity (D) is by Definition~\ref{def:phase_cophase}. Identity (E) follows from the facts that $S^{-1}$ is an anti-coalgebra morphism and $\alpha^{-n} \circ S^{-1} = \alpha^{n}$. Identity (F) is by Lemma \ref{lem:antipode_integral}. The right hand side of Identity $(F)$ is apparently $e_{\theta}$.
\end{proof}

\end{lemma}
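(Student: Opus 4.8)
The plan is to reduce immediately to the integer parametrization $\theta = n - \tfrac{1}{2}$ and then prove the identity by a local rewriting of the tensor diagram, after expanding $e_\theta$ through its definition (Definition \ref{def:generalized_cointegral}) in terms of the right cointegral $e_R$, integer powers of the cophase $\alpha$, and powers of the antipode. The point is that once every node is written using only $e_R$, $\alpha^{\pm m}$, the phase element $a$, $\Delta$, $M$ and $S^{\pm 1}$, the whole statement becomes a sequence of rewrites, each replacing one small piece of the network by an already-established identity.

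The first ingredient I would isolate is the scalar datum. By Lemma \ref{lem:antipode_integral}, $S^2$ acts by the scalar $q$ on every generalized cointegral; consequently $S^{-2n}$ acts on $e_{-n+1/2} = e_{-\theta}$ by $q^{-n}$. This lets me strip off all even antipode content up front, paying a single factor of $q^{-n}$, so that afterward only one copy of $S^{-1}$ (the odd part of the antipode) remains in the diagram to be dealt with.

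The geometric heart of the argument is the twisted cocommutativity of $e_R$ supplied by Lemma \ref{lem:eR_cocommutativity}: it lets me commute a coproduct leg past the antipode at the cost of inserting the phase element $a$ together with an extra $S^2$. I expect this to be the main obstacle, precisely because here the cyclic order of the coproduct legs and the placement of the inserted $a$ must be tracked without error, and a slip in the ordering convention would propagate through the rest of the computation. After this move I would use that $\alpha^{1-n}$ is an algebra morphism, that $a$ is group-like, and that $q = \alpha(a)$ (Definition \ref{def:phase_cophase}) to gather the various $q$-powers and confirm that they cancel against the $q^{-n}$ from the previous step, then fuse the adjacent product and coproduct nodes using the bialgebra compatibility axiom of Definition \ref{def:hopf_algebra}.

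Finally I would clear the leftover decorations: the single remaining $S^{-1}$ is handled using that $S^{-1}$ is an anti-coalgebra morphism together with the relation $\alpha^{-n}\circ S^{-1} = \alpha^{n}$, which holds because $\alpha$ is an algebra map and $\alpha \circ S^{-1} = \alpha^{-1}$. A concluding application of Lemma \ref{lem:antipode_integral} reassembles what remains into the defining expression for $e_\theta$, yielding the claimed identity. Apart from the twisted-cocommutativity step and its accompanying $q$-bookkeeping, every rewrite is routine, so I would present the argument as a single chain of diagram equalities with the justification of each arrow noted inline.
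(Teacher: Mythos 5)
Your proposal is correct and follows essentially the same route as the paper's proof: the same reduction $\theta = n - \tfrac{1}{2}$, the same initial use of Lemma \ref{lem:antipode_integral} to extract the factor $q^{-n}$, the twisted cocommutativity of $e_R$ from Lemma \ref{lem:eR_cocommutativity} as the central move, followed by the same bookkeeping via the algebra-morphism property of $\alpha^{1-n}$, group-likeness of $a$, $q=\alpha(a)$, the bialgebra axiom, the anti-coalgebra property of $S^{-1}$ with $\alpha^{-n}\circ S^{-1}=\alpha^{n}$, and a final application of Lemma \ref{lem:antipode_integral} to reassemble $e_\theta$. This matches the paper's chain of identities (A)--(F) step for step.
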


\subsection{Drinfeld pairings} We next discuss a certain type of pairing on Hopf algebras.

\begin{definition} \label{def:hopf_pairing} A \emph{Drinfeld pairing} on two Hopf algebras $G$ and $H$ is a bilinear form $P: G \otimes H \to k$
\begin{equation*}
\xblackrightarrow[\footnotesize{$G$}]{P} \xblackleftarrow[\footnotesize{$H$}]{}
\end{equation*}
such that the induced map $G \to H^{*,\op{cop}}$ is a Hopf algebra morphism, or equivalently, the induced map $H \to G^{*, \op{op}}$ is a Hopf algebra morphism. 
\end{definition}
In this paper, given two Hopf algebras $G$ and $H$, we will not simultaneously consider more than one Drinfeld pairing on them, and hence a pairing $P: G \otimes H \to k$ is simply represented by
\vspace{.4cm}
\begin{align*}
    \includegraphics[scale=.4, valign = c]{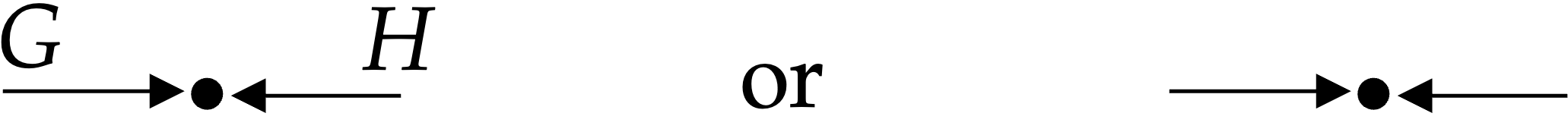}
\end{align*}
$$$$
when no confusion arises. We also add a subscript to structure tensors (e.g.~$\Delta_{H}$) to indicate which Hopf algebra they are from, and as always, omit the subscript whenever it is clear from the context. Unpacking the definition of a Drinfeld pairing leads to the following identities of tensor diagrams:
\vspace{.4cm}
\begin{align*}
    \includegraphics[scale=.4, valign = c]{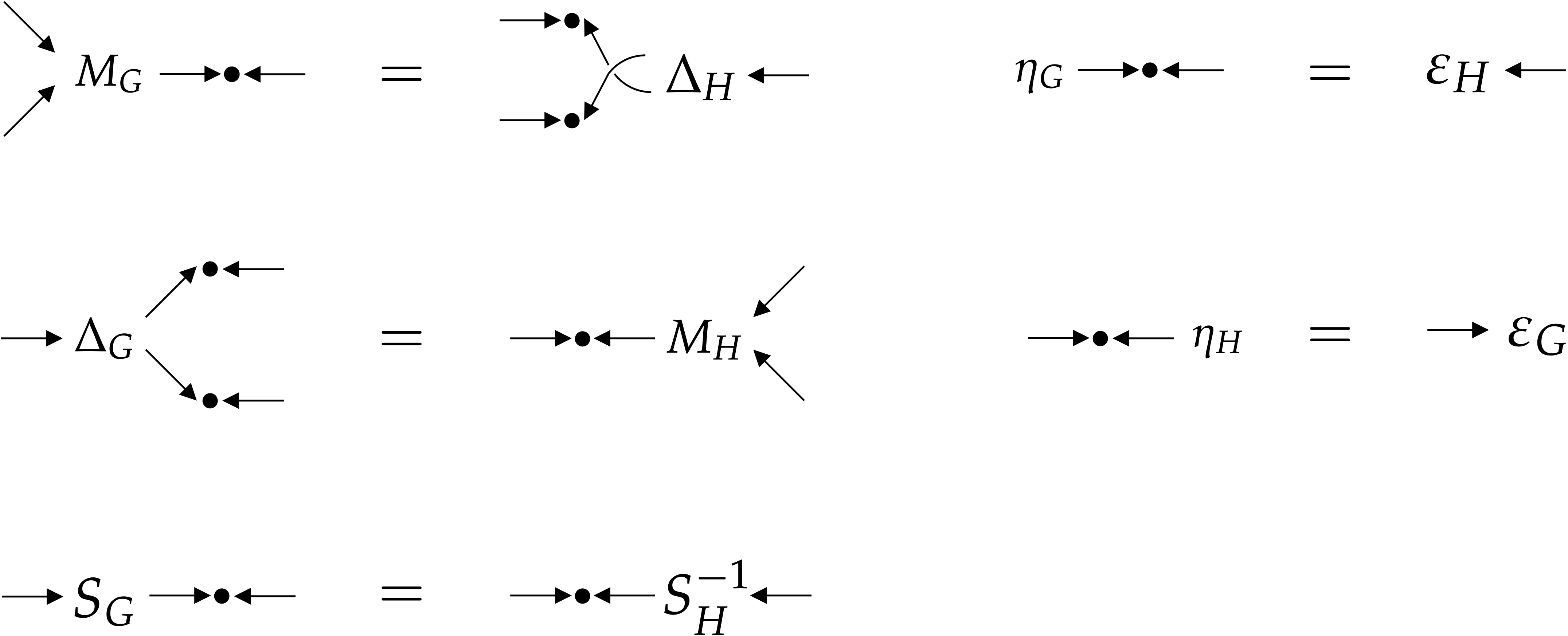}
\end{align*}
$$$$

\noindent The following lemma is straightforward.
\begin{lemma} Let $P$ be a Drinfeld pairing for the pair $(G,H)$. Then $P$ is also a Drinfeld pairing for
\[
(G^{\op{op}},H^{\op{cop}})\,, \qquad (G^{\op{cop}},H^{\op{op}})\,, \qquad (H^{\op{op},\op{cop}},G)  
\]
\end{lemma}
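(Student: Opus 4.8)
The plan is to deduce all three statements from two ingredients already available: the two equivalent forms of the defining condition in Definition \ref{def:hopf_pairing}, and the commutation relations among the operations $(-)^{*}$, $(-)^{\op{op}}$, $(-)^{\op{cop}}$ recorded in the preceding lemma. Write $\phi\colon G \to H^{*,\op{cop}}$, $\phi(g) = P(g,-)$, and $\psi\colon H \to G^{*,\op{op}}$, $\psi(h) = P(-,h)$, for the two induced maps; by hypothesis both are Hopf algebra morphisms. The idea is that these same underlying linear maps, once their domains and codomains are re-decorated with $\op{op}$ and $\op{cop}$, remain Hopf algebra morphisms, and the re-decorated codomains are exactly the duals demanded by Definition \ref{def:hopf_pairing} for the three new pairs.

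The observation I would isolate first is that $(-)^{\op{op}}$ and $(-)^{\op{cop}}$ are functorial: a linear map $f\colon A \to B$ that is a Hopf algebra morphism is automatically a Hopf algebra morphism $A^{\op{op}} \to B^{\op{op}}$ and $A^{\op{cop}} \to B^{\op{cop}}$, with $f$ unchanged. Compatibility with (co)product, unit and counit is immediate from reversing the relevant tensor legs, and compatibility with the antipode follows because $f \circ S_A = S_B \circ f$ forces $f \circ S_A^{-1} = S_B^{-1} \circ f$ (using that the antipode of a finite-dimensional Hopf algebra is invertible). Granting this, the first two pairs are purely formal. Applying $(-)^{\op{op}}$ to $\phi$ gives a Hopf morphism $G^{\op{op}} \to (H^{*,\op{cop}})^{\op{op}} = H^{*,\op{cop},\op{op}}$, and the commutation relations identify $H^{*,\op{cop},\op{op}} = H^{*,\op{op},\op{cop}} = (H^{\op{cop}})^{*,\op{cop}}$; this is precisely the statement that $P$ is a Drinfeld pairing for $(G^{\op{op}}, H^{\op{cop}})$. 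Likewise, applying $(-)^{\op{cop}}$ to $\phi$ produces a Hopf morphism $G^{\op{cop}} \to (H^{*,\op{cop}})^{\op{cop}} = H^* = (H^{\op{op}})^{*,\op{cop}}$, which is the condition for $(G^{\op{cop}}, H^{\op{op}})$.

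For the third pair $(H^{\op{op},\op{cop}}, G)$ I would pass to the transpose form of $P$ and use the second characterization from Definition \ref{def:hopf_pairing}. There the relevant induced map is $H^{\op{op},\op{cop}} \to G^{*,\op{cop}}$, $h \mapsto P(-,h)$, which is $\psi$ with re-decorated source and target. Applying $(-)^{\op{op}}$ and then $(-)^{\op{cop}}$ to the Hopf morphism $\psi\colon H \to G^{*,\op{op}}$ yields a Hopf morphism $H^{\op{op},\op{cop}} \to (G^{*,\op{op}})^{\op{op},\op{cop}} = G^{*,\op{cop}}$, exactly as required. I expect the only real hazard to be keeping the commutation relations straight and, for this last pair, remembering to switch to the transpose pairing and the equivalent second characterization rather than the first; the single genuinely non-formal input is the invertibility of the antipode used in the functoriality step. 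As a fully explicit alternative, one can instead unpack Definition \ref{def:hopf_pairing} into its five scalar identities (the algebra-, coalgebra-, unit-, counit-, and antipode-compatibilities depicted above) and check directly that each survives the relabelings $g\cdot g' \mapsto g'\cdot g$, $\Delta \mapsto \Delta^{\op{cop}}$, $S \mapsto S^{\pm 1}$ induced by $\op{op}$ and $\op{cop}$, invoking the antipode identity $P(S g, h) = P(g, S^{-1} h)$ (possibly twice) to absorb the $S^{\pm 1}$.
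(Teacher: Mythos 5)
Your proof is correct. There is in fact nothing in the paper to compare it against: the authors introduce this lemma with the sentence ``The following lemma is straightforward'' and supply no proof, so your write-up fills in the verification they left implicit. All three of your steps are sound. The functoriality of $(-)^{\op{op}}$ and $(-)^{\op{cop}}$ on Hopf algebra morphisms holds as you state it; the antipode step $f\circ S_A^{-1}=S_B^{-1}\circ f$ is legitimate because the antipodes in play are invertible (the paper's definitions of $H^{\op{op}}$ and $H^{\op{cop}}$ already presuppose $S^{-1}$, and all Hopf algebras here are finite-dimensional). The identifications you invoke --- $H^{*,\op{cop},\op{op}}=H^{*,\op{op},\op{cop}}=(H^{\op{cop}})^{*,\op{cop}}$ for the first pair, $(H^{*,\op{cop}})^{\op{cop}}=H^*=(H^{\op{op}})^{*,\op{cop}}$ for the second, and $(G^{*,\op{op}})^{\op{op},\op{cop}}=G^{*,\op{cop}}$ for the third --- are exactly what the commutation lemma preceding Definition \ref{def:hopf_pairing} provides; in particular the interchange of $\op{op}$ and $\op{cop}$ on the dual comes from its third relation applied to $H^*$, where the isomorphism is the identity map (the antipode only enters in the further identification with $H^*$ itself, which you never need). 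Your treatment of the third pair is also the correct reading of the statement: a Drinfeld pairing for $(H^{\op{op},\op{cop}},G)$ is a form on $H\otimes G$, so one must pass to the transpose of $P$ and use the second characterization in the definition, precisely as you do. The only refinement worth mentioning is that the antipode-intertwining check in your functoriality step is actually automatic, since any bialgebra morphism between Hopf algebras commutes with the antipodes; but including it explicitly, as you did, is harmless.
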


\begin{definition}[Drinfeld Double]\label{def_generalized_double}
The \emph{Drinfeld double} associated to a Drinfeld pairing $P$ for $G$ and $H$ is the Hopf algebra
\[
D(G,H;P) \qquad \text{or more simply} \qquad D_P
\]
defined as follows. As a coalgebra, $D_P$ is $G \otimes H$ with the tensor product coalgebra structure. That is, the coproduct and counit are given by
\begin{align*}
    \includegraphics[scale=.4, valign = c]{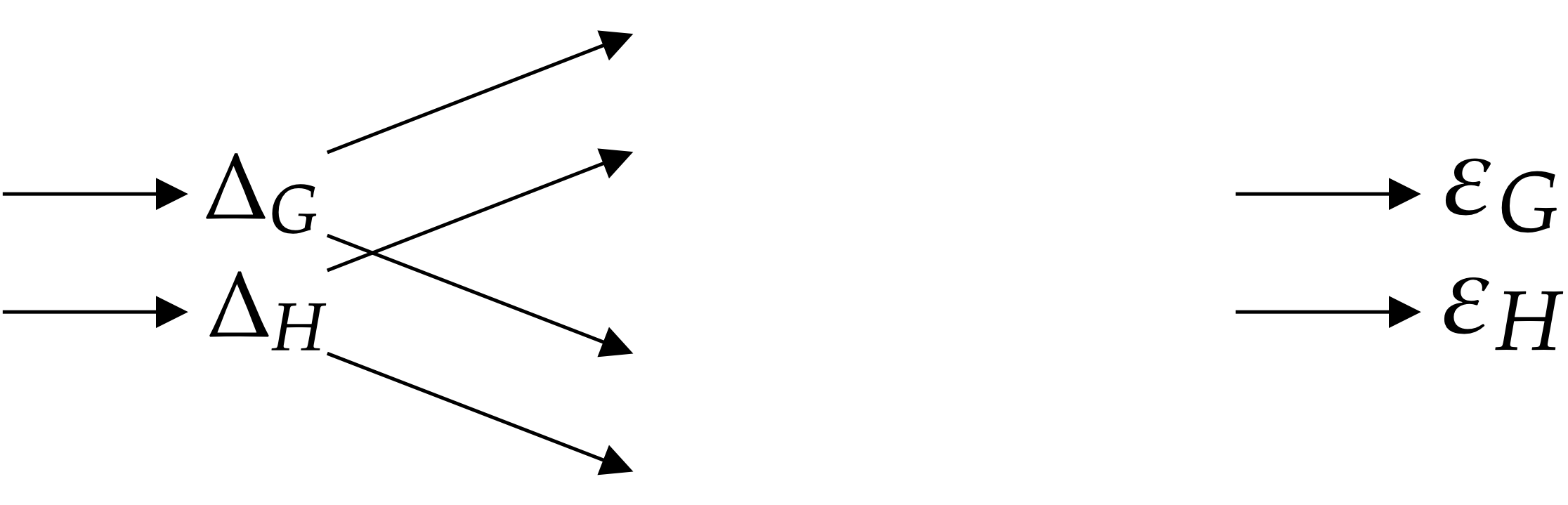}
\end{align*}
Denote by $U: H \otimes G \to G \otimes H$ the tensor
\vspace{.4cm}
\begin{align*}
    \includegraphics[scale=.4, valign = c]{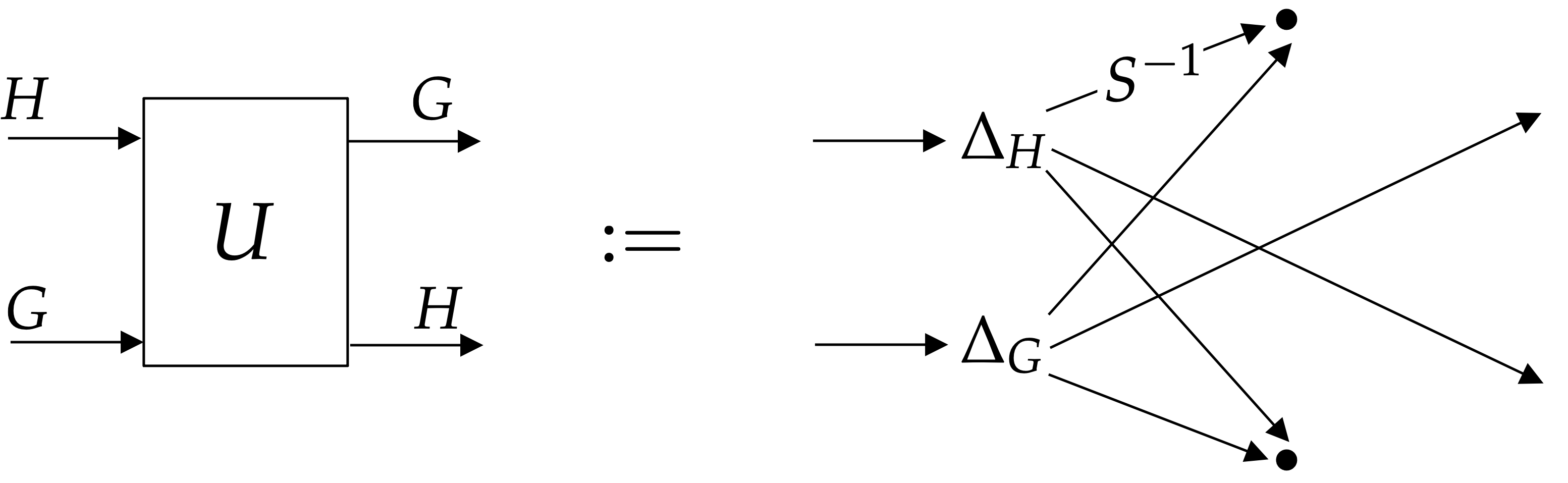}
\end{align*}
$$$$
Then the product and unit on $D_P$ are given by
\vspace{.4cm}
\begin{align*}
    \includegraphics[scale=.4, valign = c]{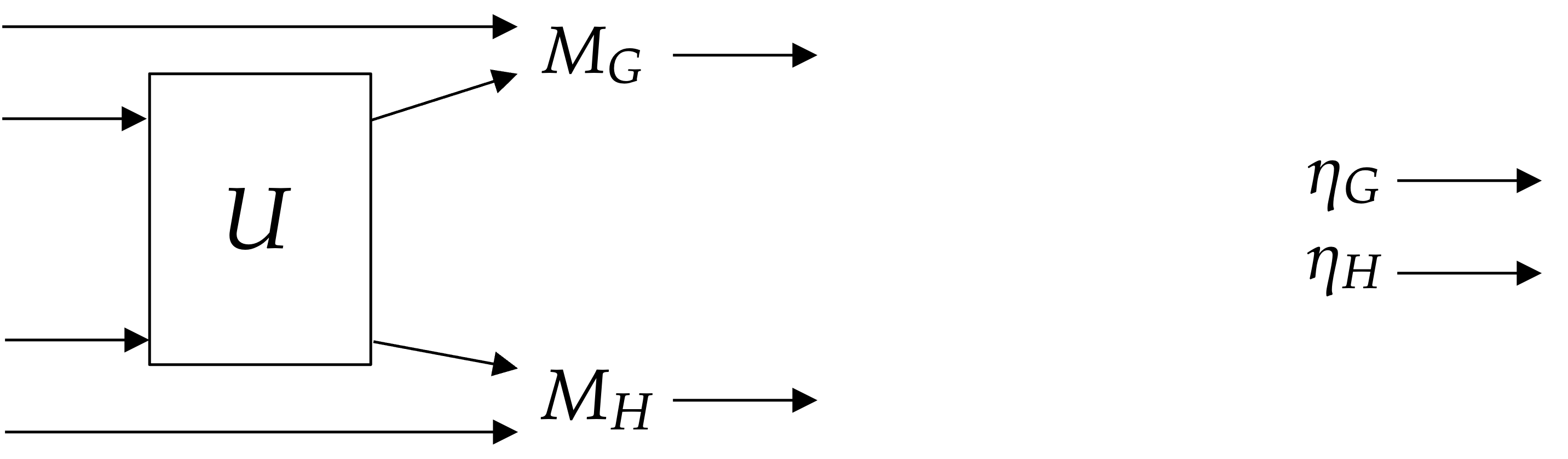}
\end{align*}
$$$$
Lastly, the antipode is
\vspace{.4cm}
\begin{align*}
    \includegraphics[scale=.4, valign = c]{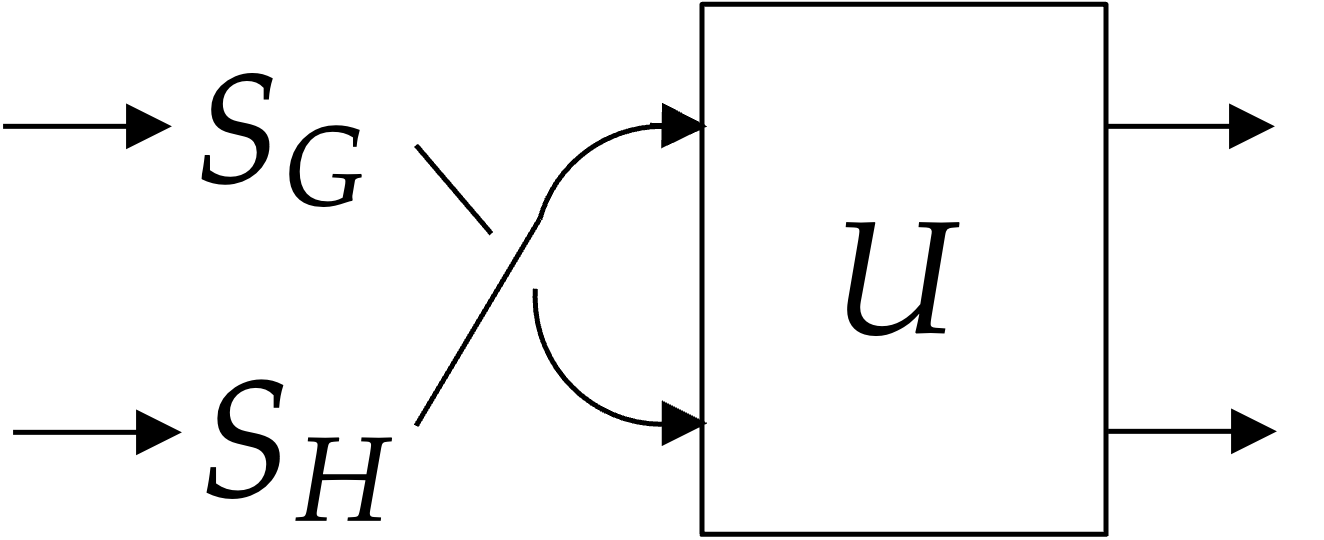}
\end{align*}
$$$$
\end{definition}

It can be checked the embeddings $G \to D_P$ mapping $g$ to $g \otimes 1$ and  $H \to D_P$ mapping $h$ to $1 \otimes h$ are Hopf algebra morphisms. We identify $G$ and $H$ with their images in $D_P$ by the above maps. Under these identifications, for $g \in G,\ h \in H$, 
\begin{align*}
    g \cdot h  &= g \otimes h \\
    h \cdot g  &= U(h \otimes g)
\end{align*}
 Hence, as an algebra, $D_P$ is generated by the subalgebras $G$ and $H$ subject  to the above relations.

 \begin{example}[Drinfeld double of a Hopf algebra]
     Given a Hopf algebra $H$, the canonical pairing $P_{\text{can}}$ on $H^{*,\op{cop}} \otimes H$ is a Drinfeld pairing, and $D(H^{*,\op{cop}} , H; P_{\text{can}}) = H^{*, \op{cop}} \otimes H$ is the usual  Drinfeld double of $H$.
 \end{example}

 \begin{example}\label{example:quasi_triangular_pairing}
     Let $H$ be a quasi-triangular Hopf algebra with the $R$-matrix $R \in H \otimes H$ which is represented by the tensor
      \begin{align*}
    \includegraphics[scale=.4, valign = c]{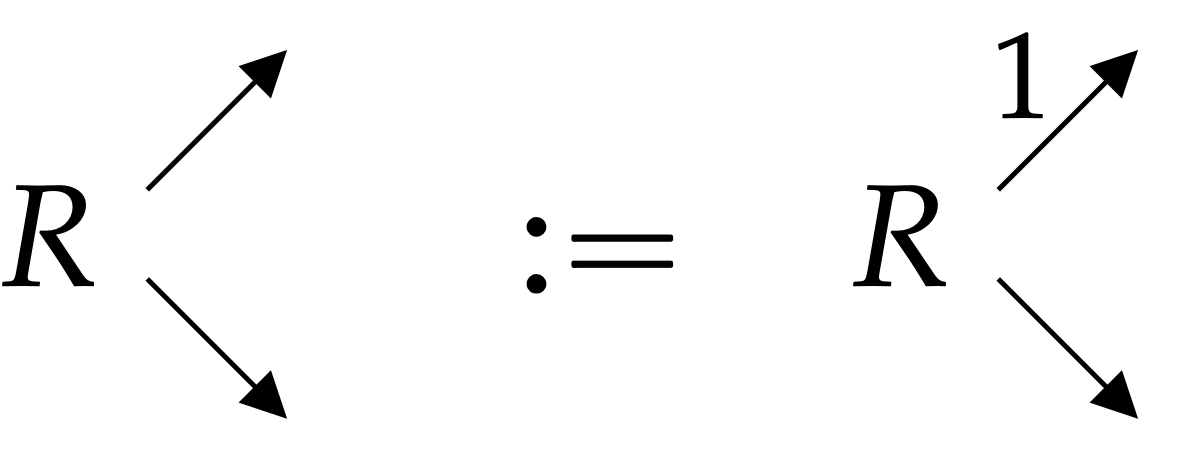}
     \end{align*}
     The edge labelled by $1$ denotes the first factor of $R$. Here we will only draw the $R$-tensor in a similar manner as above and hence will omit the label `1'. With this notation, the $R$-matrix satisfies the identities
     \vspace{.4cm}
     \begin{align}\label{eqn:R-matrix_condition1}
    \includegraphics[scale=.4, valign = c]{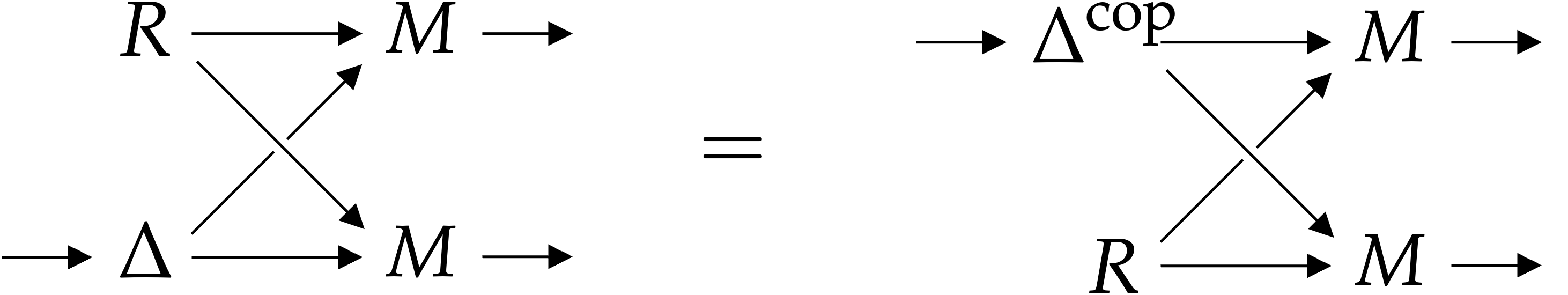}
     \end{align}
      \begin{align*}
    \includegraphics[scale=.4, valign = c]{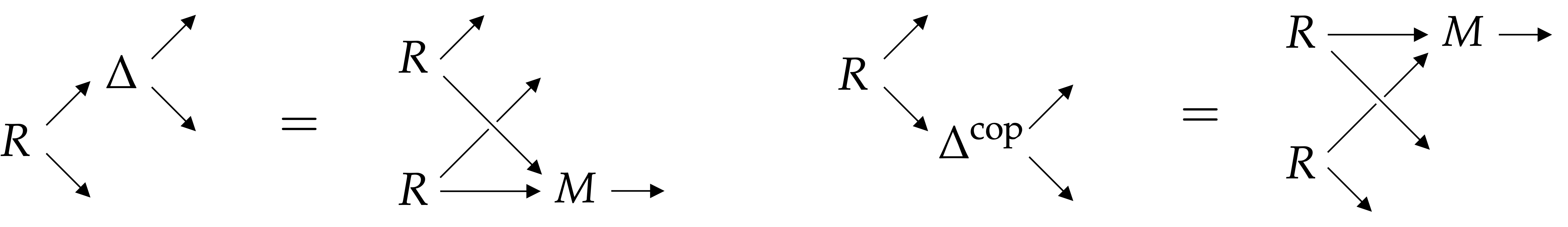}
     \end{align*}
     $$$$
     Define the map $f_R: H^* \to H^{\op{cop}}$ by $f_R(p):= (p \otimes \text{Id})(R)$. By  Proposition $12.2.11$ of \cite{radford2011hopfalgebras}, $f_R$ is a Hopf algebra morphism. Denote by $R^*$ the bilinear form on $H^* \otimes H^*$ defined by $R^*(p \otimes q):= (p \otimes q)(R)$. Then $R^*$ is a Drinfeld pairing on $H^* \otimes H^*$.
 \end{example}

\section{Hopf Triplets} \label{sec:Hopf_triplets}
We now introduce Hopf triplets which are the algebraic data to define the invariants.

\begin{definition}[Hopf Triplet] \label{def:hopf_triplet}
A \emph{Hopf triplet} $\mathcal{H} = (H_\alpha,H_\beta,H_\kappa,\langle-\rangle)$ over a field $k$ consists of three  Hopf algebras $H_\alpha,\ H_\beta$, and $H_\kappa$, and three pairings
\[
\langle-\rangle_{\alpha\beta}:H_\alpha \otimes H_\beta \to k \qquad \langle-\rangle_{\beta\kappa}:H_\beta \otimes H_\kappa \to k \qquad \langle-\rangle_{\kappa\alpha}:H_\kappa \otimes H_\alpha \to k \qquad
\]
whose tensor diagrams are denoted, respectively, by
\begin{align*}
    \includegraphics[scale=.4, valign = c]{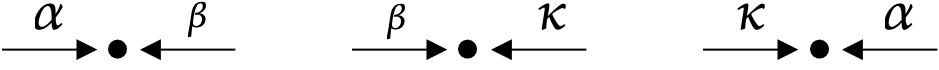}
\end{align*}
such that the following conditions hold.
\begin{itemize}[leftmargin=.2in]
    \item[(a)] For $\mu\nu \in \{\alpha\beta, \beta\kappa, \kappa\alpha\}$, $\langle-\rangle_{\mu\nu}$ is a Drinfeld pairing; that is, $\langle-\rangle_{\mu\nu}$ induces a Hopf algebra morphism $H_{\mu} \to H_{\nu}^{*, \text{cop}}$.
    \item[(b)] The above induced map $H_{\mu} \to H_{\nu}^{*, \text{cop}}$ and its dual $H_{\nu}^{\text{op}} \to H_{\mu}^{*}$ preserve the phase element,  
    \begin{align*}
    \includegraphics[scale=.4, valign = c]{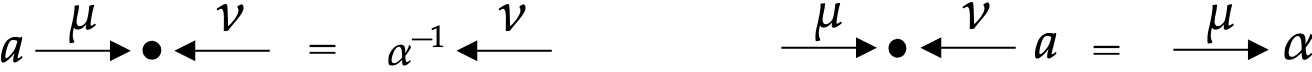}
    \end{align*}
    \item[(c)] The following equality holds:
    \begin{align}\label{eqn:triplet_def_triangle_1}
    \includegraphics[scale=.4, valign = c]{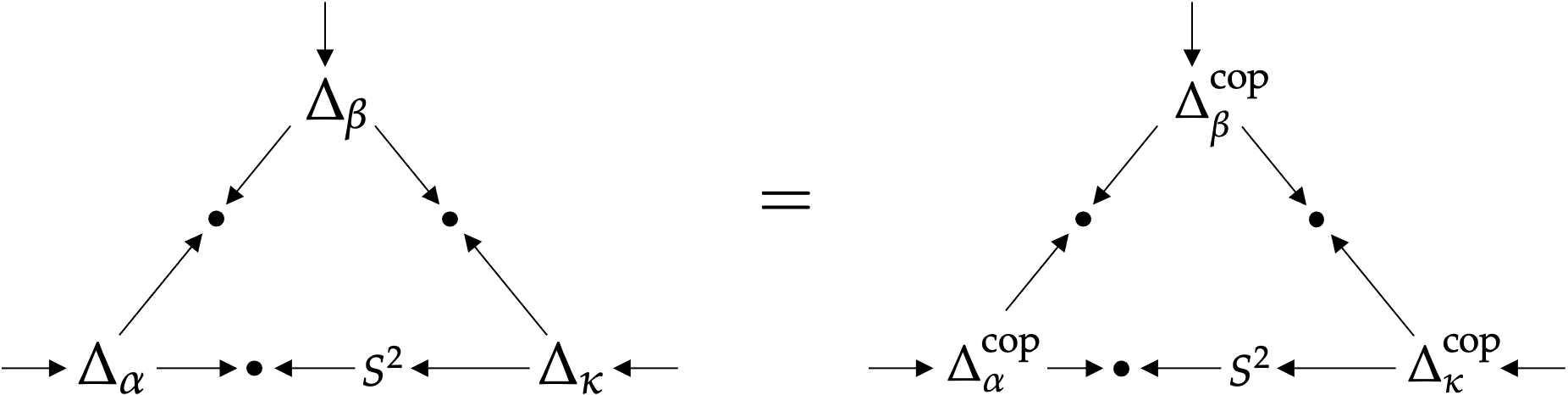}
    \end{align}
\end{itemize}
\end{definition}
\begin{remark}
    If all three Hopf algebras of $\mathcal{H}$ are involutory (i.e., semisimple), then Part (b) is automatically satisfied, and the above definition reduces to the definition of involutory Hopf triplet in \cite{chaidez20194manifold}. 
\end{remark}
\begin{remark}
    In Part (c) of the above definition and throughout the paper, we drop the indices from certain tensors whenever they are clear from the context. Also, the equality in Part (c) seems asymmetric in the sense that only the $S^2$ tensor from $H_{\kappa}$ is involved. However, the lemma below shows otherwise.
\end{remark}
\begin{lemma}\label{lem:triplet_triangle_condition}
    Assuming the condition in Part (a) of Definition \ref{def:hopf_triplet} holds, then Equation~\eqref{eqn:triplet_def_triangle_1} in Part (c) is equivalent to the identity below which is formally obtained from Equation~\eqref{eqn:triplet_def_triangle_1} by cyclically permuting the indices of all involved tensors in the order $\alpha \mapsto \beta \mapsto \kappa \mapsto \alpha$\,:
    \begin{align*}
    \includegraphics[scale=.4, valign = c]{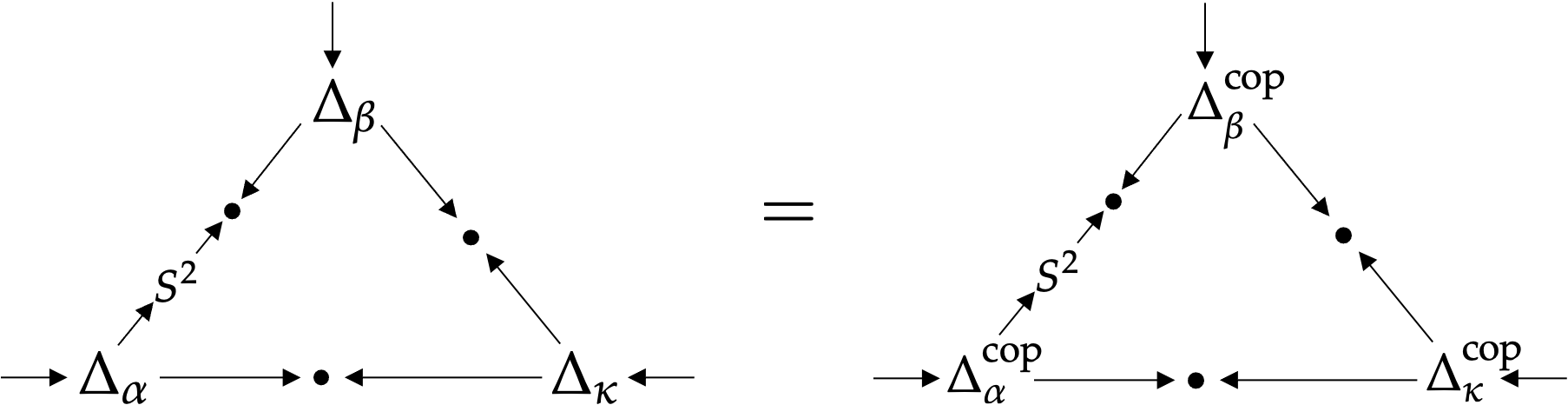}
    \end{align*}
    By permuting the indices once again, one obtain a third identity which is also equivalent to  Equation~\eqref{eqn:triplet_def_triangle_1}.
    \begin{proof}
    The proof is straightforward by using the conditions that the pairings induce Hopf algebra morphisms and that $S^2$ is a Hopf algebra isomorphism.
    \end{proof}
\end{lemma}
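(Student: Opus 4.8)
The plan is to reduce the apparent asymmetry of \eqref{eqn:triplet_def_triangle_1} to the fact that a single $S^2$ insertion can be transported freely around the triangle of pairings, so that all three rotated forms become rewritings of one and the same relation. I would first isolate the two consequences of Part (a) of Definition \ref{def:hopf_triplet} that do all the work. Since $\langle-\rangle_{\mu\nu}$ induces a Hopf algebra morphism $H_\mu \to H_\nu^{*,\op{cop}}$ (Definition \ref{def:hopf_pairing}), and a Hopf morphism commutes with antipodes, the pairing satisfies the antipode-intertwining relation $\langle S_\mu(x), y\rangle_{\mu\nu} = \langle x, S_\nu(y)\rangle_{\mu\nu}$; iterating once gives $\langle S_\mu^2(x), y\rangle_{\mu\nu} = \langle x, S_\nu^2(y)\rangle_{\mu\nu}$, so $S^2$ may be slid from the $H_\mu$-leg of any pairing to its $H_\nu$-leg. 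Secondly, $S^2$ is a Hopf algebra automorphism on each $H_\mu$, hence invertible and commuting with every structure tensor ($M$, $\Delta$, $\eta$, $\epsilon$, and $S$). Only Part (a) is needed, matching the hypothesis of the lemma.

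Next I would observe that the tensor diagram underlying \eqref{eqn:triplet_def_triangle_1} consists of the three pairings contracted around a triangle together with structure maps, and that this bare configuration is invariant under the cyclic relabeling $\alpha \mapsto \beta \mapsto \kappa \mapsto \alpha$; the \emph{only} feature breaking the symmetry is the lone insertion of $S_\kappa^2$ on the $H_\kappa$-leg. The heart of the argument is then to transport this insertion. Sliding $S_\kappa^2$ across the pairing $\langle-\rangle_{\kappa\alpha}$ converts it into an $S_\alpha^2$ on the adjacent $H_\alpha$-leg, and using that $S^2$ commutes with the structure tensors of $H_\alpha$ I can move it into the position that $S_\alpha^2$ occupies in the cyclically relabeled diagram. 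Reading off and relabeling, the result is exactly the first rotated identity. Because each step — the antipode slide across a pairing and the commutation of the automorphism $S^2$ — is a reversible equality (invertibility of $S^2$ is used here), the rewriting gives an \emph{equivalence}, not merely an implication; applying the same procedure once more produces the third identity and closes the cycle.

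The step I expect to be the main obstacle is the bookkeeping of the op/cop conventions while transporting $S^2$. The Drinfeld condition is recorded with target $H_\nu^{*,\op{cop}}$, and its equivalent dual form $H_\nu^{\op{op}} \to H_\mu^{*}$ (together with Definition \ref{def:dual_op_cop_algebras}) dictates how products and coproducts are swapped across each pairing edge. One must verify that the direction of each $S^2$ slide and the orientation of each edge of the triangle are consistent, so that after transporting $S^2$ across exactly one pairing the remaining structure maps line up with the rotated diagram with no residual $S^{\pm 1}$, antipodes, or phase factors. Checking that these conventions close up correctly around the triangle is where the diagrammatic care concentrates; beyond that, the algebraic content is precisely the two facts isolated at the outset.
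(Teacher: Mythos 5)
Your overall strategy --- isolating the two consequences of Part (a) and transporting the lone $S^2$ around the triangle --- is exactly the spirit of the paper's (one-sentence) proof, but your execution contains a concrete error in the step that does all the work. The antipode-intertwining relation you derive from Part (a) is wrong: since the induced map $H_\mu \to H_\nu^{*,\op{cop}}$ is a Hopf algebra morphism and the antipode of $H_\nu^{*,\op{cop}}$ is $(S_\nu^{-1})^*$ (the $\op{cop}$ operation replaces the antipode by its inverse, see Definition \ref{def:dual_op_cop_algebras}), the correct relation is
\[
\langle S_\mu(x), y\rangle_{\mu\nu} \;=\; \langle x, S_\nu^{-1}(y)\rangle_{\mu\nu},
\qquad\text{hence}\qquad
\langle S_\mu^2(x), y\rangle_{\mu\nu} \;=\; \langle x, S_\nu^{-2}(y)\rangle_{\mu\nu}.
\]
Your rule $\langle S_\mu(x), y\rangle_{\mu\nu} = \langle x, S_\nu(y)\rangle_{\mu\nu}$ holds only in the involutory case, where the $S^2$ insertions are trivial anyway; it also contradicts the paper's own usage elsewhere: in the proof of Proposition \ref{prop:inv_combing_move} (identity (B)) the antipode is ``changed to the inverse antipode when it is slid to the other side,'' and in the proof of Theorem \ref{thm:triangle_identity_versions} the legitimate move is creating a pair of antipodes, $\langle S_\mu(x),S_\nu(y)\rangle_{\mu\nu} = \langle x,y\rangle_{\mu\nu}$, which is incompatible with your rule unless $S^2 = \id$.

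Consequently the heart of your argument --- ``sliding $S_\kappa^2$ across $\langle-\rangle_{\kappa\alpha}$ converts it into an $S_\alpha^2$ on the adjacent $H_\alpha$-leg,'' with ``no residual $S^{\pm 1}$'' --- fails as stated: the slide produces $S_\alpha^{-2}$, which differs from the $S_\alpha^{2}$ appearing in the cyclically permuted identity by $S^4$, and $S^4 \neq \id$ in general (the lemma assumes only Part (a), not involutivity). The equivalence can still be proved along your lines, but it needs a compensating move you do not supply: after the slide, reparametrize the entire $H_\alpha$ input by the automorphism $S_\alpha^{2}$ (an invertible change of variables, legitimate because $S^2$ commutes with all structure tensors of $H_\alpha$, so it preserves the identity in both directions); this removes the $S_\alpha^{-2}$ at the $\kappa\alpha$ pairing at the cost of inserting $S_\alpha^{2}$ on the other $H_\alpha$-leg, and one must then verify, diagram in hand, that the resulting configuration is exactly the permuted identity. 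This is precisely the op/cop bookkeeping you flagged as the ``main obstacle'' and then waved through using the incorrect slide rule, so as written the proof has a genuine gap.
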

We call Equation~\eqref{eqn:triplet_def_triangle_1} the Triangle Identity. Lemma \ref{lem:triplet_triangle_condition} shows there is a symmetry to the Triangle Identity by cyclically permuting the indices $\alpha \mapsto \beta \mapsto \kappa \mapsto \alpha$.
Below we provide some additional equivalent formulations of the Triangle Identity which will be useful in the study of the 4-manifold invariants.

Still, assume the condition in Part (a) of Definition \ref{def:hopf_triplet} holds for the tuple of Hopf algebras $(H_\alpha,H_\beta,H_\kappa,\langle-\rangle)$. Then, the pairing-induced map $H_{\alpha} \to H_{\beta}^{*, \text{cop}}$ can also be thought of as a Hopf algebra morphism,
\begin{align}
    \langle-\rangle_{\alpha\beta}\colon &H_{\alpha}^{\text{op}} \to H_{\beta}^{*, \text{cop},\text{op}} = \left(H_{\beta}^{\text{cop}}\right)^{*, \text{cop}},
\end{align}
or equivalently, 
\begin{align}\label{eqn:triplet_alpha_beta}
    \langle-\rangle_{\alpha\beta}\colon &H_{\alpha}^{\text{op}} \otimes H_{\beta}^{\text{cop}} \to k
\end{align}
is a Drinfeld pairing. Hence, the Drinfeld double $D(H_{\alpha}^{\text{op}}, H_{\beta}^{\text{cop}}, \langle-\rangle_{\alpha\beta})$ is defined (see Definition \ref{def_generalized_double}). Similarly, the map $H_{\beta} \to H_{\kappa}^{*, \text{cop}}$ induces the Hopf algebra morphism,
\begin{align}\label{eqn:triplet_beta_kappa}
    \langle-\rangle_{\beta\kappa}\colon &H_{\beta}^{\text{cop}} \to H_{\kappa}^{*},
\end{align}
and $H_{\kappa} \to H_{\alpha}^{*, \text{cop}}$ induces the Hopf algebra morphism,
\begin{align}
    \langle-\rangle_{\kappa\alpha}\colon &H_{\alpha}^{*, \text{cop},*} = H_{\alpha}^{\text{op}} \to H_{\kappa}^{*}.
\end{align}
Take the composition
\begin{align}\label{eqn:triplet_kappa_alpha}
    (S^2)^* \circ \langle-\rangle_{\kappa\alpha}\colon&H_{\alpha}^{\text{op}}\  \overset{\langle-\rangle_{\kappa\alpha}}{\longrightarrow} \  H_{\kappa}^{*}\  \overset{(S^2)^*}{\longrightarrow}\  H_{\kappa}^{*}
\end{align}
The two maps in Equations~\eqref{eqn:triplet_beta_kappa} and~\eqref{eqn:triplet_kappa_alpha} determine a linear map
\begin{align}
    \Phi_{\alpha\beta}\colon& D(H_{\alpha}^{\text{op}}, H_{\beta}^{\text{cop}}, \langle-\rangle_{\alpha\beta}) \to H_{\kappa}^{*},
\end{align}
which is defined by
\begin{align*}
    \Phi_{\alpha\beta}(x \otimes y) := \left((S^2)^* \circ \langle\cdot , x\rangle_{\kappa\alpha} \right) \bullet \left(\langle y, \cdot \rangle_{\beta\kappa}\right).
\end{align*}
In terms of the tensor diagrams, $\Phi_{\alpha\beta}$ is presented by
\begin{align*}
    \includegraphics[scale=.4, valign = c]{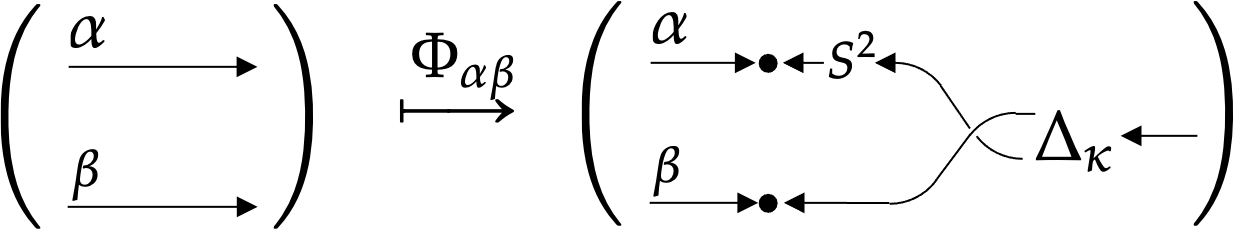}
\end{align*}
By the construction of the Drinfeld double, $\Phi_{\alpha\beta}$ is a coalgebra morphism and preserves the unit. Then by the cyclic symmetry of the indices, the coalgebra maps $\Phi_{\beta\kappa}$ and $\Phi_{\kappa\alpha}$ are also defined similarly,
\begin{align*}
    \Phi_{\beta\kappa}\colon& D(H_{\beta}^{\text{op}}, H_{\kappa}^{\text{cop}}, \langle-\rangle_{\beta\kappa}) \to H_{\alpha}^{*}\,,\\
    \Phi_{\kappa\alpha}\colon& D(H_{\kappa}^{\text{op}}, H_{\alpha}^{\text{cop}}, \langle-\rangle_{\kappa\alpha}) \to H_{\beta}^{*}\,.
\end{align*}
\begin{lemma}\label{lem:doublet_invertible_tensor}
    For a Drinfeld pairing $(H_{\alpha}, H_{\beta}, \langle - \rangle)$, the following identities hold:
    \begin{align*}
          \includegraphics[scale=.4, valign = c]{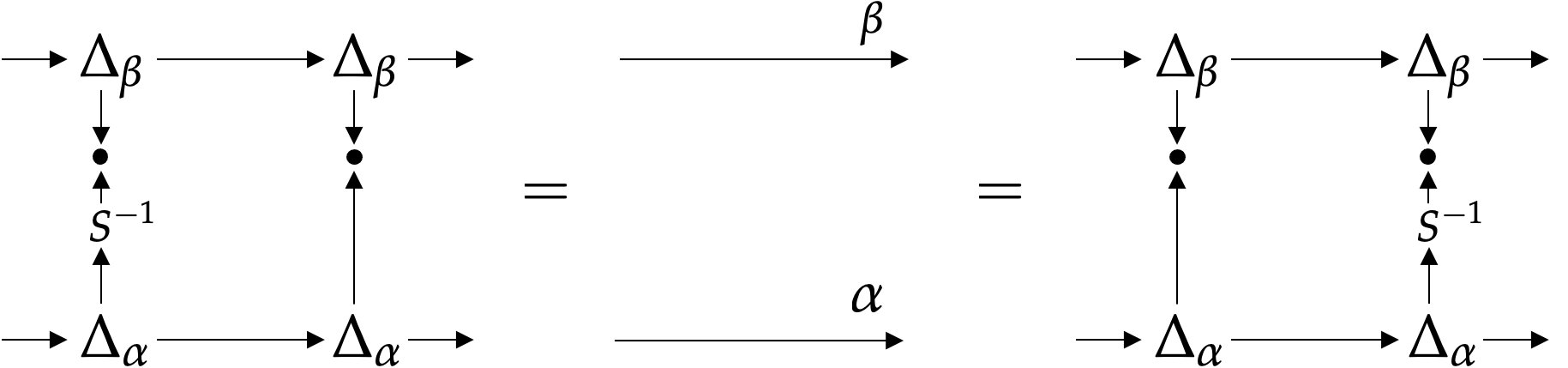}
    \end{align*}
    \begin{proof}
        We leave it as an exercise.
    \end{proof}
\end{lemma}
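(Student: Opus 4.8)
The two displayed equations assert that a certain tensor built from the pairing $\langle-\rangle_{\alpha\beta}$ together with the coproducts of $H_\alpha$ and $H_\beta$ (and an antipode in one factor) is invertible, with the claimed formula furnishing its two-sided inverse. The plan is therefore to verify that each of the two relevant composites collapses to $\id_{H_\alpha}\otimes\id_{H_\beta}$, and to deduce the second identity from the first by symmetry. \emph{First}, I would translate both tensor diagrams into Sweedler notation, writing a generic element as $x\otimes y$ and expanding each coproduct into its legs $x_{(1)}\otimes x_{(2)}\otimes\cdots$ and $y_{(1)}\otimes y_{(2)}\otimes\cdots$; the pairing then appears as scalar factors $\langle x_{(i)},y_{(j)}\rangle_{\alpha\beta}$ attached to various legs, and composing the two tensors produces a single expression in which two (or more) pairing factors share a common coproduct leg.

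\emph{Second}, I would merge those adjacent pairing factors using the Drinfeld-pairing axioms recorded immediately after Definition \ref{def:hopf_pairing}: the defining property that $\langle-\rangle_{\alpha\beta}$ induces a Hopf algebra morphism $H_\alpha\to H_\beta^{*,\op{cop}}$ says exactly that the pairing is multiplicative in each argument with respect to the coproduct of the other, with one of the two compatibilities carrying the order reversal coming from the $\op{cop}$. Applying this repeatedly rewrites a product of pairings evaluated on successive coproduct legs as a single pairing evaluated on a product of those legs.

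\emph{Third}, the product of legs produced in this way has the form $\sum x_{(1)} S(x_{(2)})$ (or its counterpart in $H_\beta$), which by the antipode axiom of Definition \ref{def:hopf_algebra} equals $\eta\,\epsilon(x)$; the surviving counit then contracts the remaining coproduct leg by counitality, leaving precisely $x\otimes y$. This establishes one composite as the identity. \emph{Finally}, the opposite composite follows by the same computation applied to the Drinfeld pairing on the opposite/co-opposite pair, which is again a Drinfeld pairing by the lemma preceding Definition \ref{def_generalized_double}, so that the identical axioms are available; alternatively one dualizes, transporting the statement for $(H_\alpha,H_\beta)$ to the dual Hopf algebras.

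I expect the main obstacle to be purely bookkeeping: tracking the several coproduct legs through the composite and, above all, getting the placement of the antipode and the $\op{op}/\op{cop}$ order reversals right, so that the two pairing factors to be merged really do share a coproduct leg in the correct order for the antipode axiom to fire. Once the diagrams are drawn with all legs labeled, each rewriting step is forced, and the only genuine content is the single application of $\sum x_{(1)} S(x_{(2)})=\eta\,\epsilon(x)$ together with counitality. This is exactly why the authors can reasonably leave it as an exercise.
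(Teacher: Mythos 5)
The paper offers no proof of this lemma at all---it is literally left as an exercise---so there is nothing to compare against; your Sweedler-notation computation (merge the two pairings sharing a coproduct leg via the Hopf-morphism property of the Drinfeld pairing, fire the antipode axiom $\sum x_{(1)}S(x_{(2)})=\eta\,\epsilon(x)$, then contract with counitality) is exactly the standard argument the authors presumably intend, and it is sound, including your correct identification of the only real pitfall: the $\op{op}/\op{cop}$ order reversals that determine whether the collapsing product is $\sum x_{(1)}S(x_{(2)})$ or the invalid $\sum S(x_{(2)})x_{(1)}$. One small simplification for your last step: since all vector spaces in the paper are finite-dimensional, once one composite is shown to equal $\id_{H_\alpha\otimes H_\beta}$ the other follows immediately by linear algebra (a one-sided inverse of an endomorphism of a finite-dimensional space is two-sided), so the second computation via the opposite/co-opposite pairing---where the antipode of $H_\alpha^{\op{op}}$ is $S^{-1}$, a wrinkle your symmetry argument would need to track---can be avoided entirely.
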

\begin{thm}\label{thm:triangle_identity_versions}
    Given a tuple of Hopf algebras $(H_\alpha,H_\beta,H_\kappa,\langle-\rangle)$ such that the condition in Part (a) of Definition \ref{def:hopf_triplet} holds, the followings are equivalent.
    \begin{enumerate}
        \item The Triangle Identity in Equation~\eqref{eqn:triplet_def_triangle_1} holds.
        \item $\Phi_{\alpha\beta}: D(H_{\alpha}^{\text{op}}, H_{\beta}^{\text{cop}}, \langle-\rangle_{\alpha\beta}) \to H_{\kappa}^{*}$ is a Hopf algebra morphism.
        \item The following different version of the Triangle Identity holds:
        \begin{align}\label{eqn:triplet_def_triangle_2}
          \includegraphics[scale=.4, valign = c]{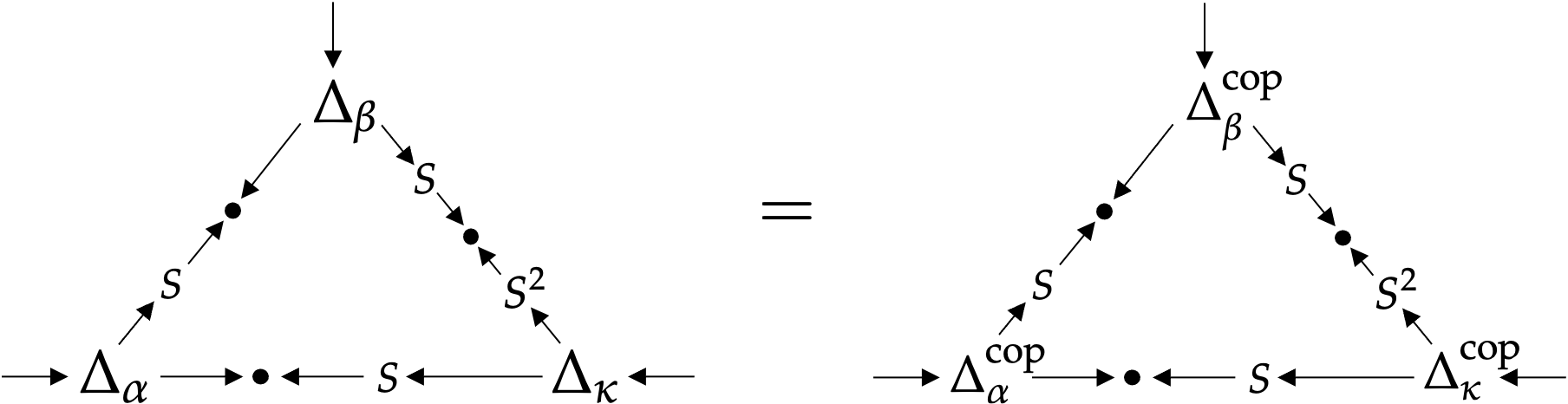}
        \end{align}
    \end{enumerate}
    By the cyclic symmetry of the Triangle Identity (Lemma \ref{lem:triplet_triangle_condition}), one can cyclically permute the indices $\alpha \mapsto \beta \mapsto \kappa \mapsto \alpha$ in each of the above three statements to obtain new identities which are also equivalent to one another. 
    \begin{proof}
        By definition, $\Phi:= \Phi_{\alpha\beta}$ are Hopf algebra morphisms when restricting to the two Hopf subalgebras   $H_{\alpha}^{\text{op}} $ and $H_{\beta}^{\text{cop}}$, and we have $\Phi(x_{\alpha}y_{\beta}) = \Phi(x_{\alpha})\Phi(y_{\beta})$ for $x_{\alpha} \in H_{\alpha}^{\text{op}}, \ y_{\beta} \in H_{\beta}^{\text{cop}}$ where we have identified $x_{\alpha}$ with $x_{\alpha} \otimes 1$ and $y_{\beta}$ with $1 \otimes y_{\beta}$. It follows that $\Phi$ is automatically a co-algebra morphism since the co-algebra structure on $D(H_{\alpha}^{\text{op}}, H_{\beta}^{\text{cop}}, \langle-\rangle_{\alpha\beta})$ is the tensor product of that of its two factors. Hence, for $\Phi$ to be a Hopf algebra morphism, it suffices to be an algebra morphism for the only remaining condition to check is
        \begin{align}\label{eqn:Phi_algebra_morphism_0}
            \Phi(y_{\beta}x_{\alpha}) = \Phi(y_{\beta})\Phi(x_{\alpha}).
        \end{align}

        Lemma \ref{lem:doublet_invertible_tensor} shows that these two tensors
        \begin{align}\label{eqn:invertible_tensor}
          \includegraphics[scale=.4, valign = c]{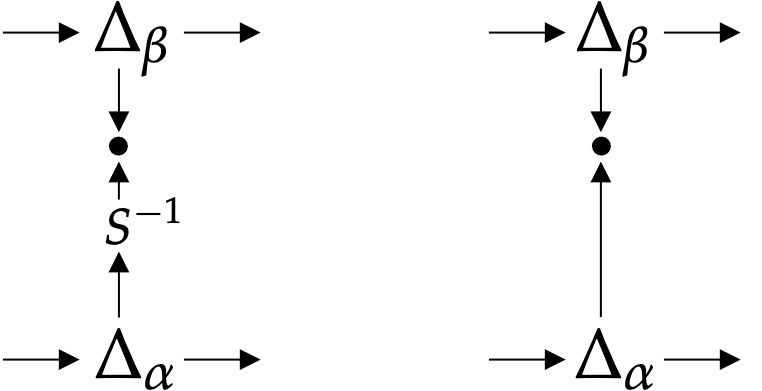}
        \end{align}
        are 2-sided inverses to each other. 
        
        \textbf{(i) $\Leftrightarrow$ (ii).} Now take the above tensor on the left, and contract it with both sides of the Triangle Identity in Equation~\eqref{eqn:triplet_def_triangle_1} (in the unique way). We obtain the following identity equivalent to Equation~\eqref{eqn:triplet_def_triangle_1},
        \begin{align}\label{eqn:Phi_algebra_morphism_1}
          \includegraphics[scale=.4, valign = c]{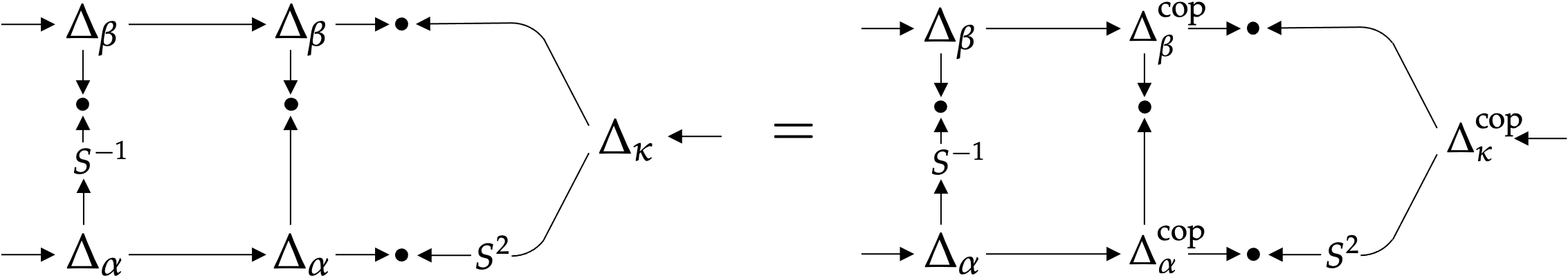}
        \end{align}
        Applying Lemma \ref{lem:doublet_invertible_tensor} to the left-hand tensor and re-write the right-hand tensor using $\Delta$ instead of $\Delta^{\text{cop}}$, we obtain
        \begin{align}\label{eqn:Phi_algebra_morphism_2}
          \includegraphics[scale=.4, valign = c]{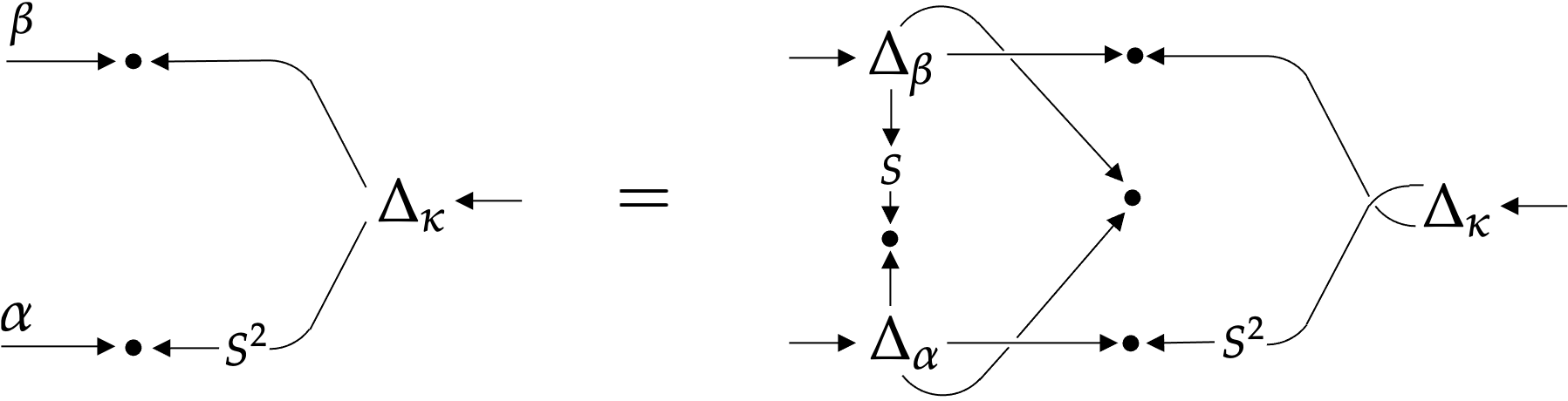}
        \end{align}
        Now, take the antipode $S_{\kappa}$ and contract it with both sides of the above identity ($S_{\kappa}$ appears on the right end of the tensor diagrams on both sides of the identity). Then, slide $S_{\kappa}$ through the tensor diagrams until the tensor $S_{\beta} \otimes S_{\alpha}$ appears on the left end. During the process we need to use the fact that we can create a pair of antipodes, one on each side of a pairing,
        \begin{align*}
          \includegraphics[scale=.4, valign = c]{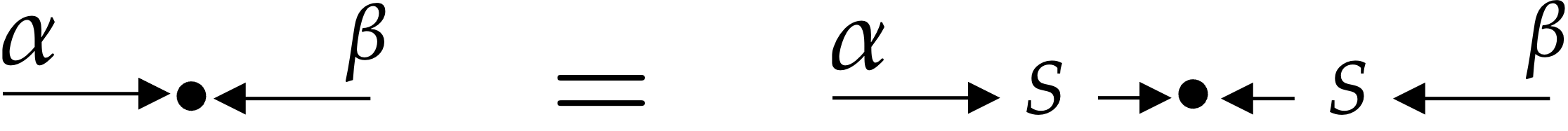}
        \end{align*}
        We obtain the following equivalent identity:
        \begin{align}\label{eqn:Phi_algebra_morphism_3}
          \includegraphics[scale=.4, valign = c]{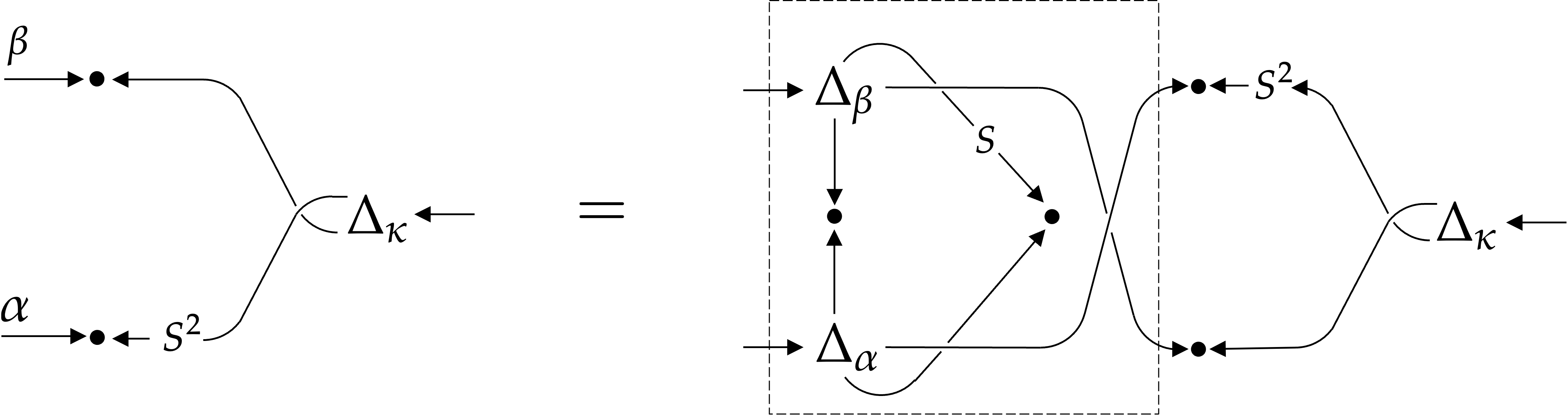}
        \end{align}
        Note that the tensor diagram within the dashed box on the right side is precisely the multiplication tensor in $D(H_{\alpha}^{\text{op}}, H_{\beta}^{\text{cop}}, \langle-\rangle_{\alpha\beta})$ which multiplies an element in $H_{\beta}^{\text{cop}}$ with an element in $H_{\alpha}^{\text{op}}$:
        \begin{align*}
          \includegraphics[scale=.4, valign = c]{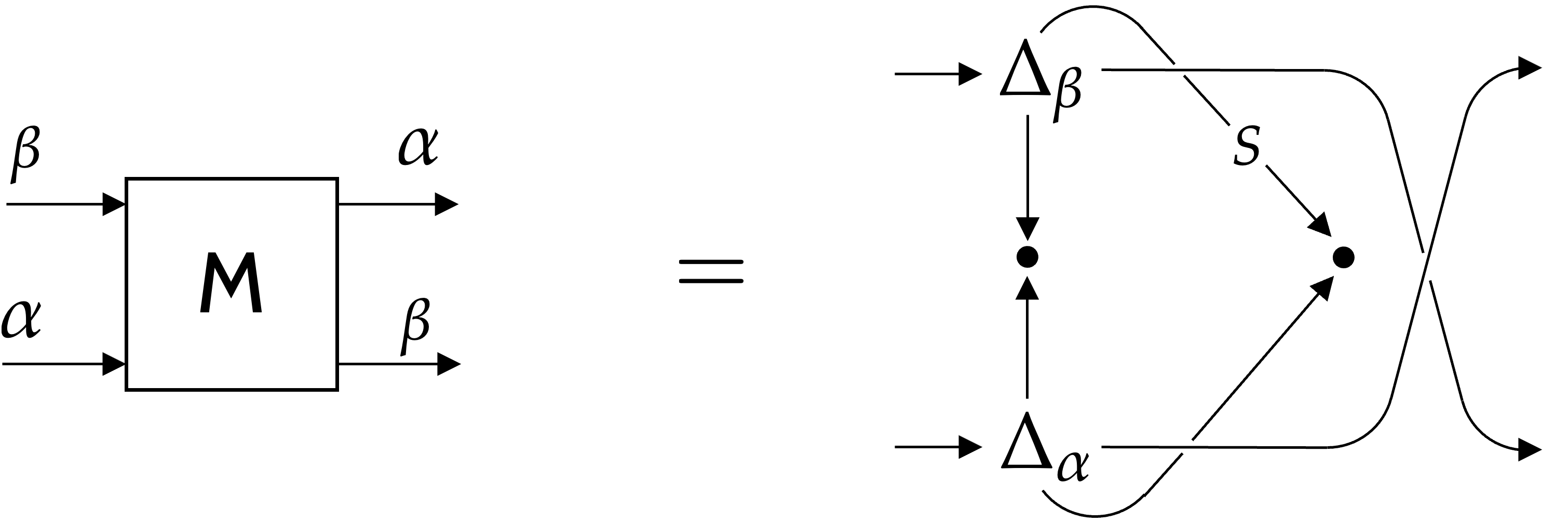}
        \end{align*}
        Then Equation~\eqref{eqn:Phi_algebra_morphism_3} is equivalent to 
        \begin{align}\label{eqn:Phi_algebra_morphism_4}
          \includegraphics[scale=.35, valign = c]{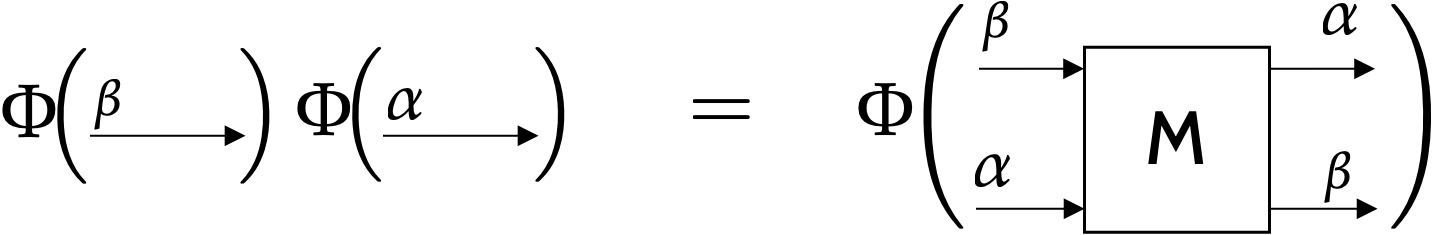}
        \end{align}
        which is the same as Equation~\eqref{eqn:Phi_algebra_morphism_0}. Note that the Triangle Identity and Equations~\eqref{eqn:Phi_algebra_morphism_1}-\eqref{eqn:Phi_algebra_morphism_4} are equivalent to one another. This shows the equivalence of (i) and (ii).

        \textbf{(ii) $\Leftrightarrow$ (iii).} The proof is similar to, and in fact slightly easier than, the previous case. By some straightforward manipulations of the antipodes, Equation~\eqref{eqn:triplet_def_triangle_2} is equivalent to the following identity:
        \begin{align*}
          \includegraphics[scale=.4, valign = c]{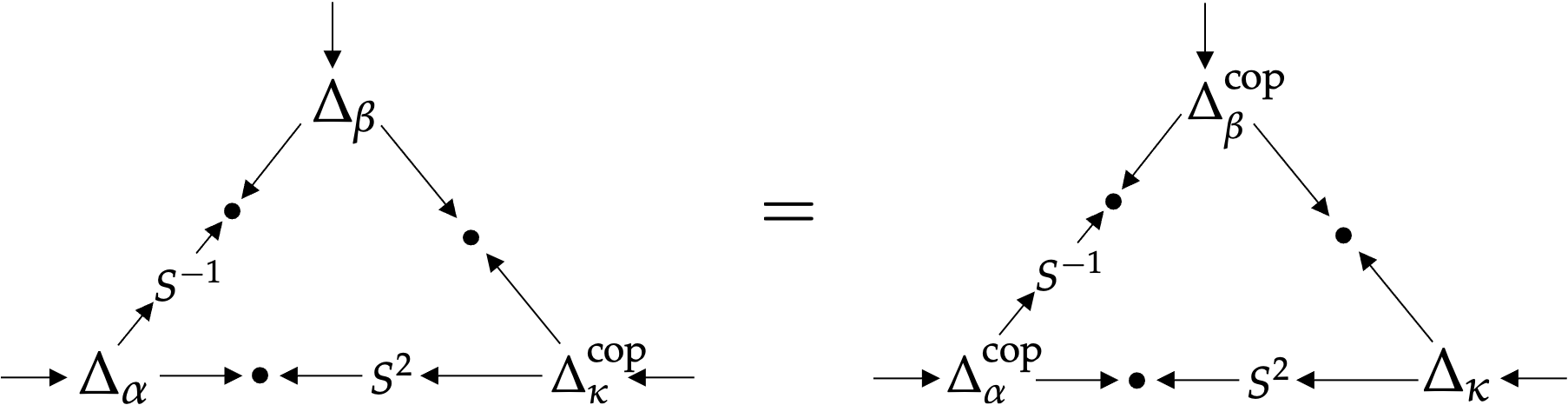}
        \end{align*}
        Now take the tensor on the right of Equation~\eqref{eqn:invertible_tensor} and contract it with both sides of the above tensor diagrams; this yields exactly the identity in Equation~\eqref{eqn:Phi_algebra_morphism_3}. 
    \end{proof}
\end{thm}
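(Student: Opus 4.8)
The plan is to establish the cycle of equivalences (i) $\Leftrightarrow$ (ii) and (ii) $\Leftrightarrow$ (iii); the remaining statements then follow immediately from the cyclic symmetry already recorded in Lemma \ref{lem:triplet_triangle_condition}. The first move is to reduce condition (ii) to a single manageable identity. Because the coalgebra structure on $D(H_\alpha^{\text{op}}, H_\beta^{\text{cop}}, \langle-\rangle_{\alpha\beta})$ is just the tensor product of the coalgebra structures of its two factors, and because $\Phi_{\alpha\beta}$ is assembled from the two coalgebra morphisms $(S^2)^*\circ\langle-\rangle_{\kappa\alpha}$ and $\langle-\rangle_{\beta\kappa}$, the map $\Phi_{\alpha\beta}$ is automatically a coalgebra morphism preserving the unit, and it restricts to a Hopf algebra morphism on each of the two generating subalgebras $H_\alpha^{\text{op}}$ and $H_\beta^{\text{cop}}$, so that $\Phi_{\alpha\beta}(x_\alpha y_\beta) = \Phi_{\alpha\beta}(x_\alpha)\Phi_{\alpha\beta}(y_\beta)$ holds for free. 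Since $D_P$ is generated as an algebra by these two subalgebras subject only to the crossed commutation relation $h\cdot g = U(h\otimes g)$ of Definition \ref{def_generalized_double}, the sole remaining obstruction to $\Phi_{\alpha\beta}$ being a Hopf morphism is crossed multiplicativity, namely $\Phi_{\alpha\beta}(y_\beta x_\alpha) = \Phi_{\alpha\beta}(y_\beta)\Phi_{\alpha\beta}(x_\alpha)$ for $x_\alpha \in H_\alpha^{\text{op}}$, $y_\beta \in H_\beta^{\text{cop}}$.

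For (i) $\Leftrightarrow$ (ii) I would exploit the two mutually inverse tensors furnished by Lemma \ref{lem:doublet_invertible_tensor}. Contracting the Triangle Identity \eqref{eqn:triplet_def_triangle_1} against one of these invertible tensors yields an \emph{equivalent} identity — equivalence, rather than mere implication, is retained precisely because the contracted tensor is invertible — which I then simplify by applying the inversion relation on one side and rewriting $\Delta^{\text{cop}}$ as $\Delta$ on the other. The key graphical step is to attach an antipode $S_\kappa$ to both sides and transport it through the diagram, repeatedly using the fact that a single antipode straddling a Drinfeld pairing can be traded for a pair of antipodes, one on each leg, until $S_\beta \otimes S_\alpha$ surfaces at the free end. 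At that point the subtensor enclosed in the diagram is recognizable as exactly the Drinfeld-double product of an $H_\beta^{\text{cop}}$-element with an $H_\alpha^{\text{op}}$-element, so the whole identity collapses to the crossed-multiplicativity condition above. Because every intermediate contraction is against an invertible tensor, the chain of rewrites is reversible and the equivalence is genuine.

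For (ii) $\Leftrightarrow$ (iii) the argument is shorter. Routine repositioning of antipodes rewrites the alternative Triangle Identity \eqref{eqn:triplet_def_triangle_2} into a form to which I apply the \emph{other} invertible tensor of Lemma \ref{lem:doublet_invertible_tensor}; contracting then lands on the same intermediate identity just shown to encode crossed multiplicativity, closing the loop of equivalences.

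The step I expect to be the main obstacle is the antipode-sliding in the (i) $\Leftrightarrow$ (ii) argument: one must carefully bookkeep where antipodes and their inverses are created and cancelled as $S_\kappa$ is carried across the pairings, and verify that the op/cop conventions force the emerging tensor to coincide with the Drinfeld-double product $U$ rather than its opposite. The secondary point of care is ensuring that each contraction is genuinely against an invertible tensor, so that the derivation is a chain of equivalences and not a one-directional implication; this is exactly what Lemma \ref{lem:doublet_invertible_tensor} is designed to guarantee.
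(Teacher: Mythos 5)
Your proposal is correct and follows essentially the same route as the paper's proof: the same reduction of (ii) to the crossed multiplicativity condition $\Phi(y_\beta x_\alpha) = \Phi(y_\beta)\Phi(x_\alpha)$, the same use of the mutually inverse tensors from Lemma \ref{lem:doublet_invertible_tensor} to pass back and forth between equivalent identities, the same antipode-sliding across pairings to expose the Drinfeld-double product for (i) $\Leftrightarrow$ (ii), and the same shorter contraction argument for (ii) $\Leftrightarrow$ (iii). The points you flag as delicate (bookkeeping of antipodes and invertibility guaranteeing genuine equivalence rather than one-way implication) are exactly the points the paper's proof handles, so no gap remains.
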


Given a Hopf triplet $\mathcal{H} = (H_\alpha,H_\beta,H_\kappa,\langle-\rangle)$, for $\mu \in \{\alpha, \beta, \kappa\}$, denote by $q_{\mu}$ the phase element of $H_{\mu}$.  For two adjacent indices $(\mu, \nu)$ in the cyclically ordered list  $\alpha \mapsto \beta \mapsto \kappa \mapsto \alpha$, Part (b) of Definition \ref{def:hopf_triplet} implies $q_{\mu} = q_{\nu}^{-1}$. It follows that $q_{\alpha} = q_{\beta} = q_{\kappa} = \pm 1$.
\begin{definition}\label{def:triplet_phase_balanced}
    Let  $\mathcal{H} = (H_\alpha,H_\beta,H_\kappa,\langle-\rangle)$ be  a Hopf triplet.
    \begin{enumerate}
        \item The phase of $\mathcal{H}$ is defined as $q_{\mathcal{H}}:= q_{\alpha} = \pm 1$.
        \item $\mathcal{H}$ is called balanced (resp.~involutory) if all of its Hopf algebras are balanced (resp. involutory).
    \end{enumerate}
\end{definition}

\begin{example}
    Let $(H,R)$ be a quasi-triangular Hopf algebra satisfying the condition
    \begin{align}\label{eqn:quasi_R_condition_1}
          \includegraphics[scale=.4, valign = c]{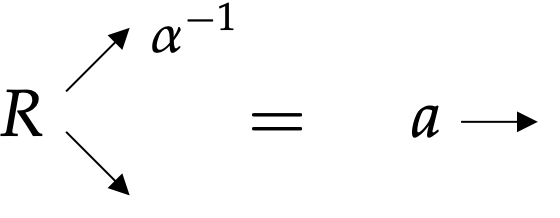}
    \end{align}
    Using the notation in Example \ref{example:quasi_triangular_pairing}, the above condition means $f_R(\alpha^{-1}) = a$.
    Note that Equation~\eqref{eqn:quasi_R_condition_1} holds whenever $\alpha = \epsilon$ and $a = 1$, and in particular when $H$ is semisimple. We define a Hopf triplet $\mathcal{H} = (H_{\alpha}, H_{\beta}, H_{\kappa}; \langle-\rangle)$ as follows:
    \begin{align*}
        H_{\alpha} &:= H^*,  \\
        H_{\beta}  &:= H^{\op{cop}},\\
        H_{\kappa} &:= H^*.
    \end{align*}
    For $p_{\alpha} \in H_{\alpha}, x_{\beta} \in H_{\beta}, p_{\kappa} \in H_{\kappa}$, the pairings are defined by
    \begin{align*}
        \langle p_{\alpha}, x_{\beta}\rangle_{\alpha\beta} &:= p_{\alpha}(S(x_{\beta})),\\
        \langle x_{\beta}, p_{\kappa}\rangle_{\beta\kappa} &:= p_{\kappa}(x_{\beta}),\\
        \langle p_{\kappa}, p_{\alpha}\rangle_{\kappa\alpha} &:= (p_{\alpha} \otimes p_{\kappa})\circ (S \otimes S^{-2})(R)\,.
    \end{align*}
    It is straightforward to check that $(H_{\alpha}, H_{\beta};\langle-\rangle)$ and $(H_{\beta}, H_{\kappa};\langle-\rangle)$ are Drinfeld pairings and they preserve the phase elements. To see that $(H_{\kappa}, H_{\alpha};\langle-\rangle)$ is a Drinfeld pairing, notice that the pairing-induced map $H_{\alpha} (= H^*) \to H_{\kappa}^{*,\op{op}} (= H^{\op{op}})$ can be expressed as the composition
    \begin{equation*}
        \begin{tikzcd}
            H^* \arrow[r, "S^*"] & H^{*,\op{cop}, \op{op}}\arrow[r, "f_R"] & H^{\op{op}}\arrow[r, "S^{-2}"] & H^{\op{op}},
        \end{tikzcd}
    \end{equation*}
   where each map is a Hopf algebra morphism. 

    That the pairing on $H_{\kappa} \otimes H_{\alpha}$ preserves the phase element is equivalent to Equation~\eqref{eqn:quasi_R_condition_1}. We leave this as an exercise.

    Lastly, the Triangle Equation~\eqref{eqn:triplet_def_triangle_1} becomes the identity
    \begin{align*}
          \includegraphics[scale=.4, valign = c]{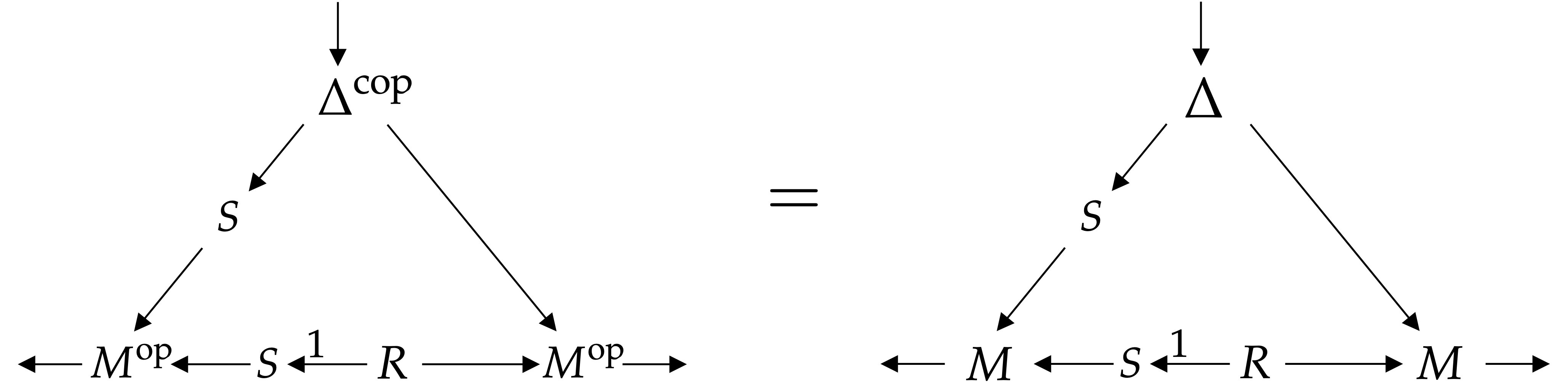}
    \end{align*}
    which is equivalent to Equation~\eqref{eqn:R-matrix_condition1} satisfied by the $R$-matrix.
\end{example}

\begin{example}
    One can also numerically search for Hopf triplets from low-dimensional Hopf algebras. For example, Hopf algebras of dimension up to 11 are classified in \cite{cstefan1999hopf}. Take the 8-dimensional Hopf algebra $A:= A_{\text{C}_2 \times \text{C}_2}$ as an illustration (see Theorem 3.5 \cite{cstefan1999hopf}). As an algebra, $A$ is the quotient of the free algebra  $k[G, H, X]$ by the 2-sided ideal generated by,
    $$G^2 - 1, \  H^2 - 1,\  X^2,\  GX + XG,\  HX + XH,\  GH - HG\,.$$
    Moreover, $G$ and $H$ are group-like elements, and $X$ is $(G, 1)$-primitive, i.e.~$\Delta(X)  = X \otimes G + 1 \otimes X$. The phase element $a$ of $A$ is $G$, and the phase element $\alpha$ of $A^*$ is given by $\alpha(G) = \alpha(H) = -1$, and $\alpha(X) = 0$. The Hopf algebra $A$ is balanced but not semisimple. Choose an ordered basis of $A$ as
    $$(1, G, H, GH, X, GX, HX, GHX)\,.$$

    We look for Hopf triplets of the form $(A, A, A; \langle-\rangle)$ where the three Hopf algebras are all equal to $A$, and the three pairings are assumed to be the same as well. Under the above basis, the pairing is represented by an $8 \times 8$ matrix. By direct computer calculations, the following is a solution to the pairing where $t$ is an arbitrary parameter:
    \begin{equation*}
    \left(
\begin{array}{cccccccc}
 1 & 1 & 1 & 1 & 0 & 0 & 0 & 0 \\
 1 & -1 & -1 & 1 & 0 & 0 & 0 & 0 \\
 1 & -1 & -1 & 1 & 0 & 0 & 0 & 0 \\
 1 & 1 & 1 & 1 & 0 & 0 & 0 & 0 \\
 0 & 0 & 0 & 0 & t & -t & -t & t \\
 0 & 0 & 0 & 0 & t & t & t & t \\
 0 & 0 & 0 & 0 & t & t & t & t \\
 0 & 0 & 0 & 0 & t & -t & -t & t \\
\end{array}
\right)
\end{equation*}
\end{example}

\section{Trisections and Combings} \label{sec:trisections_and_combings} In this section, we review the theory of 4-manifold trisections and trisection diagrams. We also discuss singular combings of trisection diagrams. 

\subsection{Open books} \label{subsec:open_books} We begin with a brief discussion of open books on $3$-manifolds. A relative trisection determines a 4-manifold equipped with an open book on the boundary.

\begin{definition}\label{def:marked_open_book}
A \emph{marked open book} $\pi = (B,C,\pi)$ of type $(p,b)$ on a closed $3$-manifold $Y$ consists of the following data.
\begin{itemize}
    \item A closed $1$-manifold $B \subset Y$ with $b$ components, called the \emph{binding} of $\pi$.
    \vspace{2pt}
    \item A distinguished component $C \subset B$ of the binding, called the \emph{marking} of $\pi$.
    \vspace{2pt}
    \item A fibration $\pi:Y \setminus B \to S^1$. The fiber $\Sigma = \pi^{-1}(p)$ is called the \emph{page} and must be genus $p$.
\end{itemize}
The fibration must be trivial near the binding, in the sense that there must exist a tubular neighborhood $B \times D^2 \simeq U$ of $B$ in $Y$ such that $\pi|_U$ is given by the projection
\[\pi(x,r,\theta) = \theta \in S^1 \qquad \text{for}\qquad (x,r,\theta) \in B \times (D^2 \setminus 0) \simeq U \setminus B\] 
\end{definition}

\begin{example}[Trivial open book] \label{ex:trivial_OB} The standard open book $(B,\pi)$ on $S^3$ is given as follows. Identify $S^3$ with the unit sphere in $\C^2$. Then we define the binding by
\[B = (0 \times \C) \cap S^3\]
Then we have $S^3 \setminus B \subset \C^\times \times \C$ and we define $\pi$ as the restriction of
\[
\pi:\C^\times \times \C \to S^1 \qquad \text{given by}\qquad \pi(z_1,z_2) = \frac{z_1}{|z_1|} \in S^1 \subset \C
\]\end{example}

\subsection{Surface diagrams} \label{subsec:curves_on_surfaces} We next discuss general surface diagrams and their moves. 

\begin{definition} A \emph{surface diagram} $D = (\Sigma;\Gamma_1,\dots,\Gamma_k)$ consists of a compact oriented surface with boundary $\Sigma$ and embedded, closed $1$-manifolds
\[\Gamma_i \subset \Sigma \qquad\text{for each }i = 1,\dots,k\]
that are pairwise transverse (so $\Gamma_i \pitchfork \Gamma_j$) and without triple intersections (so $\Gamma_i \cap \Gamma_j \cap \Gamma_k = \emptyset$). A component $\eta \subset \Gamma_i$ is called a \emph{curve} and two components of $\Gamma_i$ are said to be the same \emph{color}. 

\vspace{3pt}

An \emph{isomorphism} or \emph{diffeomorphism} $\phi:D \simeq D'$ is simply a diffeomorphism $\phi:\Sigma \simeq \Sigma'$ of the underlying surfaces that restricts to a diffeomorphism $\phi:\Gamma_i \simeq \Xi_i$ of each embedded $1$-manifold. \end{definition}

There are a number of natural ways to modify surface diagrams. First, we can move the $1$-manifolds in a surface diagram around via isotopy. 

\begin{definition} An \emph{isotopy} $D_t$ of surface diagrams is a $1$-parameter family of diagrams
\[
D_t = (\Sigma;\Gamma_1^t,\dots,\Gamma_k^t) \qquad\text{for}\qquad t \in [0,1]
\]
such that $\Gamma_i^t \subset \Sigma$ varies smoothly in $t$ for each $i$. \end{definition}

\noindent Next, we can change the crossings of a surface diagram by standard regular homotopies (that are essentially Reidemeister moves).

\begin{definition} A \emph{two-point} move on a surface diagram $D$ is the local modification
\begin{figure}[h!]
\centering
\includegraphics[width=.5\textwidth]{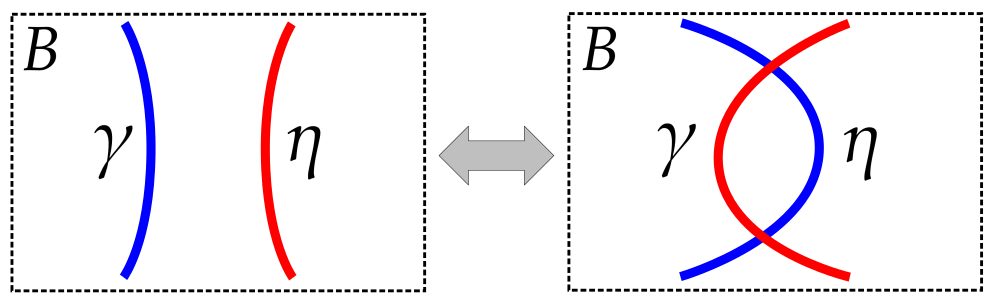} 
\label{fig:surface_diagram_two_point}
\end{figure} 

\noindent A \emph{three-point} move on a surface diagram $D$ is the local modification
\begin{figure}[h!]
\centering
\includegraphics[width=.5\textwidth]{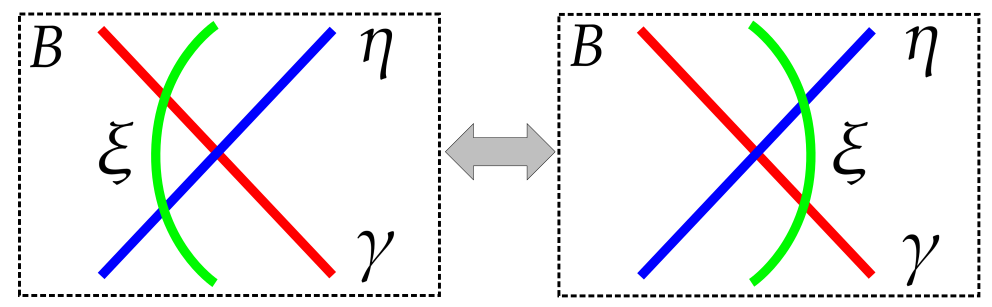} 
\label{fig:surface_diagram_three_point}
\end{figure} 
\end{definition}

\noindent Note that any pair of surface diagrams $D$ and $D'$ connected by a $1$-parameter family of surface diagrams
\[(D;\Gamma_1^t,\dots,\Gamma_k^t) \qquad\text{for}\qquad t \in [0,1]\]
can be connected by a sequence of two point moves, three point moves and diffeomorphisms (see \cite{chaidez20194manifold}). Finally, we can modify a surface diagram by ambiently summing curves of the same color.

\begin{definition} A \emph{handle-slide} of a curve $\eta$ over a curve $\xi$ (of the same color) in a surface diagram $D$ is a surface diagram $D'$ acquired from $D$ by replacing $\eta$ with a band sum curve
\[\eta \; \#_\gamma \; \xi\]
Precisely, let $\gamma$ be an arc connecting $\eta$ and $\xi$ in the surface $\Sigma$ of $D$, and let $U$ be a band neighborhood of $\eta \cup \gamma \cup \xi$. Then $\eta \#_\gamma \xi$ is the unique component of $\partial U$ that is not homotopic to $\eta$ or $\xi$ in $U$.
\begin{figure}[h!]
\centering
\includegraphics[width=\textwidth]{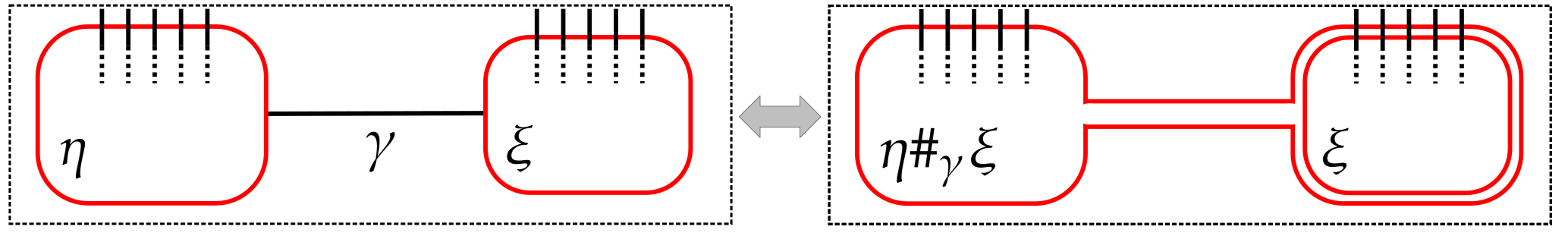} 
\label{fig:handle_slide_point}
\end{figure} 
\end{definition}

There is also a general procedure for summing two surface diagrams.

\begin{definition} The \emph{boundary sum} $D \natural D'$ of two surface diagrams 
\[D = (\Sigma;\Gamma_1,\dots,\Gamma_k) \qquad\text{and}\qquad D' = (\Sigma';\Xi_1,\dots,\Xi_k)\]
along components $C \subset \partial \Sigma$ and $C' \subset \partial \Sigma$ is the surface diagram with underlying surface given by the boundary sum $\Sigma \natural \Sigma'$ along $C$ and $C'$, and $1$-manifolds given by $\Gamma_1 \cup \Xi_1,\dots,\Gamma_k \cup \Xi_k$. 
\end{definition}

\subsection{Trisection diagrams} \label{subsec:trisections} Trisections and trisection diagrams were introduced by Gay-Kirby \cite{gk2016} (in the closed case) and Castro-Gay-Pinzon-Caicedo \cite{cgpc2018relativetrisections} (in the case with boundary). A trisection diagram is a type of surface diagram that specifies a handlebody decomposition of a 4-manifold. 

\vspace{3pt}

In this paper, we will work with a variant of the relative trisection theory of \cite{cgpc2018relativetrisections} that equips a trisection with a distinguished boundary component.

\begin{definition} \label{def:trisection_diagram} A \emph{marked trisection diagram} $T$ is a surface diagram $(\Sigma;\alpha,\beta,\kappa)$ and a marked component $C \subset \partial \Sigma$ such that the surface diagrams $D_i$ given by
\[D_1 = (\Sigma;\alpha,\beta) \qquad D_2 = (\Sigma;\beta,\kappa) \quad\text{and}\quad D_3 = (\Sigma;\kappa,\alpha)\]
are diffeomorphic (after handleslides) to the following standard diagram for each $i = 1,2,3$. 
\begin{figure}[h!]
\centering
\includegraphics[width=\textwidth]{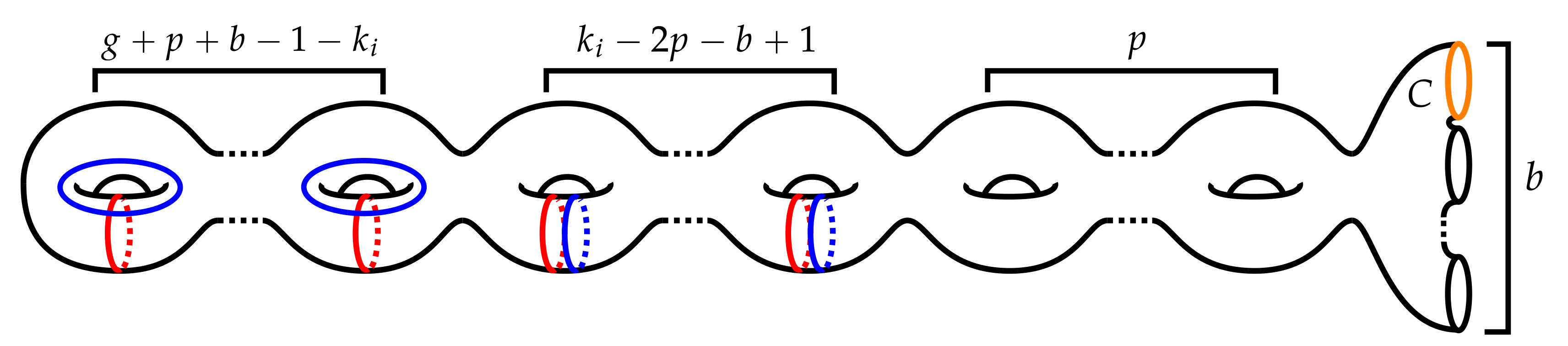} 
\end{figure}

\noindent The quantities $g, k_i, p$ and $b$ will be referred to as the \emph{genus}, \emph{indices}, \emph{page genus} and \emph{boundary number} of the trisection $T$, respectively. We will use the notation
\[g(T)\,, \quad k_i(T)\,, \quad p(T)\,, \quad\text{and}\quad b(T)\,,\]
when we must emphasize the specific trisection. The \emph{type} of the trisection $T$ is the tuple
\[(g,k_1,k_2,k_3;p,b)\]\end{definition}

All of the standard operations on surface diagrams can be applied to trisection diagrams. Additionally, we adopt the convention that the boundary sum
\[S \natural T \quad\text{of two marked trisection diagrams $S$ and $T$}\]
always occurs along the marked boundary component, unless explicitly stated otherwise. The type of $S \natural T$ is specified as follows.
\[g(S \natural T) = g(S) + g(T)\,,\qquad k_i(S \natural T) = k_i(S) + k_i(T)\,, \qquad p(S \natural T) = p(S) + p(T)\,,\]
\[b(S \natural T) = b(S) + b(T) - 1\,.\]

\begin{definition}\label{def:stabilization}
A \emph{stabilization} $S$ and a \emph{destabilization} $D$ of a marked trisection $T$ are, respectively, marked trisections that satisfy
\[S \simeq T \; \natural \; T^{\op{st}}_i \qquad\text{and}\qquad T \simeq D \;\natural \; T^{\op{st}}_i\]
Here $T^{\op{st}}_i$ is any of the three standard marked trisection diagrams $T^{\op{st}}_1,T^{\op{st}}_2$ and $T^{\op{st}}_3$ given below.

\begin{figure}[h!]
\centering
\includegraphics[width=\textwidth]{Figures/stabilization_trisection.png} 
\label{fig:stabilized_sphere_trisection}
\end{figure}

\noindent Each stabilization trisection is a trisection for the $4$-ball $B^4$ equipped with an open book with page diffeomorphic to the disk. The type of $T^{\op{st}}_j$ is given by
\[(1,\delta_{1j},\delta_{2j},\delta_{3j};0,1)\]
Here $\delta_{ij}$ is the usual indicator, i.e. $\delta_{ij} = 1$ if $i = j$ and $0$ otherwise.\end{definition}

A \emph{trisection move} is any stabilization, destabilization, isotopy or diffeomorphism operation. The following result relates marked trisections modulo trisection moves to 4-manifolds. 

\begin{thm} \cite{gk2016,cgpc2018relativetrisections} There is a bijection between marked trisection diagrams $T$ (modulo trisection moves) and connected, compact $4$-manifolds $X$ with a marked open book on $\partial X$ (modulo diffeomorphism).
\[\frac{\Big\{\text{marked trisections $T$}\Big\}}{\text{trisection moves}} \xrightarrow{\sim}  \frac{\Big\{\begin{array}{c} \text{connected, compact 4-manifolds $X$}\\\text{with a marked open book $\pi$ on $\partial X$}\end{array}\Big\}}{\text{diffeomorphism}}\]
If the type of $T$ is $(g,b;p,k)$, then the type of the induced open book $\pi$ is $(p,b)$. \end{thm}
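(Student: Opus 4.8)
The plan is to derive this as the \emph{marked} refinement of the relative trisection correspondence of Gay--Kirby \cite{gk2016} and Castro--Gay--Pinz\'on-Caicedo \cite{cgpc2018relativetrisections}, whose Theorem 3 already furnishes a bijection between relative trisection diagrams (modulo diffeomorphism, handleslides and (de)stabilization) and connected compact $4$-manifolds with an open book on the boundary (modulo diffeomorphism). The only genuinely new data is the distinguished binding/boundary component, so the task reduces to showing that the marking is transported faithfully in both directions and that restricting the stabilization move to the marked component does not enlarge or shrink the equivalence relation.

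First I would fix the forward map. Given a marked trisection diagram $T=(\Sigma;\alpha,\beta,\kappa;C)$, forgetting $C$ produces an ordinary relative trisection diagram, to which \cite{cgpc2018relativetrisections} assigns a $4$-manifold $X$ and an open book $\pi$ on $\partial X$ by gluing the three standard $4$-dimensional handlebody pieces prescribed by the cut systems $\{\alpha,\beta\}$, $\{\beta,\kappa\}$, $\{\kappa,\alpha\}$ along the central surface $\Sigma$; the induced open book has pages modeled on the page subsurface, with binding components in bijection with $\partial\Sigma$. I would then declare the marked binding component of $\pi$ to be the one determined by $C\subset\partial\Sigma$, yielding a \emph{marked} open book in the sense of Definition \ref{def:marked_open_book}. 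Reading the standard local models of Definition \ref{def:trisection_diagram} directly gives that the page has genus $p$ and that there are $b$ binding components, so the induced marked open book has type $(p,b)$, and this assignment is manifestly invariant under diffeomorphisms of marked diagrams.

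Next I would verify well-definedness on the allowed moves together with surjectivity. Isotopy and handleslides leave both $(X,\pi)$ and the marked component unchanged, while a stabilization is a boundary sum with one of $T^{\op{st}}_1,T^{\op{st}}_2,T^{\op{st}}_3$ along $C$; since each $T^{\op{st}}_i$ represents $B^4$ with a disk-page open book (Definition \ref{def:stabilization}), boundary-summing with it alters $\Sigma$ and the type but not the diffeomorphism class of the marked pair $(X,\pi)$, exactly as in the unmarked theory. Surjectivity is the existence half: every $(X,\pi)$ equipped with a chosen marked binding admits a relative trisection inducing $\pi$ by \cite{gk2016,cgpc2018relativetrisections}, and marking the boundary component of $\Sigma$ lying over the chosen binding gives a preimage.

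The crux, and the point requiring care beyond quoting \cite{cgpc2018relativetrisections}, is injectivity under the \emph{restricted} move set. Given a diffeomorphism of marked pairs, forgetting the marking and invoking the uniqueness half of CGPC yields a sequence of isotopies, handleslides and trisection stabilizations relating the two underlying diagrams; because the two open books already agree, only interior, type-preserving stabilizations are needed, consistent with each $T^{\op{st}}_i$ having page genus $0$ and boundary number $1$ and hence fixing $(p,b)$ under boundary sum. I expect the main obstacle to be showing that these interior stabilizations---which CGPC may perform anywhere on $\Sigma$ and along any boundary component---can all be commuted and transported so as to be realized as boundary sums with $T^{\op{st}}_1,T^{\op{st}}_2,T^{\op{st}}_3$ along the single marked component $C$, so that the restricted and unrestricted equivalence relations coincide; and, in parallel, that the connecting diffeomorphisms can be isotoped to fix $C$, using that the given diffeomorphism already matched the marked binding components. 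Granting these two reductions, the forward map descends to the claimed bijection on equivalence classes, and the type bookkeeping $(g,k_1,k_2,k_3;p,b)\mapsto(p,b)$ follows from the additivity formulas for boundary sum together with the standard local models.
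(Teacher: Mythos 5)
The paper offers no proof of this theorem at all—it is quoted directly from the literature, described in the introduction as ``a direct adaption of Theorem 3 in \cite{cgpc2018relativetrisections}''—and your proposal is exactly that adaptation (transport the marking through the CGPC correspondence, then check that the restricted move set and marking-preserving diffeomorphisms induce the same equivalence relation), so your approach coincides with the paper's. The two reductions you flag and then grant—relocating CGPC's interior stabilizations to boundary sums with $T^{\op{st}}_i$ along the marked component $C$, and arranging the connecting diagram diffeomorphisms to respect the marking—are precisely the content of the phrase ``direct adaptation,'' and the paper leaves them just as implicit as you do.
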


\noindent The construction of a 4-manifold with a boundary open book from a relative trisection diagram uses a standard (partial) handlebody decomposition associated to the diagram. The details are described by Castro-Gay-Pinzon-Caicedo in \cite[\S 3]{cgpc2018relativetrisections}. 

\vspace{3pt}

Trisections for closed 4-manifolds can be viewed as a strict subset of trisections of 4-manifolds with non-trivial boundary, as follows. Consider a closed, oriented, connected $4$-manifold $M$. Choose a ball $B \subset X$ and let $(X,\pi)$ be the pair
\[
X = M \setminus B \qquad\text{and}\qquad \pi = \pi_{\op{std}} \text{ on }B \simeq S^3
\]
Here $\pi_{\op{std}}$ is the standard open book on $S^3$ with disk-like pages (see Example \ref{ex:trivial_OB}). Let $T = (\Sigma,\alpha,\beta,\kappa)$ be a trisection for $(X,\pi)$. Note that this trisection has a single boundary component, since the corresponding open book only has one binding component. In particular, there is a canonical marking of $(X,\pi)$ and we may regard it as marked.

\begin{lemma} \label{lem:rel_trisection_to_trisection} The diagram $T \cup D^2$ acquired by gluing a disk $D^2$ onto the boundary of $T$ is a trisection diagram for the closed 4-manifold $M$.
\end{lemma}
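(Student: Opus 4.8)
The plan is to prove the lemma by combining two correspondences: the Gay--Kirby dictionary \cite{gk2016} sending a \emph{closed} trisection diagram to a closed $4$-manifold, and the Castro--Gay--Pinz\'on-Caicedo construction \cite{cgpc2018relativetrisections} sending the relative diagram $T$ to the pair $(X,\pi_{\op{std}})$ with $X = M\setminus B$. Writing $\wh{\Sigma} = \Sigma \cup_C D^2$ for the capped surface and $N$ for the closed $4$-manifold assigned to $(\wh\Sigma;\alpha,\beta,\kappa)$ by Gay--Kirby, the goal splits into two tasks: first, that $(\wh\Sigma;\alpha,\beta,\kappa)$ really is a \emph{closed} trisection diagram in the sense of \cite{gk2016}; and second, that the resulting closed manifold $N$ is diffeomorphic to $M$. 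The second task is where the content lies.

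For the first task I would argue as follows. Because $p(T)=0$ and $b(T)=1$ (the pages of $\pi_{\op{std}}$ are disks with a single binding circle, Example~\ref{ex:trivial_OB}), the marked boundary component $C$ is the unique boundary circle of $\Sigma$, and in the standard local model near $C$ the curve systems $\alpha,\beta,\kappa$ are disjoint from a collar of $C$. Hence gluing $D^2$ along $C$ leaves the three closed $1$-manifolds untouched, and the pairwise surface diagrams $D_i$ of Definition~\ref{def:trisection_diagram} are simply capped off. I would then check that each capped pair $(\wh\Sigma;\text{two of }\alpha,\beta,\kappa)$ becomes, after handleslides, the \emph{standard closed} Heegaard diagram of $\#^{\wh k_i}(S^1\times S^2)$: this is the routine observation that filling the disk page of the standard \emph{relative} piece produces the standard \emph{closed} piece, with the genus and index bookkeeping matching because $\wh{\Sigma}$ has genus $g(T)$ and the boundary has been absorbed. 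This verifies the closed trisection axioms.

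The hard part is identifying $N$ with $M$, and I would do this by comparing the two handle decompositions directly rather than by abstract uniqueness. Both the CGPC assembly of $X$ and the Gay--Kirby assembly of $N$ attach the identical collection of handles dictated by $\alpha$, $\beta$, and $\kappa$; they differ \emph{only} in a collar neighborhood of $C$. In the relative construction this collar yields a neighborhood of the binding together with the open book $\pi_{\op{std}}$ on the $S^3$ boundary, whereas in the closed construction the capping disk $D^2$ contributes the piece that cones off this $S^3$. Since $\pi_{\op{std}}$ is the standard disk-page open book on $S^3$ (so its mapping-torus/cone filling is the \emph{trivial} one), this cone is exactly a standard $4$-ball $B^4$ attached along $\partial X \cong S^3$. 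Therefore
\[
N \;\cong\; X \cup_{S^3} B^4 \;=\; (M\setminus B)\cup B \;=\; M,
\]
and by the Gay--Kirby correspondence $T\cup D^2$ is a trisection diagram for $M$. I expect the only delicate point to be confirming that the local collar model of the relative trisection near the marked binding matches the standard sector-over-$D^2$ decomposition of $B^4$; this is precisely where $p=0,\,b=1$ is used, since a disk page guarantees the filling is the standard ball and not some other cobordism.
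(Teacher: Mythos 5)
Your proposal is correct in substance, but it takes a genuinely different route from the paper. The paper's proof is essentially a one-line citation: it views $M$ as the gluing $X \cup_f B$ along the identity diffeomorphism of $S^3$ intertwining the standard open book, observes that the relative trisection diagram for $(B,\pi_{\op{std}})$ is just a disk with no curves, and then invokes the general gluing formula of Castro--Ozbagci \cite[Prop.~2.12]{co2017lefschetztrisections} (a special case of Castro's gluing theorem) to conclude that $T \cup_{\partial T} D^2$ is a diagram for $M$; the only thing checked by hand is that the gluing formula introduces no extra curves, since $g$ and the $k_i$ are unchanged. You instead re-derive this special case of the gluing theorem from scratch: you verify the closed trisection axioms for the capped diagram and then compare the Gay--Kirby assembly of $N$ with the CGPC assembly of $X$ piece by piece, arguing the difference is a standard $B^4$ coning off the disk-page open book on $S^3$. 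What the paper's approach buys is brevity and rigor-by-citation; what yours buys is self-containedness and an explanation of \emph{why} the gluing works in this case. The one caveat is that the step you flag as ``the only delicate point'' --- matching the collar model of the relative trisection near the binding with the sector decomposition of $B^4$, so that the three closed sectors restrict correctly on both sides --- is exactly the content of the cited gluing theorem, so your argument is only complete modulo carrying out that local verification (or falling back on the same citation the paper uses).
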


\begin{proof} This follows immediately from the general gluing formula of Castro-Ozbagci \cite[Prop. 2.12]{co2017lefschetztrisections}, which is itself a special case of the gluing formula of Castro \cite{castro2017cobordisms}. In particular, $M$ may be viewed as a gluing
\[X \cup_f B\]
where $f:\partial X \simeq \partial B$ is the identity diffeomorphism, intertwining the standard open book. The trisection diagram $S$ for $(B,\pi_{\op{std}})$ is simply the disk (with no curves). Thus, by \cite[Prop. 2.12]{co2017lefschetztrisections}, a trisection diagram for $M$ is given by
\[T \cup_{\partial T} S = (\Sigma \cup_{\partial \Sigma} D^2, \alpha,\beta,\kappa)\]
Typically, \cite[Prop. 2.12]{co2017lefschetztrisections} prescribes that the addition of some new $\alpha,\beta$ and $\kappa$ curves to $T'$, but in this case there are no such curves since
\[g(T') = g(T) \qquad\text{and}\qquad k_i(T') = k_i(T) \qedhere\] 
\end{proof}

\begin{remark} Conversely, any trisection diagram $T'$ of $M$ can be turned into a trisection of $(X,\pi)$ by removing a disk.
\end{remark}

We will require a description of the homology of a $4$-manifold in terms of its trisections. Given a closed $1$-manifold $\nu \subset \Sigma$, let
\[L_\nu := \op{span}\big([\eta] \; : \; \eta \text{ is a connected component of }\nu\big) \subset H_1(\Sigma)\]

\begin{prop} \label{prop:trisection_homology} \cite[Thm 2]{tanimoto2021homology} Let $T = (\Sigma;\alpha,\beta,\kappa)$ be a trisection for the $4$-manifold $X$. Then the complex
\[
0 \to \Z \xrightarrow{0} (L_\alpha \cap L_\beta) \oplus (L_\beta \cap L_\kappa) \oplus (L_\kappa \cap L_\alpha) \xrightarrow{\jmath} L_\alpha \oplus L_\beta \oplus L_\kappa \xrightarrow{\iota} H_1(\Sigma) \xrightarrow{0} \Z \to 0
\]
computes the homology groups of $X$. Here the maps $\jmath$ and $\iota$ are given by
\[\jmath(x,y,z) = (x - y,y - z, z - x) \qquad\text{and}\qquad \iota(x,y,z) = x + y + z\]
\end{prop}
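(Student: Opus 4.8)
The plan is to read $H_*(X)$ off the handle decomposition of $X$ that the trisection provides, identifying the cellular boundary maps with $\iota$ and $\jmath$ directly from the curves on $\Sigma$. Following Gay--Kirby, a trisection presents $X$ by thickening the central surface to $\Sigma \times D^2$ and attaching $2$-handles over the three arcs of $\partial D^2$ along the curve systems $\alpha,\beta,\kappa$, then closing up with $3$-handles and a single $4$-handle; the three standard-diagram axioms of Definition \ref{def:trisection_diagram} are exactly what guarantee this description and control the higher handles. I would first set up the cellular chain complex of this handle decomposition. The connectivity and orientation of $X$ give $C_0 = C_4 = \Z$ with zero differentials into $C_0$ and out of $C_4$, accounting for $H_0(X) = H_4(X) = \Z$ and the two outer $\Z$'s. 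After the standard cancellation absorbing the top cell of $\Sigma$, the central surface contributes $C_1 \cong H_1(\Sigma)$ and the $2$-handles along the curves contribute $C_2 = L_\alpha \oplus L_\beta \oplus L_\kappa$. The key local computation is that the cellular boundary $\partial_2$ sends the $2$-handle attached along a curve $\eta$ of colour $\mu$ to the class $[\eta] \in H_1(\Sigma)$; summing the three colours realises $\partial_2$ as the map $\iota(x,y,z) = x+y+z$, simultaneously identifying the image of the $\mu$-summand with $L_\mu = \op{span}([\eta] : \eta \subset \mu)$. This already produces $H_1(X) = H_1(\Sigma)/(L_\alpha + L_\beta + L_\kappa)$ and isolates $\ker \iota$ as the degree-$2$ cycle group.

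For the top half of the complex I would invoke Poincaré duality rather than compute the $3$-handles by hand. Since $X$ is closed and oriented, turning the handle decomposition upside down identifies the truncated complex $C_3 \to C_2$ with the dual of the complex $C_2 \xrightarrow{\iota} C_1$ computed above. Dualising $\iota$ and using that each $L_\mu \subset H_1(\Sigma)$ is Lagrangian for the intersection form---so that the form identifies $H_1(\Sigma)/L_\mu \cong L_\mu^{*}$---transforms the cochain differential $\iota^{*}$ into the difference map $\jmath$ whose domain is $(L_\alpha \cap L_\beta) \oplus (L_\beta \cap L_\kappa) \oplus (L_\kappa \cap L_\alpha)$, the $3$-handles being forced precisely by classes that bound in two of the three handlebodies at once. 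Assembling the two halves gives the complex
\[
0 \to \Z \xrightarrow{0} (L_\alpha \cap L_\beta) \oplus (L_\beta \cap L_\kappa) \oplus (L_\kappa \cap L_\alpha) \xrightarrow{\jmath} L_\alpha \oplus L_\beta \oplus L_\kappa \xrightarrow{\iota} H_1(\Sigma) \xrightarrow{0} \Z \to 0,
\]
whose homology in degrees $0,1,2,3,4$ returns $\Z$, $H_1(\Sigma)/\op{im}\iota$, $\ker\iota/\op{im}\jmath$, $\ker\jmath$, and $\Z$, as claimed.

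The main obstacle is precisely the handle bookkeeping hidden in the two phrases above: establishing the Gay--Kirby handle decomposition in the needed normal form, showing that the top cell of $\Sigma$ cancels so that $C_2$ is exactly $L_\alpha \oplus L_\beta \oplus L_\kappa$ with no spurious generator, and carrying out the duality identification so that $\partial_3$ is literally $\jmath$ with the correct signs and cyclic ordering $\alpha \mapsto \beta \mapsto \kappa \mapsto \alpha$. I would control this with three consistency checks: the Euler characteristic relation $\chi(X) = 2 + g - \big(\op{rk}(L_\alpha \cap L_\beta) + \op{rk}(L_\beta \cap L_\kappa) + \op{rk}(L_\kappa \cap L_\alpha)\big)$; the small models $S^4$, $\C P^2$ and $S^2 \times S^2$, where one can check the complex by hand; and the duality constraint that $H_3(X) \cong H^1(X)$ is free of rank $\op{rk} H_1(X)$, which forces an inclusion--exclusion identity for the three rank-$g$ Lagrangians $L_\alpha, L_\beta, L_\kappa$ in $H_1(\Sigma)$ that $\ker\jmath$ must satisfy. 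An alternative route that avoids the handle normal form altogether is the Mayer--Vietoris spectral sequence for the cover of $X$ by its three four-dimensional sectors, which degenerates at $E^2$ for dimension reasons; there the same groups appear through the quotients $H_1(\Sigma)/L_\mu$, and one recovers the stated complex after the dual linear-algebra identification.
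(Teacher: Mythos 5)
The paper never proves this proposition itself; it is imported wholesale via the citation \cite[Thm 2]{tanimoto2021homology}, so your argument has to stand on its own. Its bottom half does: the Gay--Kirby reconstruction gives a handle decomposition in which $C_1 \cong H_1(\Sigma)$, there is one 2-handle per curve with $\partial_2(\text{handle along }\eta) = [\eta]$, hence $\op{im}\partial_2 = L_\alpha + L_\beta + L_\kappa$ and $H_1(X) \cong H_1(\Sigma)/(L_\alpha + L_\beta + L_\kappa)$. The genuine gap is everything above degree $1$, and the duality shortcut you propose does not close it. Turning the handle decomposition upside down produces a \emph{different} decomposition whose chain complex is the dual cochain complex of the original; this yields the isomorphisms $H_k(X) \cong H^{4-k}(X)$, but it does not identify the boundary map $\partial_3$ of the \emph{original} decomposition with $\iota^*$, and it certainly does not produce the specific chain groups $(L_\alpha\cap L_\beta)\oplus(L_\beta\cap L_\kappa)\oplus(L_\kappa\cap L_\alpha)$ or the map $\jmath$ with its signs and cyclic ordering. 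Those can only come from computing the attaching maps of the 3-handles (which arise from the 3-cells of the three handlebodies and the 1-handles of the sectors $\natural^{k_i}(S^1\times B^3)$) --- precisely the computation you set aside. Even at the level of groups the shortcut under-determines the answer over $\Z$: duality plus universal coefficients gives $H_2(X)\cong \op{Hom}(H_2(X),\Z)\oplus\op{Ext}(H_1(X),\Z)$, a self-referential constraint that cannot certify that $\ker\iota/\op{im}\jmath$ carries the correct torsion. Identifying $C_3$ and $\partial_3$ \emph{is} the content of the theorem, and it is exactly the part your proposal defers.

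Two further problems. First, the ``standard cancellation absorbing the top cell of $\Sigma$'' cannot be performed where you place it: that 2-cell has vanishing cellular boundary (its attaching word is a product of commutators), so it cannot cancel against anything in $C_1$; it can only cancel against a 3-handle, which again presupposes knowledge of $\partial_3$. Until that is done, $C_2$ is $\Z^{3g}\oplus\Z\langle[\Sigma]\rangle$, not $L_\alpha\oplus L_\beta\oplus L_\kappa$. Second, the fallback Mayer--Vietoris route is also not free: for a three-set cover the spectral sequence admits a possibly nonzero differential $d_2\colon E^2_{2,q}\to E^2_{0,q+1}$, so ``degenerates at $E^2$ for dimension reasons'' is false as stated, and even granted degeneration one must still solve extension problems to recover the displayed chain complex rather than only its graded pieces. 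The Euler-characteristic and small-example checks you list are sanity tests, not substitutes for these arguments. Finally, note that your proof (single 4-handle, closed Poincar\'e duality) is for closed $X$, which is what the complex's $H_4 = \Z$ forces; that matches the intended use here, but it is worth flagging that the paper's standing convention is that $X$ has non-empty boundary, so some care is needed when the proposition is invoked for marked trisection diagrams.
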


\subsection{Combings} Next, we introduce the notion of a combing of trisection diagram. 

\vspace{3pt}

We must first recall a notion of degree. Let $v$ be a tangent vector-field on a surface $\Sigma$ defined along a closed immersed curve $C$. There is a canonical (isotopy class of) bundle isomorphism
\[
\tau:T\Sigma|_C \simeq \R^2
\]
which takes a non-vanishing vector-field $w$ tangent to $C$ to $(1,0) \in \R^2$. The \emph{degree} $\op{deg}(C,v)$ of $v$ along $C$ is the degree of the map
\[
C \to S^1 \qquad \text{given by}\qquad p \mapsto \frac{\tau \circ v(p)}{|\tau \circ v(p)|} \in S^1 
\]

\begin{definition}\label{def:combing_on_diagram}
A \emph{(singular) combing} $v$ on a marked trisection $T$ of type $(g,k;p,b)$ is a vector-field
\[v:\Sigma \to T\Sigma \qquad\text{on the trisection surface }\Sigma\]
that satisfies the following properties.
\begin{itemize}
\item[(a)] $v$ has exactly one zero $p_\eta$ of index $-1$ on each curve $\eta$ in $T$, and no additional zeros. 
\item[(b)] $v$ is tangent to $\eta$ and pointing out of $p_\eta$ in a neighborhood of $p_\eta$. 
\item[(c)] $v$ has degree $\chi(\Sigma) + 3(g - p)$ on the marked component of $\partial \Sigma$ and degree $0$ on the others.  
\end{itemize}
An \emph{isotopy} of combings is a $1$-parameter family of vector-fields $v_t$ satisfying the criteria above.
\end{definition}

A pair of singular combings $u$ on $S$ and $v$ on $T$ can be boundary summed to form a combing
\[
u \natural v \quad\text{on the boundary sum diagram}\quad S \natural T
\]
The sum is acquired by putting $u$ and $v$ into a standard form near the marked boundary components of $S$ and $T$, as in the following picture:

\vspace{4pt}

\begin{center}
\includegraphics[width=.7\textwidth]{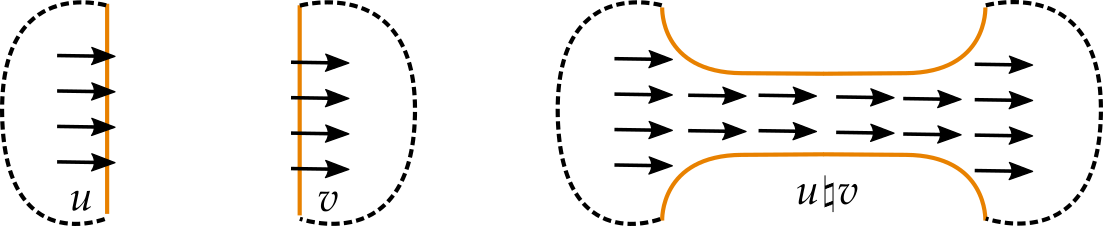}
\label{fig:boundary_sum_combings}
\end{center}

\vspace{3pt}

\noindent We will be primarily interested in equivalence classes of combings up to certain moves.

\begin{definition}\label{def:combing_move}
A \emph{combing move} on a combing $v$ of a marked trisection diagram $T$ is any of the following local modifications in a box $B$ around a singularity $p_\eta$ of $v$.
\begin{itemize}
\item A \emph{basepoint isotopy} that moves the basepoint of $v$ across an intersection point.
\vspace{2pt}
\begin{center}
\includegraphics[width=.5\textwidth]{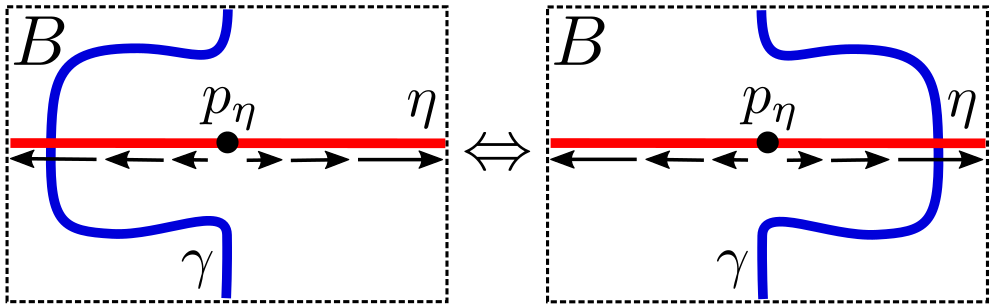} 
\label{fig:basepoint_crossing}
\end{center}
\vspace{2pt}

\item A \emph{basepoint spiral} that twists the combing $v$ near the basepoint.
\vspace{2pt}
\begin{center}
\includegraphics[width=.5\textwidth]{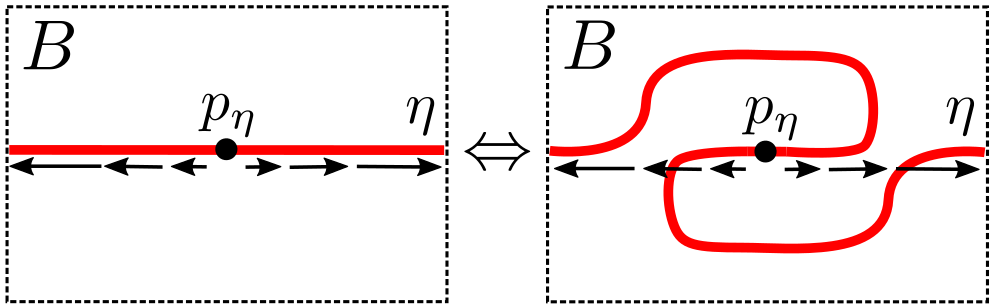} 
\label{fig:basepoint_spiral}
\end{center}
\vspace{2pt}
\end{itemize}
The set of equivalence classes $[v]$ of combings on $T$ up to trisection moves is denoted by
\[\text{Comb}(T)\]
\noindent Note that both of these moves are depicted as modifications of the trisection curves supported in $B$, that leave $v$ unchanged. However, they are equivalent (up to diffeomorphism supported in $B$) to modifications of $v$ supported in $B$ that leave the trisection unchanged.
\end{definition}

We would like to classify combings modulo isotopy, basepoint isotopies, and basepoint spirals. For this purpose, we give the following definition.

\begin{definition} \label{def:combing_difference} Let $u$ and $v$ be a pair of combings of a marked trisection $T$. The \emph{difference}
\[[v - u] \in H_1(\Sigma)/(L_\alpha + L_\beta + L_\kappa) \simeq H_1(X)\]
is defined as follows. Apply isotopies and basepoint isotopies so that
\begin{itemize}
    \item[(a)] $u$ and $v$ have the same singularity $p_\eta$ on each curve $\eta$.
    \item[(b)] $u$ and $v$ agree in a neighborhood of each singularity $p_\eta$ and the boundary $\partial \Sigma$.
\end{itemize}
Choose a bundle isomorphism $\tau:T\Sigma|_U \simeq \C$ of $T\Sigma$ over the open set $U = \Sigma \setminus (\cup_\eta p_\eta)$, and let
\[\phi:U \to \R/\Z \simeq S^1 \qquad\text{given by}\qquad \phi(s) = \op{arg}\big(\frac{\tau \circ v(s)}{\tau \circ u(s)}\big) \in \R/\Z\]
Since $\phi$ vanishes near $p_\eta$, it extends to a map $\phi:\Sigma \to S^1$, and we acquire a cohomology class
\[\phi^*E \in H^1(\Sigma) \qquad\text{where}\qquad E = \op{PD}[S^1] \in H^1(S^1)\]
Since $\phi = 0$ near $\partial \Sigma$, we have $\phi^*E \in H^1(\Sigma,\partial\Sigma) \subset H^1(\Sigma)$. We let
\[[v - u]_\Sigma := \op{PD}(\phi^*E) \in H_1(\Sigma) \qquad\text{and}\qquad [v - u] := [v - u]_\Sigma + (L_\alpha + L_\beta + L_\kappa)\]\end{definition}

\begin{lemma} The difference class $[v - u] \in H_1(\Sigma)/(L_\alpha + L_\beta + L_\kappa)$ is well-defined and
\[[w - u] = [w - v] + [v - u]\]
Moreover, $[v - u] = 0$ if and only if $v$ and $u$ are isotopic after a sequence of basepoint isotopies and spirals. 
\end{lemma}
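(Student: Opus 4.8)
The plan is to treat $[v-u]$ as a relative difference (winding) class of two nonvanishing sections, and to run the standard three-step argument---well-definedness, additivity, and a vanishing criterion---throughout keeping careful track of the singularities $p_\eta$ and of the ambiguity by $L_\alpha + L_\beta + L_\kappa$. First I would settle well-definedness. To remove the dependence on the trivialization $\tau$, I choose a conformal structure on $\Sigma$ compatible with the orientation, so that $T\Sigma$ becomes a complex line bundle $L$; the space of such structures is contractible and hence irrelevant up to homotopy. Over $U = \Sigma \setminus \{p_\eta\}$ the fields $v,u$ are nonvanishing, so the ratio $v/u$ is a canonical nonvanishing section of $L \otimes L^{-1} \cong \underline{\C}$, and $\phi = \op{arg}(v/u)$ is a well-defined map $U \to \R/\Z$ in which the trivialization cancels. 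It extends over each $p_\eta$ (where it is locally constant, equal to $0$) and vanishes near $\partial\Sigma$, so $\phi^*E \in H^1(\Sigma,\partial\Sigma)$ is defined. It then remains to check independence of the isotopies and basepoint isotopies used to align the singularities and the germs of $u,v$: an isotopy of $u$ or $v$ deforms $\phi$ by a homotopy and leaves $\phi^*E$ fixed, while the only genuine ambiguity is the path along $\eta$ used to drag the common singularity $p_\eta$. Two such paths differ by a loop that is a multiple of $[\eta]$, and a local computation in the model for an index-$(-1)$ zero shows that dragging $p_\eta$ once around $\eta$ changes $[v-u]_\Sigma$ by an integer multiple of $[\eta] \in L_\alpha + L_\beta + L_\kappa$. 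Hence the reduction $[v-u] \in H_1(\Sigma)/(L_\alpha+L_\beta+L_\kappa)$ is well-defined.

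The additivity $[w-u] = [w-v] + [v-u]$ is the easiest step. After basepoint isotopies arranging that $u,v,w$ share the same singularities and agree near them and near $\partial\Sigma$, multiplicativity of the ratio gives $w/u = (w/v)(v/u)$, hence $\phi_{wu} = \phi_{wv} + \phi_{vu}$ in $\R/\Z$. Since pullback along the group multiplication $S^1 \times S^1 \to S^1$ adds fundamental classes, $\phi_{wu}^*E = \phi_{wv}^*E + \phi_{vu}^*E$ in $H^1(\Sigma,\partial\Sigma)$; applying $\op{PD}$ and reducing modulo $L_\alpha + L_\beta + L_\kappa$ yields the claim.

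For the vanishing criterion, the implication ``related by moves $\Rightarrow [v-u]=0$'' amounts to checking that each allowed move preserves the class: isotopies leave $\phi^*E$ unchanged, while a basepoint isotopy and a basepoint spiral are local modifications near a single $p_\eta$ whose effect on the representative $[v-u]_\Sigma$ is, by the local model, an integer multiple of the curve class $[\eta]$ and hence trivial in the quotient. For the converse, assume $[v-u]=0$, i.e.\ $[v-u]_\Sigma \in L_\alpha + L_\beta + L_\kappa$. Writing this as a combination $\sum_i c_i[\eta_i]$ of curve classes, I would realize each summand by dragging the singularity $p_{\eta_i}$ around $\eta_i$ (a sequence of basepoint isotopies, together with spirals to correct the local framing), reducing to the case $[v-u]_\Sigma = 0 \in H_1(\Sigma)$. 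Then $\phi^*E = 0$ in $H^1(\Sigma,\partial\Sigma)$, so $\phi$ lifts to $\tilde\phi:\Sigma \to \R$ vanishing near $\{p_\eta\}\cup\partial\Sigma$. Rotating $v$ at each point through the angle $-2\pi t\,\tilde\phi$ gives an explicit family $v_t$ from $v$ (at $t=0$) to a field everywhere positively proportional to $u$ (at $t=1$), which is then isotopic to $u$ by rescaling. Because $\tilde\phi$ is constant near each singularity and near the boundary, this family preserves the index-$(-1)$ zeros and the prescribed boundary degrees, so it is an isotopy through combings.

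The main obstacle is the pair of local computations underlying both the well-definedness and the two directions of the vanishing criterion: precisely determining how $[v-u]_\Sigma$ changes under a basepoint isotopy and under a basepoint spiral, and conversely verifying that these moves suffice to realize any change by $L_\alpha + L_\beta + L_\kappa$, so that $[v-u]_\Sigma$ can be driven to $0$ in $H_1(\Sigma)$. Getting the bookkeeping of the index-$(-1)$ singularities right---so that the local twist contributed by a spiral is accounted for exactly by a multiple of $[\eta]$---is the delicate point; everything else is formal manipulation of winding numbers and relative cohomology.
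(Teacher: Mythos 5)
Your construction coincides with the paper's: the relative winding class $\phi^*E$, additivity via multiplicativity of the ratio, invariance of the class under the two moves, and a rotation/lifting argument for the converse. The well-definedness and additivity parts go through essentially as in the paper, granting the local computation you defer (dragging a basepoint once around $\eta$ shifts $[v-u]_\Sigma$ by $[\eta]$, which the paper proves by counting tangencies against test curves). The genuine gap is in the converse of the vanishing criterion. Having reduced to $[v-u]_\Sigma = 0$ in $H_1(\Sigma)$, you assert that $\phi$ lifts to $\tilde\phi\colon\Sigma\to\R$ ``vanishing near $\{p_\eta\}\cup\partial\Sigma$.'' That is not automatic, and in general it is false. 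Since $\phi\equiv 0$ on those sets, any lift is merely locally constant there with \emph{integer} values, and all lifts differ by a single global constant. Vanishing of $\phi^*E\in H^1(\Sigma,\partial\Sigma)$ kills the winding along closed loops and along arcs joining boundary components (so, after adding a constant, the lift is $0$ near all of $\partial\Sigma$), but it places no constraint on the integers $n_\eta$ given by the value of $\tilde\phi$ near each interior singularity: a radial bump with $\tilde\phi\equiv n$ on a small disk about $p_\eta$ and $\tilde\phi\equiv 0$ outside a larger disk has $\phi^*E=0$ yet $n_\eta=n$ arbitrary. These integers are a strictly finer invariant, living in $H^1\bigl(\Sigma,\,\partial\Sigma\cup\bigcup_\eta D_\eta\bigr)$ for small disks $D_\eta$ around the singularities, not in $H^1(\Sigma,\partial\Sigma)$, and nothing in your argument controls them.

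This matters because whenever $n_\eta\neq 0$, your family $v_t$ (rotation through angle $-2\pi t\,\tilde\phi$) turns the field near $p_\eta$ by the constant nonzero angle $-2\pi t\,n_\eta$ at intermediate times, violating condition (b) of Definition \ref{def:combing_on_diagram} (that $v$ be tangent to $\eta$ and point out of $p_\eta$ near $p_\eta$); your justification checks only the index of the zeros and the boundary degrees, which does not cover tangency, so $v_t$ is not an isotopy through combings. This is exactly the point at which basepoint spirals must enter: as in the paper's proof, one first applies spirals at each $p_\eta$ (each spiral changes $n_\eta$ by $\pm 1$) until all these local integers vanish, i.e.\ until the null-homotopy of $\phi$ can be taken to be $0$ near every basepoint, and only then runs the rotation argument. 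The spirals you mention during the dragging step are aimed at realizing the drag as moves, not at normalizing the $n_\eta$, so as written your proof of the converse would show that $[v-u]=0$ forces $u$ and $v$ to be related by isotopies and basepoint isotopies alone --- a strictly stronger statement than the lemma, which fails precisely because of the residual integers $n_\eta$ and which would render the spiral move redundant.
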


\begin{proof} Let $u$ and $v$ be two combings of $T$. We will say that $u$ and $v$ are in \emph{good position} if they satisfy Definition \ref{def:combing_difference}(a-b). 

\vspace{3pt}

There are several properties of $[v - u]_\Sigma$ that are immediate from the definition. First, note that if $u_t$ and $v_t$ are arbitrary isotopies of vector-fields that are in good position at each $t$, then
\[[v_t - u_t]_\Sigma \in H_1(\Sigma)\]
is independent of $t$. Second, note that if $u_0,\dots,u_k$ are in pairwise good position, then
\begin{equation} \label{eqn:additive_difference_sigma} 
[u_k - u_0]_\Sigma = [u_k - u_{k-1}]_\Sigma + \dots + [u_1 - u_0]_\Sigma\end{equation}
Finally, note that $[v - u]_\Sigma = -[u - v]_\Sigma$. We now prove the desired claims in four steps.

\vspace{3pt}

{\bf Step 1: Basepoint isotopy.} If $u$ and $v$ are in good position and related by a sequence of basepoint isotopies carrying a basepoint $p_\eta$ once around the curve $\eta$, then
\[[v - u]_\Sigma = [\eta] \in H_1(\Sigma)\]
Indeed, if $\gamma$ is any closed curve in $\Sigma$, then
\[[v - u]_\Sigma \cdot [\gamma] = \langle \phi^*E,[\gamma]\rangle = \#(v \cap T\gamma) - \#(u \cap T\gamma)\]
Here $\#(v \cap T\gamma)$ and $\#(u \cap \gamma)$ are signed counts of points where $v$ and $u$ are, respectively, tangent to $\gamma$ in the direction parallel to the orientation. By reversing the basepoint isotopies from $u$ to $v$ as an ambient isototopy and applying it to $\gamma$, we find that
\[
\#(v \cap T\gamma) = \#(u \cap T\gamma') 
\]  
Here $\gamma'$ is a curve acquired by isotoping each strand of $\gamma$ that intersects $\eta$ over the singularity $p_\eta$ once. In particular,
\[
 \#(u \cap T\gamma') = \#(u \cap T\gamma) + [\eta] \cdot [\gamma]
\]

\vspace{3pt}

{\bf Step 2: Basepoint spiral.} If $u$ and $v$ are in good position and related by a single basepoint spiral at a basepoint $p_\eta$, then
\[[v - u]_\Sigma = 0 \in H_1(\Sigma)\]
Indeed, there is an isotopy $u_t$ of vector-fields from $u$ to $v$ so that $u_t$ is in good position with $u$ and the angle between $u_t$ and $u$ is constant in a neighborhood of $p_\eta$ (but varies with $t$). Thus, there is an isotopy of difference functions
\[\phi_t:\Sigma \to S^1\]
Accordingly $[u_t - u]_\Sigma = \phi_t^*E$ is constant in $t$, and must vanish.

\vspace{3pt}

{\bf Step 3: Well-definedness.} Finally, suppose that $u$ and $v$ are arbitrary. Apply isotopies and basepoint isotopies to acquire pairs of combings $u_0,v_0$ and $u_1,v_1$ that are in good position.  To prove well-definedness, we must show that
\[[v_0 - u_0]_\Sigma - [v_1 - u_1]_\Sigma \in L_\alpha + L_\beta + L_\kappa\]
By simultaneously isotoping the pair $(u_1,v_1)$, we may assume that all of the vector-fields $u_0,u_1,v_0$ and $v-1$ are in pairwise good position. Then by (\ref{eqn:additive_difference_sigma}), we have
\begin{equation} \label{eqn:combing_difference_welldefined_2}
[v_1 - u_1]_\Sigma = [v_1 - v_0]_\Sigma + [v_0 - u_0]_\Sigma + [u_0 - u_1]_\Sigma
\end{equation}
By Steps 1 and 2, $[v_1 - v_0]_\Sigma$ and $[u_1 - u_0]_\Sigma$ are in $L_\alpha + L_\beta + L_\kappa$ since they are related by basepoint isotopies and spirals. This proves well-definedness of $[v - u]$, and additivity follows from (\ref{eqn:additive_difference_sigma}).

\vspace{3pt}

{\bf Step 4: Kernel.} Finally, if $u$ and $v$ are related by basepoint isotopies and spirals, then this is still the case when they are put in good position. Thus $[v - u] = 0$ by Steps 1 and 2. 

\vspace{3pt}

Conversely, if $[v - u] = 0$ then $[v - u]_\Sigma \in L_\alpha + L_\beta + L_\kappa$. After a sequence of basepoint isotopies applied to $v$, we acquire a $v'$ with $[v' - u]_\Sigma = 0$. The circle map $\phi':\Sigma \to S^1$ corresponding to $v'$ and $u$ is homotopic to a constant map through maps
\[\phi_t:\Sigma \to S^1 \qquad\text{with}\qquad \phi_0 = 0 \,\text{ and }\,\phi_1 = \phi\]
After applying basepoint spirals to $v'$, we can assume that $\phi_t = 0$ near each basepoint $p_\eta$. Finally, it is simple to see that there is a unique family of vector-fields $u_t$ in good position to $u$ that recovers $\phi_t$, i.e. such that
\[
\phi_t = \op{arg}\big(\frac{\tau \circ u_t}{\tau \circ u}\big)
\]
This $v'$ is isotopic to $u$, and $v$ is isotopic to $u$ after basepoint isotopies and spirals. \end{proof}

\begin{cor} \label{cor:unique_combings} If $T$ is a marked trisection diagram for a $4$-manifold $X$ with $H_1(X) = 0$ then any two combings are isotopic after a sequence of basepoint isotopies and basepoint spirals.
\end{cor}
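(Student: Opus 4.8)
The plan is to deduce this directly from the preceding lemma, whose final clause already characterizes exactly when two combings are related by basepoint isotopies and spirals. Concretely, given any two combings $u$ and $v$ of $T$, that lemma produces a well-defined difference class $[v - u] \in H_1(\Sigma)/(L_\alpha + L_\beta + L_\kappa)$ and asserts that $[v - u] = 0$ precisely when $u$ and $v$ become isotopic after a sequence of basepoint isotopies and spirals. So the entire matter reduces to showing that this difference class is forced to vanish whenever $H_1(X) = 0$.

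First I would pin down the identification $H_1(\Sigma)/(L_\alpha + L_\beta + L_\kappa) \simeq H_1(X)$ that is invoked in Definition \ref{def:combing_difference}. This is read off from Proposition \ref{prop:trisection_homology}: in the chain complex computing $H_*(X)$, the term $H_1(\Sigma)$ occupies the degree-one spot, with outgoing map zero and incoming map $\iota(x,y,z) = x + y + z$ whose image is exactly $L_\alpha + L_\beta + L_\kappa$. Hence the homology at that spot is the cokernel of $\iota$, namely $H_1(\Sigma)/(L_\alpha + L_\beta + L_\kappa)$, and this group is $H_1(X)$.

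With that identification in hand, the argument is immediate: if $H_1(X) = 0$ then the group $H_1(\Sigma)/(L_\alpha + L_\beta + L_\kappa)$ in which $[v - u]$ lives is the zero group, so $[v - u] = 0$ for every pair of combings $u, v$ of $T$. The kernel clause of the preceding lemma then yields that $u$ and $v$ are isotopic after a finite sequence of basepoint isotopies and basepoint spirals, which is exactly the assertion of the corollary.

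The only genuine content lies in the lemma itself — in particular its well-definedness and kernel statements, whose proof (Steps 1--4) carries all of the real work. Granting those, there is no serious obstacle here; the one point worth stating carefully is the cokernel computation above, which guarantees that the relevant quotient is precisely $H_1(X)$ rather than some larger group, so that the hypothesis $H_1(X) = 0$ applies verbatim.
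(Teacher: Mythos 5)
Your proof is correct and matches the paper's (implicit) argument: the corollary is stated as an immediate consequence of the preceding lemma, with the identification $H_1(\Sigma)/(L_\alpha + L_\beta + L_\kappa) \simeq H_1(X)$ taken for granted from Proposition \ref{prop:trisection_homology}, exactly as you spell out via the cokernel of $\iota$. Your only addition is making that cokernel computation explicit, which is a reasonable piece of care rather than a departure from the paper's route.
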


\subsection{Lefschetz fibrations} We conclude this section by discussing a rich source of examples of trisection diagrams: Lefschetz fibrations and their diagrams. Our main references are Ozbagci-Stipsicz \cite{ozbagci2004surgery} and Castro-Ozbagci \cite{castro2019trisections}.

\begin{definition} \cite[Def.~10.1.1]{ozbagci2004surgery} An \emph{achiral Lefschetz fibration} $\pi$ on a compact, connected $4$-manifold with boundary $X$ is a smooth map $f:X \to S$ to a smooth surface with boundary $S$ such that:
\begin{itemize}
    \item The set of critical points $P = \{p_1,\dots,p_m\}$ of $f$ is finite and contained in $\op{int}(X)$.
    \vspace{2pt}
    \item Each critical point $p$ has a neighborhood $V$ and a chart $U \simeq V$ from a subset $U \subset \C^2$ with
    \[
    f(z_1,z_2) = z_1^2 + z_2^2 \qquad\text{in the chart }U
    \]
\end{itemize}
The \emph{fiber} $F$ of the Lefschetz fibration is the surface with boundary given by the fiber of $f$ at a non-critical value. If $S$ is diffeomorphic to $D^2$, there is an associated open book on the boundary
\[(B,\pi)  \qquad\text{with}\qquad B = f^{-1}(0) \cap \partial X\quad\text{and}\quad\pi = \op{arg} \circ f\]
Here $\op{arg}$ is the argument map sending a non-zero complex number to its corresponding angle.

\end{definition}

\begin{remark} We will also typically assume that the critical values of different critical points are distinct, and that $0$ is not a critical value. Any Lefschetz fibration is equivalent to such a fibration after a small isotopy. \end{remark}

A Lefschetz fibration is specified (up to isotopy and isomorphism) by the data of a certain type of diagram. We fix the following terminology.

\begin{definition} An \emph{achiral Lefschetz diagram} $(F,L,s)$ consists of an oriented surface with boundary $F$ equipped with:
\begin{itemize}
    \item A collection of simple, closed curves $L = L_1,\dots,L_m$.
    \vspace{3pt}
    \item An associated sign $s_i \in \{+1,-1\}$ for each closed curve $L_i$.
\end{itemize}
An (ordinary) Lefschetz diagram is an achiral Lefschetz fibration with only positive signs.\end{definition}

\begin{thm} \label{thm:Lef_diagram_to_Lef_fibration} Any achiral Lefschetz diagram $(F,L,s)$ with $m$ curves determines an achiral Lefschetz fibration $\pi:X \to D^2$ over the disk with fiber $F$ and $m$ critical points. 
\end{thm}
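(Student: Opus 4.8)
The plan is to construct the total space $X$ by attaching Lefschetz-type $2$-handles to the trivial fibration $F \times D^2 \to D^2$, one handle for each curve $L_i$, and then to extend the projection over the handles so that each handle produces exactly one critical point with the prescribed local model. First I would fix a small sub-disk $D_0 \subset \op{int}(D^2)$ and take as base fibration the projection $\pi_0 : F \times D_0 \to D_0$, which is an achiral Lefschetz fibration with no critical points (the base case $m = 0$). I would then choose $m$ distinct points $p_1, \dots, p_m$ in $D^2 \setminus D_0$ together with disjoint embedded arcs running from $\partial D_0$ out toward each $p_i$, which organize the attaching regions of the handles and guarantee distinct critical values.

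For each $i$, I would push a copy of the vanishing cycle $L_i$ into a fiber $F \times \{q_i\}$ over a point $q_i \in \partial D_0$ and attach a $2$-handle $H_i$ along this curve. The attaching framing is taken to be the framing induced by the fiber surface, corrected by $-s_i$: the page framing minus one for a positive curve ($s_i = +1$), and the opposite for a negative one ($s_i = -1$). Setting $X := (F \times D_0) \cup H_1 \cup \dots \cup H_m$ and extending $\pi_0$ across each $H_i$ in the standard way yields a map $\pi : X \to D^2$ in which $H_i$ is fibered so that its unique interior critical point maps to $p_i$. Since the attaching regions are disjoint, the extension is well defined and the critical set is exactly $\{p_1, \dots, p_m\} \subset \op{int}(X)$.

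The crux of the argument — and the step I expect to be the main obstacle — is verifying that, in a chart around each critical point, the extended map agrees with the required model $f(z_1,z_2) = z_1^2 + z_2^2$, up to the orientation on the $\C^2$-chart recorded by $s_i$. This is precisely the classical dictionary between Lefschetz $2$-handles and nondegenerate quadratic singularities: a $2$-handle attached along a vanishing cycle with framing one less than the surface framing is diffeomorphic to the local Lefschetz model, with the sign of the framing correction selecting between the holomorphic model and its complex conjugate (equivalently, between an orientation-preserving and an orientation-reversing chart). I would establish this by explicitly identifying the neighborhood of $H_i$ with the standard model $\{z_1^2 + z_2^2 = w\}$ fibered over the $w$-disk, checking that the vanishing cycle and the attaching framing match; this is exactly where the sign $s_i$ enters and where one must track orientations carefully. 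Granting this local identification, induction on $m$ (attaching one handle at a time, with the $m=0$ base case above) shows that $\pi$ is an achiral Lefschetz fibration.

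Finally I would record that the fiber and boundary data are as claimed. The generic fiber of $\pi$ is visibly $F$, since the handles are attached over the interior and a regular value can be chosen in $D_0 \setminus (\text{arcs})$; and the associated boundary open book has page $\op{int}(F)$ and binding $\partial F$, matching the definition $(B,\pi) = (f^{-1}(0) \cap \partial X,\, \op{arg} \circ f)$. The only genuinely new content beyond this routine bookkeeping is the local handle-to-singularity identification, so once that is in place the remaining verifications are immediate.
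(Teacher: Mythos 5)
Your construction is correct, but it takes a genuinely different route from the paper's. The paper does not build the total space by hand: its argument (given as the remark following the theorem, citing Ozbagci--Stipsicz \S 10.1) runs through the monodromy correspondence. An achiral Lefschetz fibration over $D^2$ with critical values $Q$ is determined up to isomorphism by the fiber $F$ and the monodromy homomorphism $\Phi_f:\pi_1(D^2\setminus Q,0)\to \op{MC}(F)$; since $\pi_1(D^2\setminus Q,0)$ is generated by simple loops around the critical values and the monodromy around each such loop is a right- or left-handed Dehn twist, the diagram $(F,L,s)$ is precisely a prescription of such a homomorphism, and the fibration exists and is determined by it. You instead construct $X$ explicitly by attaching Lefschetz $2$-handles to the trivial fibration along the curves $L_i$ with fiber framing $\mp 1$ according to $s_i$, and you correctly isolate the key step as the local identification of such a handle with the quadratic model $z_1^2+z_2^2$ (or its conjugate in an orientation-reversed chart). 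These are the two standard proofs, and your handle picture is in fact how the monodromy correspondence is usually established, so nothing is circular. What the paper's route buys is immediate well-definedness: the fibration is characterized up to isomorphism by the factorization data, with no choices of attaching regions to worry about. What your route buys is an explicit handle decomposition of $X$, which is what the paper's later constructions (the Lefschetz-to-trisection procedure of Algorithm \ref{alg:Lef_to_Tri} and the Kirby-diagram descriptions of the Stein nuclei) actually exploit. One point you should make explicit: the attaching points $q_i$ must be ordered cyclically along $\partial D_0$ consistently with the ordering of the list $L_1,\dots,L_m$, since the isomorphism type of the resulting fibration depends on the ordered monodromy factorization, not merely on the unordered collection of signed curves; as stated, your ``disjoint arcs'' leave this ordering implicit.
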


\begin{figure}[h!]
\centering
\includegraphics[width=.7\textwidth]{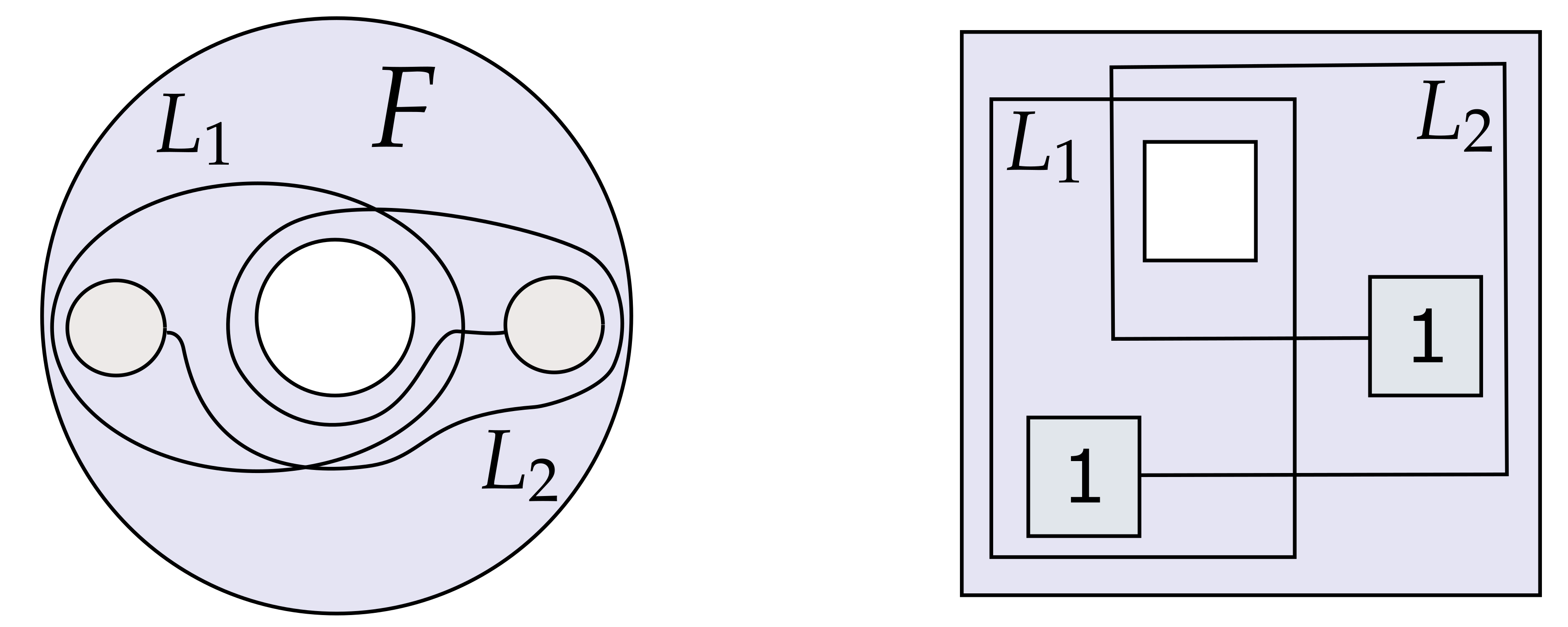} 

\caption{Two pictures of the same Lefschetz fibration. The underlying surface $F$ is a genus $1$ surface with two boundary components (where the grey circles represent the handle, as in a Heegaard diagram). There are two curves $L_1$ and $L_2$. The image on the right is a more rectilinear picture where the two ends of the handle are labelled.}\label{fig:simple_lefschetz}
\end{figure} 

\begin{remark} Let us remark on the topology behind Theorem \ref{thm:Lef_diagram_to_Lef_fibration}. For a more complete discussion, see Ozbagci-Stipsicz \cite[\S 10.1]{ozbagci2004surgery}.

\vspace{3pt}

An achiral Lefchetz fibration over the disk $D^2$ with critical values $Q \subset D^2$ is determined up to isomorphism by the fiber $F$ and the monodromy homomorphism
\[
\Phi_f:\pi_1(D^2 \setminus Q,0) \to \op{MC}(F)
\]
sending a loop based at $p$ to the monodromy of the fibration around the loop (as an element of the mapping class group). The group $\pi_1(D^2 \setminus Q,0)$ is generated by a set of simple loops around each critical point, and the monodromy around each such loop is a Dehn twist, whose handedness depends on the orientation of the singularity chart \cite[Prop 10.1.5]{ozbagci2004surgery} . 

\vspace{3pt}

A Lefschetz fibration is thus determined by a surface, a set of curves and a specification of righthanded or lefthanded Dehn twists along each curve. This is the data of a Lefschetz diagram.\end{remark}

There is an important algorithm for converting a Lefschetz diagram into a trisection diagram introduced by Castro-Ozbagci \cite{castro2019trisections} (and in the relative case, discussed by Castro-Gay-Pinzon-Caicedo \cite{cgpc2018relativetrisections}).

\begin{algorithm}[Lefschetz to trisection] \label{alg:Lef_to_Tri} Fix a smooth 4-manifold $X$ with boundary $\partial X$, and let $f:X \to D^2$ be a Lefschetz fibration inducing an open book $\pi$ on $\partial X$. Fix a Lefschetz diagram
\[
(F,L,s) \qquad\text{with $m$ curves and fiber of genus $p$ with $b$ boundary components}
\]
We now describe an algorithm for converting the diagram $(F,L,s)$ into a (balanced) relative trisection representing $(X,\pi)$, denoted by
\[T = (\Sigma,\alpha,\beta,\kappa) \qquad\text{of type}\qquad (3m + p, 2p + b - 1; p,b) \]
We will apply each step of this algorithm to the Lefshetz diagram in Figure~\ref{fig:simple_lefschetz} as an example.

\vspace{5pt}

{\bf Step 1.} Divide each curve $L_i$ into two arcs, a red arc $R_i$ and a green arc $G_i$, meeting at along their boundary points. On the Lefschetz diagram in Figure \ref{fig:simple_lefschetz}, this yields the following picture:

\begin{center}
\includegraphics[width=.3\textwidth]{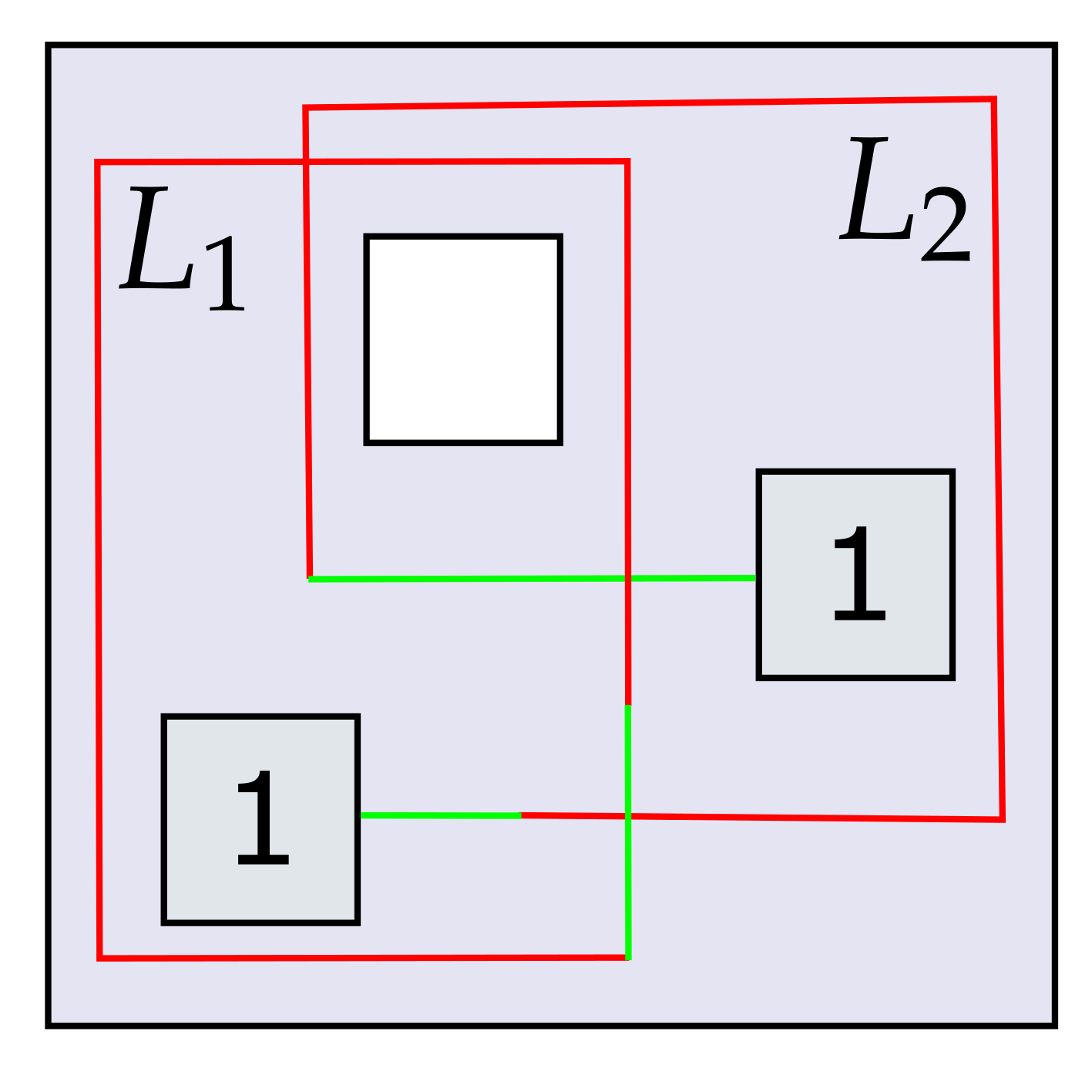}
\end{center}

{\bf Step 2.} Inductively, isotope each curve $L_i$ so that all intersections with the curve $L_j$ for $j < i$ are contained in $R_i \cap G_j$, i.e. are intersections between the red part of $L_i$ and the green part of $L_j$. On the Lefschetz diagram in Figure \ref{fig:simple_lefschetz}, this yields the following picture:

\begin{center}
\includegraphics[width=.3\textwidth]{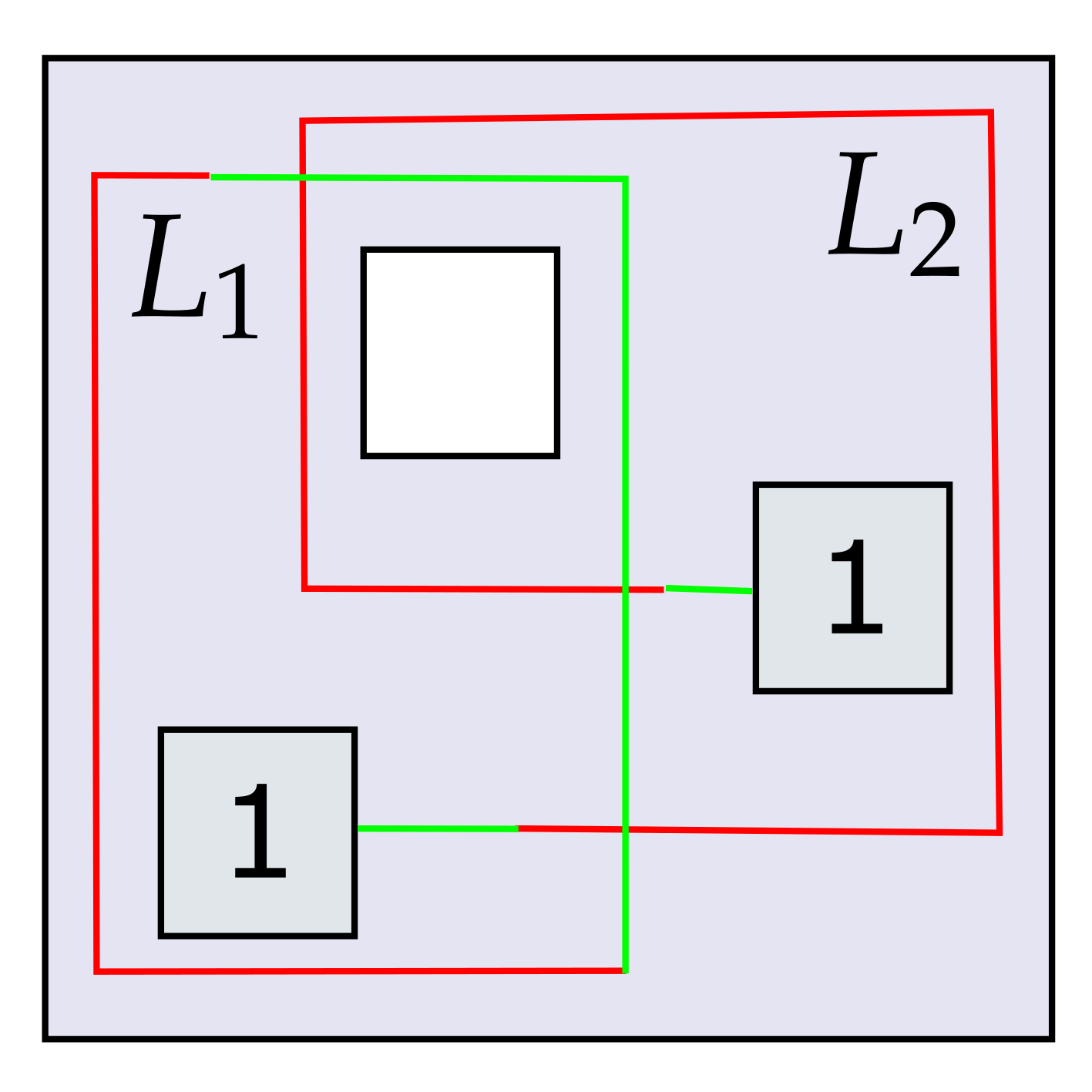}
\end{center}

{\bf Step 3.} For each curve $L_i$, choose a neighborhood $U_i$ of the two points at the intersection  of the red and green parts, of the following form:

\vspace{5pt}

\begin{center}
\includegraphics[width=.4\textwidth]{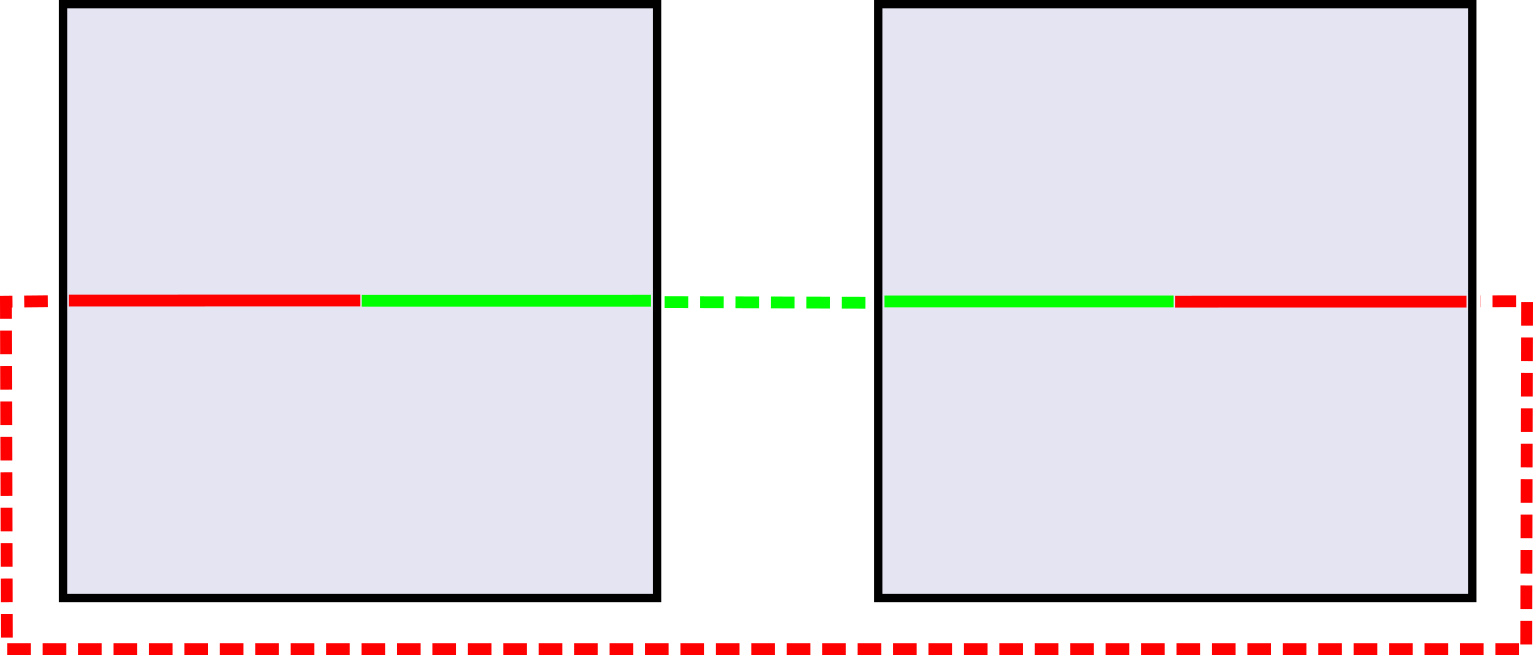}
\end{center}

\vspace{5pt}

{\bf Step 4.} For each curve $L_i$, replace the chosen neighborhood $U_i$ with the picture on the left (if the sign $s_i$ is positive) or the pucture on the right (if the sign $s_i$ is negative):

\vspace{5pt}

\begin{center}
\includegraphics[width=.8\textwidth]{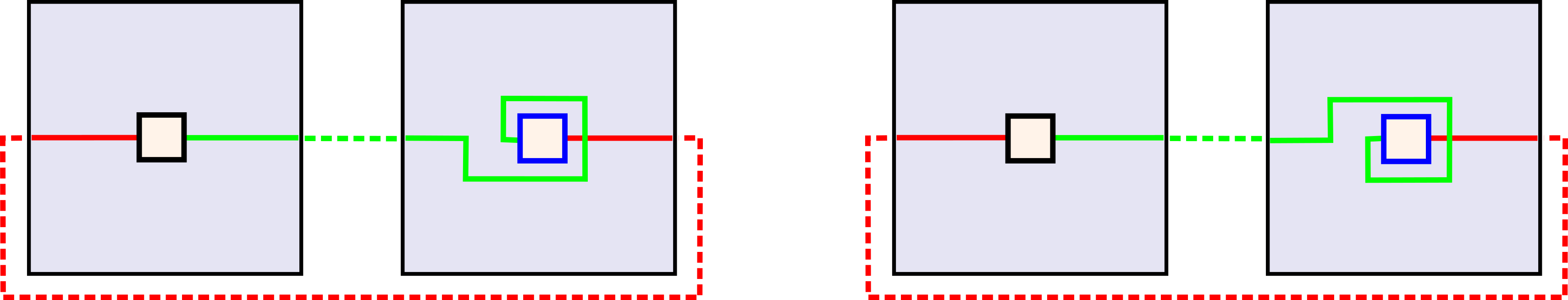}
\end{center}

\noindent Applying Steps 3 and 4 to the Lefschetz diagram in Figure \ref{fig:simple_lefschetz} yields the following picture:

\vspace{5pt}

\begin{center}
\includegraphics[width=.3\textwidth]{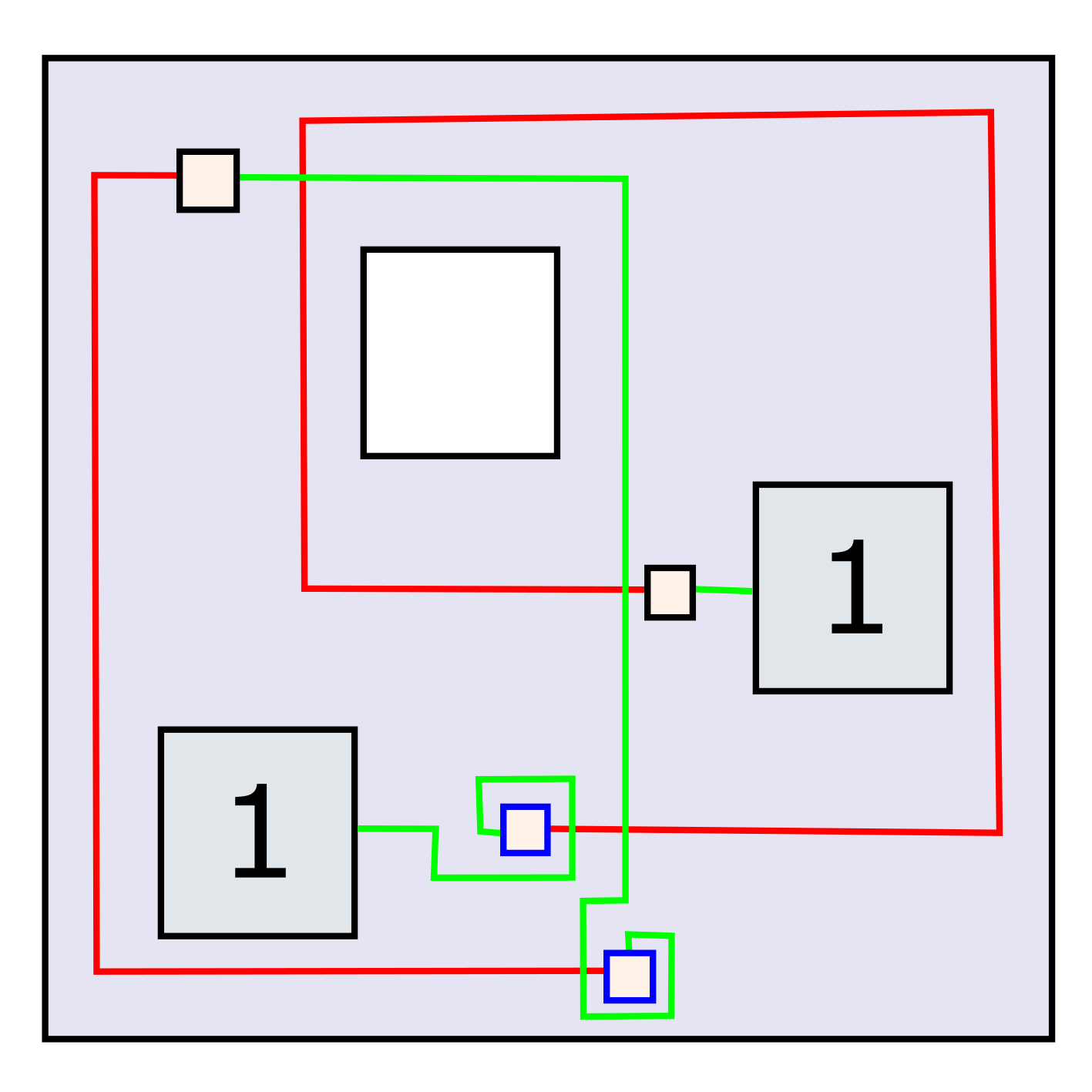}
\end{center}

\end{algorithm}

We will apply this algorithm to some more sophisticated examples in Section \ref{sec:stein_nuclei}.

\section{Trisection Invariant} \label{sec:main_results} We now proceed with the proof of the main results of this paper. In particular, we construct the (non-semisimple, relative) trisection brackets and trisection invariants. We also explain the basic properties of these invariants.

\begin{setup} \label{set:trisection_invariant} We fix the following setup to formulate the trisection invariants in this section.
\begin{itemize}
    \item A balanced Hopf triplet $\mathcal{H}$ with Hopf algebras and pairings denoted by
\[
H_\mu \quad\text{for}\quad \mu \in \{\alpha,\beta,\kappa\} \qquad\text{and}\qquad \langle -\rangle_{\mu\nu}\quad\text{for}\quad \mu\nu \in \{\alpha\beta,\beta\kappa,\kappa\alpha\}\]
\item A right cointegral on each Hopf algebra in the Hopf triplet, denoted by \[e^\mu \qquad\text{or}\qquad e^{H_\mu} \qquad\text{for}\qquad \mu \in \{\alpha,\beta,\kappa\}  \]
\item A marked trisection diagram with oriented curves, denoted by
\[T = (\Sigma;\alpha,\beta,\kappa; C) \qquad\text{of type}\qquad (g,k_1,k_2,k_3;p,b)\]
\item A basepoint on each closed curve $\mu_i$ in the trisection $T$, denoted by
\[p_{\mu_i} \in \mu_i\]
\item A singular combing $v$ on the marked trisection $T$ with singularities at the basepoints $p_{\mu_i}$.
\end{itemize}

 \noindent There is a generalized integral corresponding the right integral $e^\mu$ and any proper half-integer $\theta$ (see Definition \ref{def:generalized_cointegral}). We denote this tensor by
\[e^\mu_\theta \qquad\text{or}\qquad e^{H_\mu}_\theta \qquad\text{for any }\mu \in \{\alpha, \beta, \kappa\}\]We will drop the superscript $\mu$ when the Hopf algebra is clear from context. We note that
\[e_{-1/2}^\mu = e^\mu \qquad\text{and}\qquad e_{1/2}^\mu \text{ is a left integral}\]

\end{setup}

\subsection{Rotation numbers} We start by introducing various rotation numbers associated to a singular combing. To start, consider an embedded arc and a trivialization
\[
\gamma \subset \Sigma \qquad\text{and}\qquad \tau:T\Sigma|_\gamma \simeq \R^2
\]
Assume that the combing $v$ is non-vanishing along $\gamma$. Then we can associate a rotation number
\[
\text{rot}(\gamma,v,\tau) \in \R
\]
as follows. Choose a parametrization of the arc $f:[0,1] \simeq \gamma$. We can define a map
\[\phi:[0,1] \to S^1 \simeq \R/\Z \qquad\text{given by}\qquad \phi(t) = \frac{\tau \circ v(f(t))}{|\tau \circ v(f(t))|}\]
This map lifts to a map $\tilde{\phi}:[0,1] \to \R$, and we define $\text{rot}(\gamma,v,\tau) := \tilde{\phi}(1) - \tilde{\phi}(0)$. The following lemma is simple and left as an exercise.

\begin{lemma} \label{lem:continuity_of_rot} The rotation number $\text{\rm rot}(\gamma,v,\tau)$ is well-defined (i.e.~independent of $f$) and continuous in the arguments $\gamma,v$ and $\tau$.
\end{lemma}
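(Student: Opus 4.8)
The plan is to treat the two assertions separately: first well-definedness (independence of the auxiliary parametrization $f$ and of the choice of lift), and then joint continuity in the triple $(\gamma, v, \tau)$.

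\textbf{Well-definedness.} First I would note that the lift $\tilde\phi$ of $\phi$ through the covering $\R \to \R/\Z \simeq S^1$ exists by the path-lifting property and is unique once $\tilde\phi(0)$ is fixed; any two lifts differ by an additive integer constant, so the quantity $\tilde\phi(1) - \tilde\phi(0)$ is independent of the choice of lift. For independence of $f$, I would observe that any two orientation-compatible parametrizations $f, g \colon [0,1] \to \gamma$ differ by an orientation-preserving self-diffeomorphism $h$ of $[0,1]$ fixing the endpoints, i.e.\ $g = f \circ h$ with $h(0) = 0$ and $h(1) = 1$. Then the associated circle maps satisfy $\phi_g = \phi_f \circ h$, and $\tilde\phi_f \circ h$ is a lift of $\phi_g$; since $h$ fixes the endpoints, $\tilde\phi_g(1) - \tilde\phi_g(0) = \tilde\phi_f(1) - \tilde\phi_f(0)$. (Reversing the orientation of $\gamma$ negates the rotation number, so the value is canonical once the arc is oriented, as it will be in all our applications.)

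\textbf{Continuity.} For the continuity statement, I would package the data into the single object $w(t) := \tau(v(f(t))) \in \R^2 \setminus \{0\}$. Since $v$ is assumed non-vanishing along $\gamma$ and $\tau$ is a bundle isomorphism, $w$ never vanishes, and it depends continuously on $(\gamma, v, \tau)$ jointly, provided I first fix a continuously varying family of parametrizations $f$ (e.g.\ by choosing a tubular-neighborhood/local chart model along which $\gamma$ and $\tau$ vary). Composing with the normalization $x \mapsto x/|x|$, which is continuous on $\R^2 \setminus \{0\}$, produces the circle-valued path $\phi = w/|w|$ depending continuously, in the uniform topology on paths, on the triple. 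The crux is then the elementary but essential fact that the path-lifting map, sending a path $\phi \colon [0,1] \to S^1$ together with a lift of its initial point to the corresponding lift $\tilde\phi \colon [0,1] \to \R$, is continuous from the sup-norm topology on circle-paths to the sup-norm topology on real paths: if two circle-paths stay uniformly within distance $1/4$ and their chosen initial lifts agree, then their lifts remain uniformly close throughout. Choosing the initial lift $\tilde\phi(0)$ to vary continuously with the data (a local branch of the argument suffices), I conclude that $\tilde\phi(1)$, and hence the difference $\tilde\phi(1) - \tilde\phi(0) = \text{rot}(\gamma, v, \tau)$, varies continuously.

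I expect the only genuine obstacle to be bookkeeping rather than mathematics: namely specifying the topologies precisely and arranging a parametrization $f$ and trivialization that vary continuously as $\gamma$ moves, so that the phrase ``$w$ depends continuously on $(\gamma,v,\tau)$'' is literally true. Once that setup is in place, both the lift-continuity lemma and the reparametrization argument are routine. As an alternative route that makes continuity (indeed smoothness) transparent under $C^1$ hypotheses, I would record the integral formula
\[
\text{rot}(\gamma, v, \tau) = \frac{1}{2\pi} \int_0^1 \frac{w_1 \dot w_2 - w_2 \dot w_1}{w_1^2 + w_2^2}\, dt, \qquad w = (w_1, w_2),
\]
from which continuity of the integrand in the data gives continuity of $\text{rot}$ directly; but the path-lifting argument is preferable since it requires only $C^0$ control of the data.
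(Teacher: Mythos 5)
The paper never actually proves this lemma: it is stated with the remark that it ``is simple and left as an exercise,'' so there is no argument of the authors' to compare yours against. Your proposal is correct and supplies exactly the details that the exercise asks for. The well-definedness half is exactly right: uniqueness of lifts up to an additive integer constant makes $\tilde\phi(1)-\tilde\phi(0)$ independent of the lift, and writing any two orientation-compatible parametrizations as $g = f\circ h$ with $h$ an endpoint-fixing diffeomorphism of $[0,1]$, so that $\tilde\phi_f\circ h$ lifts $\phi_g$, gives independence of $f$. Your parenthetical caveat about orientation is also the right one to flag: strictly speaking $\operatorname{rot}$ changes sign under orientation reversal of the arc, and in the paper's applications (Definitions \ref{def:rot_of_curve} and \ref{def:rot_of_intersection}) the arcs are explicitly oriented, so no ambiguity arises. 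For continuity, your reduction to the nonvanishing plane-valued path $w(t)=\tau(v(f(t)))$, followed by the sup-norm continuity of path lifting (difference of lifts starts in $(-1/4,1/4)$ and, being continuous with values in a disjoint union of small intervals around integers, stays there), is a complete argument; your observation that the real content is bookkeeping — fixing a topology on arcs and a continuously varying parametrization so that ``$w$ depends continuously on the data'' is literally true — matches how the lemma is actually used in the paper, namely along $1$-parameter families (isotopies), where continuity plus half-integrality forces the rotation numbers of Definitions \ref{def:rot_of_curve} and \ref{def:rot_of_intersection} to be constant. The $C^1$ integral formula you record is a fine alternative, and your preference for the $C^0$ path-lifting argument is well taken.
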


We can now associate (half) integer rotation numbers to the curves and intersection points in the trisection diagram, as follows.

\begin{definition}[Rotation of curve] \label{def:rot_of_curve} The rotation number of a curve $\mu_i$ in the trisection $T$ with respect to the combing $v$ is the proper half-integer
\[
\theta(\mu_i) \in \Z + \frac{1}{2}
\]
defined as follows. Recall that the combing $v$ is non-vanishing along $\mu_i$ except in a neighborhood $U$ of $p_{\mu_i}$, where it has a singularity at $p_{\mu_i}$, is tangent to $\mu_i$ and is pointing towards $p_{\mu_i}$ (see Definition \ref{def:combing_on_diagram}). Choose an arc $\gamma \subset \mu_i$ with endpoints in $U$ and choose a trivialization $\tau:T\Sigma|_{\mu_i} \simeq \R^2$ along $\mu_i$ identifying $(1,0) \in \R^2$ with an oriented tangent vector-field to $\mu_i$. Then we let
\[\theta(\mu_i) = \text{rot}(\gamma,v,\tau)\]
This is a half-integer since $v$ is tangent to $\gamma$ and points in a constant direction in $U$. \end{definition}

\begin{definition}[Rotation of intersection] \label{def:rot_of_intersection} The rotation numbers of an intersection point $p$ of two curves $\mu_i$ and $\nu_j$ in the trisection $T$ with respect to $v$, where $\mu\nu \in \{\alpha\beta, \beta\kappa, \kappa\alpha\}$, are the integers
\[
\theta_{\mu_i}(p) \in \Z \qquad\text{and}\qquad \theta_{\nu_i}(p) = -\theta_{\mu_i}(p) \in \Z
\]
defined as follows. Let $\gamma_\mu \subset \mu_i$ and $\gamma_\nu \subset \nu_j$ denote the oriented arcs connecting $p_{\mu_i}$ to the intersection point $p$ in $\mu_i$ and $\nu_j$, respectively. Choose trivializations
\[
\tau_\mu:T\Sigma|_{\mu_i} \simeq \R^2 \qquad\text{and}\qquad \tau_\nu:T\Sigma_{\nu_j} \simeq \R^2
\]
sending the oriented tangent vector $\mu_i$ and $\nu_j$ to $(1,0)$, respectively. Also assume that these tangent line to $\mu_i$ is sent by $\tau_\nu$ to a perpendicular line to the tangent line to $\nu_j$. Then we set
\[\theta_{\mu_i}(p) = 2(\text{rot}(\gamma_\mu,v,\tau_\mu) - \text{rot}(\gamma_\nu,v,\tau_\nu)) + \frac{1}{2}\]
\[\theta_{\nu_j}(p) =  -\theta_{\mu_i}(p) = 2(\text{rot}(\gamma_\nu,v,\tau_\nu) - \text{rot}(\gamma_\mu,v,\tau_\mu)) - \frac{1}{2}\]
It can easily be checked that $\theta_{\mu_i}(p)$ and $\theta_{\nu_i}(p)$ are even integers if $\mu_i$ and $\nu_j$ intersect positively, and odd integers otherwise. \end{definition}

\begin{lemma} The rotation numbers $\theta(\mu_i)$ and $\theta_{\mu_i}(p)$ are independent of choices of trivialization, and invariant under isotopy of the trisection and the combing.
\end{lemma}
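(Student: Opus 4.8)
The plan is to deduce both statements from a single mechanism: a quantity that varies continuously in its parameters (by Lemma \ref{lem:continuity_of_rot}) while being constrained to a discrete set must be locally constant, hence constant on any connected family of parameters. The parameters here are the trivializations and the isotopy variable, so the whole argument reduces to (i) exhibiting the relevant parameter spaces as connected, and (ii) checking that the discrete normalizations $\theta(\mu_i) \in \Z + \tfrac12$ and $\theta_{\mu_i}(p) \in \Z$ persist across each family. No display-math is needed, so everything can be carried out inline.

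First I would establish independence of the trivialization for the curve rotation $\theta(\mu_i)$. Call a trivialization $\tau$ \emph{admissible} if it is orientation-preserving and sends a positively oriented tangent field of $\mu_i$ to $(1,0)$. Two admissible trivializations $\tau,\tau'$ differ by a map $g = \tau'\circ\tau^{-1}$ into the subgroup $G \subset GL^+(2,\R)$ preserving the positive horizontal direction; since $G = \{\left(\begin{smallmatrix} a & b \\ 0 & d\end{smallmatrix}\right) : a,d>0\}$ is contractible, there is a homotopy $\tau_s$ through admissible trivializations from $\tau$ to $\tau'$. By Lemma \ref{lem:continuity_of_rot}, $s \mapsto \op{rot}(\gamma,v,\tau_s)$ is continuous. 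At the two endpoints of $\gamma$, which lie in the neighborhood $U$ of Definition \ref{def:combing_on_diagram}, the field $v$ is tangent to $\mu_i$, so $\tau_s\circ v$ points in the $G$-invariant horizontal direction $(\pm 1,0)$; hence the lifted angles at the endpoints change by integers continuously in $s$ and are therefore constant, so $\op{rot}(\gamma,v,\tau_s)$ is constant and $\theta(\mu_i)$ is trivialization-independent. Independence of $\gamma$ is immediate, since two such arcs differ only inside $U$, where $v$ is tangent and contributes no net turning.

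The intersection case is the main obstacle, because $\op{rot}(\gamma_\mu,v,\tau_\mu)$ is individually only a real number: at the endpoint $p$ the field $v$ points in no distinguished direction, so the endpoint argument above fails for each term separately. Here I would work with the full combination $\theta_{\mu_i}(p) = 2(\op{rot}(\gamma_\mu,v,\tau_\mu) - \op{rot}(\gamma_\nu,v,\tau_\nu)) + \tfrac12$ and exploit the perpendicularity normalization at $p$. Varying $(\tau_\mu,\tau_\nu)$ through admissible pairs obeying the normalization, the endpoints $p_{\mu_i}$ and $p_{\nu_j}$ are controlled exactly as before, while at the shared endpoint $p$ the normalization fixes the angle between $\tau_\mu\circ v(p)$ and $\tau_\nu\circ v(p)$ relative to the (now fixed) tangent lines of $\mu_i$ and $\nu_j$; the two endpoint contributions at $p$ therefore cancel up to an integer, which, being continuous in the homotopy parameter, is constant. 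Since the single-point perpendicularity constraint removes only a connected factor, the space of admissible normalized pairs is connected, and $\theta_{\mu_i}(p)$ is independent of the trivializations. Verifying this cancellation explicitly — confirming that the perpendicularity normalization is exactly strong enough to kill the otherwise non-integral dependence at $p$ — is the one step requiring genuine care, and I would carry it out using the $G$-action on directions at $p$.

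Finally, isotopy invariance follows by the same template applied to the isotopy parameter. Given a family $(T_t,v_t)$ in which $v_t$ is a genuine combing for every $t$ — so that the local normal form of Definition \ref{def:combing_on_diagram} near each $p_{\mu_i}$ persists and the half-integrality/integrality is preserved — I would choose arcs and admissible trivializations depending continuously on $t$. Lemma \ref{lem:continuity_of_rot} then makes $t \mapsto \theta(\mu_i)$ and $t \mapsto \theta_{\mu_i}(p)$ continuous while valued in $\Z+\tfrac12$ respectively $\Z$, and discreteness forces both to be constant, completing the proof.
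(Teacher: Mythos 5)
Your proposal is correct and follows essentially the same route as the paper: both arguments combine the continuity of $\op{rot}(\gamma,v,\tau)$ from Lemma \ref{lem:continuity_of_rot} with the discreteness of the normalized rotation numbers (valued in $\Z+\tfrac12$ or $\Z$) to force constancy along any connected family, and both use connectedness (contractibility) of the space of admissible trivializations. The paper states this in three sentences; you have merely spelled out the endpoint bookkeeping at the singularities and at the intersection point $p$ that the paper leaves implicit.
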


\begin{proof} By Lemma \ref{lem:continuity_of_rot}, the rotation number is continuous with respect to the trivialization, arc, and non-vanishing vector-field. This implies that the rotation numbers in Definitions \ref{def:rot_of_curve} and \ref{def:rot_of_intersection} are constant in these terms under isotopy, since they are half-integer valued.  The independence of trivialization follows since the space of trivializations in both cases is contractible.\end{proof}

\subsection{Trisection bracket} We are now ready to formally define the trisection bracket and state its key properties. We defer several proofs to their own sections for the sake of readability.

\begin{definition} The \emph{trisection bracket} associated to a trisection $T$, combing $v$, Hopf triplet $\mathcal{H}$ and cointegrals $e$ as in Setup \ref{set:trisection_invariant} is the scalar
\[\langle (T,v) \rangle_{\mathcal{H}} \in k\]
defined as follows. First, for each curve $\mu_i$ in $T$, associate the tensor $\Gamma(\mu_i)$ as shown in Figure \ref{fig:circle_tensor}.

\begin{figure}[h!]
    \centering
    \includegraphics[scale=.4, valign = c]{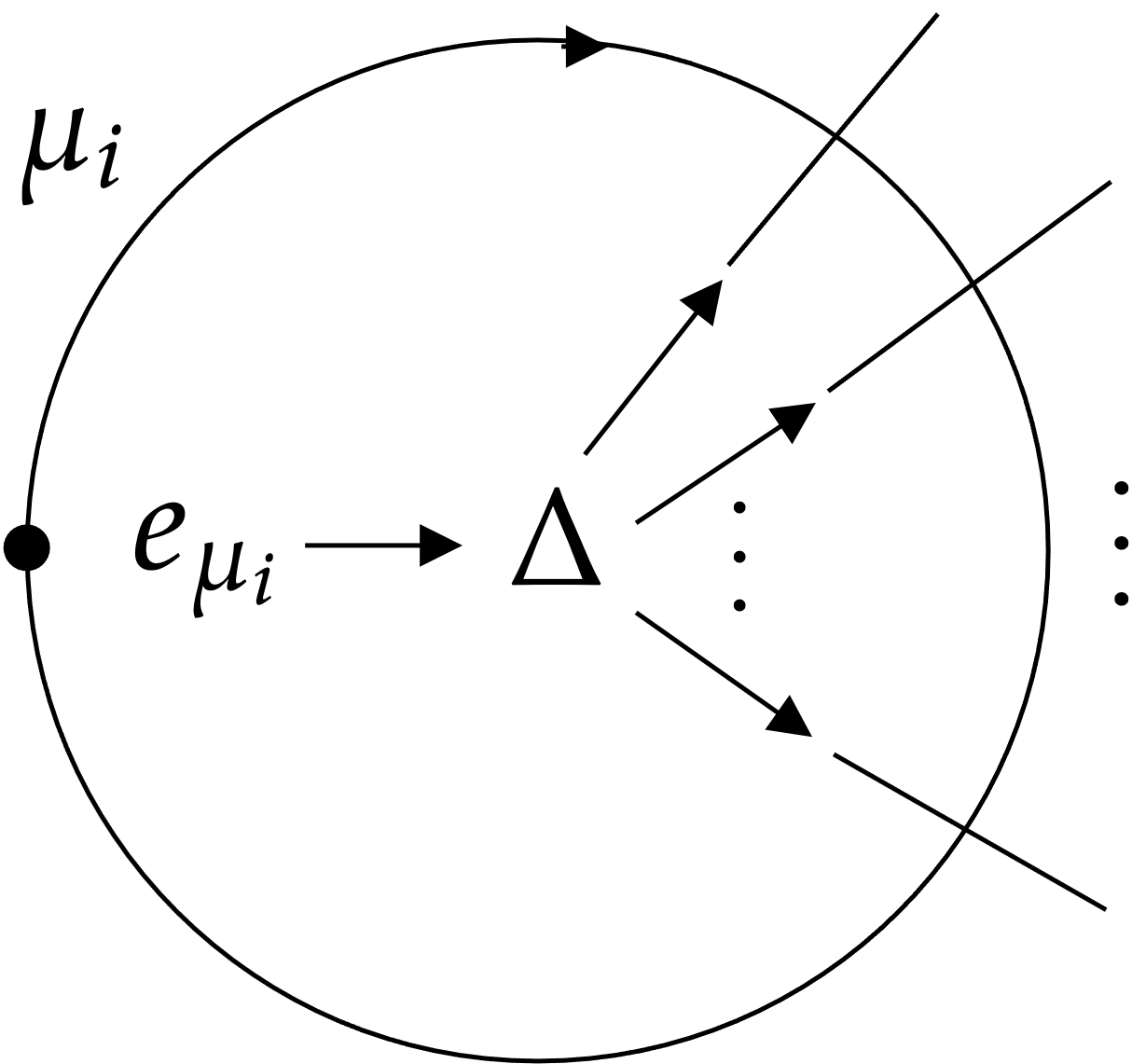}
    \caption{Tensors assigned to trisection circle $\mu_i$, where $\mu \in \{\alpha, \beta, \kappa\}$.}\label{fig:circle_tensor}
\end{figure}

\noindent Here the generalized $\Delta$ tensor has an outgoing edge for each intersection point of the $\mu_i$ curve with another curve, and the outgoing edges are labelled by these intersection points in order, according to the base point (depicted by the black dot on $\mu_i$) and the chosen orientation of $\mu_i$. $\Gamma(\mu_i)$ is acquired by contracting the tensor $\Delta$ with the generalized cointegral $e_{\mu_i}$ along the single input of $\Delta$. Note that both tensors are defined using the Hopf algebra $H_{\mu}$. 

\vspace{3pt}

Next, for each intersection $p$ between a $\mu_i$ curve and a $\nu_j$ curve for $\mu\nu \in \{\alpha\beta, \beta\kappa, \kappa\alpha\}$, $p$ is associated with both an outgoing edge of the tensor assigned to $\mu_i$ and an outgoing edge of the tensor assigned to $\nu_j$. We assign to $p$ the tensor depicted in the dashed box on either side of the identity in Figure \ref{fig:intersection_tensor}. 

\begin{figure}[h!]
    \centering
    \includegraphics[scale=.4, valign = c]{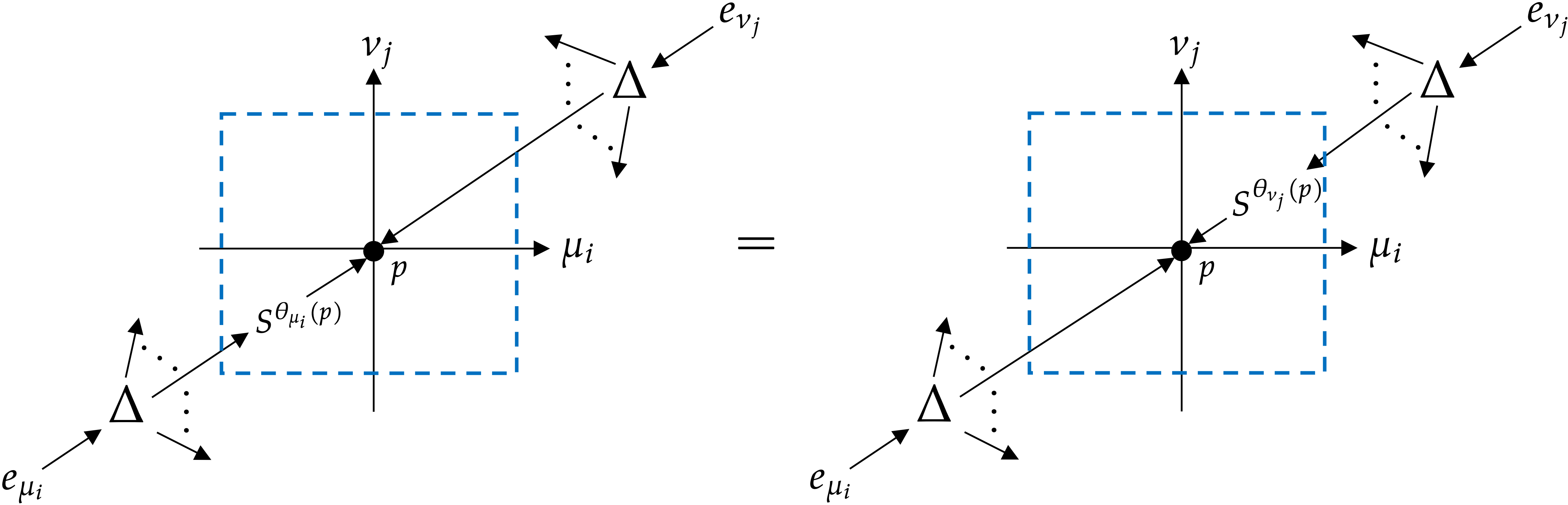}
    \caption{Tensors assigned to an intersection between a $\mu_i$ curve and a $\nu_j$ curve.}\label{fig:intersection_tensor}
\end{figure}

\noindent The bracket $\langle (T,v) \rangle_{\mathcal{H}}$ is obtained by contracting all the tensors assigned to all curves and all intersections on $T$ as described in the above paragraph.
\end{definition}
\begin{remark}\label{rem:choose_integral_1}
    The bracket $\langle (T,v) \rangle_{\mathcal{H}}$ obviously depends on the choice of a right integral in each Hopf algebra, albeit in a simple way. Explicitly, if we choose a new set of right integrals,
    \begin{align*}
        \tilde{e}_R^{H_{\mu}} := c_{\mu}e_R^{H_{\mu}}, \quad \mu \in \{\alpha, \beta, \kappa\},
    \end{align*}
    then for a diagram $(T,v)$ of type $(g,k_1, k_2, k_3; p,b)$, the bracket of $(T,v)$ with the new integrals picks up a multiplicative factor $(c_{\alpha}c_{\beta}c_{\kappa})^{g-p}$.
\end{remark}

\begin{remark}
From Definition \ref{def:triplet_phase_balanced}, the phase of $\mathcal{H}$ is $q_{\mathcal{H}} := q_{\alpha} = q_{\beta} = q_{\kappa} = \pm 1$. Define the equivalence relation on the ground field $k$ such that $x \sim y$ if $x = y$ or $x = q_{\mathcal{H}}y$. Denote by $k/\sim$ the set of equivalence classes. Clearly, the multiplication in $k$ induces a well-defined multiplication in $k/\sim$. For a reason that will be clear later (see Proposition \ref{prop:inv_combing_move}), we think of $\langle (T,v) \rangle_{\mathcal{H}}$ as taking values in $k/\sim$,
    \begin{equation*}
        \langle (T,v) \rangle_{\mathcal{H}} \in k/\sim.
    \end{equation*}
\end{remark}
\begin{remark}
The equality in Figure \ref{fig:intersection_tensor} shows that one can slide the antipode from one side of the pairing to the other as long as its exponent is changed accordingly. For this reason, when we need to focus on the tensors associated with one specific curve, say a $\mu_i$ curve, we can assume that for all the intersections $p$ on this curve, the corresponding antipode $S^{\theta_{\mu_i}(p)}$ has been slid over to the $\mu_i$ side, with 
\begin{align*}
    \theta_{\mu_i}(p) := 2(\theta(\mu_i;p) - \theta(\nu_j;p)) \pm \frac{1}{2}\,,
\end{align*}
with the $\pm$ sign depending on whether $\mu\nu \in \{\alpha\beta, \beta\kappa, \kappa\alpha\}$ or $\nu\mu \in \{\alpha\beta, \beta\kappa, \kappa\alpha\}$.
\end{remark}

\subsection{Properties of bracket} The trisection bracket satisfies several key invariance properties. To avoid being distracted by technicalities, we defer the complicated proofs to Section \ref{subsec:proof1}.

\vspace{3pt}

First, the trisection bracket is independent of the choice of orientation in Setup \ref{set:trisection_invariant}. 
\begin{prop}\label{prop:bracket_orientation}
The bracket $\langle (T,v) \rangle_{\mathcal{H}}$ does not depend on the choice of orientations on the curves of $T$.
\end{prop}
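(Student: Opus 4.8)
The plan is to reduce to reversing the orientation of a single curve and then to track how each ingredient of the bracket transforms. Since an orientation of $T$ is a choice of orientation on each curve independently, and reversing an orientation is an involution, any two orientations differ by reversing the orientations of some subset of the curves one at a time. It therefore suffices to prove that $\langle (T,v)\rangle_{\mathcal{H}}$ is unchanged when the orientation of a single curve $\mu_i$ is reversed.

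Reversing the orientation of $\mu_i$ affects the tensor diagram in exactly three ways. First, the outgoing legs of the curve tensor $\Gamma(\mu_i)$ are read off in the reverse cyclic order, so the iterated coproduct feeding $\Gamma(\mu_i)$ is replaced by the iterated co-opposite coproduct. Second, reversing both the oriented tangent framing and the direction of traversal negates the curve rotation number of Definition \ref{def:rot_of_curve}, so $\theta(\mu_i)\mapsto-\theta(\mu_i)$, and the generalized cointegral $e^{\mu}_{\theta(\mu_i)}$ is replaced by $e^{\mu}_{-\theta(\mu_i)}$. Third, each intersection rotation number $\theta_{\mu_i}(p)$ of Definition \ref{def:rot_of_intersection}, hence the antipode power $S^{\theta_{\mu_i}(p)}$ at each intersection $p$ on $\mu_i$, changes accordingly. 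The goal is to show these three changes cancel.

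The engine of the argument is Lemma \ref{lem:e_theta_cocommutativity}, the co-commutativity of the generalized cointegral, applied repeatedly to restore the original order of the legs of $\Gamma(\mu_i)$. Each swap reinstating the order costs a power of $S^2$ on the affected legs together with an insertion of the phase element, exactly as in Lemma \ref{lem:eR_cocommutativity}. Because $\mathcal{H}$ is balanced, $S^2$ is conjugation by the phase element (Corollary \ref{cor:tilt} and the definition of balanced), and by Definition \ref{def:hopf_triplet}(b) the pairings preserve the phase elements; this lets me transport each such $S^2$ and each phase-element insertion across the intersection pairings using the antipode-sliding identity of Figure \ref{fig:intersection_tensor}, converting them into adjustments of the exponents $\theta_{\mu_i}(p)$. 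Finally I would relate $e^{\mu}_{-\theta(\mu_i)}$ back to $e^{\mu}_{\theta(\mu_i)}$ using the index-shift relation among the generalized cointegrals following Definition \ref{def:generalized_cointegral} together with Lemma \ref{lem:S_fli_e_theta}.

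The main obstacle is the bookkeeping in this last step: I must verify that the net power of $S^2$ produced on each leg by the iterated co-commutativity, combined with the index shift from $e_{-\theta}$ back to $e_{\theta}$, precisely matches the change in $\theta_{\mu_i}(p)$ prescribed by Definition \ref{def:rot_of_intersection}. This is where the specific normalizations $\pm\tfrac12$ in the rotation numbers are forced, and checking the cancellation leg-by-leg, rather than any conceptual subtlety, is the heart of the proof. A convenient way to organize it is to first establish a $k$-fold co-commutativity identity for $e_\theta$ (an iterate of Lemma \ref{lem:e_theta_cocommutativity}) expressing the fully reversed coproduct in terms of the ordered coproduct with an explicit power of $S^2$ on each leg, and then match those powers against the rotation-number change one intersection at a time.
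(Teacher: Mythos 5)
Your reduction to reversing a single curve, your identification of the three changes (reversed leg order, $e_{\theta}\mapsto e_{-\theta}$, shifted antipode exponents), and your final appeal to Lemma \ref{lem:S_fli_e_theta} all match the paper. But the engine you propose does not work, and it is not the paper's engine. Lemma \ref{lem:e_theta_cocommutativity} is the identity underlying basepoint isotopy: it lets you move the distinguished \emph{first} leg of the iterated coproduct of $e_\theta$ past one intersection, i.e.\ it produces a \emph{cyclic} shift of the output legs (at the cost of $S^{\pm 2}$ and phase-element insertions). Iterating it, or refining the top-level swap of $\Delta(e_\theta)$ through coassociativity, only ever yields cyclic permutations of the legs --- you cannot transpose two inner legs, because the co-commutativity identity holds only for the coproduct of the cointegral itself, not for the inner coproducts of the iterated tensor. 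Orientation reversal, however, replaces the ordered iterated coproduct by the fully \emph{reversed} (co-opposite) one, and a reversal is not a cyclic permutation as soon as the curve carries three or more intersections. So ``applying Lemma \ref{lem:e_theta_cocommutativity} repeatedly to restore the original order'' cannot succeed, and the subsequent machinery you invoke (balancedness, transport of phase elements across the pairings via Definition \ref{def:hopf_triplet}(b)) is needed for the combing moves of Proposition \ref{prop:inv_combing_move}, not here.

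The step you left as ``changes accordingly'' is in fact the crux. Writing $\theta=\theta(\mu_i)$, one computes directly that after reversal $\theta'=-\theta$ and, for every intersection $p$ on the curve, $\theta'_{\mu_i}(p)=\theta_{\mu_i}(p)-2\theta$: a \emph{uniform} shift by the odd integer $-2\theta$. This is what the paper exploits: factor $S^{-2\theta}$ off every leg, and since an odd power of the antipode is an anti-coalgebra morphism, pulling these factors through the coproduct converts the reversed iterated coproduct into the ordered one applied to $S^{-2\theta}(e_{-\theta})$, which equals $e_{\theta}$ by Lemma \ref{lem:S_fli_e_theta}. That single anti-coalgebra-morphism step is the mechanism that handles the reversal; no leg-by-leg co-commutativity bookkeeping is involved or available.
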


Second, recall that we are interested in equivalence classes of combings modulo combing moves (see Definition~\ref{def:combing_move}). The trisection bracket is invariant under these moves.

\begin{prop}\label{prop:inv_combing_move}
The bracket $\langle (T,v) \rangle_{\mathcal{H}} \in k/\sim$ is invariant under the combing moves. More precisely:
\begin{itemize}
    \item $\langle (T,v) \rangle_{\mathcal{H}} \in k$ is invariant under basepoint isotopy on $v$.
    \vspace{3pt}
    \item $\langle (T,v) \rangle_{\mathcal{H}} \in k$ changes by a multiplicative factor of $q_{\mathcal{H}}$ after a basepoint spiral.
\end{itemize}
The trisection invariant can therefore be viewed as an invariant
\[\langle (T,[v]) \rangle_{\mathcal{H}} := \langle (T,v) \rangle_{\mathcal{H}} \in k/\sim \qquad\text{of the combing class }[v] \in \text{Comb}(T)\]
\end{prop}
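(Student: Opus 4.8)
The plan is to reduce the invariance statement to purely algebraic identities among the circle tensors $\Gamma(\mu_i)$ of Figure~\ref{fig:circle_tensor} and the intersection tensors of Figure~\ref{fig:intersection_tensor}, and then to verify those identities using the structural lemmas on generalized cointegrals established earlier. The key observation is that each combing move is supported in a small box around a single singularity $p_{\mu_i}$, so it alters only the data attached to the curve $\mu_i$ and to the intersection points lying on it: namely (i) the cyclic order in which the outgoing legs of $\Gamma(\mu_i)$ are read off from the basepoint, (ii) the index $\theta(\mu_i)$ of the generalized cointegral $e_{\theta(\mu_i)}$ (Definition~\ref{def:rot_of_curve}), and (iii) the antipode powers $S^{\theta_{\mu_i}(p)}$ attached to the intersection tensors on $\mu_i$ (Definition~\ref{def:rot_of_intersection}). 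First I would record precisely how each of (i)--(iii) changes under the two moves, using that the rotation numbers are half-integer valued and vary continuously.

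Next I would treat the \emph{basepoint isotopy}. Crossing the basepoint past an intersection point $p$ on $\mu_i$ cyclically shifts one outgoing leg of $\Gamma(\mu_i)$ from one end of the iterated coproduct to the other, simultaneously shifting $\theta(\mu_i)$ and $\theta_{\mu_i}(p)$. The main tool is the co-commutativity of the generalized cointegral, Lemma~\ref{lem:e_theta_cocommutativity} (refining Lemma~\ref{lem:eR_cocommutativity}): moving a leg of $\Delta(e_{\theta(\mu_i)})$ around the coproduct produces an $S^2$ and a conjugation by the phase element on that leg. I would then show that the resulting $S^2$ is exactly absorbed by the change of the antipode power $S^{\theta_{\mu_i}(p)}$ at $p$ (the two match because, for a balanced Hopf algebra, $S^2$ equals conjugation by the phase element), that the conjugation by the phase element is transported across the pairing at $p$ and cancelled using the phase-preservation axiom Definition~\ref{def:hopf_triplet}(b), and that the induced shift of the cointegral index is reconciled with the reordering via Lemma~\ref{lemma:etheta1} and Lemma~\ref{lem:S_fli_e_theta}. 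Since every contribution cancels, the bracket is unchanged as an element of $k$, not merely of $k/\sim$.

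Then I would treat the \emph{basepoint spiral}. Here no intersection point is crossed, so the cyclic order of the legs of $\Gamma(\mu_i)$ is unchanged; the twist only changes the rotation numbers $\theta(\mu_i)$ and $\theta_{\mu_i}(p)$ attached to $\mu_i$. I would show that the combined effect of the induced index shift of $e_{\theta(\mu_i)}$ and the induced change of the antipode powers on the legs amounts, after using that $S^2$ is a coalgebra morphism commuting with the iterated coproduct together with the identity of Lemma~\ref{lem:S_fli_e_theta}, to a single application of $S^2$ to the generalized cointegral. By the eigenvalue-$q$ property of $S^2$ on generalized cointegrals (Lemma~\ref{lem:antipode_integral}), this multiplies $\Gamma(\mu_i)$, and hence the whole contraction, by exactly $q_{\mathcal{H}}$. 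Since $q_{\mathcal{H}}=\pm1$ this is the asserted behaviour, and it is precisely the \emph{absence} of a compensating reordering (present in the isotopy case) that leaves the factor $q_{\mathcal{H}}$ uncancelled; this dichotomy is the conceptual heart of the statement.

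The hard part will be the bookkeeping in the basepoint isotopy: one must simultaneously track the $S^2$ and the phase element produced by co-commutativity against the independent shifts of $\theta(\mu_i)$ and $\theta_{\mu_i}(p)$, and verify that the phase element is transported consistently across the pairing so that nothing survives. This is delicate because the move couples $\Gamma(\mu_i)$ to the intersection tensor at $p$, and because the antipode powers may be slid from one side of the pairing to the other, as in the identity of Figure~\ref{fig:intersection_tensor}; choosing conventions so that balancedness and axiom~(b) apply cleanly is where the genuine work lies. The spiral computation, by contrast, should reduce to a short consequence of the eigenvalue property once its effect on the rotation numbers has been recorded.
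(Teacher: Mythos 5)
Your toolkit is the right one---Lemma~\ref{lem:e_theta_cocommutativity}, balancedness, the phase-preservation axiom of Definition~\ref{def:hopf_triplet}(b), and the eigenvalue-$q$ property of $S^2$ are exactly what the paper's proof runs on---but your bookkeeping premise contains a genuine error that breaks the cancellation scheme. You assert that each combing move ``alters only the data attached to the curve $\mu_i$ and to the intersection points lying on it.'' This is false for the basepoint isotopy: the index $-1$ singularity of $v$ crosses the \emph{other} curve $\nu_j$, so the winding of $v$ along $\nu_j$ necessarily jumps. Concretely, in the paper's computation (with $\mu_i=\alpha_i$, $\nu_j=\beta_j$): $\theta(\beta_j)$ increases by $1$, so the cointegral index of $\Gamma(\beta_j)$ changes from $\theta(\beta)$ to $\theta(\beta)+1$; moreover $\theta(\beta;y)$ increases by $1$ for every intersection $y$ on $\beta_j$ occurring after $x$ (shifting those antipode powers by $S^{2}$) and by $\tfrac{1}{2}$ at $y=x$ itself. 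These $\nu_j$-side changes are not incidental; they are where the phase element goes. In the paper's chain of identities, co-commutativity produces a phase element, which is pushed across the pairing by axiom (b) and then absorbed precisely into the index shift of the generalized cointegral on the other curve, using the definition of the generalized cointegrals together with balancedness. In your scheme, where $\Gamma(\nu_j)$ is frozen, the transported phase element has nothing to cancel against, and conversely the actual after-move tensor (with shifted $\beta$-side data) is never matched to the before-move tensor, so the argument cannot close up as described.

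Two smaller inaccuracies point the same way. For the basepoint spiral, the paper's computation gives $\theta(\alpha)$ \emph{unchanged}, with only the intersection rotation numbers $\theta(\alpha;x)$ increasing by $1$; hence every leg acquires an $S^{2}$, which (since $S^{2}$ is a Hopf algebra morphism, so commutes past the iterated coproduct) equals $S^{2}$ applied to $e_{\theta(\alpha)}$, producing the factor $q_{\mathcal H}$ by Lemma~\ref{lem:antipode_integral}. Your version, in which $\theta(\mu_i)$ also shifts, would replace $e_{\theta(\mu_i)}$ by the genuinely different tensor $e_{\theta(\mu_i)\pm 1}$, and the clean eigenvalue argument you invoke would no longer apply as stated. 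Finally, the lemmas you cite for ``reconciling index shifts'' (Lemma~\ref{lemma:etheta1} and Lemma~\ref{lem:S_fli_e_theta}) are not the relevant ones here: the paper uses them for handle slides and for orientation reversal, respectively; what the isotopy argument actually needs is the relation between generalized cointegrals of adjacent indices combined with balancedness.
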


\begin{remark} When the corresponding 4-manifold $X$ has $H_1(X) = 0$), we write
\[\langle T \rangle_{\mathcal{H}} = \langle (T,v) \rangle_{\mathcal{H}} \qquad\text{for any singular combing $v$ on $T$}\] \end{remark}

Third, we consider the invariance of the trisection bracket under trisection moves (see Section \ref{subsec:trisections}). Recall from Section \ref{subsec:trisections} that if $T$ and $T'$ are two trisection diagrams that differ by a single trisection move, then there is a canonical identification
\[\text{Comb}(T) \simeq \text{Comb}(T')\]
equivariant with respect to the action of the first homology group of the 4-manifold. For a given combing class $[v]$ on $T$, we use $[v]$ to also denote the corresponding class on $T'$.

\begin{prop}\label{prop:inv_trisection_move}
    The bracket $\langle (T,[v]) \rangle_{\mathcal{H}}$ is invariant under handle slides, two-point moves and three-point moves performed on the diagram $T$. That is, if $T$ and $T'$ differ by one of these moves then
    \[\langle (T,[v])\rangle_{\mathcal{H}} = \langle (T',[v])\rangle_{\mathcal{H}} \]
\end{prop}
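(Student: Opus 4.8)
The plan is to exploit the \emph{locality} of the bracket. By construction $\langle(T,v)\rangle_{\mathcal H}$ is the full contraction of a tensor network in which each curve contributes a generalized-cointegral-decorated coproduct (Figure \ref{fig:circle_tensor}) and each crossing contributes a pairing tensor (Figure \ref{fig:intersection_tensor}). Each of the three moves is supported in a disk $B \subset \Sigma$ meeting only finitely many strands, so it alters only the tensors living over $B$ while leaving the rest of the network, and hence the boundary legs entering and exiting $B$, untouched. It therefore suffices to prove, for each move, a \emph{local} identity equating the sub-tensor built from the strands in $B$ before and after the move, as maps on the fixed boundary legs. Before doing so I would normalize the combing: by Proposition \ref{prop:bracket_orientation} I may choose orientations freely, and by Proposition \ref{prop:inv_combing_move} I may slide the singularities and adjust spirals so that $v$ is in a standard position near $B$ (in particular with no singularity in $B$). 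This fixes, via Definitions \ref{def:rot_of_curve} and \ref{def:rot_of_intersection}, the generalized-cointegral indices $\theta(\mu_i)$ and the antipode exponents $\theta_{\mu_i}(p)$ attached to the strands in $B$, which I must then match on both sides of each local identity; I track a fixed representative of $[v]$ through the move using the canonical identification $\text{Comb}(T)\simeq\text{Comb}(T')$.

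The two easier moves go as follows. For a \textbf{two-point move}, the disk $B$ contains one strand of color $\mu$ and one of color $\nu$ meeting in two crossings of opposite sign; these feed two adjacent legs of the $\mu$-coproduct and two of the $\nu$-coproduct into a pair of pairing tensors whose antipode exponents differ by one (by Definition \ref{def:rot_of_intersection}, opposite-sign crossings have opposite parity). The local identity is that this configuration collapses to a single uncrossed pair of strands carrying a counit. This is exactly the interaction of the Drinfeld pairing with comultiplication (Definition \ref{def:hopf_pairing}) followed by the antipode axiom $M\circ(S\otimes\mathrm{id})\circ\Delta=\eta\epsilon$ (unit after counit): merging the two pairings through coproduct-compatibility produces a pairing against $M\circ(\mathrm{id}\otimes S)\circ\Delta$ of one strand, which is the counit. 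For a \textbf{three-point move}, since curves of the same color are disjoint, the three strands in $B$ must carry the three distinct colors $\alpha,\beta,\kappa$, meeting pairwise; the before/after local tensors are precisely the two sides of (a form of) the Triangle Identity \eqref{eqn:triplet_def_triangle_1}. After sliding antipodes to the appropriate sides (using the sliding rule recorded in Figure \ref{fig:intersection_tensor}) to absorb the rotation numbers, invariance is exactly the content of Definition \ref{def:hopf_triplet}(c), or in the packaged form given by Theorem \ref{thm:triangle_identity_versions}.

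The \textbf{handle slide} is the main obstacle. Replacing $\eta$ by the band sum $\eta\,\#_\gamma\,\xi$ (of the same color $\mu$) leaves $\xi$ and its cointegral untouched but gives the new curve a parallel copy of every crossing $\xi$ had, in addition to the crossings of $\eta$; correspondingly the coproduct decorating $e^{H_\mu}$ on the band-summed curve acquires one extra leg for each crossing of $\xi$, and at each such crossing the opposing strand $\zeta$ must now be comultiplied to pair against both $\xi$ and the slid curve. The plan is to show that, after fusing each doubled pair of parallel pairings, the network over the band reconstitutes the original pair $\Gamma(\eta),\Gamma(\xi)$. Concretely, coproduct-compatibility of the Drinfeld pairing (Definition \ref{def:hopf_pairing}) merges the two pairings at a doubled crossing into a single pairing against a product of a slid-curve leg with a $\xi$-leg; running this over all doubled crossings, the parallel legs assemble, through coassociativity of $\Delta$, into a copy of the cointegral multiplied onto $e^{H_\mu}$ of $\xi$. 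Lemma \ref{lemma:etheta1} is then the tool that absorbs this duplicated cointegral, being precisely the statement that multiplying a generalized-cointegral coproduct back together collapses via the right-integral property, leaving $\Gamma(\xi)$ and $\Gamma(\eta)$ unchanged. The genuinely delicate point is the rotation-number bookkeeping: as the slid strand sweeps across the crossings of $\xi$ it changes the exponents $\theta_{\mu_i}(p)$ and shifts the generalized index of the cointegral, so the fusion must be run for $e_\theta$ rather than for $e_R$ itself. Matching these indices is where Lemma \ref{lem:e_theta_cocommutativity} (co-commutativity of $e_\theta$ up to the tilt/$S^2$ twist) and Lemma \ref{lem:S_fli_e_theta} enter, converting the $S^2$-twists generated by the combing into exactly the index shift demanded by Definition \ref{def:generalized_cointegral}. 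I expect essentially all of the real work to lie in verifying that these antipode powers and generalized indices conspire correctly; by contrast the underlying algebraic collapse is a direct application of the pairing axioms and the cointegral lemmas.
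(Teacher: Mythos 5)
Your proposal follows essentially the same route as the paper's proof: a local verification of each move after using Proposition \ref{prop:bracket_orientation} to normalize orientations, with the two-point move reduced to the Drinfeld-pairing compatibility plus the antipode axiom, the three-point move reduced to the Triangle Identity (Definition \ref{def:hopf_triplet}(c), respectively its equivalent form from Theorem \ref{thm:triangle_identity_versions} for the opposite handedness), and the handle slide reduced to pairing compatibility, the bialgebra axiom, and Lemma \ref{lemma:etheta1}. The only cosmetic differences are that the paper tracks the rotation-number bookkeeping for the handle slide directly in the tensor diagrams (it does not need Lemma \ref{lem:e_theta_cocommutativity} or Lemma \ref{lem:S_fli_e_theta} there), and it explicitly separates the two orientation cases of the three-point move, which your sketch subsumes in the phrase ``a form of the Triangle Identity.''
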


Finally, we recall that there is a connect sum operation on combings that descends to combing equivalence classes. That is, there is a canonical map
\[\text{Comb}(S) \times \text{Comb}(T) \to \text{Comb}(S \natural T) \qquad\text{given by}\qquad [u] \natural [v]  = [u \natural v]\]
The final property that we discuss is the additivity of the trisection bracket under connect sum. This property is essentially immediate from the construction.

\begin{prop} \label{prop:boundary_sum_property} The trisection bracket is multiplicative with respect to boundary sum. That is, if $(S,u)$ and $(T,v)$ are combed, marked trisections then
\[
\langle (S \natural T,[u \natural v])\rangle_{\mathcal{H}} = \langle (S,[u])\rangle_{\mathcal{H}} \; \cdot \; \langle (T,[v])\rangle_{\mathcal{H}}
\]
\end{prop}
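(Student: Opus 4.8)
The plan is to show that the entire tensor network defining $\langle (S\natural T,[u\natural v])\rangle_{\mathcal H}$ splits as a disjoint union of the networks for $(S,[u])$ and for $(T,[v])$, so that its scalar evaluation is the product of the two evaluations. First I would unpack the definition of the boundary sum. By construction, $S\natural T$ is formed by a band sum of the surfaces $\Sigma_S$ and $\Sigma_T$ along their marked boundary components, in a region containing none of the curves $\alpha,\beta,\kappa$. Consequently the curve system of $S\natural T$ is literally the disjoint union of the curve systems of $S$ and of $T$, each curve sitting inside its original half of $\Sigma_S\natural\Sigma_T$, and \emph{no new intersection points are created}. Thus the set of intersection points of $S\natural T$ is the disjoint union of those of $S$ and those of $T$.

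Next I would verify that every tensor occurring in the network is unchanged by the summation. The combing $u\natural v$ is defined by putting $u$ and $v$ into the standard local model near the marked components and gluing; away from this model it equals $u$ on the $S$-side and $v$ on the $T$-side. In particular, along each curve $\mu_i$ of $S$ (resp. of $T$) the combing $u\natural v$ agrees with $u$ (resp. $v$), and the tangent bundle $T\Sigma$ near such a curve is canonically the same as in the original surface. By Definitions \ref{def:rot_of_curve} and \ref{def:rot_of_intersection}, the rotation numbers $\theta(\mu_i)$ and $\theta_{\mu_i}(p)$ are computed from the restriction of the combing to an arc together with a local trivialization, so they coincide with the values computed in $S$ (resp. $T$). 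Therefore the curve tensor $\Gamma(\mu_i)$, which depends only on its ordered intersection points and the generalized cointegral $e^{\mu}_{\theta(\mu_i)}$, and each intersection tensor are identical to those appearing in the original brackets.

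It then remains to assemble these observations. Since the intersection points partition into an $S$-part and a $T$-part, and each intersection tensor contracts exactly the two curve-edges meeting at that point, no contracted edge ever joins an $S$-curve tensor to a $T$-curve tensor. Hence the closed tensor diagram for $S\natural T$ is the disjoint union of the closed diagram for $S$ and that for $T$, and the evaluation of a disjoint union of closed tensor diagrams is the product of the evaluations. This yields $\langle (S\natural T,u\natural v)\rangle_{\mathcal H}=\langle (S,u)\rangle_{\mathcal H}\cdot\langle (T,v)\rangle_{\mathcal H}$. Passing to equivalence classes is legitimate by Proposition \ref{prop:inv_combing_move}, together with the defining identity $[u\natural v]=[u]\natural[v]$ in $\text{Comb}(S\natural T)$.

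I expect the only genuinely delicate point to be the claim that the rotation numbers are unaffected by the sum; this is where one must check that the standard gluing model for the combings, which redistributes the boundary degree across the new marked component, alters neither the combing along any curve nor its behaviour near any intersection. Given the explicit local model for $u\natural v$ and the additivity of the Euler characteristic and genus data $(\chi,g,p)$ under boundary sum, this is a purely local verification, and once it is in hand the factorization of the network into two non-interacting pieces is formal.
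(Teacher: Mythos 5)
Your proof is correct and takes the same route the paper intends: the paper offers no written proof beyond remarking that the property is ``essentially immediate from the construction,'' and your argument---no new intersections under boundary sum, rotation numbers computed locally along curves away from the band region (with isotopy-invariance of rotation numbers absorbing the standard-form adjustment near the marked components), hence a tensor network that splits as a disjoint union whose evaluation factors---is exactly the elaboration of that remark.
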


We next define the normalized version of the bracket that is invariant under stabilization. Recall from Definition \ref{def:stabilization} that the standard, stabilization trisections
\[T^{\op{st}}_j \qquad\text{for}\qquad j \in \{1,2,3\} \qquad\text{of type}\qquad  (1,\delta_{1j},\delta_{2j},\delta_{3j};0,1)\]
are three trisections for the $4$-ball $D^4$ with the standard open book $\pi_{\op{std}}$ with disk-like pages.
In particular, given any trisection $T$ of type $(g,k_1, k_2, k_3; p,b)$, the stabilization $T \natural T^{\op{st}}_j$ has type
\[(g+1,k_1+\delta_{1j},k_2 + \delta_{2j},k_3 + \delta_{3j};p,b)\]
Note also that there is a single equivalence class of combing on $T^{\op{st}}_j$ by Corollary \ref{cor:unique_combings}. We now define the trisection invariant of the data $\mathcal{H},e,T,[v]$ as in Setup \ref{set:trisection_invariant} as follows:
\[
\tau_{\mathcal{H}}(T,[v]) := \langle T^{st}_1\rangle_{\mathcal{H}}^{-k_1} \cdot \langle T^{st}_2\rangle_{\mathcal{H}}^{-k_2}\ \cdot \langle T^{st}_3\rangle_{\mathcal{H}}^{-k_3} \cdot \langle (T,[v]) \rangle_{\mathcal{H}}.
\]

\begin{remark}\label{rem:choose_integral_2} Note that $\tau_{\mathcal{H}}$ depends on the choice of right integerals $e$, which we are suppressing in the notation. However, the dependence on this choice is very mild. Let $(X, \pi)$ be the 4-manifold represented by the trisection $T$ of type $(g,k_1, k_2, k_3; p,b)$, then the Euler characteristic of $X$ is
\begin{equation}\label{eqn:euler of X}
    \chi(X) = g - k_1 -k_2 - k_3 + 3p + 2b -1.
\end{equation}
Note that both $p$ and $b$ are invariants of $X$ independent of $T$. For a new set of cointegrals 
\begin{align*}
        \tilde{e}_R^{H_{\mu}} := c_{\mu}e_R^{H_{\mu}}, \quad \mu \in \{\alpha, \beta, \kappa\},
    \end{align*}
if we denote by $\tilde{\tau}_{\mathcal{H}}(T, [v])$ the invariant of $(T,v)$ computed using  $\tilde{e}_R^{H_{\mu}}$, then by Remark~\ref{rem:choose_integral_1},
\begin{align*}
    \tilde{\tau}_{\mathcal{H}}(T, [v]) = (c_{\alpha}c_{\beta}c_{\kappa})^{\chi(X)-4p-2b+1}\ \  \tau_{\mathcal{H}}(T, [v])\,.
\end{align*}
\end{remark}

\begin{prop}\label{prop:inv_stabilization} The quantity
    $\tau_{\mathcal{H}}(T, [v])$ is invariant under all trisection moves, i.e.~isotopy, handleslides, stabilization, and destabilization.
\end{prop}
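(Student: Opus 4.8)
The plan is to split the trisection moves into two groups according to whether they preserve the type $(g,k_1,k_2,k_3;p,b)$ of the diagram. Isotopy and handleslides preserve the type, while stabilization and destabilization alter it. For the first group the statement is immediate from the invariance of the unnormalized bracket $\langle(T,[v])\rangle_{\mathcal{H}}$, while for the second group the normalizing factors $\langle T^{\op{st}}_i\rangle_{\mathcal{H}}^{-k_i}$ are designed precisely to absorb the change in the bracket. The whole proposition is thus a formal consequence of the properties of the bracket established in Propositions \ref{prop:inv_trisection_move} and \ref{prop:boundary_sum_property}.

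First I would dispatch the type-preserving moves. A handleslide leaves $(g,k_1,k_2,k_3;p,b)$ unchanged, so the three factors $\langle T^{\op{st}}_i\rangle_{\mathcal{H}}^{-k_i}$ appearing in the definition of $\tau_{\mathcal{H}}(T,[v])$ coincide for $T$ and the slid diagram $T'$, and Proposition \ref{prop:inv_trisection_move} gives $\langle(T,[v])\rangle_{\mathcal{H}}=\langle(T',[v])\rangle_{\mathcal{H}}$; hence $\tau_{\mathcal{H}}(T,[v])=\tau_{\mathcal{H}}(T',[v])$. For isotopy, recall from Section \ref{subsec:curves_on_surfaces} that any $1$-parameter family of surface diagrams is realized by a finite sequence of two-point moves, three-point moves, and diffeomorphisms. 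The first two are covered by Proposition \ref{prop:inv_trisection_move} and again leave the type fixed, while diffeomorphism invariance is immediate from the fact that the bracket is assembled from the combinatorics of $(T,v)$ in a way that is natural under diffeomorphisms of $\Sigma$. Thus $\tau_{\mathcal{H}}$ is unchanged under isotopy and handleslides.

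The substantive case is stabilization. By Definition \ref{def:stabilization} a stabilization replaces $T$ by $T\natural T^{\op{st}}_j$, whose type is $(g+1,k_1+\delta_{1j},k_2+\delta_{2j},k_3+\delta_{3j};p,b)$. Under the canonical identification $\op{Comb}(T)\simeq\op{Comb}(T\natural T^{\op{st}}_j)$ the class $[v]$ corresponds to $[v\natural v_0]$, where $v_0$ is the unique combing class on $T^{\op{st}}_j$ guaranteed by Corollary \ref{cor:unique_combings}. Expanding $\tau_{\mathcal{H}}$ on the stabilized diagram, applying multiplicativity of the bracket (Proposition \ref{prop:boundary_sum_property}) together with $\langle(T^{\op{st}}_j,v_0)\rangle_{\mathcal{H}}=\langle T^{\op{st}}_j\rangle_{\mathcal{H}}$, and separating the factor $\langle T^{\op{st}}_j\rangle_{\mathcal{H}}^{-(k_j+1)}=\langle T^{\op{st}}_j\rangle_{\mathcal{H}}^{-k_j}\cdot\langle T^{\op{st}}_j\rangle_{\mathcal{H}}^{-1}$, I compute
\[
\tau_{\mathcal{H}}(T\natural T^{\op{st}}_j,[v\natural v_0]) = \Big(\prod_{i=1}^{3}\langle T^{\op{st}}_i\rangle_{\mathcal{H}}^{-k_i}\Big)\,\langle T^{\op{st}}_j\rangle_{\mathcal{H}}^{-1}\,\langle (T,[v])\rangle_{\mathcal{H}}\,\langle T^{\op{st}}_j\rangle_{\mathcal{H}} = \tau_{\mathcal{H}}(T,[v]),
\]
where the last equality uses the nondegeneracy hypothesis \eqref{eq:intro_nondegenerate_H} to cancel the factors $\langle T^{\op{st}}_j\rangle_{\mathcal{H}}^{\pm 1}$. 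Since a destabilization is the inverse of a stabilization, the same identity yields its invariance.

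The only genuinely delicate points, which I would address carefully rather than treat as routine, are the following. First, I must verify that the canonical identification of combing classes under stabilization is exactly the boundary-sum map $[v]\mapsto[v\natural v_0]$, and not some twisted version of it, so that Proposition \ref{prop:boundary_sum_property} applies verbatim; this is where the bulk of the topological care lies. Second, I should confirm that $\tau_{\mathcal{H}}(T,[v])$ is well-defined in $k/\sim$, i.e.\ that the basepoint-spiral ambiguity by $q_{\mathcal{H}}$ recorded in Proposition \ref{prop:inv_combing_move} is compatible with the normalization — it is, since the factors $\langle T^{\op{st}}_i\rangle_{\mathcal{H}}$ are honest scalars in $k$ and multiplication descends to $k/\sim$. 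Beyond these bookkeeping verifications, the proposition is a formal consequence of the bracket's invariance under handle slides and isotopy (Proposition \ref{prop:inv_trisection_move}), its multiplicativity under boundary sum (Proposition \ref{prop:boundary_sum_property}), and the explicit choice of the normalizing factors.
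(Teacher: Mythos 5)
Your proposal is correct and follows essentially the same route as the paper: isotopy and handleslides are dispatched by Proposition \ref{prop:inv_trisection_move} (the normalization factors being unchanged since the type is preserved), and (de)stabilization is handled by the multiplicativity of the bracket under boundary sum (Proposition \ref{prop:boundary_sum_property}) together with cancellation of the normalizing factor $\langle T^{\op{st}}_j\rangle_{\mathcal{H}}^{\pm 1}$. Your additional care about the identification of combing classes under stabilization and the well-definedness in $k/\sim$ is sound bookkeeping that the paper leaves implicit.
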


\begin{proof} For handleslides and isotopies, this is simply Proposition \ref{prop:inv_trisection_move}. For (de)stabilizations, we note that if we stabilize by $T^{\op{st}}_i$, then by Proposition \ref{prop:boundary_sum_property} we have
\begin{align*}
    \tau_{\mathcal{H}}(T \natural T^{\op{st}}_1,[v]) &:= \langle T^{st}_1\rangle_{\mathcal{H}}^{-k_1-1} \cdot \langle T^{st}_2\rangle_{\mathcal{H}}^{-k_2}\ \cdot \langle T^{st}_3\rangle_{\mathcal{H}}^{-k_3} \cdot \langle (T \natural T^{\op{st}}_1,[v]) \rangle_{\mathcal{H}} \\
    &= \langle T^{st}_1\rangle_{\mathcal{H}}^{-k_1-1} \cdot \langle T^{st}_2\rangle_{\mathcal{H}}^{-k_2}\ \cdot \langle T^{st}_3\rangle_{\mathcal{H}}^{-k_3} \cdot \langle (T,[v]) \rangle_{\mathcal{H}} \cdot \langle T^{\op{st}}_i\rangle_{\mathcal{H}} \\
    &= \tau_{\mathcal{H}}(T,[v])
\end{align*}
The same proof holds for stabilization by the other stabilization trisections. \end{proof}

Finally, we can formulate the main invariant of the paper and prove the main results. Fix a Hopf triplet $\mathcal{H}$ and cointegrals $e$ as in Setup \ref{set:trisection_invariant} that satisfies
\[\langle T^{\op{st}}_i\rangle_{\mathcal{H}} \neq 0 \qquad\text{for}\qquad i \in \{1,2,3\}\]
Fix a pair $(X,\pi)$ of a smooth 4-manifold $X$ with non-empty, connected boundary and a marked open book $\pi$ on $\partial X$.

\begin{definition}\label{def:trisection_inv_pair} 
The \emph{trisection invariant} $\tau_{\mathcal{H}}(X,\pi)$ of the pair $(X,\pi)$ with respect to $(\mathcal{H},e)$ is the set
    \begin{align}
        \tau_{\mathcal{H}}(X, \pi) := \Big\{\tau_{\mathcal{H}}(T, [v]) \ \Big|\ [v] \in \text{Comb}(T)\Big\} \subset k/\sim
    \end{align}
where $T$ is any trisection for $(X,\pi)$ and $\text{Comb}(T)$ is the set of combing classes.
\end{definition}

As an immediate consequence of the properties of the bracket stated in Propositions \ref{prop:inv_combing_move}, \ref{prop:inv_trisection_move}, and \ref{prop:inv_stabilization}, we have the main theorem of the paper.
\begin{thm}\label{thm:trisection_inv_main_thm}
Let $\mathcal{H}$ be a balanced Hopf triplet such that  $\langle T^{st}_i \rangle_{\mathcal{H}} \neq 0$ for $i = 1,2,3$. 
 Then $\tau_{\mathcal{H}}(X, \pi)$ is an invariant of the pair $(X, \tau)$, called the trisection invariant.
\end{thm}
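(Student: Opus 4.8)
The plan is to deduce the theorem by assembling the invariance properties already established in Propositions \ref{prop:inv_combing_move}, \ref{prop:inv_trisection_move}, and \ref{prop:inv_stabilization}, together with the trisection correspondence of Gay--Kirby and Castro--Gay--Pinz\'on-Caicedo. By that correspondence, any two marked trisection diagrams $T$ and $T'$ representing the same pair $(X,\pi)$ are connected by a finite sequence of trisection moves, namely isotopies, handleslides, stabilizations, and destabilizations. Thus it suffices to show that the set $\tau_{\mathcal{H}}(X,\pi) = \{\tau_{\mathcal{H}}(T,[v]) : [v]\in \text{Comb}(T)\}$ of Definition \ref{def:trisection_inv_pair} is unchanged when $T$ is replaced by any $T'$ obtained by a single such move.

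First I would fix one trisection move relating $T$ and $T'$ and recall that it induces a canonical identification $\text{Comb}(T) \simeq \text{Comb}(T')$ that is equivariant for the free and transitive $H_1(X)$-action on combing classes. In particular this identification is a \emph{bijection}: for handleslides and isotopies it is tautological, while for a stabilization $T' \simeq T \,\natural\, T^{\op{st}}_i$ it arises from boundary sum with the unique combing class on $T^{\op{st}}_i$, which exists by Corollary \ref{cor:unique_combings}. Proposition \ref{prop:inv_stabilization} asserts that, for each fixed class $[v]$, the normalized bracket satisfies $\tau_{\mathcal{H}}(T,[v]) = \tau_{\mathcal{H}}(T',[v])$ under this identification. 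Since the identification is a bijection, as $[v]$ ranges over $\text{Comb}(T)$ its image ranges over all of $\text{Comb}(T')$, so the two sets of values coincide. This yields independence of the chosen diagram, and diffeomorphism invariance of $(X,\pi)$ is then automatic, since diffeomorphic pairs correspond under the bijection to diagrams related by moves.

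The main obstacle is less a computation than the careful bookkeeping that legitimizes this assembly, and two points deserve attention. First, the unnormalized bracket descends to combing classes only after passing to $k/\sim$: a basepoint spiral multiplies it by $q_{\mathcal{H}}$ by Proposition \ref{prop:inv_combing_move}, so the quotient $k/\sim$ is precisely what is needed to absorb this ambiguity and make $\tau_{\mathcal{H}}(T,[v])$ a genuine function of the class $[v]$. Second, the entire construction presupposes the nondegeneracy hypothesis $\langle T^{\op{st}}_i\rangle_{\mathcal{H}}\neq 0$, which guarantees that the normalizing factors $\langle T^{\op{st}}_i\rangle_{\mathcal{H}}^{-k_i}$ defining $\tau_{\mathcal{H}}(T,[v])$ are invertible; this is exactly the assumption of the theorem, and it is also what makes the destabilization computation in Proposition \ref{prop:inv_stabilization} cancel correctly.

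Finally, I would address the implicit dependence on the chosen cointegrals $e$. By Remark \ref{rem:choose_integral_2}, rescaling them by $c_\alpha,c_\beta,c_\kappa$ multiplies $\tau_{\mathcal{H}}(T,[v])$ by $(c_\alpha c_\beta c_\kappa)^{\chi(X)-4p-2b+1}$, a factor determined solely by the diffeomorphism invariants $\chi(X)$, $p$, and $b$ of $(X,\pi)$. Hence this factor is identical for every diagram and every combing class, so with $(\mathcal{H},e)$ fixed as in the statement the set $\tau_{\mathcal{H}}(X,\pi)\subset k/\sim$ is well-defined. Combining the diagram-independence of the previous paragraph with these remarks completes the proof that $\tau_{\mathcal{H}}(X,\pi)$ is an invariant of the pair $(X,\pi)$.
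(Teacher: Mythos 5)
Your proposal is correct and follows essentially the same route as the paper, which simply deduces the theorem as an immediate consequence of Propositions \ref{prop:inv_combing_move}, \ref{prop:inv_trisection_move}, and \ref{prop:inv_stabilization} combined with the trisection-move correspondence for marked diagrams. Your additional bookkeeping (the bijectivity of the identification $\text{Comb}(T)\simeq\text{Comb}(T')$, the role of $k/\sim$, and the cointegral rescaling via Remark \ref{rem:choose_integral_2}) just makes explicit what the paper leaves implicit.
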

\begin{cor}
    Let $\mathcal{H}$  be as in Theorem \ref{thm:trisection_inv_main_thm}, and assume $H_1(X) = 0$. Then $\tau_{\mathcal{H}}(X, \pi)$ is a \emph{scalar} invariant of $(X, \pi)$.
\end{cor}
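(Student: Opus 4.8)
The plan is to deduce the corollary from two facts that are already in place: that $\tau_{\mathcal{H}}(X,\pi)$ is a well-defined invariant \emph{set} (Theorem \ref{thm:trisection_inv_main_thm}), and that the set $\op{Comb}(T)$ indexing that set reduces to a single point when $H_1(X)=0$ (Corollary \ref{cor:unique_combings}). The only content of the statement is that a one-element subset of $k/\sim$ is the same datum as a scalar in $k/\sim$.

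First I would fix a marked trisection diagram $T$ for $(X,\pi)$, which exists by the correspondence of Section \ref{subsec:trisections}, and recall the definition
\[
\tau_{\mathcal{H}}(X,\pi) = \big\{\, \tau_{\mathcal{H}}(T,[v]) \ \big|\ [v]\in\op{Comb}(T) \,\big\} \subset k/\sim.
\]
By Theorem \ref{thm:trisection_inv_main_thm} this set depends only on the diffeomorphism class of $(X,\pi)$ and not on the auxiliary choices (the representative $T$, and the basepoints and orientations used to compute each bracket); this invariance is the substantive input, assembled from Propositions \ref{prop:inv_combing_move}, \ref{prop:inv_trisection_move} and \ref{prop:inv_stabilization}.

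Next I would impose $H_1(X)=0$ and count combings. By the lemma following Definition \ref{def:combing_difference}, for any two combings $u,v$ on $T$ the difference class $[v-u]$ lives in $H_1(\Sigma)/(L_\alpha+L_\beta+L_\kappa)\simeq H_1(X)$ and vanishes exactly when $u$ and $v$ are related by basepoint isotopies and spirals. Since $H_1(X)=0$ forces $[v-u]=0$, all combings on $T$ are equivalent, so $\op{Comb}(T)$ is a singleton; this is precisely Corollary \ref{cor:unique_combings}. Consequently the defining set for $\tau_{\mathcal{H}}(X,\pi)$ contains the single element $\tau_{\mathcal{H}}(T,[v])$ for the unique class $[v]$, and identifying a singleton with its element yields a well-defined scalar $\tau_{\mathcal{H}}(X,\pi)\in k/\sim$.

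I do not expect a genuine obstacle here: all of the difficulty has already been discharged upstream, in showing that $\op{Comb}(T)$ is a torsor over $H_1(X)$ (the well-definedness and additivity of the difference class) and in proving move-invariance of $\tau_{\mathcal{H}}(T,[v])$. The one point meriting a careful sentence is that the identifications $\op{Comb}(T)\simeq\op{Comb}(T')$ underlying Theorem \ref{thm:trisection_inv_main_thm} are $H_1(X)$-equivariant, so the unique combing class on one diagram matches the unique class on another; when $H_1(X)=0$ this is automatic, as both sides are single points, and hence the resulting scalar is unambiguous.
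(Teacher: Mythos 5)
Your proposal is correct and matches the paper's (implicit) argument exactly: the corollary follows immediately from Theorem \ref{thm:trisection_inv_main_thm} together with Corollary \ref{cor:unique_combings}, which makes $\op{Comb}(T)$ a singleton when $H_1(X)=0$, so the invariant set has exactly one element and is thus a scalar in $k/\sim$. The paper offers no separate proof beyond this observation, so there is nothing to add.
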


Let $Y$ be a closed 4-manifold, and $X = Y - D^4$ where $D^4$ is a smooth 4-ball. Endow $\partial X = S^3$ with the standard on open book $\pi_{st}$. Define
\begin{align*}
    \tau_{\mathcal{H}}(Y) := \tau_{\mathcal{H}}(X, \pi_{st})
\end{align*}
\begin{cor}
    Let $\mathcal{H}$  be as in Theorem \ref{thm:trisection_inv_main_thm}. Then $\tau_{\mathcal{H}}(Y)$ is an invariant of a closed 4-manifold $Y$.
\end{cor}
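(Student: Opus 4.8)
The plan is to deduce this from Theorem \ref{thm:trisection_inv_main_thm}, which already asserts that $\tau_{\mathcal{H}}(X,\pi)$ depends only on the diffeomorphism type of the pair $(X,\pi)$. Thus the entire content of the corollary is a well-definedness statement: I must show that the diffeomorphism type of the pair $(Y \setminus \op{int}(D^4),\ \pi_{st})$ is independent of the two auxiliary choices implicit in its construction, namely (a) the smoothly embedded $4$-ball $D^4 \subset Y$, and (b) the identification $\partial(Y \setminus \op{int}(D^4)) \cong (S^3,\pi_{st})$ used to install the standard open book. Once this independence is established, applying Theorem \ref{thm:trisection_inv_main_thm} to the resulting (well-defined) pair immediately gives that $\tau_{\mathcal{H}}(Y)$ is an invariant of $Y$.

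For choice (a), I would fix two embedded closed balls $D^4_0, D^4_1 \subset Y$ and write $X_i = Y \setminus \op{int}(D^4_i)$. Since $Y$ is connected, the disk theorem (uniqueness of smoothly embedded balls, cf.\ Palais and Cerf) supplies an ambient isotopy $\Phi_t \colon Y \to Y$ with $\Phi_0 = \op{Id}$ and $\Phi_1(D^4_0) = D^4_1$. Restricting $\Phi_1$ to $X_0$ produces a diffeomorphism $X_0 \xrightarrow{\sim} X_1$ carrying $\partial X_0$ onto $\partial X_1$, so the two complements are diffeomorphic and the problem is reduced to comparing the two boundary open books under a single, fixed boundary identification.

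For choice (b), two identifications of an oriented $S^3$ with the standard $(S^3,\pi_{st})$ differ by a self-diffeomorphism of $S^3$, and I would invoke Cerf's theorem that $\op{Diff}^+(S^3)$ is connected: any orientation-preserving self-diffeomorphism of $S^3$ is isotopic to the identity, hence carries $\pi_{st}$ to an isotopic open book. Isotopic open books yield diffeomorphic pairs via the isotopy extension theorem (extend an ambient isotopy realizing the isotopy of open books across a collar of $\partial X$ and by the identity elsewhere), so $(X,\pi_{st})$ is well-defined up to diffeomorphism of pairs. I expect this boundary-matching step to be the main obstacle, since it is the only place where one must genuinely control the open book rather than merely the underlying smooth topology; the orientation bookkeeping is handled by working throughout with orientation-preserving embeddings of $D^4$, which is exactly the data furnished by the disk theorem, so that all boundary identifications in play are orientation-preserving and Cerf's theorem applies.
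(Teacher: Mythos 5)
Your proof is correct and takes essentially the same route as the paper, which states this corollary without any written proof as an immediate consequence of Theorem \ref{thm:trisection_inv_main_thm} and the definition $\tau_{\mathcal{H}}(Y) := \tau_{\mathcal{H}}(Y \setminus \op{int}(D^4), \pi_{st})$. Your contribution is simply to make explicit the standard differential-topology facts (Palais' disk theorem and Cerf's theorem that $\op{Diff}^+(S^3)$ is connected) showing that the pair $(Y \setminus \op{int}(D^4), \pi_{st})$ is well-defined up to diffeomorphism of pairs, which is exactly the implicit content of the paper's claim.
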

When $\mathcal{H}$ is semisimple, $\tau_{\mathcal{H}}(Y)$ reduces to the 4-manifold invariant introduced in Definition 3.6 of \cite{chaidez20194manifold}. More explicitly, denote by $\tau_{\mathcal{H}, \zeta}(Y)$ the semisimple trisection invariant of \cite{chaidez20194manifold}, where $\zeta$ is a third root of $\langle T_1^{st}\rangle \langle T_3^{st}\rangle \langle T_3^{st}\rangle$. Given a $(g,k)$ trisection diagram $T'$ of $Y$, denote by $T$ the trisection diagram of $X:= Y - D^4$ obtained by removing a disk from the surface of $T'$ (see Lemma \ref{lem:rel_trisection_to_trisection}). Then $T$ has type $(g,k,k,k;0,1)$. Note that $\chi(X) = g - 3k +1$ by Equation \ref{eqn:euler of X} and $\chi(Y) = g - 3k + 2$. Using this trisection $T$ to compute the $\tau_{\mathcal{H}}(Y)$ and comparing with Definition  3.6 of \cite{chaidez20194manifold}, we have,
\begin{equation*}
    \tau_{\mathcal{H}}(Y) =\zeta^{\chi(Y)-2} \ \ \tau_{\mathcal{H}, \zeta}(Y).
\end{equation*}

\subsection{Bracket of stabilization trisections} To define the trisection invariant, we require the previously stated non-degeneracy condition that the stabilization trisections have non-zero bracket.
\[\langle T^{st}_i\rangle_{\mathcal{H}} \neq 0 \qquad\text{for }i = 1,2,3\]
After extensive investigation with computer aid of Hopf algebras up to dimension 11, we have not discovered a non-semisimple Hopf triplet that satisfies this condition. It is plausible that such Hopf triplets exist outside the scope of our investigations, but this will be the subject of future work. In order to make the non-degeneracy condition more explicit, we now give an explicit tensor diagram expression for the brackets $\langle T^{st}_i \rangle_{\mathcal{H}}$. 

\vspace{3pt}

Since $H_1(D^4) = 0$, there is a unique class of combings on each $T^{st}_i$. Moreover, the three trisection diagrams have the same underlying surface $\Sigma_{1,1}$ (the genus-1 surface with one boundary component) and can be obtained from each other by cyclically permuting the labels of their curves in the order $\alpha \mapsto \beta \mapsto \kappa \mapsto \alpha$. In Figure \ref{fig:tri_inv_56} we describe a combing on $\Sigma_{1,1}$ corresponding to $T^{st}_2$, but the same combing also applies to $T^{st}_1$ and $T^{st}_3$. 

\begin{figure}[h!]
    \centering
    \includegraphics[scale=.44, valign = c]{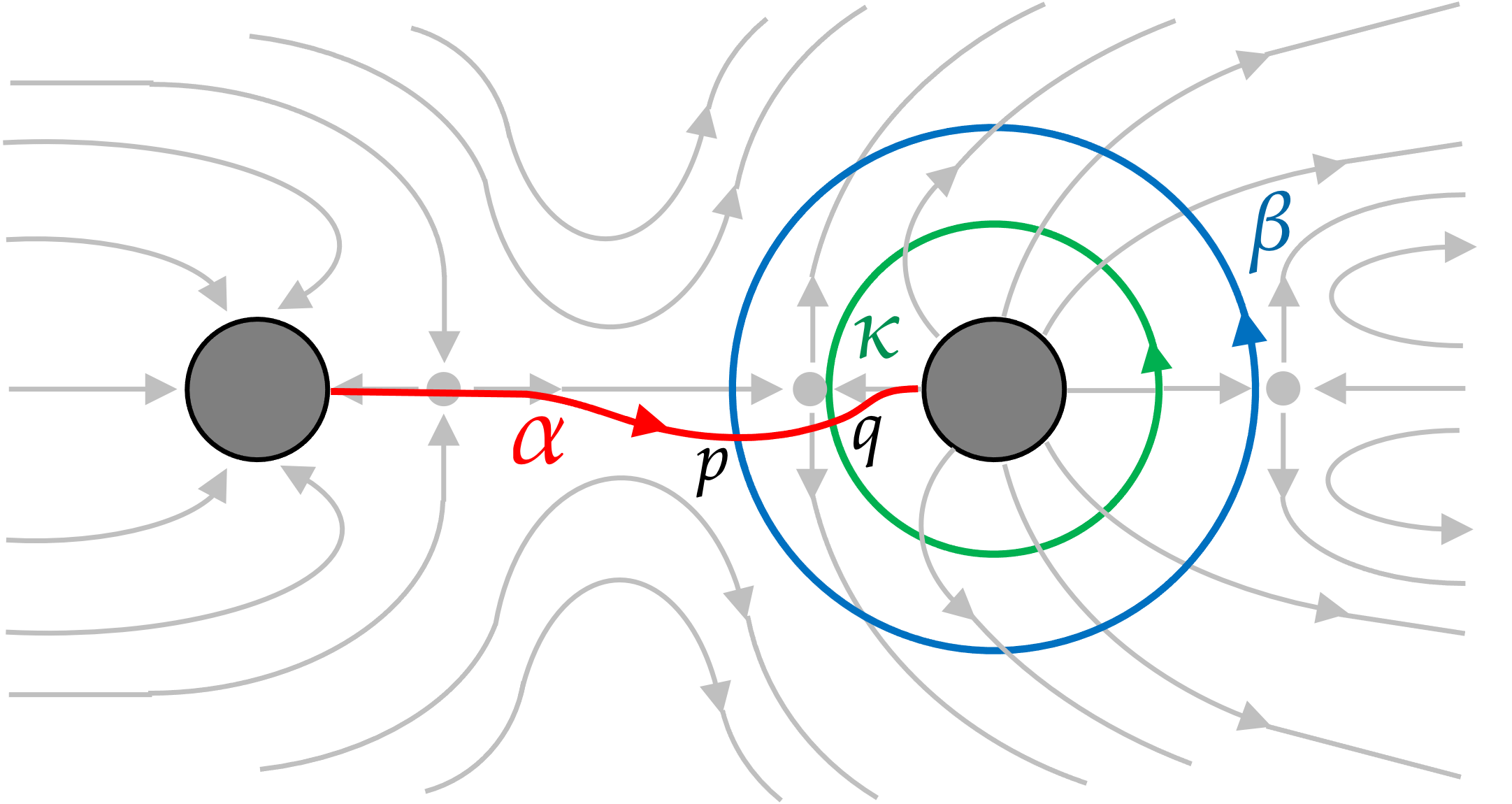}
    \caption{A combing $v_0$ on $\Sigma_{1,1}$, where there is a singularity of index $-1$ on each curve.}
    \label{fig:tri_inv_56}
\end{figure}

In Figure \ref{fig:tri_inv_56}, we think of the plane as $\mathbb{R}^2 \sqcup \{\infty\}$. The surface $\Sigma_{1,1}$ is obtained by removing a neighborhood of $\infty$ and the two grey disks and then identifying the boundary circles of the two disks. The grey lines represent the flow lines of a combing $v_0$ on $\Sigma_{1,1}$, which has one singularity of index $-1$ on each of the three curves. It follows from  Figure \ref{fig:tri_inv_56} that $\theta(\alpha) = \frac{1}{2}$, $\theta(\beta) = \frac{3}{2}$, $\theta(\kappa) = \frac{1}{2}$. Denote by $p$ the intersection of the $\alpha$-curve with the $\beta$-curve, and by $q$ the intersection of the $\alpha$-curve with the $\kappa$-curve. Then
\[\theta(\alpha;p) = 0\,, \qquad \theta(\beta;p) = \frac{3}{4}\,,
    \qquad \theta(\alpha;q) = \frac{1}{4}\,, \quad\text{and}\quad \theta(\kappa;q) = 0\,.\]
Hence, $\langle T^{st}_2 \rangle_{\mathcal{H}}$ equals the tensor diagram below, where the $\beta$ (resp.~$\kappa$) symbol by the arrow indicates that the leg represents the Hopf algebra $H_{\beta}$ (resp.~$H_{\kappa}$):
\begin{align}\label{eqn:stabilization_tensor_2}
    \includegraphics[scale=.4, valign = c]{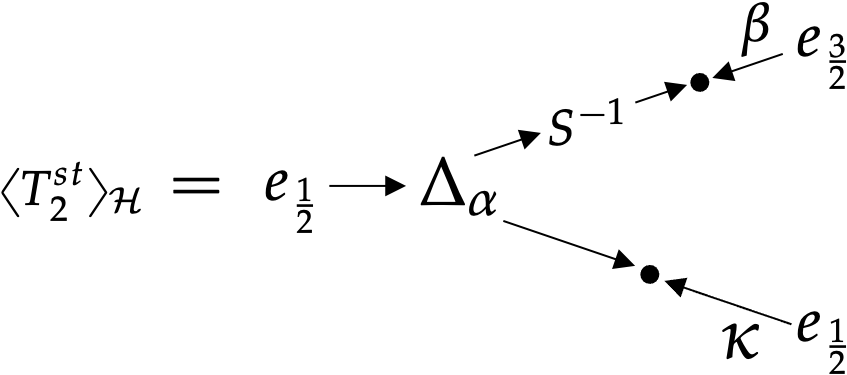}
\end{align}
The bracket for the other two diagrams are obtained from $\langle T^{st}_2 \rangle_{\mathcal{H}}$ by cyclically permuting the labels in the tensor diagram:
\begin{align}\label{eqn:stabilization_tensor_1_3}
    \includegraphics[scale=.4, valign = c]{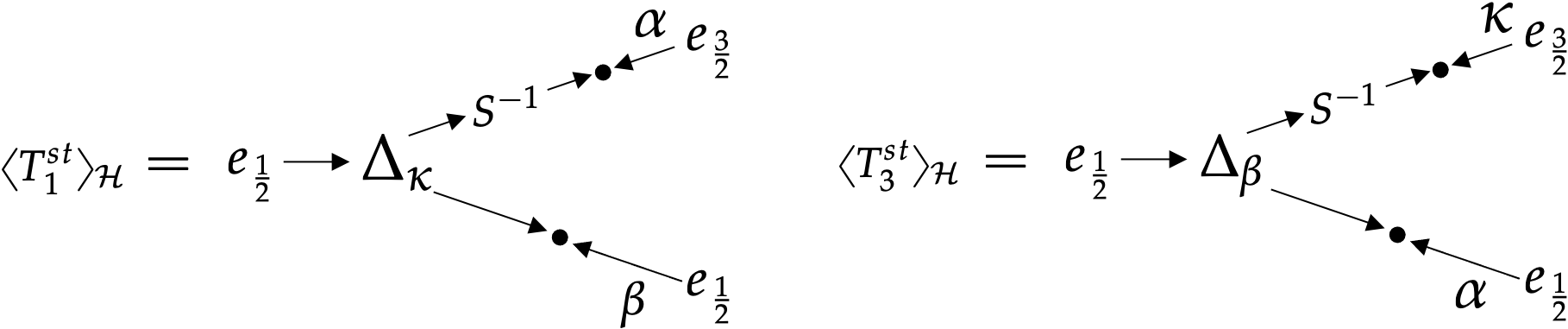}
\end{align}

\subsection{Proofs of Propositions~\ref{prop:bracket_orientation},~\ref{prop:inv_combing_move}, and~\ref{prop:inv_trisection_move}}\label{subsec:proof1} In this section, we give the deferred proofs of Propositions \ref{prop:bracket_orientation}, \ref{prop:inv_combing_move} and \ref{prop:inv_trisection_move} from earlier in this section.

\begin{proof} (Proposition \ref{prop:bracket_orientation})
    Without loss of generality, assume the orientation of an $\alpha_i$ curve is reversed as shown below:
\begin{align*}
    \includegraphics[scale=.4, valign = c]{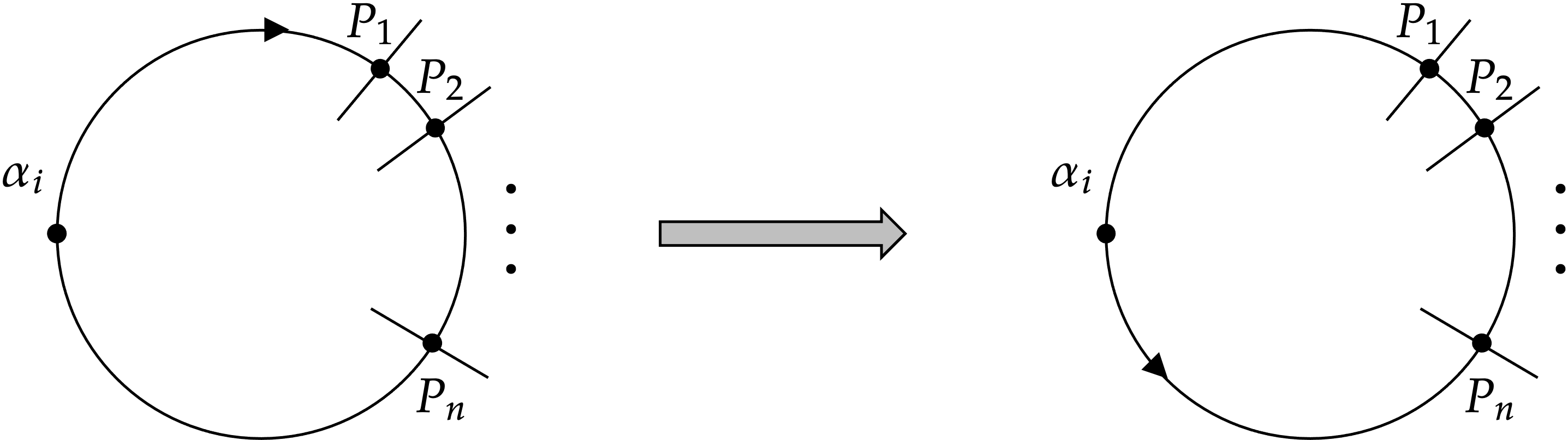}
\end{align*}
For ease of notation, we denote $\theta = \theta(\alpha_i)$ and $\theta(p) = \theta_{\alpha_i}(p)$ where the $\alpha_i$ curve has its initial orientation.  When the orientation of the $\alpha_i$ curve is reversed, we use similar notation with $\alpha_i$ replaced by $\alpha_i'$.  We directly see that
\begin{align*}
    \theta' = - \theta,
\end{align*}
and for each intersection $p$ on $\alpha_i$,
\begin{align*}
    \theta'(p) = -2 \theta + \theta(p)
\end{align*}
The relevant tensor diagram associated with $\alpha_i$ before orientation reversal is
\begin{align}\label{eqn:ori_reversal_before}
    \includegraphics[scale=.4, valign = c]{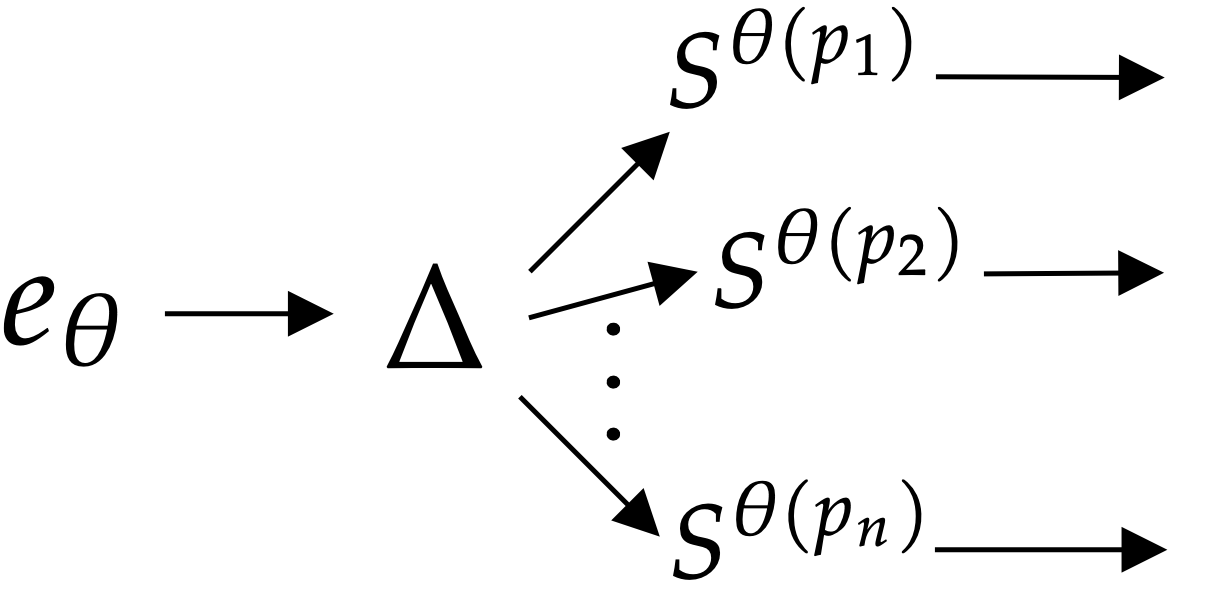}
\end{align}
Then the relevant tensor diagram associated with $\alpha_i$ after orientation reversal and some manipulations are as follows:
\begin{align*}
    \includegraphics[scale=.4, valign = c]{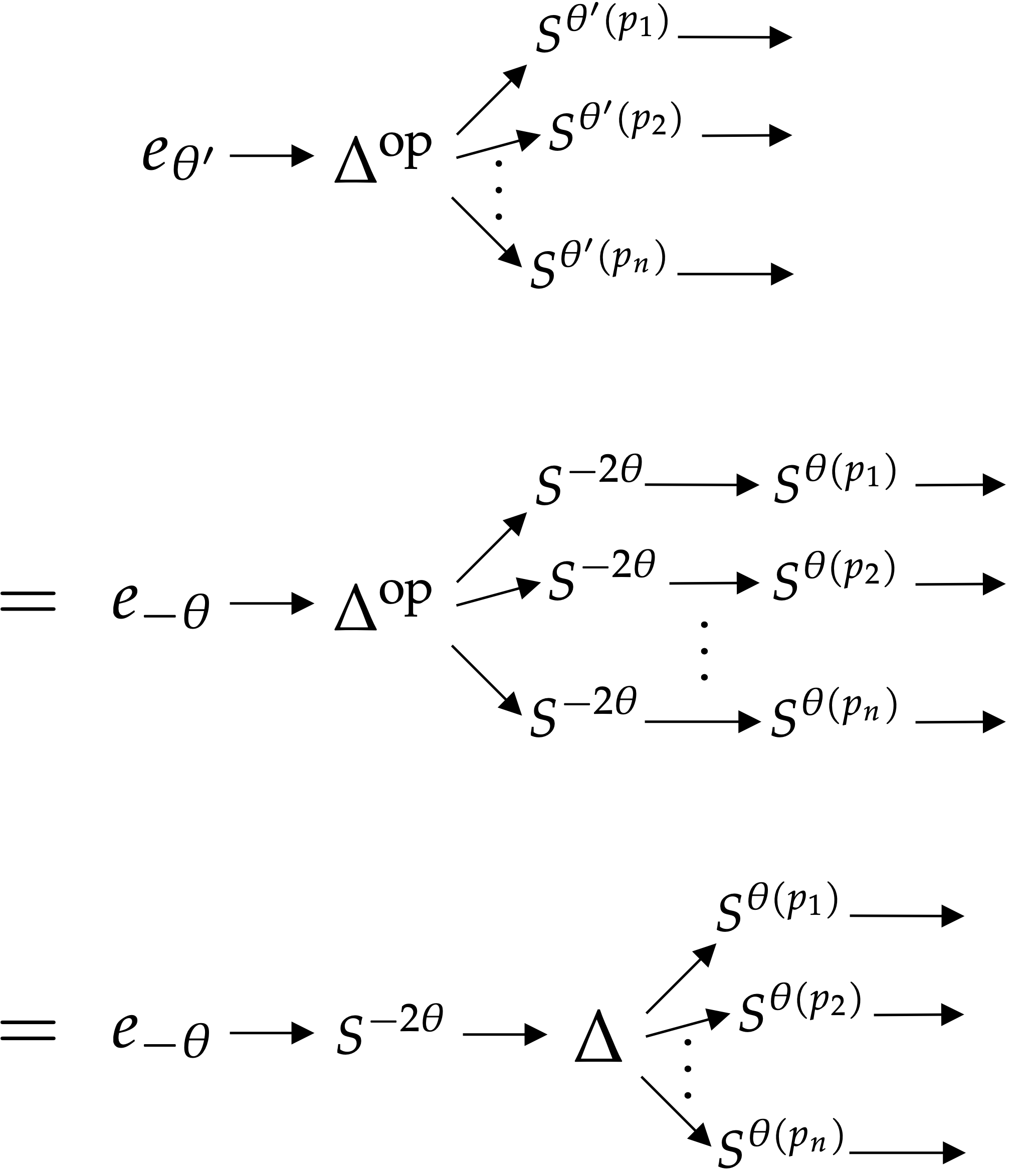}
\end{align*}
The second equality above is due to the fact that $-2\theta$ is an odd integer and $S^{-2\theta}$ is hence an anti-coalgebra morphism. That the last tensor diagram above equals Equation~\eqref{eqn:ori_reversal_before} follows from the identity (see Lemma \ref{lem:S_fli_e_theta}),
\begin{align*}
    \includegraphics[scale=.4, valign = c]{Figures/tri_inv_24.png}
\end{align*}
This concludes the proof. \end{proof}

\begin{proof} (Proposition \ref{prop:inv_combing_move})
On account of Proposition~\ref{prop:bracket_orientation} we can choose the orientation of each curve as we please, and then examine basepoint isotopy and basepoint spiral moves accordingly.

\vspace{3pt}

 \textbf{Basepoint isotopy.} Without loss of generality, consider the basepoint on an $\alpha_i$ curve which we will slide through the intersection $x$ between the $\alpha_i$ curve and a $\beta_j$ curve.  We further assume $\alpha_i$ and $\beta_j$ intersect positively. This situation is depicted below:
 \begin{align*}
    \includegraphics[scale=.4, valign = c]{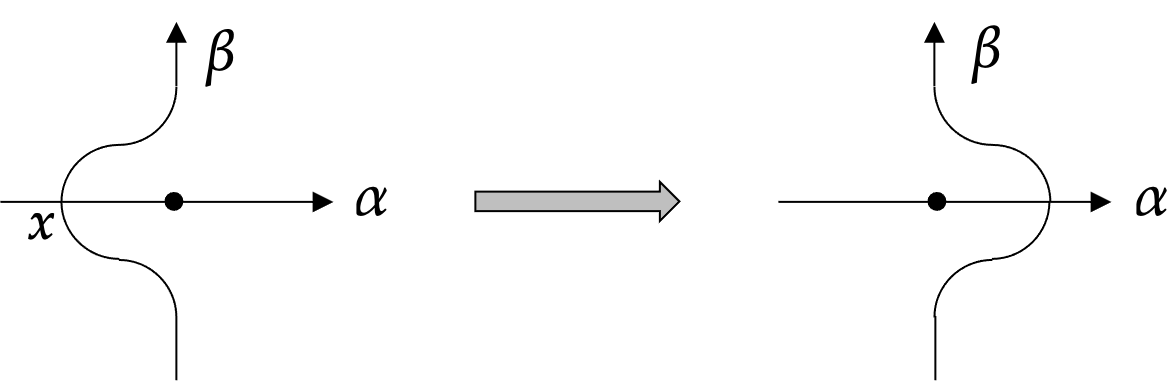}
\end{align*}
For ease of notation, we will call $\alpha_i$ the \emph{$\alpha$ curve} and similarly call $\beta_j$ the \emph{$\beta$ curve}. Now let us visualize the base point with the combing present, both before and after the basepoint isotopy:
\begin{align*}
    \includegraphics[scale=.4, valign = c]{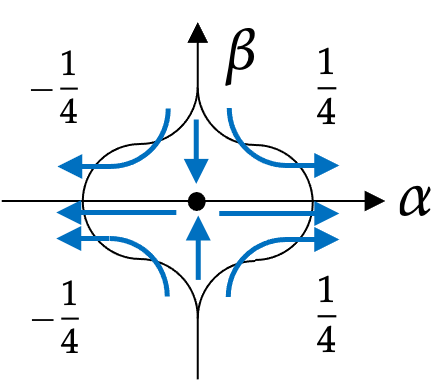}
\end{align*}
The numbers indicate the degrees of rotation, in units of $2\pi$, of the tangent field of the $\beta$ curve relative to the combing restricted to a specific segment. For instance,  as one travels along the old $\beta$ curve (i.e.~the left route), the relative degree of rotation from the ``bottom'' point to the point of intersection with the $\alpha$ curve is $-\frac{1}{4}$.

\vspace{3pt}

Note that before the move $x$ is the last intersection on the $\alpha$ curve with $\theta(\alpha;x) = \theta(\alpha)$, whereas after the move $x$ becomes the first intersection with $\theta(\alpha;x) = 0$.  The following shows the relevant tensor diagram before the move:
\begin{align}\label{eqn:base_point_isotopy_before}
    \includegraphics[scale=.4, valign = c]{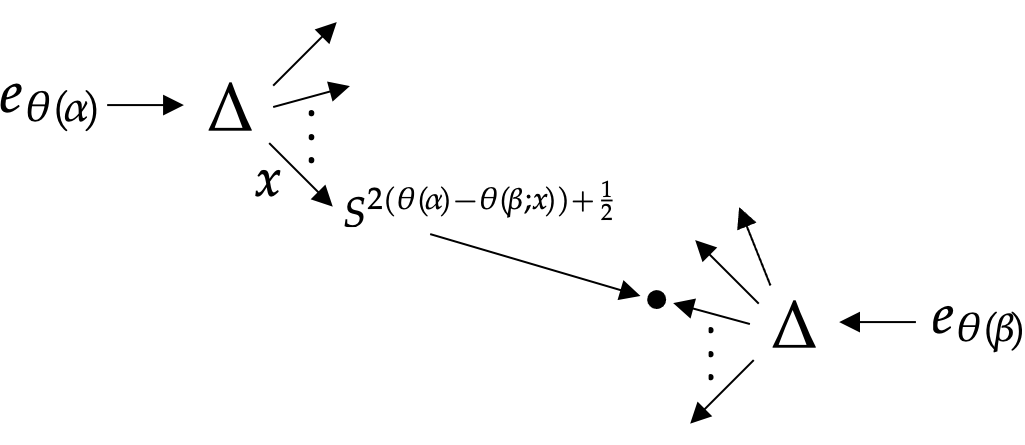}
\end{align}
After the move, $\theta(\beta;y)$ increases by $1$ for all intersections $y$ on $\beta$ occurring after $x$.  Moreover $\theta(\beta;y)$ increases by $\frac{1}{2}$ for $y = x$, and $\theta(\beta;y)$ remains unchanged for intersections $y$ on $\beta$ occuring before $x$. Furthermore $\theta(\beta)$ increases by $1$. Accordingly the tensor diagram after the move is given by
\begingroup
\allowdisplaybreaks
\begin{align*}
    &\includegraphics[scale=.4, valign = c]{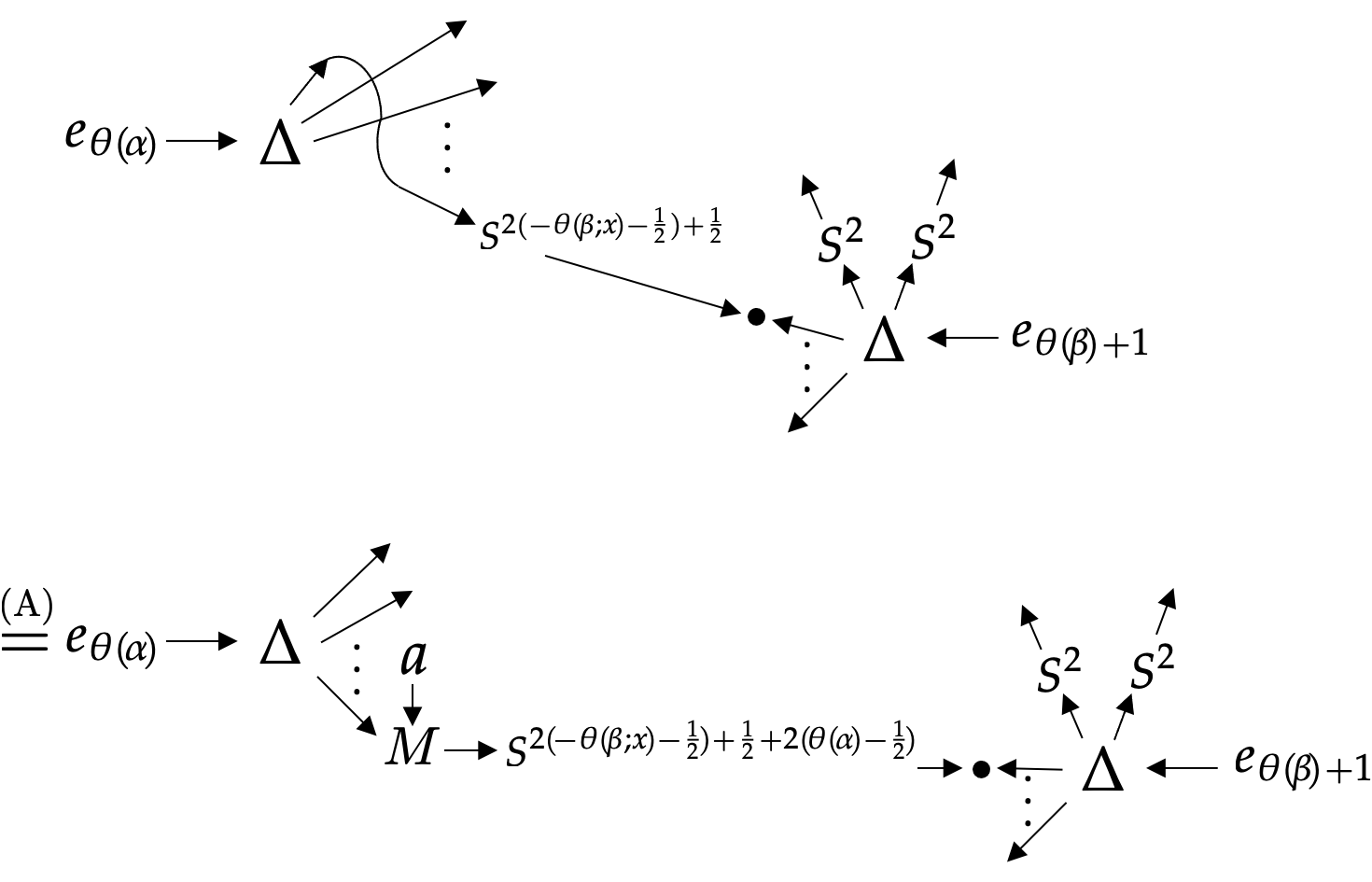}\\
    &\includegraphics[scale=.4, valign = c]{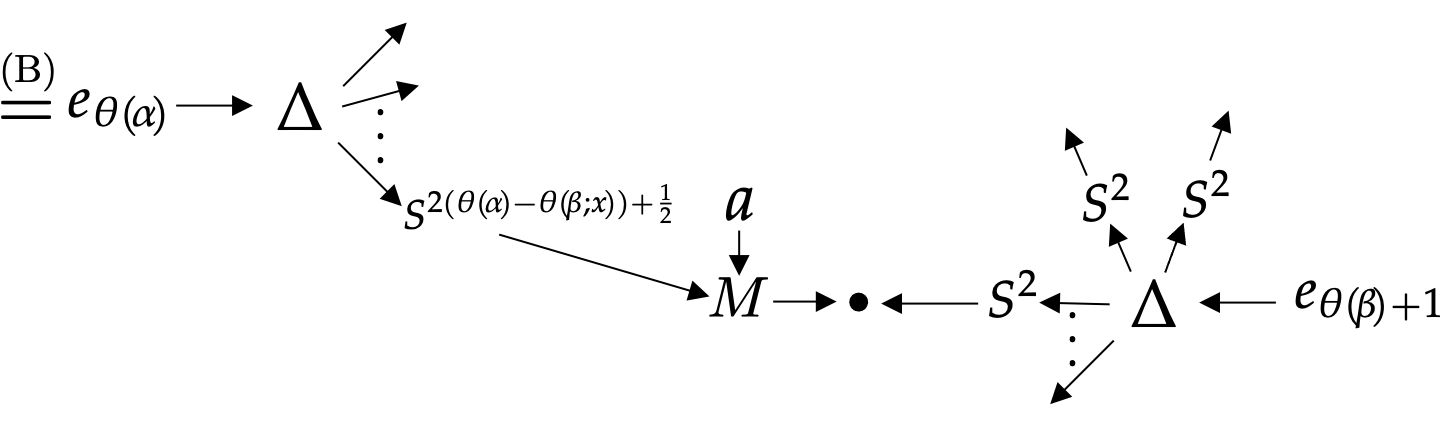}\\
    &\includegraphics[scale=.4, valign = c]{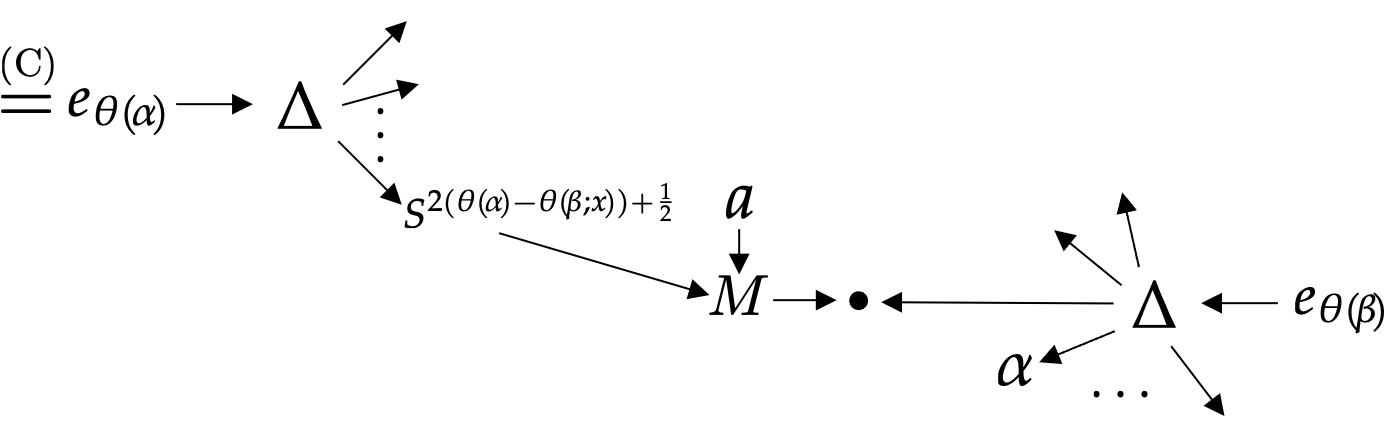}\\
    &\includegraphics[scale=.4, valign = c]{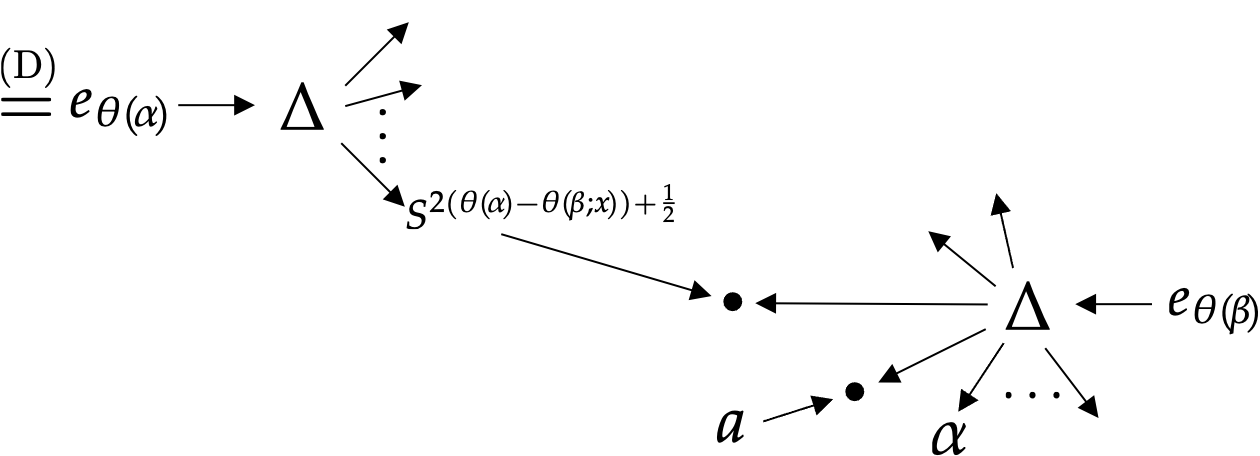}
\end{align*}
\endgroup
In the above diagrams, identity (A) comes from Lemma~\ref{lem:e_theta_cocommutativity}. Identity (B) utilizes the fact that $2(\theta(\alpha) - \theta(\beta;x)) + \frac{1}{2}$ is an even integer (since the $\alpha$ and $\beta$ curves intersect positively) and that the antipode on one side of the pairing is changed to the inverse antipode when it is slid to the other side (since the pairing on $H_{\alpha} \otimes H_{\beta}$ defines a Hopf algebra morphism $H_{\alpha} \to H_{\beta}^{*, \text{cop}}$).  Identity (C) is due to the definition of the generalized integral and the assumption that the Hopf algebras involved are balanced:
\begin{align*}
    \includegraphics[scale=.4, valign = c]{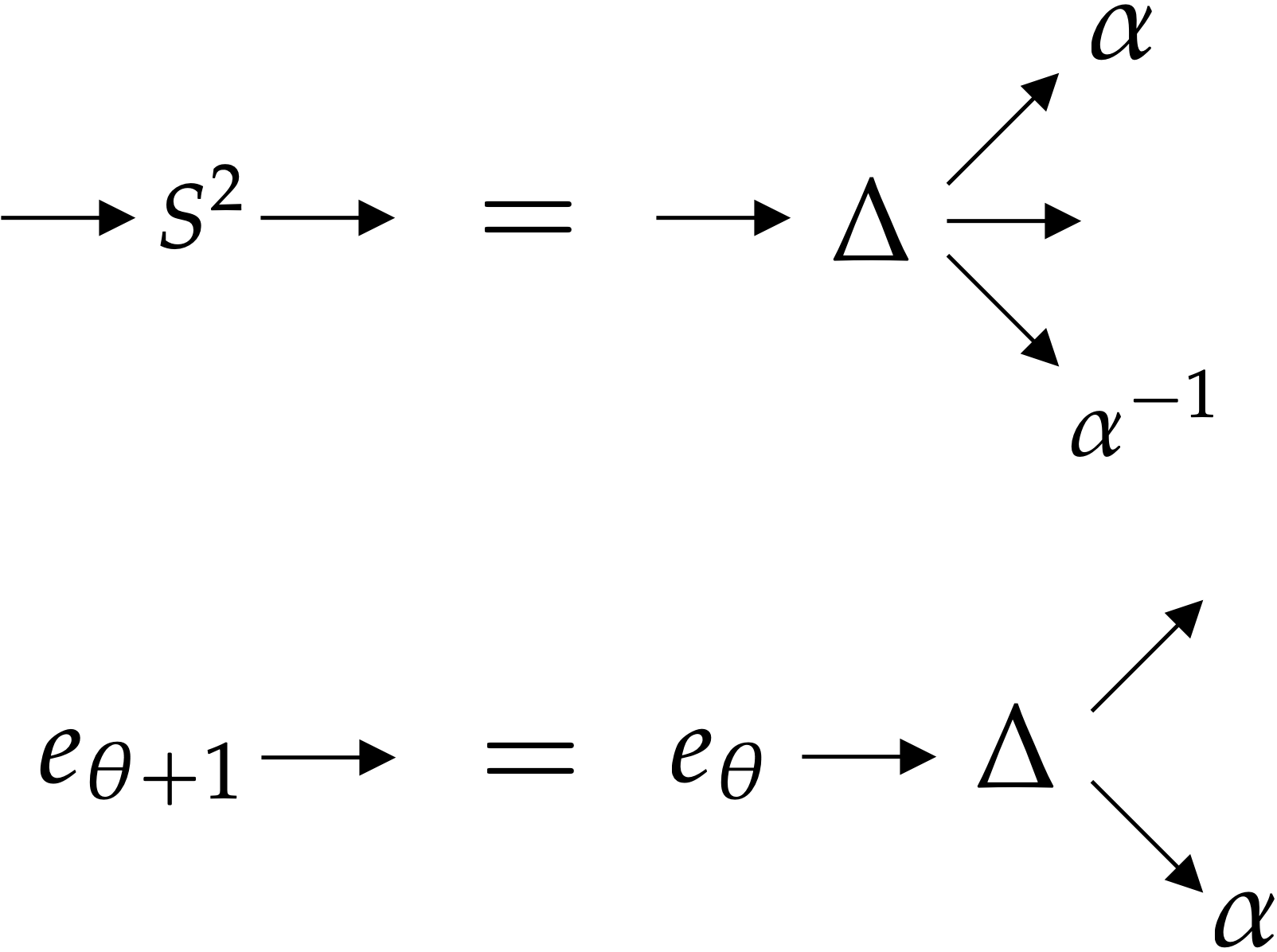}
\end{align*}
It should be clarified that $\alpha$ carries multiple meanings in identity (C) and the identities thereafter.  Specifically, $\alpha$ both denotes the $\alpha$-type curve (as in $\theta(\alpha)$) as well as the distinguished group-like element $\alpha$ in the dual Hopf algebra.
Identity (D) again utilizes the fact that the pairing on $H_{\alpha} \otimes H_{\beta}$ defines a Hopf algebra morphism $H_{\alpha} \to H_{\beta}^{*, \text{cop}}$. Lastly, since the pairing preserves the distinguished group-like element, we have
\begin{align*}
    \includegraphics[scale=.4, valign = c]{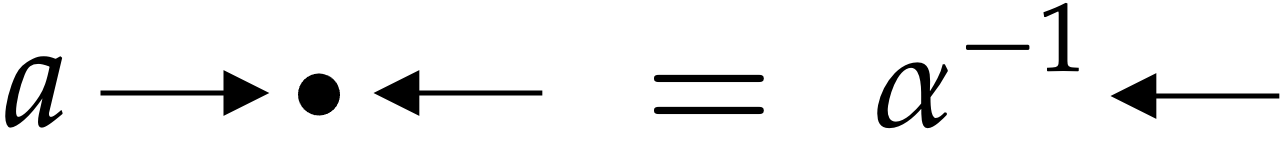}
\end{align*}
and so the right side of Identity (D) equals the tensor diagram in Equation~\eqref{eqn:base_point_isotopy_before}.

\vspace{3pt}

 \textbf{Basepoint spiral.} Without loss of generality, assume a basepoint spiral is performed around the base point of an $\alpha$ curve:
 \begin{align*}
    \includegraphics[scale=.4, valign = c]{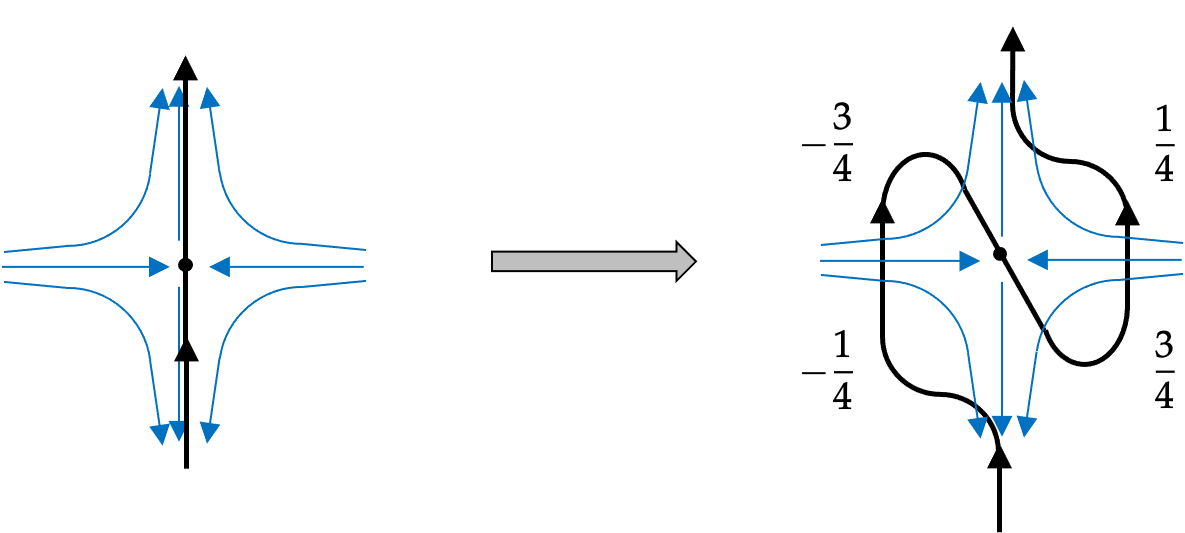}
\end{align*}
In the above figure, the black curve denotes the $\alpha$ curve, and the numbers in the right part of the figure again represent the degree of rotation of the $\alpha$ curve relative to the combing in each depicted segment. From the figure, it follows that $\theta(\alpha)$ remains unchanged, while for each intersection $x$ on the $\alpha$ curve, $\theta(\alpha; x)$ increases by 1. The relevant tensor diagram before the move is
\begin{align*}
    \includegraphics[scale=.4, valign = c]{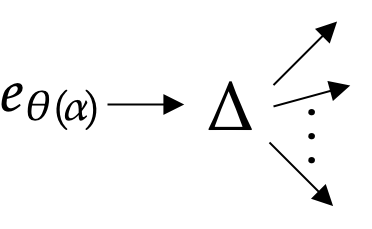}
\end{align*}
and the diagram after the move is
\begin{align*}
    \includegraphics[scale=.4, valign = c]{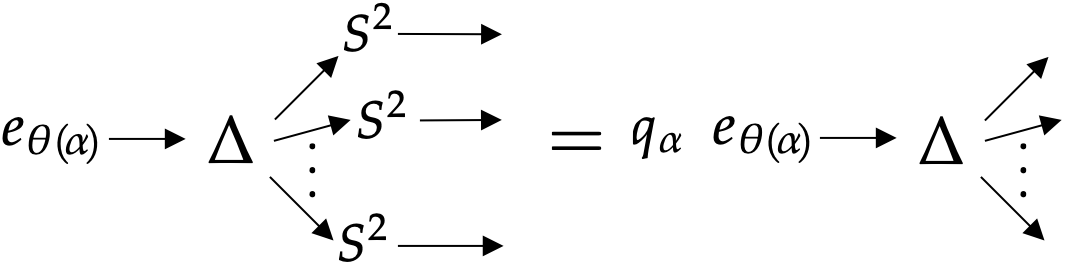}
\end{align*}
where the equality above is due to the facts that $S^2$ is a Hopf algebra morphism and that $e_{\theta(\alpha)}$ is an eigenvector of $S^2$ with eigenvalue $q_{\alpha}$. Since $q_{\mathcal{H}} = q_{\alpha}$, the two diagrams before and after the move evaluate to the same element in $k/\sim$.
\end{proof}

\begin{proof} (Proposition \ref{prop:inv_trisection_move})
We check each of the moves. Again, due to Proposition \ref{prop:bracket_orientation}, we can choose a specific orientation for each curve in the configurations discussed below.

\vspace{3pt}

\textbf{Handle-slides.} Consider the case of sliding an $\alpha_j$ curve over an $\alpha_i$ curve,
\begin{align*}
    \includegraphics[scale=.4, valign = c]{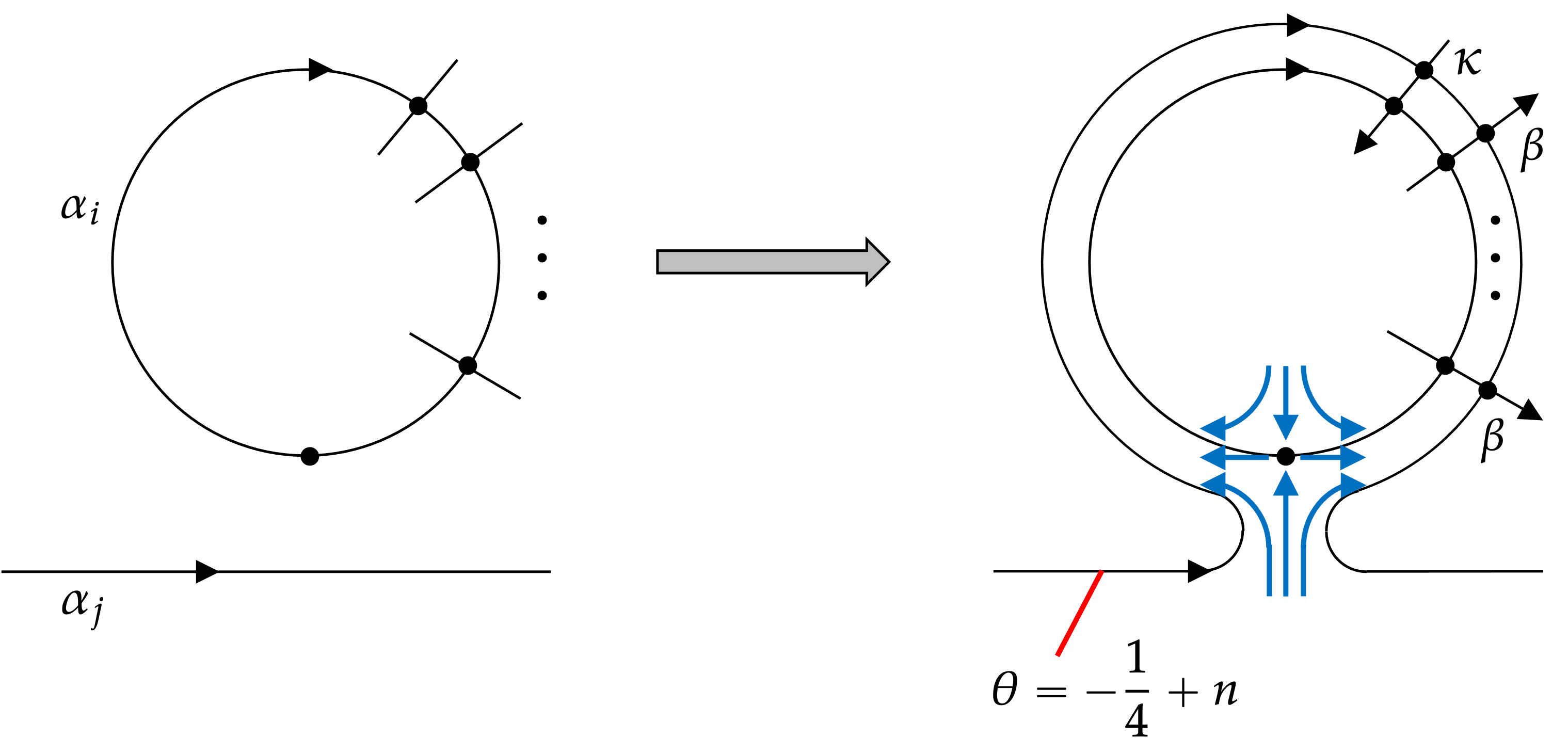}
\end{align*}
The $\alpha_i$ curve may have intersections with $\beta$-type and $\kappa$-type curves. We simply label each of those intersections with the $\alpha_i$ curve with either $\beta$ or $\kappa$ accordingly.  Denote by $\alpha_j'$ the new $\alpha_j$ after the move. Let $p$ be a point located  on $\alpha_j'$ right before $\alpha_j'$ makes a `U' turn to go around $\alpha_i$. ($p$ is the intersection of the red segment with $\alpha_j'$ in the above diagram.) Then 
$$\theta(\alpha_j';p) = -\frac{1}{4} + n$$
for some integer $n$. The new intersections $\alpha_j'$ obtained are in one-to-one correspondence with intersections on $\alpha_i$, and for each such intersection $q$ we have
$$\theta(\alpha_j';q) = n + \theta(\alpha_i; q),$$
where we have used the same symbol $q$ to denote an intersection on $\alpha_i$ and the corresponding intersection on $\alpha_j'$. Set $\theta_i := \theta(\alpha_i)$ and $\theta_j := \theta(\alpha_j)$. Then we have
$$\theta(\alpha_j') = \theta_i + \theta_j + \frac{1}{2}.$$

The tensor diagram after the move is given by
\begin{align*}
    \includegraphics[scale=.4, valign = c]{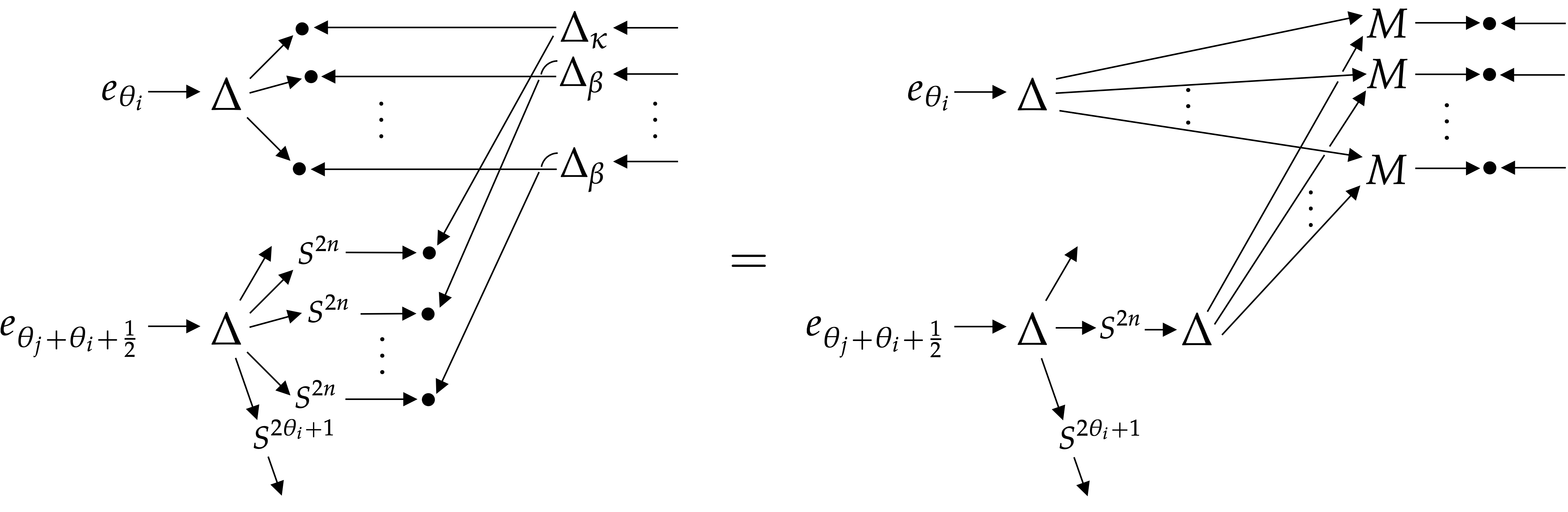}
\end{align*}
\begin{align*}
    \includegraphics[scale=.4, valign = c]{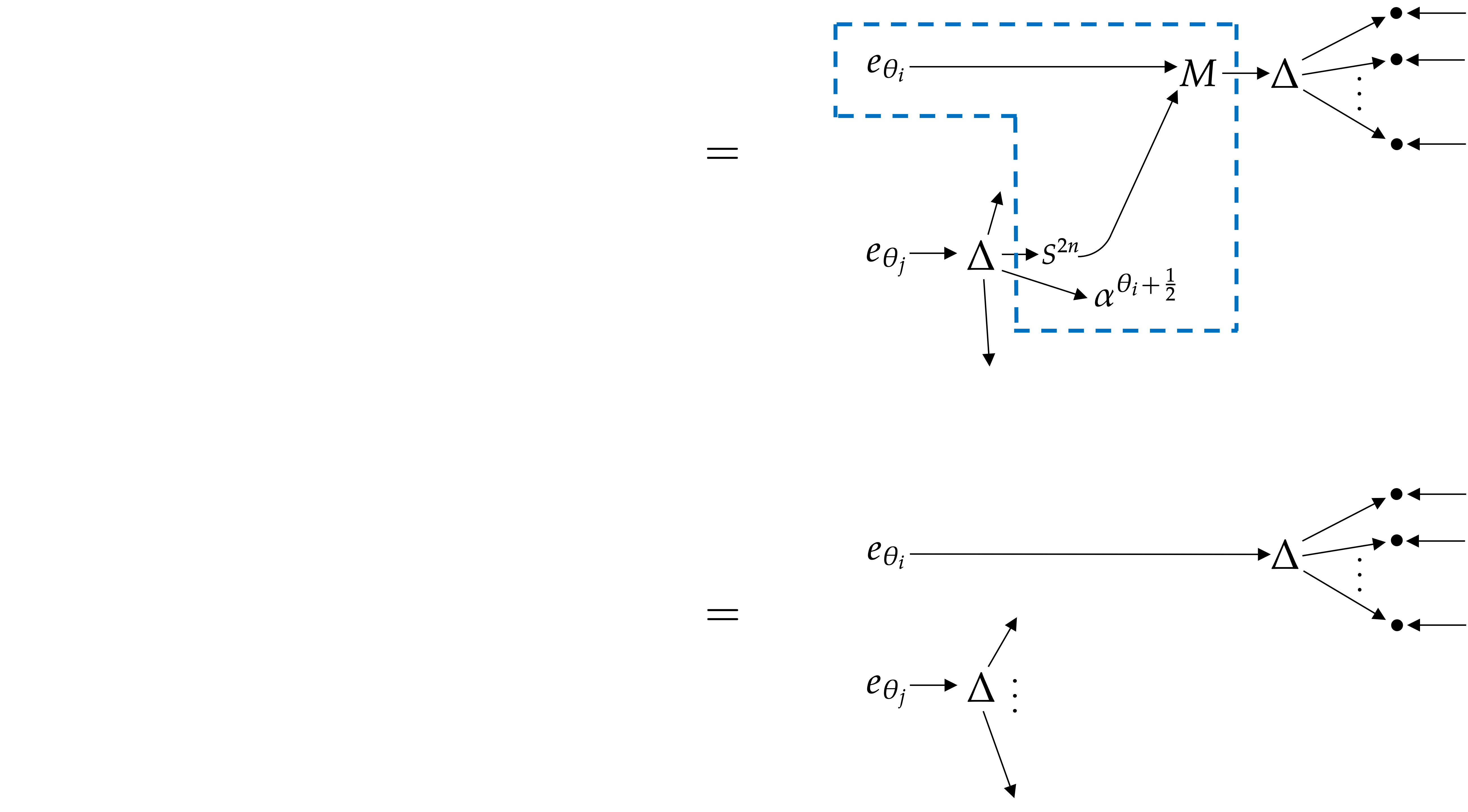}
\end{align*}
In the first equality above, we used the fact that the pairs induce Hopf algebra morphisms $H_{\alpha} \to H_{\beta}^{*, \text{cop}}$ and $H_{\alpha} \to H_{\kappa}^{*, \text{op}}$. The second equality relies on the compatibility condition between multiplication and comultiplication in a Hopf algebra. The third equality follows directly from the following identity, with the portion contained in the dashed box above corresponding to the left hand-side below (see Lemma~\ref{lemma:etheta1}),
\begin{align*}
    \includegraphics[scale=.35, valign = c]{Figures/tri_inv_38.png}
\end{align*}
The expression in the third equality is exactly the corresponding tensor diagram before the move.

\vspace{3pt}

\textbf{Two-point move.} Without loss of generality, consider the following two-point move between an $\alpha_i$ and a $\beta_j$ curve:
\begin{align*}
    \includegraphics[scale=.4, valign = c]{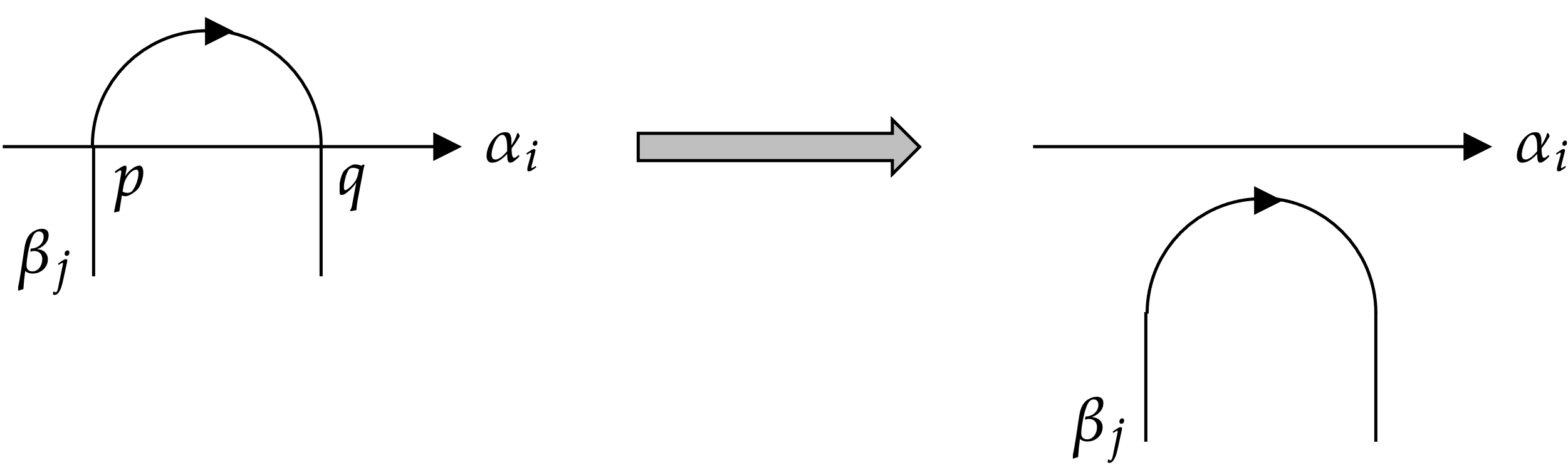}
\end{align*}
For the curves before the move, we have $\theta(\alpha_i;p) = \theta(\alpha_i;q)$ and $\theta(\beta_j;p) = \theta(\beta_j;q) + \frac{1}{2}$. Hence $\theta_{\alpha_i}(q) = \theta_{\alpha_i}(p) + 1$. Set $\theta(p) = \theta_{\alpha_i}(p)$, which is an even integer since $\alpha_i$ and $\beta_j$ intersect positively. Then the relevant tensor diagram before the move is
\begin{align*}
    \includegraphics[scale=.4, valign = c]{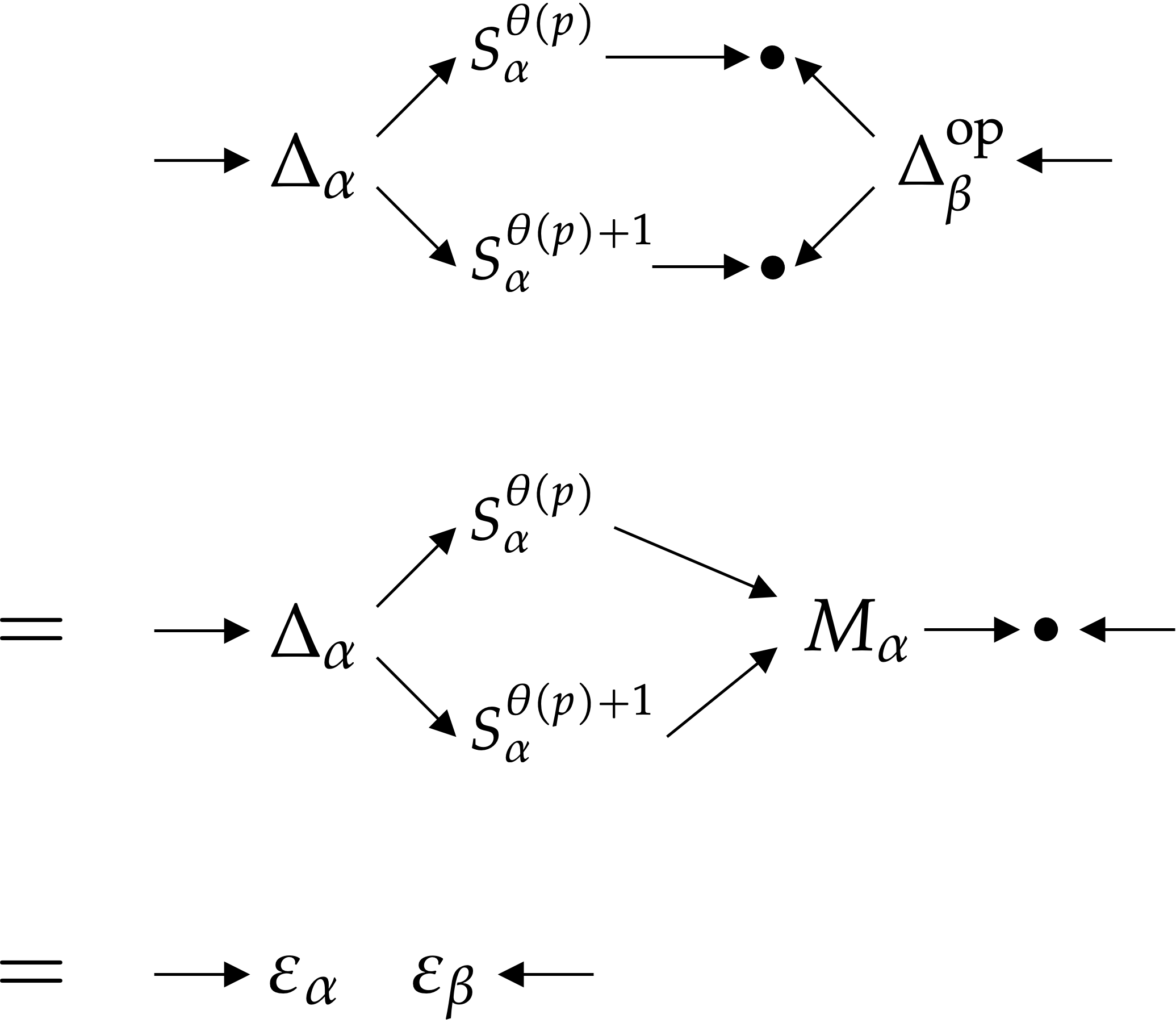}
\end{align*}
The first equality above holds due to the condition that the pairing induces a Hopf algebra morphism $H_{\alpha} \to H_{\beta}^{*, \text{cop}}$, and the second equality follows from basic properties of the antipode. The expression following the second equality isolates the relevant part of the tensor diagram after the 2-point move.

\textbf{Three-point move I.} The three-point move involves curves of each of the three types. We label each curve  by its type. There are two cases to consider depending on the relative ordering of the involved curves. Specifically, the following shows a configuration where the $\alpha$-, $\beta$-, and $\kappa$-type curves are arranged \emph{counterclockwise} on the boundary of the triangle formed by the curves:

\begin{align*}
    \includegraphics[scale=.4, valign = c]{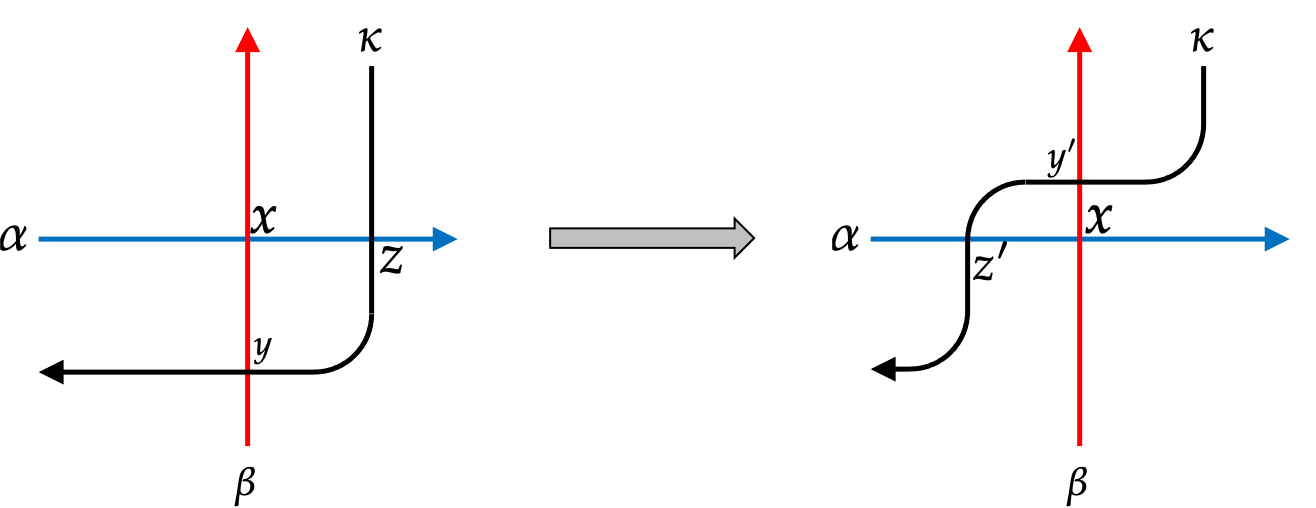}
\end{align*}
For the curves before the move, let
\begin{alignat*}{3}
    C_{\alpha} &:= \theta(\alpha;x) &&= \theta(\alpha;z)\,,\\
    C_{\beta}  &:= \theta(\beta;x) &&= \theta(\beta;y)\,,\\
    C_{\kappa} &:= \theta(\kappa;z) &&= \theta(\kappa;y) + \frac{1}{4}\,.
\end{alignat*}
Then after the move, we have
\begin{alignat*}{3}
    C_{\alpha} &= \theta(\alpha;x) &&= \theta(\alpha;z')\,,\\
    C_{\beta}  &= \theta(\beta;x) &&= \theta(\beta;y')\,,\\
    C_{\kappa} &= \theta(\kappa;z') &&= \theta(\kappa;y') + \frac{1}{4}\,.
\end{alignat*}
We also mark these numbers on the curves below for convenience:
\begin{align*}
    \includegraphics[scale=.4, valign = c]{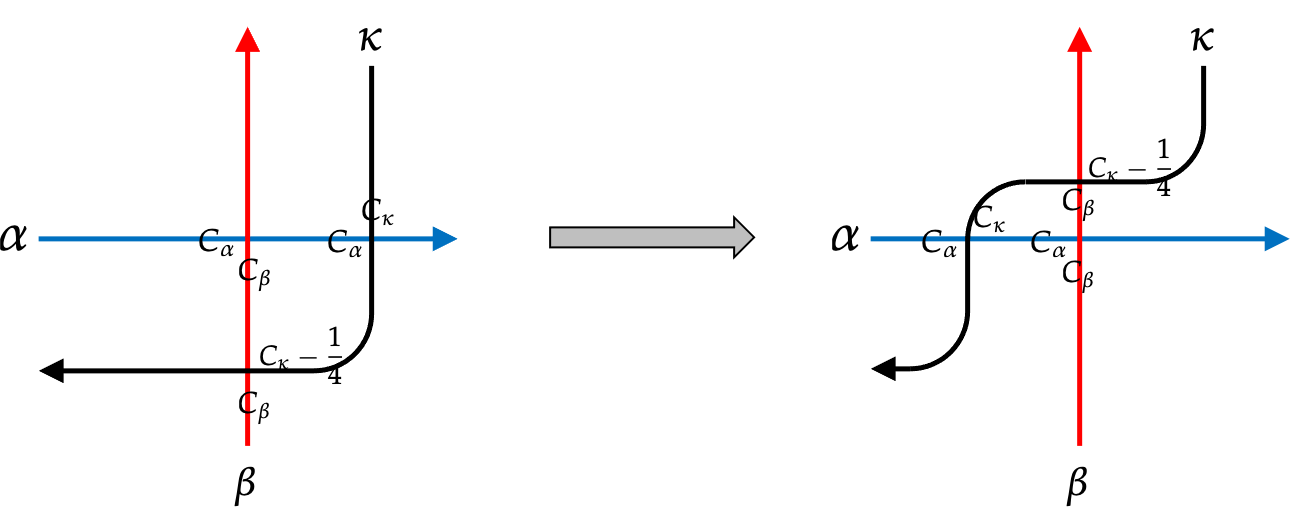}
\end{align*}

\noindent Invariance under the move requires the equality of the tensor diagrams
\begin{align*}
    \includegraphics[scale=.4, valign = c]{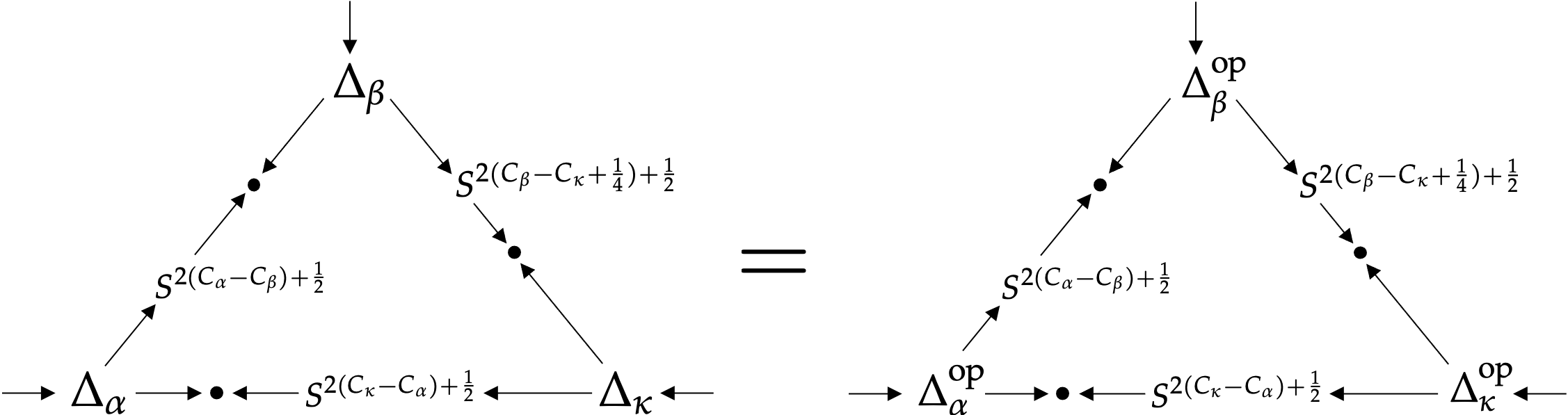}
\end{align*}
Note that all the exponents of the antipode above are even integers since the curves are oriented so that their intersections are positive. The above equality is equivalent to the following identity  which is a part of the definition of Hopf triplets:
\begin{align*}
    \includegraphics[scale=.4, valign = c]{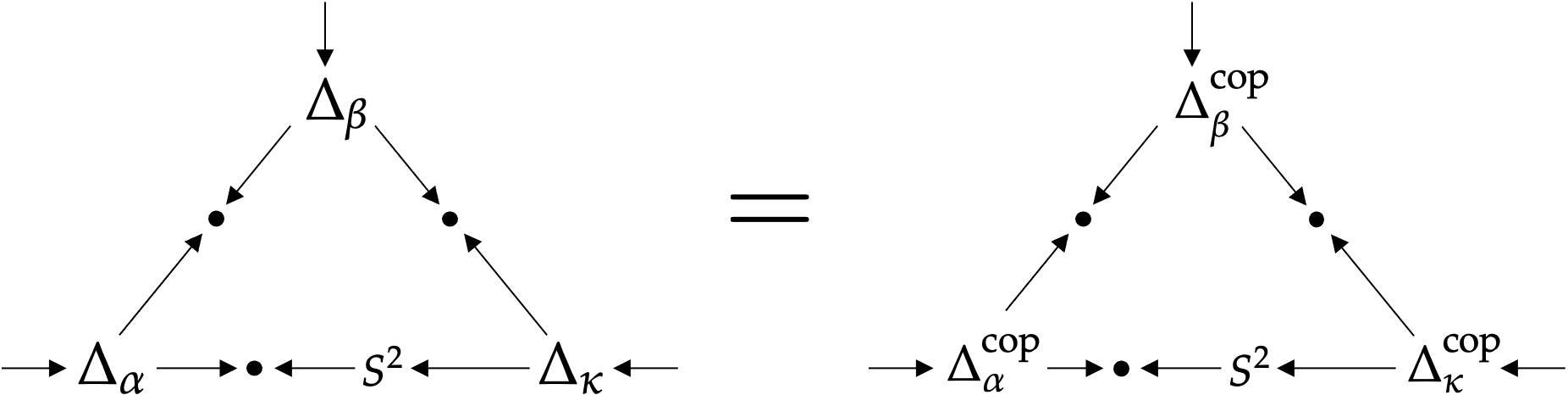}
\end{align*}

\textbf{Three-point move II.} Now we consider the other case where the $\alpha$-, $\beta$-, and $\kappa$-type curves are arranged \emph{clockwise} on the boundary of the triangle. This time the curves are oriented so that all intersections are negative:
\begin{align*}
    \includegraphics[scale=.4, valign = c]{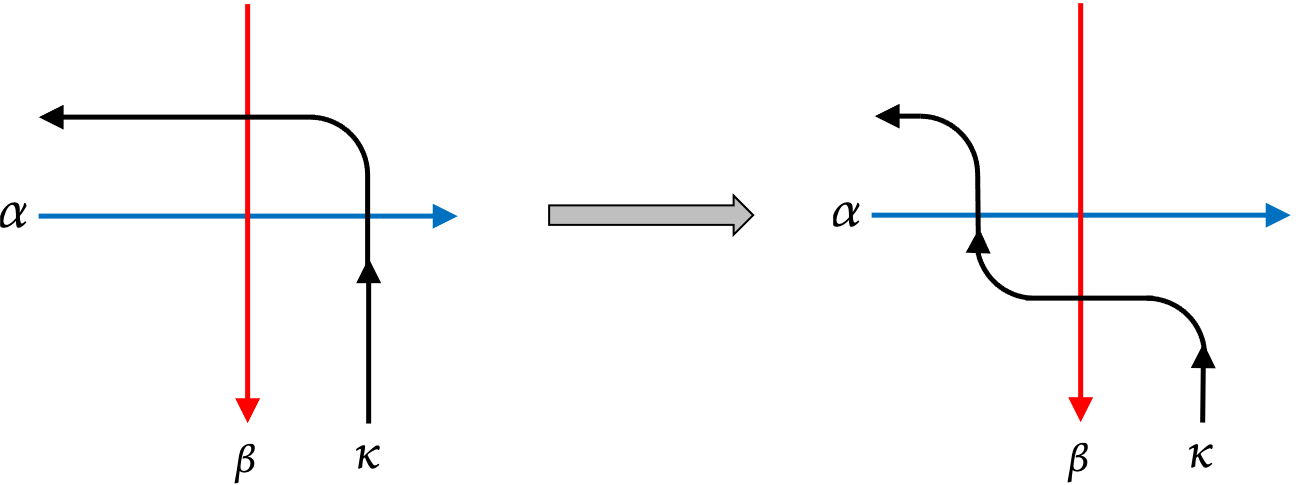}
\end{align*}
As in the first case, we mark the rotation number of each crossing on the diagram.
\begin{align*}
    \includegraphics[scale=.4, valign = c]{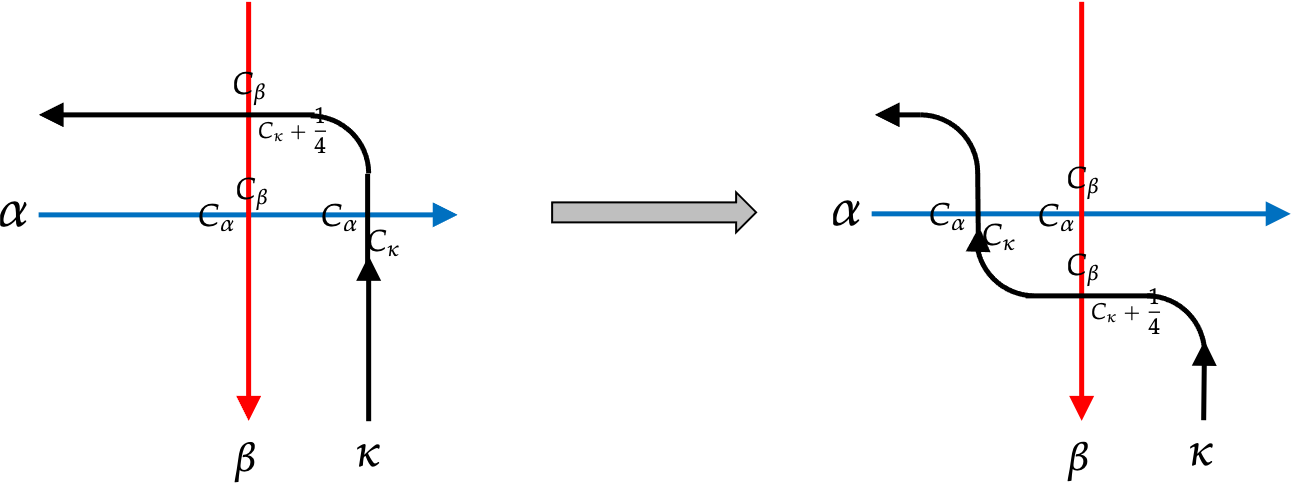}
\end{align*}

\noindent Then, invariance under the move requires the equality of the tensor diagrams
\begin{align*}
    \includegraphics[scale=.4, valign = c]{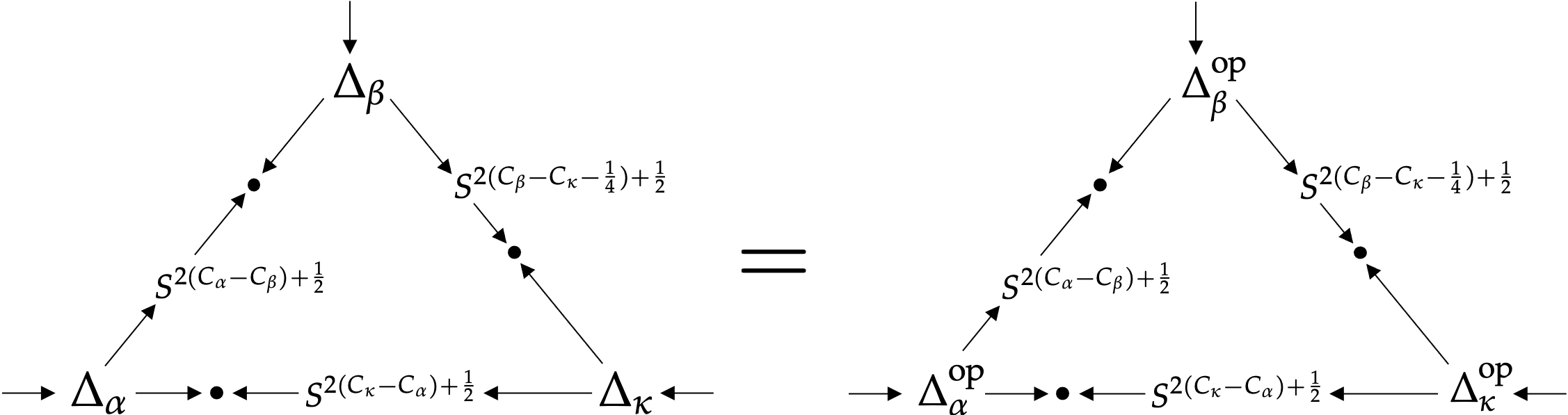}
\end{align*}
Note that all the exponents of the antipode above are odd integers. The equality of the above tensor diagrams is equivalent to the following identity which is a consequence of the definition of Hopf triplets (see Theorem \ref{thm:triangle_identity_versions}):
\begin{align*}
    \includegraphics[scale=.4, valign = c]{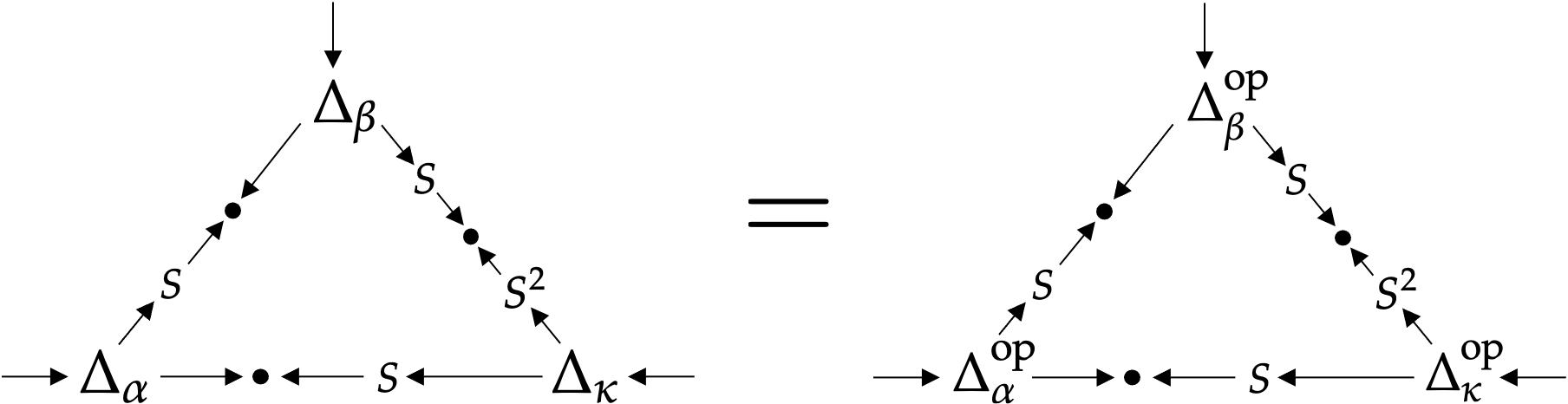}
\end{align*}
\end{proof}

\section{Trisection Invariants Of Small Exotica} \label{sec:stein_nuclei} A natural question, arising in the study of trisections, is: what is the simplest pair of trisections describing an exotic pair? This question is particularly relevant in this paper, since computing the trisection invariant is very computationally taxing. Thus, simple exotic pairs of trisection diagrams provide the most promising candidates for exotica that new invariants could feasibly be used to distinguish.

\vspace{3pt}

In this section, we discuss two families of 4-manifolds that admit exotic pairs and small trisections. First, we discuss \emph{Stein nuclei}, a family of Stein 4-manifolds introduced by Yasui in \cite{yasui2014partial}. Our trisections of the Stein corks is new and provides new examples of small exotica in the context of trisections. Second, we discuss some examples of Stein 4-manifolds with small trisection genus, introduced by Takahashi \cite{takahashi2023exotic,takahashi2022minimal}.  

\subsection{Stein Nuclei} \label{subsec:stein_nuclei} In this section, we trisect Yasui's Stein nuclei \cite{yasui2014partial} and discuss the properties of our invariants for these spaces. These manifolds admit relatively simple Lefschetz fibrations, and thus simple trisection diagrams via the algorithm in Section \ref{subsec:trisection_diagrams_of_examples}. We begin by recalling the definition of the Stein nuclei using Kirby diagrams.

\begin{definition} \cite{yasui2014partial} Let $m = (m_0,m_1,m_2)$ be a tuple of non-negative integers and let $i$ be an integer. The \emph{Stein nucleus} $N^m_i$ is the $4$-manifold with boundary given by the Kirby diagram
\begin{center}
\includegraphics[width=.3\textwidth]{Figures/stein_nucleus_kirby_diagram.png}
\end{center}
\end{definition}

The following results describe the topological and smooth properties of the Stein nuclei. These properties make the Stein nuclei a particularly fruitful testing ground for new invariants.

\begin{thm} \cite[Prop.~7.1]{yasui2014partial} For any tuple $m = (m_0,m_1,m_2)$, the Stein nuclei $N^m_i$ for $i \in \Z$ have the following topological properties.
\begin{enumerate}[label=(\alph*)]
\item (Boundary) $\partial N_i^m$ is a homology $3$-sphere.
\item (Homology) $N^m_i$ is simply connected, $H_2(N^m_i)$ is isomorphic to $\Z \oplus \Z$ and the intersection form of $N^m_i$ is unimodular and indefinite.
\item (Homeomorphism type) An orientation preserving homeomorphism $\partial N^m_i \to \partial N^m_j$ extends to a homeomorphism $N^m_i \to N^m_j$ if and only if $m_1i = m_1j \mod 2$.
\end{enumerate}
\end{thm}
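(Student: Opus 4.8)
The plan is to work directly from the Kirby diagram, which presents $N^m_i$ via a handle decomposition consisting of a single $0$-handle, some $1$-handles (the dotted circles), and $2$-handles (the framed attaching circles). All three parts will follow from a careful bookkeeping of this decomposition together with the classification theory of simply connected topological $4$-manifolds. I would treat part (b) first, since (a) is then formal and (c) builds on it.

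First I would establish part (b). Reading a Wirtinger-type presentation of $\pi_1(N^m_i)$ off the diagram — one generator per $1$-handle and one relator per $2$-handle, the relator being the word traced out as the attaching circle runs over the dotted circles — and simplifying it via Tietze transformations (using the parameters $m_0,m_1,m_2,i$ to track how the bands wind), I would check that the result presents the trivial group, proving $\pi_1(N^m_i) = 1$ and hence $H_1(N^m_i) = 0$. With simple connectivity in hand, $H_2$ and the intersection form $Q_i$ are read from the cellular chain complex of the handle decomposition: $H_2 \cong \ker \partial_2$, and $Q_i$ is the symmetric linking matrix of the $2$-handle cores, with the framings on the diagonal and the signed linking numbers off-diagonal. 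I expect $Q_i$ to be a $2\times 2$ integral matrix of determinant $-1$ whose diagonal parity is governed by the quantity $m_1 i$. The value $-1$ simultaneously shows that $Q_i$ is unimodular — so $H_2(N^m_i)\cong\Z^2$ is torsion-free of rank $2$ and $Q_i$ is nondegenerate — and that $Q_i$ is indefinite, since a rank-$2$ integral form of negative determinant has signature $0$.

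Part (a) then follows formally. Because $N^m_i$ is simply connected and built with no handles of index $\geq 3$, the long exact sequence of the pair $(N^m_i,\partial N^m_i)$ together with Poincaré–Lefschetz duality $H_2(N^m_i,\partial N^m_i)\cong H^2(N^m_i)\cong \op{Hom}(H_2(N^m_i),\Z)$ identifies the connecting homomorphism $H_2(N^m_i)\to H_2(N^m_i,\partial N^m_i)$ with the intersection form $Q_i$. Exactness at these terms (using $H_1(N^m_i)=0$) then gives $H_1(\partial N^m_i)\cong \op{coker}(Q_i)$. Unimodularity of $Q_i$ forces $\op{coker}(Q_i)=0$, so $H_1(\partial N^m_i)=0$; the remaining homology of the closed oriented $3$-manifold $\partial N^m_i$ is pinned down by duality, so $\partial N^m_i$ is a homology $3$-sphere.

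Finally, for part (c) I would invoke the classification of simply connected topological $4$-manifolds with a fixed homology-sphere boundary — Boyer's refinement of Freedman's theorem. The key input is that, relative to the boundary data, the homeomorphism type is determined by the isometry class of the intersection form: the Kirby–Siebenmann invariant vanishes automatically since our manifolds are smooth, and the linking form on a homology-sphere boundary is trivial, so no further obstruction enters the extension problem. Thus an orientation-preserving boundary homeomorphism $\partial N^m_i\to\partial N^m_j$ extends over the $4$-manifolds if and only if $Q_i$ and $Q_j$ are isometric. Any rank-$2$ indefinite unimodular form is isometric either to the even hyperbolic form $\left(\begin{smallmatrix} 0 & 1 \\ 1 & 0 \end{smallmatrix}\right)$ or to the odd form $\langle 1\rangle\oplus\langle -1\rangle$, and by the computation in part (b) the form $Q_i$ is even exactly when $m_1 i$ is even; hence the forms are isometric precisely when $m_1 i \equiv m_1 j \pmod 2$. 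The main obstacle I anticipate is twofold: carrying out the Kirby-calculus simplification to verify $\pi_1=1$ and to pin down the exact parity of the diagonal of $Q_i$ as a function of $m_1 i$, and correctly citing and checking the hypotheses of the boundary-relative classification theorem — in particular that the vanishing of the Kirby–Siebenmann and linking-form data genuinely reduces the extension question to the intersection form alone.
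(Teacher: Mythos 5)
This theorem is imported into the paper verbatim from Yasui \cite[Prop.~7.1]{yasui2014partial}; the paper itself contains no proof, so the only meaningful comparison is with Yasui's original argument. Your outline follows essentially that same route: read $\pi_1$ and the linking/intersection matrix off the Kirby diagram, deduce (a) from unimodularity via the long exact sequence of the pair and Poincar\'e--Lefschetz duality, and settle (c) by combining the parity of the rank-two indefinite unimodular form (even exactly when $m_1 i$ is even) with the Freedman--Boyer extension theorem for simply connected $4$-manifolds bounded by a homology sphere, where smoothness kills the Kirby--Siebenmann obstruction and triviality of $H_1(\partial)$ makes any isometry compatible with any boundary homeomorphism. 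The one caveat is that the computational core --- triviality of $\pi_1$ and the explicit $2\times 2$ matrix whose diagonal parity tracks $m_1 i$ --- is stated as an expectation rather than carried out (unavoidable without the actual diagram, and you flag it yourself); with those computations supplied, the argument is complete and matches the cited proof.
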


The Stein nuclei yield a infinite sequences of pairwise exotic 4-manifolds. Indeed, we have the following result, proven in \cite[Prop.~7.3]{yasui2014partial}. Also see \cite{yasui2019nonexistence} for a simplified proof.

\begin{thm} \label{thm:stein_cork_top} \cite{yasui2014partial} \label{thm:stein_cork_smooth} For any tuple $m = (m_0,m_1,m_2)$ with $m_1 \ge 1$, the Stein nuclei $N^m_i$ for $i \in \Z$ have the following smooth properties. Then there are infinite subsets
\[
S_0 \subset \{N^m_{2i} \; : \; i \in \Z\} \qquad\text{and}\qquad S_1 \subset \{N^m_{2i+1} \; : \; i \in \Z\} \qquad
\]
such that no two manifolds in $S_0$ are diffeomorphic, and similarly no two manifolds in $S_1$ are diffeomorphic.
\end{thm}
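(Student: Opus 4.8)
The plan is to distinguish the $N^m_i$ by gauge-theoretic invariants adapted to Stein domains, since ordinary Seiberg--Witten theory applies only to closed $4$-manifolds. First I would verify that each $N^m_i$ carries a Stein structure $J_i$: the Kirby diagram is built from $1$- and $2$-handles, and by Yasui's construction the defining framings can be arranged to equal $tb-1$ for Legendrian realizations of the attaching circles. By the Eliashberg--Gompf criterion such a diagram endows $N^m_i$ with a Stein structure, whose first Chern class $c_1(J_i)\in H^2(N^m_i)$ is computed from the rotation numbers of the Legendrian attaching knots. The crucial bookkeeping is to record how $c_1(J_i)$, paired with a fixed generator of $H_2(N^m_i)\cong\Z\oplus\Z$, varies with $i$; one shows this pairing grows without bound (linearly in $i$).

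To convert Stein data into a diffeomorphism invariant I would pass to the closed setting. By the Lisca--Mati\'c / Akbulut--Ozbagci embedding theorem each Stein domain $N^m_i$ embeds holomorphically into a minimal closed symplectic $4$-manifold $Z_i$ of general type with $b_2^+>1$; for such $Z_i$ the canonical class is a Seiberg--Witten basic class and the adjunction inequality holds. Restricting to surfaces in the interior of $N^m_i$ yields the Stein adjunction inequality: for every closed connected oriented surface $\Sigma\subset N^m_i$ of genus $g\ge 1$ with $[\Sigma]^2\ge 0$,
\[
[\Sigma]^2 + \big|\langle c_1(J_i),[\Sigma]\rangle\big| \le 2g-2 .
\]
The left-hand side depends only on $[\Sigma]$ and the intersection form, both preserved by an orientation-preserving diffeomorphism $N^m_i\to N^m_j$. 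Hence the set of pairs $([\Sigma]^2,\langle c_1(J),[\Sigma]\rangle)$ realizable by Stein structures $J$, viewed inside the fixed lattice $(H_2,Q)$, is a diffeomorphism invariant.

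The distinguishing step is then bookkeeping plus pigeonhole. Fix a parity, say even $i$; all $N^m_{2i}$ are homeomorphic by the previous theorem, with a common underlying lattice $(\Z^2,Q)$. Using the adjunction inequality together with the computed linear growth of $\langle c_1(J_{2i}),\cdot\rangle$, one extracts a numerical invariant $\Theta(N^m_{2i})$ — for instance the maximal value of $\langle c_1(J),h\rangle$ over Stein structures $J$ on a fixed class $h$ of minimal self-intersection — that is monotone, up to finite ambiguity, in $i$. Since $\Theta$ takes infinitely many values on $\{N^m_{2i}\}$, a pigeonhole argument produces an infinite subset $S_0$ on which $\Theta$ is injective, so any two members of $S_0$ are non-diffeomorphic; applying the same argument to odd $i$ produces $S_1$.

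The main obstacle is precisely the passage from boundary Stein data to an honest, intrinsic diffeomorphism invariant: Seiberg--Witten theory and the adjunction inequality are closed-manifold phenomena, so the crux is the symplectic embedding into a minimal general-type $Z_i$ and the verification that the resulting constraints are independent of the embedding while remaining sensitive to $i$. A secondary subtlety is that the invariant need not separate every pair, which is exactly why the statement asserts only infinite pairwise-distinct subsets $S_0,S_1$ rather than distinctness of all $N^m_i$; the pigeonhole reduction is what makes this precise.
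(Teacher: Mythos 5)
The paper itself offers no proof of this statement: it is imported verbatim from Yasui (Prop.~7.3 of \cite{yasui2014partial}), with a simplified argument in \cite{yasui2019nonexistence}, so your proposal has to be measured against Yasui's argument. Your overall architecture is in fact the correct one and essentially his: Legendrian realization of the Kirby diagram giving Stein structures via Eliashberg--Gompf, the Lisca--Mati\'c embedding into a minimal closed symplectic $4$-manifold with $b_2^+>1$, and the resulting adjunction inequality for surfaces in the Stein domain.

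There is, however, a genuine gap at the step where you extract the diffeomorphism invariant $\Theta$, and it is precisely the step carrying the content of Yasui's proof. The adjunction inequality bounds $\langle c_1(J),h\rangle$ \emph{above} by $2G(h)-2-h^2$, where $G$ is the minimal-genus function; equivalently it bounds $G$ \emph{below} in terms of Stein data. Knowing only the preferred Stein structures $J_i$ (even granting the asserted, but nowhere computed, linear growth of $\langle c_1(J_i),\cdot\rangle$) can never produce a contradiction: if $f\colon N^m_{2i}\to N^m_{2j}$ were a diffeomorphism, pushing $J_{2i}$ forward and applying adjunction in $N^m_{2j}$ to $f_*h$ reproduces word-for-word the inequality you already had in $N^m_{2i}$, since both $c_1$-pairings and minimal genus are transported by $f$. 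What breaks the symmetry, and what your sketch omits entirely, is the complementary \emph{upper} bound on the genus function of the target: one must exhibit, handle-theoretically, explicit embedded surfaces of uniformly controlled genus representing the relevant classes of $N^m_{2j}$ (up to the finite isometry group of the rank-2 indefinite unimodular lattice, and up to orientation reversal). Via adjunction these cap $\langle c_1(J),h\rangle$ for \emph{every} Stein structure $J$ on $N^m_{2j}$, hence cap $\Theta(N^m_{2j})$, and only then does the growth of $\langle c_1(J_{2i}),h_i\rangle$ yield non-diffeomorphism and the pigeonhole step make sense. These explicit genus computations (repackaged in \cite{yasui2019nonexistence} as computations of maximal Thurston--Bennequin numbers) are the heart of the matter; without them your claim that $\Theta$ is ``monotone up to finite ambiguity in $i$'' has no support.
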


In particular, the Stein nuclei furnish an infinite family of different exotic pairs. 

\subsection{Lefschetz diagrams of Stein nuclei} Each Stein nucleus is also equipped with a natural Lefschetz fibration, and thus a natural open book on the boundary. We now describe the diagrams for these fibrations in some detail. Let $F$ be the following surface of genus $0$ with 3 boundary components equipped with four curves $\alpha_1,\alpha_2,\alpha_3$ and $\beta$ as depicted below:

\vspace{8pt}

\begin{center}
\includegraphics[width=.4\textwidth]{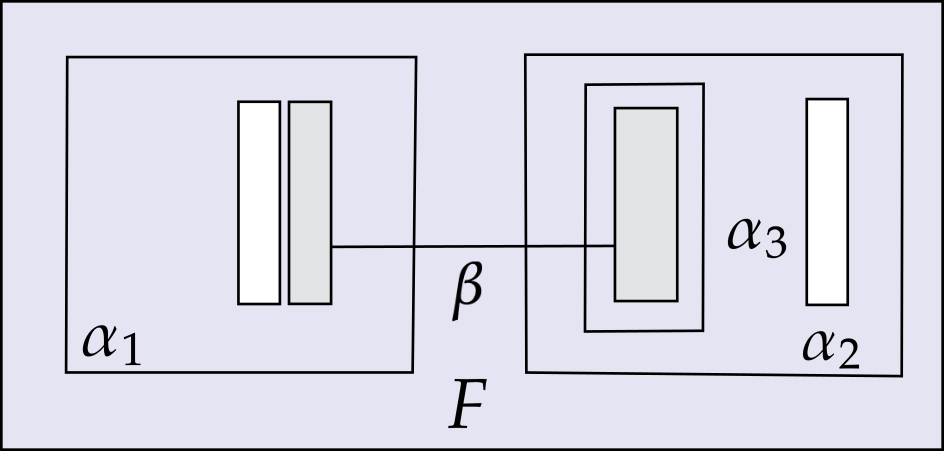}
\end{center}
Let $\gamma_i,\gamma_i^j$ and $\beta^j$ denote the curves in $F$ given by Dehn twisting $\beta$ as follows.
\[
\gamma_i = (\tau_{\alpha_1} \circ \tau_{\alpha_2} \circ \tau_{\alpha_3})^i(\beta) \qquad \gamma^j_i = \tau_{\alpha_1}^j(\gamma_i) \qquad \beta^j = \tau_{\alpha_1}^j(\beta)
\]
Starting with the simplest case, the Stein nucleus $N^0_0$ is given by the Lefschetz diagram corresponding to the zero tuple $m = (0,0,0)$ and $i = 0$ is given by
\[(F;\gamma_1,\beta,\gamma_{-1},\gamma_{-1},\alpha_1,\alpha_2)\]
This diagram may be drawn as:

\vspace{8pt}
\begin{center}
\includegraphics[width=.7\textwidth]{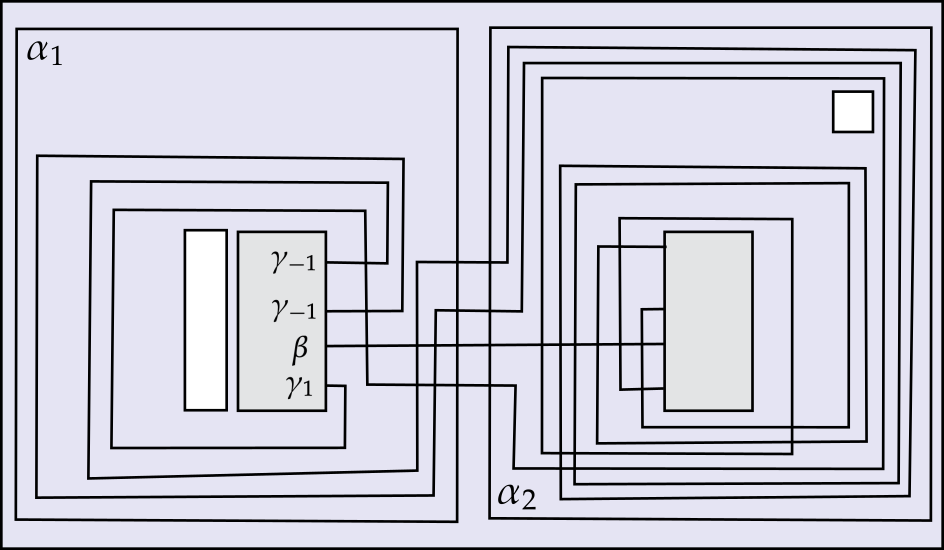}
\end{center}
The Lefshetz diagram for $N^0_i$ is acquired by Dehn twisting the curves $\gamma_1,\beta,\gamma_{-1}$ around $\alpha_1$ $i$-times. This yields the diagram
\[(F;\gamma_1^i,\beta^i,\gamma_{-1}^i,\gamma_{-1},\alpha_1,\alpha_2)\]
For purposes of depicting this diagram, it is easiest to apply the iterated Dehn twist $\tau_{\alpha_1}^{-i}$ to $F$. Since $\alpha_1$ is disjoint from $\alpha_2$, this yields the isomorphic diagram
\[(F;\gamma_1,\beta,\gamma_{-1},\gamma_{-1}^{-i},\alpha_1,\alpha_2)\]
This diagram may be drawn as follows:

\vspace{8pt}

\begin{center}
\includegraphics[width=.7\textwidth]{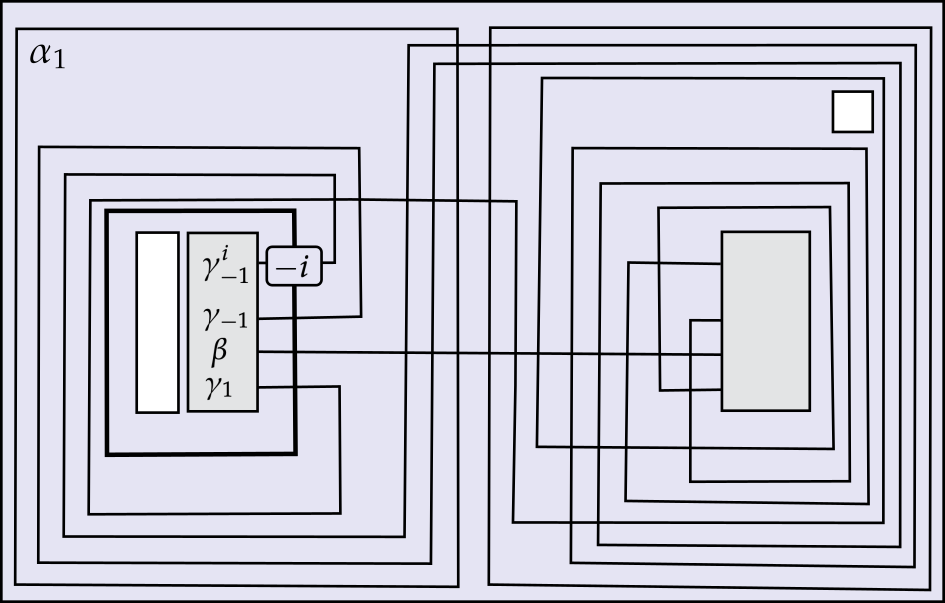}
\end{center}

\vspace{8pt}

\noindent Here, the bold circle and boxed $-i$ denotes the fact that the curve $\gamma^i_{-1}$ is acquired from $\gamma_{-1}$ by doing $-i$ Dehn twists around $\alpha_1$. This produces $|i|$-intersections with $\gamma_{-1},\beta$ and $\gamma_1$.

To extend create similar Lefschetz diagrams for $N^m_i$, we must perform \emph{$R$-modifications} on some curves in the diagram (in the language of \cite{yasui2014partial}). This involves attaching $1$-handles to the boundary of $F$ and sliding certain curves over a core circles of the $1$-handle.

More precisely, given a tuple $m = (m_1,m_2,m_3)$, let $F_m$ denote the surface acquired by attaching $m_1 + m_2 + m_3$ total $1$-handles to $F$ in an oriented way. We divide these handles into three groups as
\[
h_{0,1},\dots,h_{0,m_0} \qquad h_{1,1},\dots,h_{1,m_1} \quad\text{and}\quad h_{2,1}, \dots ,h_{2,m_2}
\]
We also let $E_{i,j}$ denote a core circle of $h_{i,j}$ (i.e.~a circle intersecting a belt arc of $h_{i,j}$). We let $\gamma_{-1}(m_0)$ be the curve acquired by handlesliding $\gamma_{-1}$ over all the core circles $E_{0,i}$, $\alpha_1(m_1)$ be the curve acquired by handlesliding $\alpha_1$ over all the core circles $E_{1,i}$, and $\alpha_2(m_2)$ be the curve acquired by handlesliding $\alpha_2$ over all the core circles $E_{2,i}$. The resulting picture of $F_m$ is given in Figure \ref{fig:stein_cork_Lef_N_m_i}.

\begin{figure} 
\includegraphics[width=\textwidth]{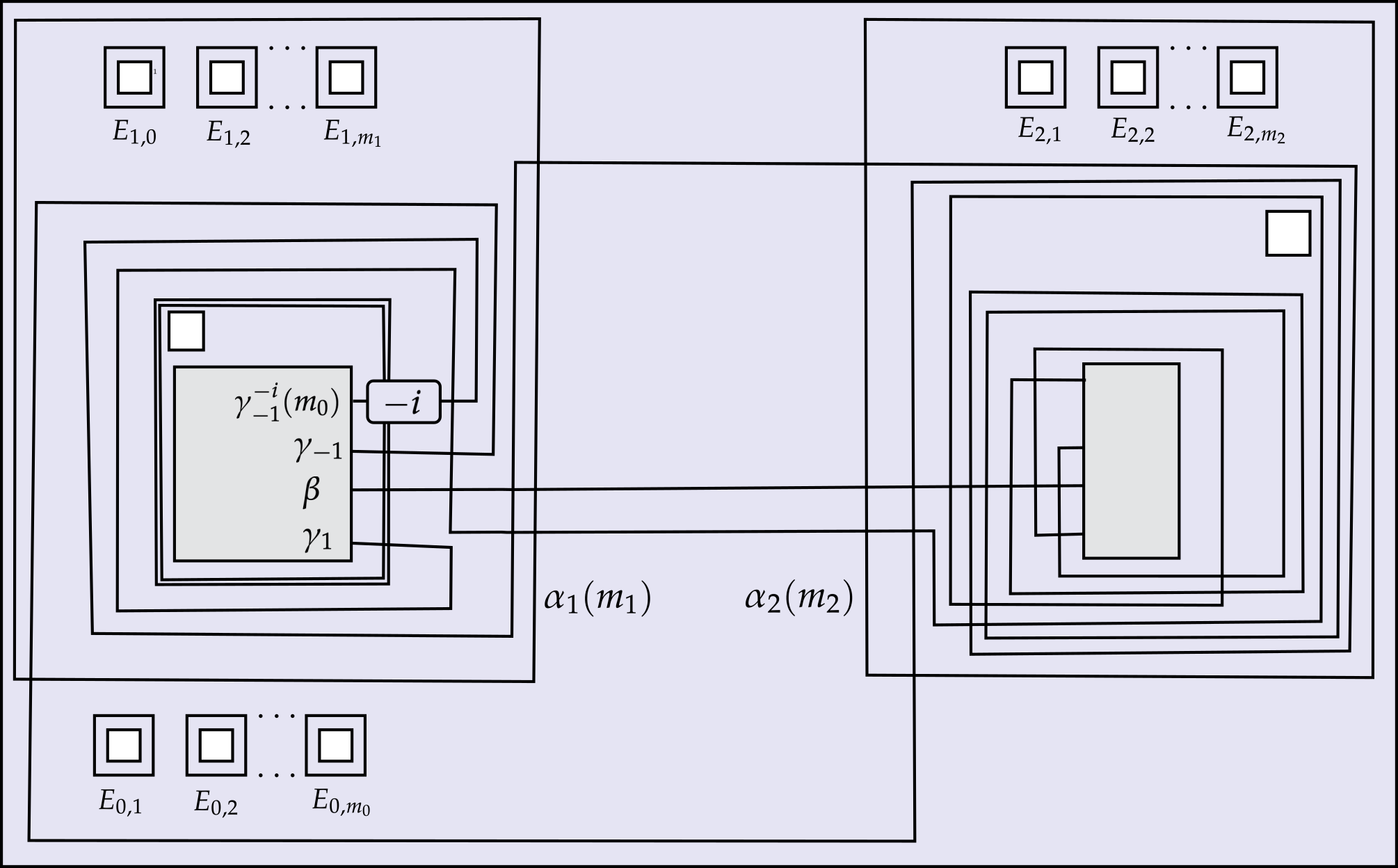}
\caption{A Lefschetz diagram for a general Stein nucleus $N^m_i$.}\label{fig:stein_cork_Lef_N_m_i}
\end{figure}

\begin{prop} \cite[\S 7]{yasui2014partial} The Stein nucleus $N^m_i$ is given by the following Lefschetz diagram $(F_m,L_m^i)$ where $L^i_m$ is the sequence of $6 + 6(m_1+m_2+m_3)$ curves
\[L_m^i = (\gamma_1,\beta,\gamma_{-1},E_{0,1},\dots,E_{0,m_0},\gamma_{-1}^{-i}(m_0),E_{1,i},\dots,E_{1,m_1},\alpha_1(m_1),E_{2,1},\dots,E_{2,m_2},\alpha_2(m_2))\]
\end{prop}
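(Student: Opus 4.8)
The plan is to pass from the Lefschetz diagram $(F_m, L^i_m)$ to a Kirby diagram for its total space via the standard handle decomposition of a Lefschetz fibration over the disk, and then to reduce this Kirby diagram by Kirby calculus to the defining diagram of $N^m_i$. The entire argument is a translation between two presentations of the same $4$-manifold, so no genuinely new topology is needed beyond the dictionary between Lefschetz fibrations and handlebodies, together with the construction of the Stein nuclei recalled in Section \ref{subsec:stein_nuclei}.

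\textbf{Step 1 (handle decomposition).} Recall, from the discussion following Theorem \ref{thm:Lef_diagram_to_Lef_fibration}, that a Lefschetz fibration $\pi\colon X \to D^2$ with bounded fiber $F$ and vanishing cycles $L_1,\dots,L_n$ is built from $F \times D^2$ by attaching, for each $L_j$, a single $2$-handle along a copy of $L_j$ pushed into a fiber, with framing equal to the page framing $\mp 1$, where the sign records whether the monodromy Dehn twist is right- or left-handed (for the positive twists giving the Stein structure, the framing is the page framing $-1$). Since $F_m$ deformation retracts onto a wedge of $2p + b - 1$ circles, with $p$ the genus and $b$ the number of boundary components of $F_m$, the piece $F_m \times D^2$ contributes one $0$-handle and $2p + b - 1$ one-handles, drawn as dotted unknots. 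The vanishing cycles in $L^i_m$ then become attaching circles of $2$-handles lying on the pages, and I would read off their mutual linking and framings directly from the planar picture of $F_m$ in Figure \ref{fig:stein_cork_Lef_N_m_i}.

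\textbf{Step 2 (Kirby reduction).} I would first treat the base case $m = (0,0,0)$, $i = 0$, where $L^0_0 = (\gamma_1,\beta,\gamma_{-1},\gamma_{-1},\alpha_1,\alpha_2)$, and verify by handleslides and $1$-/$2$-handle cancellations that the resulting Kirby diagram is exactly the defining diagram of $N^0_0$. The parameters are then reinstated one at a time. The iterated Dehn twist $\tau_{\alpha_1}^{-i}$ producing $\gamma_{-1}^{-i}(m_0)$ corresponds, on the Kirby side, to changing the relevant attaching circle by $i$ full twists along $\alpha_1$, which is precisely the integer box labelled $-i$ in the definition. Each $R$-modification — attaching a $1$-handle $h_{s,j}$ with core circle $E_{s,j}$ and handlesliding one of $\gamma_{-1},\alpha_1,\alpha_2$ over it — adds a cancelling $1$-handle/$2$-handle pair together with a handleslide, and I would show that after cancelling the pair one is left with precisely the extra generator recorded by $m_s$. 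I would also verify at this point that the open book induced on $\partial N^m_i$, with binding $f^{-1}(0)\cap\partial X$ and monodromy the ordered product of the twists in $L^i_m$, matches the one implicit in the Kirby picture, so that the identification holds at the level of pairs $(X,\pi)$ and not merely of the underlying $4$-manifold.

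\textbf{Main obstacle.} The essential difficulty is entirely in the bookkeeping of Step 2: one must track the page framing of each vanishing cycle (computed from its writhe in the planar diagram, corrected by $\mp 1$ for the twist sign) and check that the sequence of handleslides implementing the $R$-modifications and the powers of $\tau_{\alpha_1}$ reproduces the exact linking pattern and framings of the defining Kirby diagram. Keeping these framings and linking numbers mutually consistent across all of $m_0,m_1,m_2$ and $i$ simultaneously, rather than only in the base case, is where the bulk of the care is required; the conceptual content beyond this is minimal, which is why the statement is attributed directly to Yasui \cite[\S 7]{yasui2014partial}.
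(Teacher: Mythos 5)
The paper contains no proof of this proposition: it is quoted directly from Yasui \cite[\S 7]{yasui2014partial}, and the surrounding text of Section \ref{subsec:stein_nuclei} only sets up the notation ($F_m$, the curves $\gamma_i^j$, the $R$-modifications). So there is no internal argument to compare yours against; the relevant comparison is with the cited source, and your outline is indeed the standard route taken there. The dictionary you invoke in Step 1 is correct: the total space of a Lefschetz fibration over $D^2$ with bounded fiber $F_m$ and ordered vanishing cycles is $F_m \times D^2$ (one $0$-handle and $1 - \chi(F_m)$ dotted $1$-handles) with a $2$-handle attached along each cycle with page framing $-1$ for a positive twist, and matching this against Yasui's defining Kirby diagram by handleslides and cancellations, with the base case $(0,0,0)$, $i=0$ done first and the parameters reinstated via twist boxes and cancelling pairs, is exactly the right shape of argument. (Incidentally, carrying out your Step 2 would also expose the typo in the statement: the list $L^i_m$ has $6 + m_0 + m_1 + m_2$ curves, not $6 + 6(m_1+m_2+m_3)$.)

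Two caveats. First, your proposal is a plan rather than a proof: all of the substantive content — the framing and linking bookkeeping, and in particular the fact that the attaching circles of the $2$-handles sit in successive pages, so their mutual linking depends on the ordering of the cycles in $L^i_m$ — is deferred to "bookkeeping." That is defensible here only because the statement is itself a citation-level assertion, but if you intend this as an actual proof you must execute the Kirby calculus, not gesture at it. Second, the final verification you propose concerning the open book on $\partial N^m_i$ is not part of this proposition: in this paper the open book $\pi^i_m$ is \emph{defined} as the one determined by the Lefschetz diagram $(F_m, L^i_m)$, and no independent open book is attached to the Kirby picture, so the proposition asserts only a diffeomorphism of total spaces.
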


The Lefschetz diagram $(F_m,L^i_m)$ determined an open book $\pi^i_m$ on $\partial N^m_i$. This open book is independent of $i$, in the following sense. 

\begin{lemma} \cite[Prop.~3.6(4)]{yasui2014partial} There is a diffeomorphism $\partial N^m_i \simeq \partial N^m_j$ intertwining $\pi^i_m$ and $\pi_m^j$. \end{lemma}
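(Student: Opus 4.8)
The plan is to reduce the statement to a computation in the mapping class group of the page. By the discussion of achiral Lefschetz fibrations preceding Theorem~\ref{thm:Lef_diagram_to_Lef_fibration}, the open book $\pi^i_m$ is precisely the open book induced on $\partial N^m_i$ by the achiral Lefschetz fibration associated to the diagram $(F_m, L^i_m)$: its binding is $\partial F_m$, its page is the fiber $F_m$, and its monodromy is the ordered product of signed Dehn twists along the vanishing cycles,
\[
\phi^i_m \;=\; \prod_{L \in L^i_m} \tau_L^{\,s_L} \;\in\; \op{MC}(F_m).
\]
Since the page $F_m$ is manifestly independent of $i$, and since two open books with diffeomorphic pages and conjugate monodromies are carried to one another by a diffeomorphism of the underlying $3$-manifolds intertwining the open book structures, it suffices to produce $\psi \in \op{MC}(F_m)$ with $\psi\,\phi^i_m\,\psi^{-1} = \phi^j_m$. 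Such a $\psi$ then induces the desired diffeomorphism $\partial N^m_i \simeq \partial N^m_j$ on the associated mapping tori, carrying page to page and binding to binding.

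First I would isolate the dependence on $i$. Reading off the list $L^i_m$ from the preceding Proposition, the only vanishing cycle depending on $i$ is $\gamma_{-1}^{-i}(m_0) = \tau_{\alpha_1(m_1)}^{-i}\big(\gamma_{-1}(m_0)\big)$, while all remaining curves are fixed. Abbreviating $A := \alpha_1(m_1)$ and $G := \gamma_{-1}(m_0)$ and collecting the $i$-independent twists occurring before and after $G$ into words $P$ and $Q$, the identity $\tau_{\chi(c)} = \chi\,\tau_c\,\chi^{-1}$ for the image of a curve under a diffeomorphism yields the factorization $\phi^i_m = P\,\big(\tau_A^{-i}\,\tau_G^{\,s}\,\tau_A^{\,i}\big)\,Q$. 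In the case $m=0$ this recovers the two presentations of $N^0_i$ related by the global page diffeomorphism $\tau_{\alpha_1}^{-i}$ discussed earlier in this section, so the reduction is consistent with the concrete examples.

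The hard part is producing the conjugacy $\psi$, and this is where I expect the main obstacle. The naive guess $\psi = \tau_A^{\,i-j}$ fails: although $\tau_A$ commutes with those twists in $Q$ coming from curves disjoint from $\alpha_1$ (such as $\alpha_2(m_2)$ and the core circles $E_{2,k}$), it does \emph{not} commute with the twists in $P$ along $\gamma_1,\beta,\gamma_{-1}$, which meet $\alpha_1$ essentially. The genuine content, which is Yasui's \cite[Prop.~3.6(4)]{yasui2014partial}, is that the modification relating $N^m_i$ to $N^m_j$ is supported in the \emph{interior} of the $4$-manifold --- a torus twist along $\alpha_1$ that leaves a collar of $\partial N^m_i$, and hence its open book, unchanged. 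In the mapping-class-group reduction this should manifest as a relation exploiting the special form $\gamma_{\pm 1} = (\tau_{\alpha_1}\tau_{\alpha_2}\tau_{\alpha_3})^{\pm 1}(\beta)$ of the vanishing cycles together with the disjointness of $\alpha_1$ from $\alpha_2$ and $\alpha_3$. Concretely, I would establish the conjugacy of $\phi^i_m$ and $\phi^j_m$ by a sequence of Hurwitz moves and global conjugations on the vanishing-cycle word, verifying that the net effect on the total monodromy is trivial up to conjugation; this reproduces Yasui's invariance of the boundary open book in the open-book language used here, and I would lean on \cite{yasui2014partial} for the underlying interior-modification argument.
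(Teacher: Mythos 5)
The paper gives no argument for this lemma at all --- it is quoted directly from Yasui \cite[Prop.~3.6(4)]{yasui2014partial} --- and your proposal ultimately rests on that same citation, so the two approaches agree where it matters. Your preliminary reduction (the page $F_m$ is independent of $i$, the monodromy is an ordered product of signed Dehn twists, equivalence of open books follows from conjugacy in $\op{MC}(F_m)$, and the $i$-dependence sits in the single vanishing cycle $\gamma_{-1}^{-i}(m_0)$) is sound and correctly identifies the commutation phenomenon special to Yasui's curves as the one step that you, exactly like the paper, defer to \cite{yasui2014partial}.
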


\noindent In light of Theorem \ref{thm:stein_cork_top}(c), the above implies the following corollary. 

\begin{cor} For any $i$ and $j$, there is a homeomorphism $N^m_{2i} \simeq N^m_{2j}$ restricting to a diffeomorphism $\partial N^m_{2i} \simeq \partial N^m_{2j}$ intertwining the open books $\pi^m_{2i}$ and $\pi^m_{2j}$.  However, if $m_1 \ge 1$ then for infinitely many $i$ and $j$, there is no such diffeomorphism. \end{cor}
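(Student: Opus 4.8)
The plan is to deduce both halves of the corollary by combining the boundary statement of the preceding Lemma (i.e.~\cite[Prop.~3.6(4)]{yasui2014partial}) with Yasui's topological and smooth classification results for the nuclei $N^m_i$. Neither half requires new geometric input; the work is in invoking the cited theorems with the correct parity bookkeeping.

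For the first assertion, I would start from the diffeomorphism $f\colon \partial N^m_{2i} \to \partial N^m_{2j}$ intertwining the open books $\pi^{2i}_m$ and $\pi^{2j}_m$ supplied by the Lemma; after composing with a collar diffeomorphism if necessary, I may take $f$ to be orientation preserving. By the homeomorphism-extension criterion \cite[Prop.~7.1]{yasui2014partial}, an orientation-preserving boundary homeomorphism $\partial N^m_{i} \to \partial N^m_{j}$ extends to a homeomorphism $N^m_{i} \to N^m_{j}$ precisely when $m_1 i \equiv m_1 j \pmod 2$. Applying this with indices $2i$ and $2j$, both sides of the congruence $m_1(2i) \equiv m_1(2j) \pmod 2$ vanish mod $2$, so the condition holds unconditionally and $f$ extends. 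The resulting homeomorphism $N^m_{2i} \simeq N^m_{2j}$ restricts on the boundary to the original map $f$, which is a diffeomorphism intertwining the open books, exactly as required.

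For the second assertion, assume $m_1 \ge 1$ and argue by contradiction. A diffeomorphism $N^m_{2i} \simeq N^m_{2j}$ restricting to a boundary diffeomorphism that intertwines the open books is in particular a diffeomorphism of the underlying $4$-manifolds $N^m_{2i}$ and $N^m_{2j}$. By Theorem \ref{thm:stein_cork_smooth}, when $m_1 \ge 1$ there is an infinite subset $S_0 \subset \{N^m_{2i} : i \in \Z\}$ no two of whose members are diffeomorphic. Selecting infinitely many pairs of distinct even indices $2i, 2j$ with $N^m_{2i}, N^m_{2j} \in S_0$, no diffeomorphism $N^m_{2i} \simeq N^m_{2j}$ exists at all, and hence a fortiori none of the required boundary-intertwining type exists. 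This produces the claimed infinite family of pairs, and the two halves together exhibit the exotic phenomenon: these manifolds are homeomorphic rel boundary, compatibly with the open book, yet not diffeomorphic.

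I expect the only genuine subtlety to be orientation bookkeeping, namely verifying that the orientation-preserving hypothesis needed to invoke \cite[Prop.~7.1]{yasui2014partial} can always be arranged for the open-book-intertwining diffeomorphism $f$ (and that this adjustment does not disturb the open-book structure). Everything else reduces to the trivial parity computation $m_1(2i) \equiv 0 \pmod 2$ and a direct appeal to the smooth non-diffeomorphism result, so there is no substantive obstacle beyond correctly combining the cited theorems.
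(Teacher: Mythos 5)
Your proof is correct and is essentially the paper's own argument: the paper gives no separate proof, merely observing that the Corollary follows by combining the preceding Lemma (the boundary diffeomorphism intertwining the open books) with Yasui's extension criterion, Prop.~7.1(c), which applies unconditionally for even indices since $m_1(2i) \equiv m_1(2j) \equiv 0 \pmod 2$, together with the smooth non-diffeomorphism theorem for the second assertion. Your parity bookkeeping and the contradiction via the infinite pairwise non-diffeomorphic family $S_0$ are exactly that combination.
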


\noindent Thus the Stein nuclei provide infinitely many examples of the type of relative exotic phenomena that may be accessible with our trisection invariants.

\subsection{Trisecting Stein nuclei} We are now ready to trisect the Stein nuclei, using Algorithm \ref{alg:Lef_to_Tri} and some simplification. To start, we restrict to the nuclei $N^m_i$ where
\[m = (0,1,0)\]
This is the simplest family where exotic behavior is present. This is for simplicity, and adapting this calculation to the other Stein nuclei is straightforward.

We will start with the following version of the Lefschetz diagram $(F_m,L^i_m)$ for $N^{(0,1,0)}_i$: 

\vspace{8pt}

\begin{center}
\includegraphics[width=.6\textwidth]{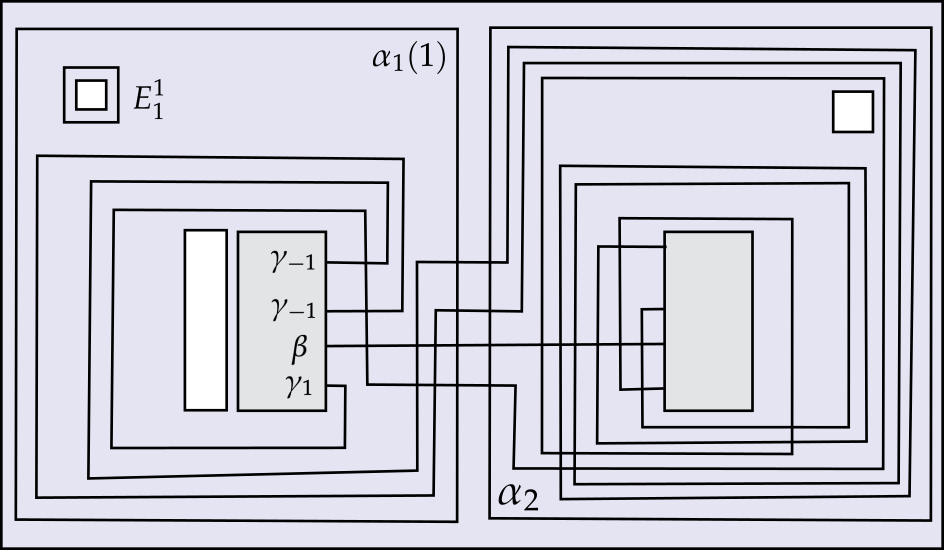}
\end{center}

\vspace{8pt}

By direct application of Algorithm \ref{alg:Lef_to_Tri}, we acquire the trisection diagram in Figure \ref{fig:trisection_of_Nmi_1} below. Note that we have omitted the label $-i$ in the twisted red curve corresponding to $\gamma^{-i}_{-1}$. After a series of handle slides and isotopies (included at the end of this paper) we arrive at the relatively simple diagram in Figure \ref{fig:trisection_of_Nmi_final} below.

\vspace{8pt}

\begin{figure} 
\begin{center}
\includegraphics[width=.8\textwidth]{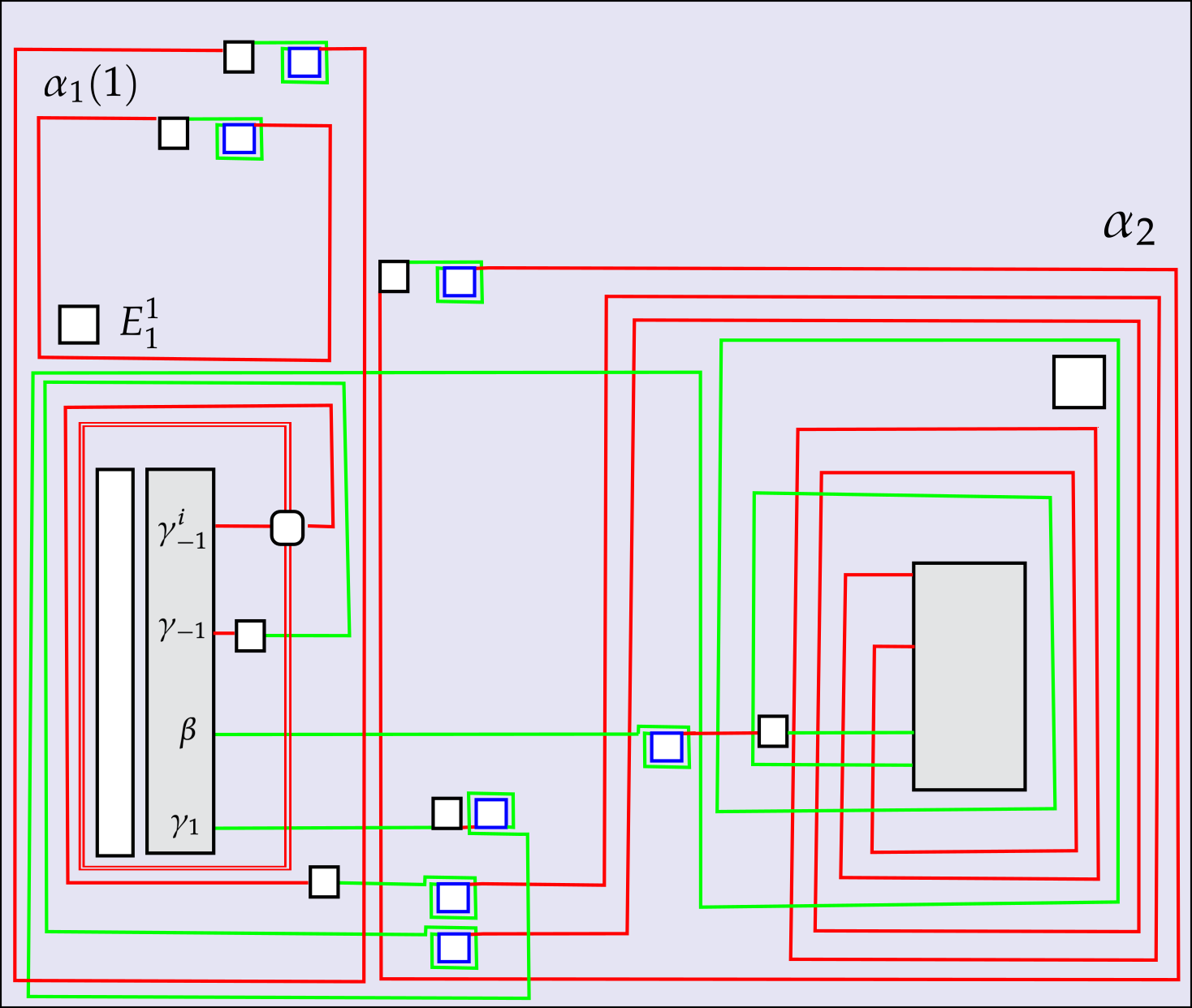}
\end{center}
\caption{The initial trisection of the Stein nucleus $N^{(0,1,0)}_i$ acquired via Algorithm \ref{alg:Lef_to_Tri}.}\label{fig:trisection_of_Nmi_1}
\end{figure}

\begin{figure} 
\begin{center}
\includegraphics[width=1\textwidth]{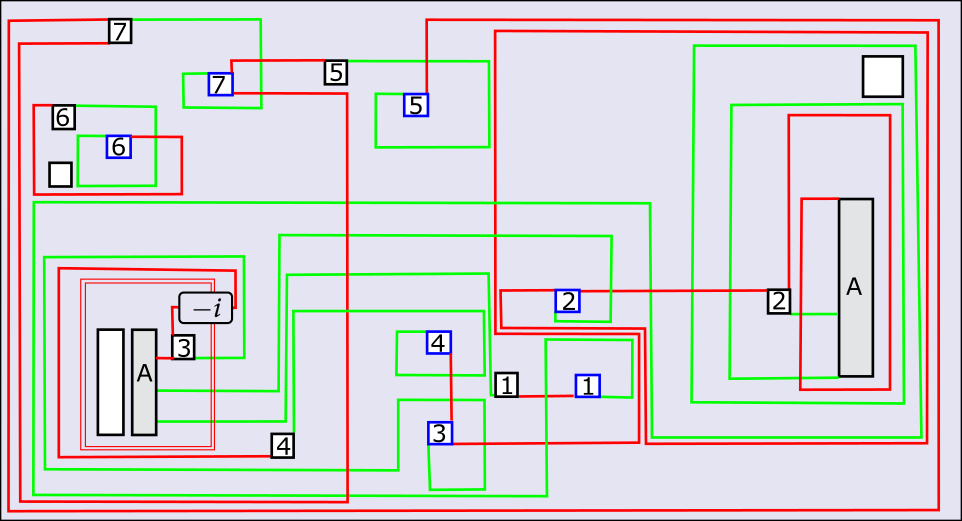}
\end{center}
\caption{The simplified trisection of the cork $N^{(0,1,0)}_i$.}\label{fig:trisection_of_Nmi_final}
\end{figure}

This trisection calculation generalizes in a straightforward way to all Stein nuclei. In particular, we arrive at Figure \ref{fig:trisection_of_Nmi_general_final} by applying Algorithm \ref{alg:Lef_to_Tri} to Figure \ref{fig:stein_cork_Lef_N_m_i}. Here we use $S(n)$ to denote the sub-diagram depicted in Figure \ref{fig:trisection_of_Nmi_component}.

\begin{figure} 
\begin{center}
\includegraphics[width=.8\textwidth]{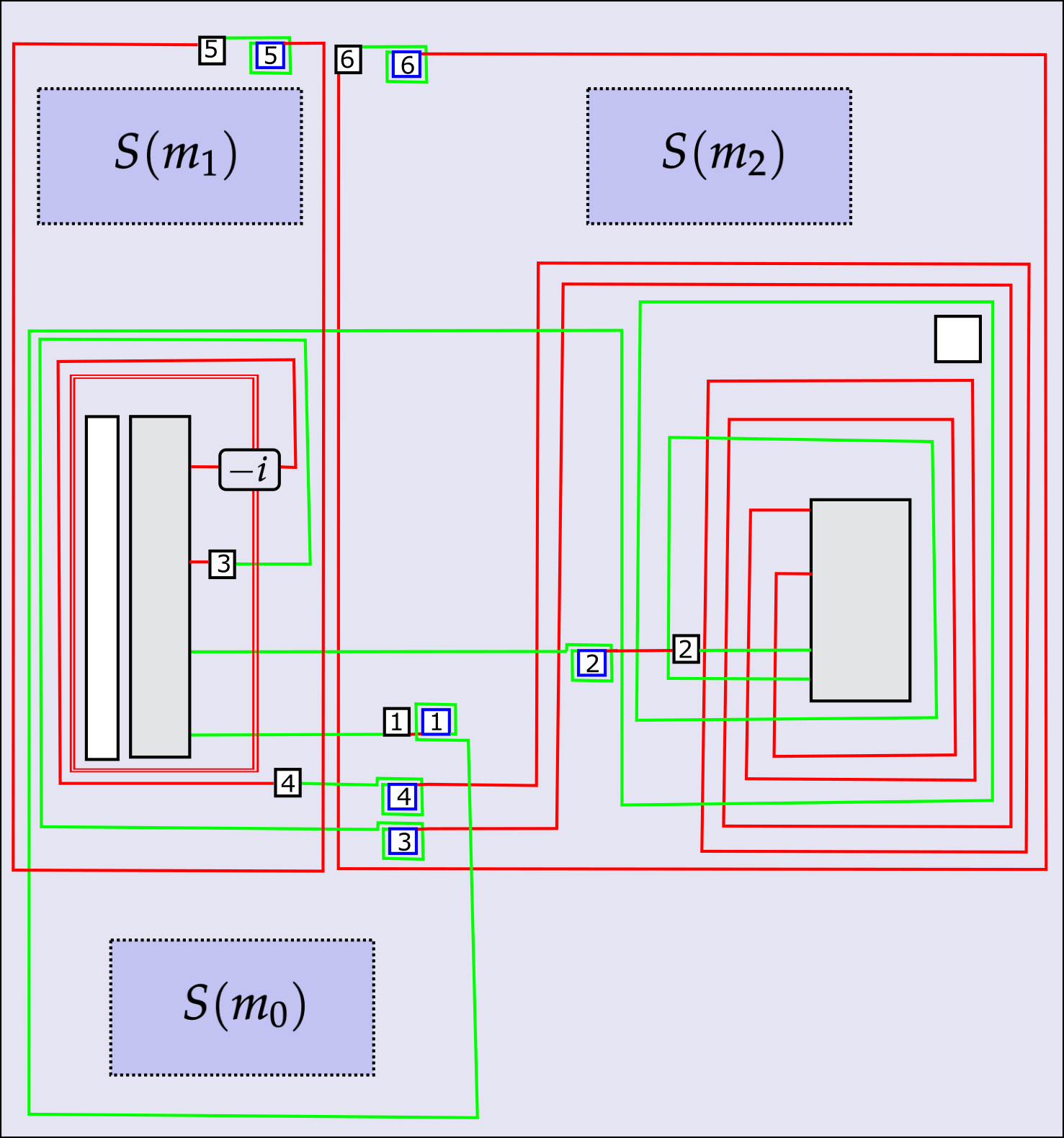}
\end{center}
\caption{The trisection of a general Stein nucleus $N^{m}_i$ acquired via Algorithm \ref{alg:Lef_to_Tri}.}\label{fig:trisection_of_Nmi_general_final}
\end{figure}

\begin{figure}
\begin{center}
\includegraphics[width=.8\textwidth]{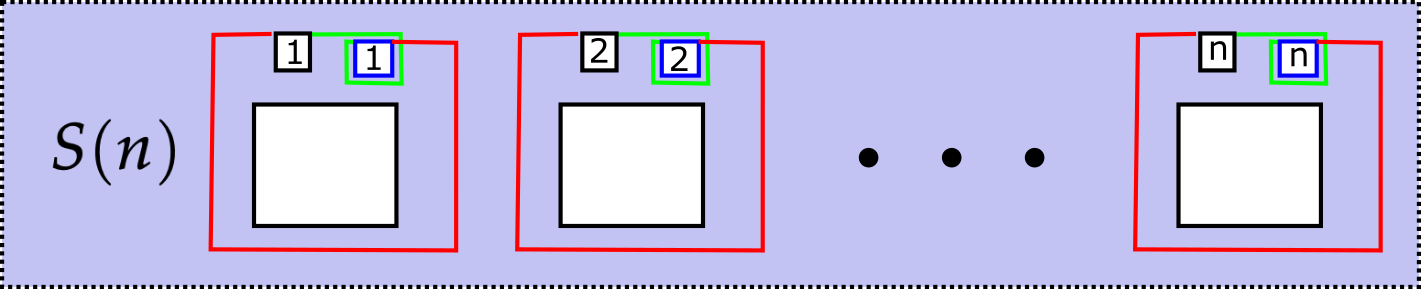}
\end{center}
\caption{The sub-diagram $S(n)$, consisting of $n$ copies of the standard Dehn twist trisection diagram around a curve parallel to a boundary component.} \label{fig:trisection_of_Nmi_component}
\end{figure}

\vspace{8pt}

\subsection{Trisection invariants of Stein nuclei} We can now discuss the trisection invariants of Stein nuclei in the involutory (i.e.~semisimple) case. 

\vspace{3pt}

In this setting, the invariant is computed with uncombed diagrams, and it satisfies a multiplicative property with respect to disjoint (non-intersecting) groups of curves in the trisection diagram. Thus we can derive the following proposition.

\begin{prop} \label{prop:ss_invariant_of_stein_nuclei} Let $\mathcal{H}$ be any involutory Hopf triple and let $N^m_i$ be the Stein nucleus for non-negative integers $m = (m_0,m_1,m_2)$ and an integer $i$. Let
\[
(N,\pi) = (N^0_0,\pi^0_0)
\]
Then the trisection invariant of $(N^m_i,\pi^m_i)$ is given by
\[\tau_{\mathcal{H}}(N^m_i,\pi^m_i) = \tau_{\mathcal{H}}(N,\pi) \cdot \tau_{\mathcal{H}}(\C P^2)^{m_0 + m_1 + m_2}\]\end{prop}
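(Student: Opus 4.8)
The plan is to combine three ingredients: the reduction of the involutory bracket to an uncombed tensor contraction, the multiplicativity of this contraction over mutually disjoint groups of curves, and the explicit decomposition of the trisection diagram of $N^m_i$ furnished by Figure \ref{fig:trisection_of_Nmi_general_final}. First I would record the standing reductions. Since $N^m_i$ is simply connected, $H_1(N^m_i)=0$, so Corollary \ref{cor:unique_combings} gives a single combing class and $\tau_{\mathcal{H}}(N^m_i,\pi^m_i)$ is an honest scalar. Because $\mathcal{H}$ is involutory, each $H_\mu$ is semisimple, hence unimodular and counimodular, so $a=1$, $\alpha=\epsilon$, $q_{\mathcal{H}}=1$, and every generalized cointegral $e^\mu_\theta$ coincides with the two-sided cointegral $e^\mu$. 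Consequently the combing is invisible to the bracket, and $\langle (T,v)\rangle_{\mathcal{H}}$ is the contraction of the uncombed tensor network in which each curve $\mu_i$ carries $\Gamma(\mu_i)=\Delta^{(\cdot)}(e^\mu)$ and each intersection carries a pairing, with no antipode decorations.

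Next I would establish the multiplicativity. Suppose the curves of a trisection diagram $T$ split into two groups $G,G'$ (each containing curves of all three colours) with no intersection point between a curve of $G$ and a curve of $G'$. In the uncombed network the only edges linking the tensor of one curve to that of another are the pairing edges at intersection points, so the network for $T$ is the disjoint union of the networks for the sub-diagrams $T_G,T_{G'}$, and its scalar evaluation factors as $\langle T_G\cup T_{G'}\rangle_{\mathcal{H}}=\langle T_G\rangle_{\mathcal{H}}\cdot\langle T_{G'}\rangle_{\mathcal{H}}$. To promote this to $\tau_{\mathcal{H}}$ I need only that the indices $k_1,k_2,k_3$ are additive under such a splitting (the page genus and boundary number do not enter the normalization); since the $\langle T^{\op{st}}_j\rangle_{\mathcal{H}}$ are global constants, the factors $\langle T^{\op{st}}_j\rangle_{\mathcal{H}}^{-k_j}$ distribute, giving $\tau_{\mathcal{H}}(T_G\cup T_{G'},[v])=\tau_{\mathcal{H}}(T_G,[v])\cdot\tau_{\mathcal{H}}(T_{G'},[v])$.

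I would then apply this to the diagram of $N^m_i$ in Figure \ref{fig:trisection_of_Nmi_general_final}. That diagram is the disjoint union of a core sub-diagram, which is a relative trisection for $(N^0_i,\pi^0_i)$, together with $m_0+m_1+m_2$ copies of the standard Dehn-twist sub-diagram $S(1)$ of Figure \ref{fig:trisection_of_Nmi_component}, one for each attached $1$-handle; by construction these pieces are supported near distinct boundary handles and meet neither the core nor one another. Recognising $S(1)$ as a relative trisection diagram for $\C P^2\setminus B^4$ with the standard open book identifies its normalized bracket with $\tau_{\mathcal{H}}(\C P^2)$, a value a short direct contraction of $\Delta$, $e$ and the three pairings also confirms. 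Iterating the multiplicativity of the previous paragraph across the $m_0+m_1+m_2$ disjoint copies then yields
\[\tau_{\mathcal{H}}(N^m_i,\pi^m_i)=\tau_{\mathcal{H}}(N^0_i,\pi^0_i)\cdot\tau_{\mathcal{H}}(\C P^2)^{m_0+m_1+m_2}.\]

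It remains to remove the dependence on $i$, that is, to prove $\tau_{\mathcal{H}}(N^0_i,\pi^0_i)=\tau_{\mathcal{H}}(N^0_0,\pi^0_0)=\tau_{\mathcal{H}}(N,\pi)$, and this is the step I expect to be the main obstacle. The parameter $i$ enters the core diagram only through the twisted curve $\gamma^{-i}_{-1}=\tau_{\alpha_1}^{-i}(\gamma_{-1})$, and my plan is to unwind it by handleslides of that curve over $\alpha_1$, which is itself a curve of the diagram, so that the $i$-fold Dehn twist is realized by allowable trisection moves; invariance of the bracket under handleslides and isotopy (Proposition \ref{prop:inv_trisection_move}) then forces $\tau_{\mathcal{H}}(N^0_i,\pi^0_i)$ to be independent of $i$. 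The delicate points are the bookkeeping of the intersection numbers created by the twisting and the verification that each twist along $\alpha_1$ is genuinely a sequence of handleslides; alternatively, one may note that for $m_1=0$ no exotic phenomena occur (Theorem \ref{thm:stein_cork_smooth} requires $m_1\ge 1$), so the pairs $(N^0_i,\pi^0_i)$ should be diffeomorphic and the equality would be immediate from Theorem \ref{thm:trisection_inv_main_thm}. Once the $i$-independence is secured, combining it with the displayed product formula gives the proposition.
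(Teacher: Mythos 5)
Your first three steps --- the reduction to an uncombed tensor contraction in the involutory case, the multiplicativity of the contraction over groups of curves with no mutual intersections, and the decomposition of Figure \ref{fig:trisection_of_Nmi_general_final} into a core sub-diagram plus $m_0+m_1+m_2$ disjoint Dehn-twist pieces --- reproduce the paper's argument, which factors the invariant in exactly this way to get $\tau_{\mathcal{H}}(N^m_i,\pi^m_i)=\tau_{\mathcal{H}}(N^0_i,\pi^0_i)\cdot\tau_{\mathcal{H}}(\C P^2)^{m_0+m_1+m_2}$. (One small inaccuracy: even with $S^2=\op{Id}$ the intersection tensors still carry a single antipode at negative crossings; what matters is only that the decorations no longer depend on the combing, so this does not affect the factorization.)

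The genuine gap is the final step, the $i$-independence of $\tau_{\mathcal{H}}(N^0_i,\pi^0_i)$, which you correctly identify as the main obstacle but do not close. Your primary route --- unwinding the twisted curve $\gamma^{-i}_{-1}$ by handleslides over the curves coming from $\alpha_1$ --- is precisely the unproven content: under Algorithm \ref{alg:Lef_to_Tri} each Lefschetz vanishing cycle contributes a triple of trisection curves, handleslides are only permitted between curves of the same color, and you give no argument that the unwinding can actually be realized by such moves; a priori, two trisection diagrams of the same pair are only guaranteed to be related by handleslides, isotopies \emph{and} stabilizations, and even invoking that uniqueness statement presupposes the diffeomorphism $N^0_i\simeq N^0_0$ that is in question. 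Your fallback is logically invalid as stated: the fact that Theorem \ref{thm:stein_cork_smooth} assumes $m_1\ge 1$ does not imply that for $m_1=0$ the manifolds are diffeomorphic --- the absence of an exoticness theorem is not a diffeomorphism, and even the homeomorphism classification would not suffice since the invariant is smooth. The paper closes this step with a citation: by \cite[Rmk 3.9]{yasui2014partial}, the Lefschetz diagrams for $N^0_i$ induce isomorphic Lefschetz fibrations for all $i$, hence there is a diffeomorphism $N^0_i\simeq N$ intertwining the boundary open books, and $\tau_{\mathcal{H}}(N^0_i,\pi^0_i)=\tau_{\mathcal{H}}(N,\pi)$ then follows from Theorem \ref{thm:trisection_inv_main_thm}. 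Replacing both of your proposed routes with this fact (or an equivalent justification of the diffeomorphism) completes the proof.
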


\begin{proof} We examine the trisection in Figure \ref{fig:trisection_of_Nmi_general_final}. Since the Hopf triple under consideration is involutory, we may compute the invariant without any choice of combings.

The regions $S(m_i)$ for $i = 0,1,2$ have tensor diagrams that are identical to the tensor diagram of $\C P^2$. Therefore, they contribute a factor of $\tau_{\mathcal{H}}(\C P^2)^{m_0 + m_1 + m_2}$. The complement of these regions has a tensor diagram identical to that of $N^0_i$. Thus we have
\[\tau_{\mathcal{H}}(N^m_i,\pi^m_i) = \tau_{\mathcal{H}}(N^0_i,\pi^0_i) \cdot \tau_{\mathcal{H}}(\C P^2)^{m_0 + m_1 + m_2}\]
and it suffices to show that $\tau_{\mathcal{H}}(N^0_i,\pi^0_i)$ is independent of $i$. By \cite[Rmk 3.9]{yasui2014partial}, each of the Lefschetz diagrams for $N^0_i$ induce isomorphic Lefschetz fibrations. Thus there is a diffeomorphism
\[
N^0_i \simeq N
\]
intertwining the boundary open books. The result follows. \end{proof}

\begin{remark} Due to Wall's principle, we know that any invariant of 4-manifolds that satisfies a multiplicative connect sum property must vanish on $\C P^2$ and $S^2 \times S^2$, if it is able to detect smooth structures. Proposition \ref{prop:ss_invariant_of_stein_nuclei} thus states that no such trisection invariant can detect the difference between Stein nuclei. Proposition \ref{prop:ss_invariant_of_stein_nuclei} also reflects the more general fact that semisimple TQFTs cannot detect smooth structures (cf. \cite{reutter2022semisimple}). \end{remark}

For a specific example, recall that our invariant yields a conjectural extension of the Kashaev invariants in \cite{kashaev2014asimple} to the case of $4$-manifolds with boundary, equipped with a boundary open book. Kashaev's invariants seem to correspond (up to a multiplicative factor) to the trisection invariant associated to a certain famil of Hopf triples $\mathcal{Z}(p)$ associated to cyclic group algebras (see \cite{cui2019four}) $\mathcal{Z}(p)$ for $p \in \N$.  Kashaev's invariant vanishes on $\C P^2$ in the case of $p = 2$. We thus acquire the following corollary.

\begin{cor} \label{cor:mod2_Kashaev_Stein_nuclei} The trisection invariant of $N^m_i$ with respect to the Hopf triple $\mathcal{Z}_2$ is $0$ if $m_1 + m_2 + m_3 > 0$.
\end{cor}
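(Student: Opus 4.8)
The plan is to deduce the corollary almost immediately from Proposition \ref{prop:ss_invariant_of_stein_nuclei}, reducing everything to a single computation on $\C P^2$. First I would observe that $\mathcal{Z}_2$ is an involutory Hopf triple: it is assembled from the group algebra of the cyclic group of order two, which is semisimple over our characteristic-zero field, so its antipode squares to the identity. Consequently Proposition \ref{prop:ss_invariant_of_stein_nuclei} applies verbatim with $\mathcal{H} = \mathcal{Z}_2$, giving
\[
\tau_{\mathcal{Z}_2}(N^m_i,\pi^m_i) = \tau_{\mathcal{Z}_2}(N,\pi) \cdot \tau_{\mathcal{Z}_2}(\C P^2)^{\,m_0 + m_1 + m_2},
\]
where $(N,\pi) = (N^0_0,\pi^0_0)$. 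Since $N^m_i$ is simply connected and $\partial N^m_i$ is a homology sphere, $H_1(N^m_i) = 0$, so $\text{Comb}$ is a singleton and both sides are honest scalars; the set-valued subtlety of $\tau_{\mathcal{H}}$ therefore does not intervene.

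With this reduction in hand, the entire content of the corollary is the vanishing $\tau_{\mathcal{Z}_2}(\C P^2) = 0$. I would establish this by transporting the known vanishing of Kashaev's invariant at $p = 2$ on $\C P^2$ through the (up-to-scalar) correspondence between Kashaev's invariant and $\tau_{\mathcal{Z}_2}$ described above: since the correspondence differs only by a nonzero multiplicative constant, it cannot turn a zero into a nonzero value, so $\tau_{\mathcal{Z}_2}(\C P^2) = 0$ as well. Alternatively, and more self-containedly, I would contract by hand the tensor diagram assigned to the standard genus-one trisection diagram of $\C P^2$ using the elementary tensors of $\mathcal{Z}_2$ (coproducts, cointegrals, and the three pairings) and verify that the resulting scalar is zero.

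Granting $\tau_{\mathcal{Z}_2}(\C P^2) = 0$, the conclusion is forced: the hypothesis $m_1 + m_2 + m_3 > 0$ (equivalently $m_0 + m_1 + m_2 > 0$, the exponent appearing in Proposition \ref{prop:ss_invariant_of_stein_nuclei}) guarantees that the exponent is a positive integer, whence $\tau_{\mathcal{Z}_2}(\C P^2)^{\,m_0+m_1+m_2} = 0$ and the product vanishes irrespective of the factor $\tau_{\mathcal{Z}_2}(N,\pi)$.

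I expect the only genuine obstacle to be the rigorous justification of $\tau_{\mathcal{Z}_2}(\C P^2) = 0$. The Kashaev correspondence is stated only conjecturally in this paper, so a fully rigorous argument ought to avoid it and instead carry out the explicit diagram contraction for $\C P^2$; that computation is routine but is the one place where the combinatorics of the pairings and cointegrals of $\mathcal{Z}_2$ actually has to be unwound.
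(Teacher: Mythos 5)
Your proposal is correct and follows essentially the same route as the paper: apply Proposition \ref{prop:ss_invariant_of_stein_nuclei} (after noting that $\mathcal{Z}_2$ is involutory and that the exponent $m_0+m_1+m_2$ is positive), thereby reducing the whole statement to the vanishing $\tau_{\mathcal{Z}_2}(\C P^2)=0$, which the paper likewise imports from the $p=2$ vanishing of Kashaev's invariant on $\C P^2$. Your observation that the Kashaev correspondence is stated only conjecturally here, so that a fully rigorous argument should instead contract the $\C P^2$ tensor diagram for $\mathcal{Z}_2$ directly, is a fair and slightly more careful reading than the paper's own one-line justification.
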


\begin{remark}[Non-semisimple case] In the general non-semisimple case, the trisection invariant of the Stein nuclei does not necessarily need to vanish if the trisection invariant of $\C P^2$ vanishes.

\vspace{3pt}

The reasoning is as follows. The trisection invariants satisfy a boundary sum property along the marked component of the open book's binding. However, they do \emph{not} satisfy a more general multiplicative property with respect to arbitrary sums or more generally arbitrary unions of disjoint regions of a diagram, as in the involutory case. This is because the rotation number of the trisection combing along any curve contributes non-trivially to the invariant. These rotation numbers can depend strongly on the isotopy class of the singular combing. In particular, the restriction of a combing of the trisection diagrams in Figure \ref{fig:trisection_of_Nmi_general_final} will not necessarily restrict in the regions $S(n)$ to a connect sum of combings of the diagram for $\C P^2$. Thus, the contributions of these regions may differ from the contribution of a $\C P^2$ connect summand.

\vspace{3pt}





\end{remark}

\subsection{Takahashi Manifolds} We conclude this section by discussing the small exotic pairs $P$ and $Q$ of trisections introduced by Takahashi in \cite{takahashi2023exotic} given by the pair of $(4,3;0,4)$-trisections in Figure \ref{fig:td-PQ-cutarc}. We let $\pi_P$ and $\pi_Q$ be the open books induced on $\partial P$ and $\partial Q$ by the trisections in Figure \ref{fig:td-PQ-cutarc}. The pair $P$ and $Q$ are exotic (homeomorphic and not diffeomorphic), and related by a twist along the Akbulut cork \cite{takahashi2023exotic}. Takahashi's examples have several similarities to Stein nuclei. In particular, we have the following result.

\begin{figure}[!htbp]
\centering
\includegraphics[scale=1]{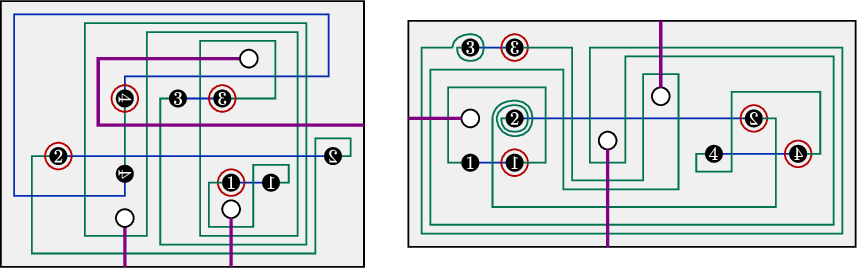}
\caption{Takahashi's manifolds $P$ (left) and $Q$ (right) from \cite{takahashi2023exotic}. The purple arcs in this figure are an auxilliary cut system, and can be ignore for our purposes.}
\label{fig:td-PQ-cutarc}
\end{figure}

\begin{lemma} Let $\mathcal{H}$ be any involutory Hopf triple. Then there are scalars $I_P$ and $I_Q$ with
\[
\tau_{\mathcal{H}}(P,\pi_P) = \tau_{\mathcal{H}}(\C P^2) \cdot I_P \qquad \text{and} \qquad \tau_{\mathcal{H}}(Q,\pi_Q) = \tau_{\mathcal{H}}(\C P^2) \cdot I_Q\qquad 
\]
Thus $\tau_{\mathcal{H}}(P,\pi_P)$ and $\tau_{\mathcal{H}}(Q,\pi_Q)$ vanish if $\tau_{\mathcal{H}}(\C P^2)$ vanishes. \end{lemma}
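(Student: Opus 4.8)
The plan is to mirror the proof of Proposition \ref{prop:ss_invariant_of_stein_nuclei}, exploiting the two special features of the involutory case. First, since $\mathcal{H}$ is involutory we have $q_{\mathcal{H}} = 1$ and $S^2 = \op{Id}$ on each $H_\mu$; consequently the generalized cointegrals $e^\mu_\theta$ all coincide with $e^\mu$ and the antipode powers $S^{\theta_{\mu_i}(p)}$ appearing at the crossings collapse to honest antipodes, independent of the rotation numbers. Thus $\langle (P,v) \rangle_{\mathcal{H}}$, and hence $\tau_{\mathcal{H}}(P,\pi_P)$, can be computed directly from the underlying tensor diagram of the trisection in Figure \ref{fig:td-PQ-cutarc} with no reference to a choice of combing, and likewise for $Q$.

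Second, I would invoke the multiplicativity of the involutory bracket with respect to disjoint, non-intersecting groups of curves: if the curves of the diagram split into a sub-collection $R$ whose curves meet no curves outside $R$, then the intersection graph disconnects and the global tensor contraction factors as the product of the contraction over $R$ and the contraction over its complement. The geometric heart of the argument is therefore to locate inside each of Takahashi's diagrams a sub-collection of $\alpha$-, $\beta$-, and $\kappa$-curves, supported in a region disjoint from the remaining curves, that is diffeomorphic after handleslides and isotopy to a single copy of the standard boundary-parallel Dehn-twist $\C P^2$ trisection piece $S(1)$ of Figure \ref{fig:trisection_of_Nmi_component}. Granting this, the contraction over $R$ evaluates to $\tau_{\mathcal{H}}(\C P^2)$ by exactly the computation used for the $S(m_i)$ blocks of the Stein nuclei, and I would simply define $I_P$ (respectively $I_Q$) to be the contraction over the complementary region, which is a well-defined scalar in $k$.

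Combining these two points yields $\tau_{\mathcal{H}}(P,\pi_P) = \tau_{\mathcal{H}}(\C P^2)\cdot I_P$ and $\tau_{\mathcal{H}}(Q,\pi_Q) = \tau_{\mathcal{H}}(\C P^2)\cdot I_Q$; the consistency of this factorization with the normalizing factors in $\tau_{\mathcal{H}}$ follows because handleslides and isotopies leave the invariant unchanged by Proposition \ref{prop:inv_trisection_move}. The final vanishing claim is then immediate: if $\tau_{\mathcal{H}}(\C P^2) = 0$ then both products vanish.

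The main obstacle I anticipate is the purely diagrammatic step of isolating the $\C P^2$ summand in Figure \ref{fig:td-PQ-cutarc}. Unlike the Stein nuclei, where the $S(n)$ blocks were produced directly by Algorithm \ref{alg:Lef_to_Tri} and sat visibly apart from the rest of the diagram, here one must first normalize Takahashi's $(4,3;0,4)$-diagrams through a sequence of handleslides and isotopies so that a boundary-parallel Dehn-twist triple is split off as a non-intersecting group of curves. Verifying that such a separating sequence of moves exists for both $P$ and $Q$, and that it produces precisely the standard $S(1)$ configuration rather than some other $\C P^2$-representing chunk, is the only nontrivial content; everything else follows formally from involutoriness and from the multiplicativity of the uncombed bracket over disjoint groups of curves.
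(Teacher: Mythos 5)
Your framework is exactly the paper's: involutoriness kills the dependence on combings and rotation numbers, the uncombed bracket factors over groups of curves with disconnected intersection graph, and the whole problem reduces to splitting off a $\C P^2$ sub-diagram, with $I_P$, $I_Q$ defined as the contractions of the complements. However, you stop precisely where the paper's proof actually lives. Everything you establish in your first three paragraphs was already available from Proposition \ref{prop:ss_invariant_of_stein_nuclei}; the only content specific to this lemma is the verification you defer as an ``anticipated obstacle,'' namely exhibiting the disjoint $\C P^2$ sub-diagram inside each of Takahashi's $(4,3;0,4)$-diagrams. A proof that ends by saying this verification ``is the only nontrivial content'' has not proved the lemma.

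For what it is worth, the verification is much lighter than the normalization procedure you anticipate. For $P$ no handleslides are needed at all: the red, blue and green curves near the $1$-handle labelled $1$ in Figure \ref{fig:td-PQ-cutarc} already form a sub-diagram $D_P$ equivalent to the standard $\C P^2$ diagram and disjoint from the rest of the curves. For $Q$, the sub-diagram $D_Q$ consists of the blue and green curves on the $1$-handle labelled $3$ together with the nearby red curve; it meets one additional curve, but three two-point moves suffice to disjoin it (these moves cross boundary components of the trisection surface, which is harmless for the factorization argument, just as in Proposition \ref{prop:ss_invariant_of_stein_nuclei}). So the missing step is a direct reading of the figure plus a few two-point moves, not a search for a separating sequence of handleslides --- but it does need to be carried out, curve by curve, for the lemma to be established.
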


\begin{proof} As in the proof of Proposition \ref{prop:ss_invariant_of_stein_nuclei}, it suffices to find a sub-diagram of the trisections for $P$ that is equivalent to the standard $\C P^2$ diagram and disjoint from the rest of the trisection for $P$. In the diagram of $P$ in Figure \ref{fig:td-PQ-cutarc} contains a sub-diagram $D_P$ (consisting of the red, blue and green curves near the $1$-handle labelled $1$ in the diagram for $P$) that is equivalent to the diagram for $\C P^2$. 

Similarly, $Q$ has a sub-diagram $D_Q$ consisting of the blue and green curves on the $1$-handle labelled $3$, and the red curve near that $1$-handle. This sub-diagram intersects the blue curve over the $1$-handle labelled $2$. However, $D_Q$ can be disjoined from this green curve by three two point moves. These moves cross some boundary components of the trisection surface, but this is not important for the argument (as in Proposition \ref{prop:ss_invariant_of_stein_nuclei}). \end{proof}

In particular, this implies that the mod 2 Kashaev-type invariant vanishes for these spaces.

\begin{cor} \label{cor:mod2_Kashaev_takahashi}The trisection invariant of $P$ and $Q$ with respect to the Hopf triple $\mathcal{Z}_2$ is $0$.
\end{cor}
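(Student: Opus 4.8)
The plan is to obtain this as an immediate consequence of the Lemma above together with the known vanishing of the Kashaev-type invariant on $\C P^2$ at $p=2$. First I would check that the hypothesis of the preceding Lemma applies to $\mathcal{H} = \mathcal{Z}_2$: the Hopf triple $\mathcal{Z}_2$ is built from the group algebra of $\Z/2$, which is semisimple in characteristic zero, hence involutory. Thus the Lemma furnishes scalars $I_P$ and $I_Q$ with $\tau_{\mathcal{Z}_2}(P,\pi_P) = \tau_{\mathcal{Z}_2}(\C P^2)\cdot I_P$ and $\tau_{\mathcal{Z}_2}(Q,\pi_Q) = \tau_{\mathcal{Z}_2}(\C P^2)\cdot I_Q$.

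Next I would substitute the value $\tau_{\mathcal{Z}_2}(\C P^2) = 0$, which is exactly the fact recalled just before Corollary \ref{cor:mod2_Kashaev_Stein_nuclei}: Kashaev's invariant, equal up to a nonzero overall factor to $\tau_{\mathcal{Z}_2}$, vanishes on $\C P^2$ when $p=2$. This immediately yields $\tau_{\mathcal{Z}_2}(P,\pi_P) = 0$ and $\tau_{\mathcal{Z}_2}(Q,\pi_Q) = 0$, which is the assertion.

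Since both steps are direct substitutions, there is no genuine obstacle; the only points meriting care are that $\mathcal{Z}_2$ is indeed involutory (so that the scalar factorization of the Lemma, and in particular the combing-independence of the bracket in the semisimple case, is available) and that the identification of $\tau_{\mathcal{Z}_2}$ with Kashaev's invariant is compatible with the vanishing statement, which holds because the two agree up to a nonzero multiplicative constant. In short, this corollary is to the Takahashi pair exactly what Corollary \ref{cor:mod2_Kashaev_Stein_nuclei} is to the Stein nuclei, with the single factor $\tau_{\mathcal{Z}_2}(\C P^2)$ supplied by the Lemma playing the role of the power $\tau_{\mathcal{H}}(\C P^2)^{m_0+m_1+m_2}$ there.
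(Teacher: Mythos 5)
Your proposal is correct and matches the paper's (implicit) reasoning exactly: the paper states this corollary as an immediate consequence of the preceding lemma together with the vanishing of the $p=2$ Kashaev-type invariant on $\C P^2$, which is precisely your two-step substitution. The only cosmetic caveat is that the paper describes the identification of $\tau_{\mathcal{Z}_2}$ with Kashaev's invariant as conjectural, so strictly speaking the input is the vanishing of $\tau_{\mathcal{Z}_2}(\C P^2)$ itself (as used for Corollary \ref{cor:mod2_Kashaev_Stein_nuclei}), but this is the same reading the paper itself relies on.
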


\bibliographystyle{plain}
\bibliography{trisection_inv_bib}

\end{document}